\documentclass[a4paper]{article}
\usepackage{setspace} %
\usepackage[backref=page,hypertexnames=false]{hyperref}
\usepackage[margin=2.5cm]{geometry}
\usepackage{comment}
\usepackage{mathtools}
\usepackage{amssymb}
\usepackage{amsthm}
\usepackage{thmtools}
\usepackage{empheq}
\usepackage{thm-restate}
\usepackage{blkarray, bigstrut}
\usepackage{booktabs}

\usepackage{enumitem}
\usepackage[utf8]{inputenc}

\usepackage[dvipsnames]{xcolor}
\usepackage{authblk}
\usepackage{amsmath}

\usepackage[short]{optidef}

\usepackage[linesnumbered,commentsnumbered, ruled, scleft, vlined]{algorithm2e}

\usepackage{cleveref}

\def\eatdot.{}

\usepackage{tikz}
\usetikzlibrary{arrows.meta}
\usetikzlibrary{arrows,shapes,decorations.pathmorphing,calc}
\usetikzlibrary{decorations.pathreplacing,angles,quotes,calligraphy}

\tikzstyle{vertex}=[circle,draw]
\tikzstyle{arc} = [draw,-{Stealth}]

\usepackage{esvect}

\usepackage{transparent}

\SetInd{0.0em}{0.6em}

\SetCommentSty{mycommfont}

\SetKwComment{Komm}{\quad $\triangleright$\ }{}
\SetKw{KwGoTo}{\textbf{go to}}

\SetAlFnt{\small}
\usepackage{algorithmic}
\algsetup{linenosize=\tiny}

\usepackage{newpxtext}
\usepackage{newpxmath}
\usepackage{accents}

\usepackage{multicol}
\newcommand{\norm}[1]{\left\lVert#1\right\rVert}

\newcommand{\R}{\mathbb{R}}
\newcommand{\Q}{\mathbb{Q}}
\newcommand{\Z}{\mathbb{Z}}
\newcommand{\Rplus}{\mathbb{R}_{\ge0}}
\newcommand{\Rplusplus}{\mathbb{R}_{>0}}
\newcommand{\Qplusplus}{\mathbb{Q}_{>0}}

\newcommand{\Zplusplus}{\mathbb{Z}_{>0}}

\newcommand{\eQ}{\widehat{\Q}}
\newcommand{\eQplusplus}{\widehat{\Q}_{>0}}
\newcommand{\val}{\operatorname{val}}

\newcommand{\mcp}{^{\mathfrak{m}}}
\newcommand{\m}{\mathfrak{m}}

\newcommand{\pr}[2]{\left\langle #1, #2 \right\rangle}
\def\<#1>{\mathinner{\langle#1\rangle}}

\newcommand{\Primal}{\mathcal{P}}
\newcommand{\Primalp}{\mathcal{P}_{>0}}
\newcommand{\Dual}{\mathcal{D}}
\newcommand{\Dualp}{\mathcal{D}_{>0}}

\newcommand{\poly}{\operatorname{poly}}
\renewcommand{\cal}{\mathcal}

\newcommand{\rank}{\operatorname{rank}}

\newcommand{\size}{\operatorname{size}}

\newcommand{\adjt}{\operatorname{adj}}
\newcommand{\Tmul}{\mathcal{T}_{\textup{mul}}}
\newcommand{\Tdet}{\mathcal{T}_{\textup{det}}}
\newcommand{\Tadj}{\mathcal{T}_{\textup{adj}}}
\newcommand{\Tinv}{\mathcal{T}_{\textup{inv}}}
\newcommand{\Tbasis}{\mathcal{T}_{\textup{basis}}}
\newcommand{\Tcirc}{\mathcal{T}_{\textup{circ}}}
\newcommand{\Tcr}{\mathcal{T}_{\textup{cr}}}

\newcommand{\TMW}{\mathcal{T}_{\textup{MW}}}

\newcommand{\out}{^\mathrm{out}}

\newcommand{\slceta}{\eta}

\newcommand{\im}{\operatorname{im}}

\renewcommand{\rho}{\varrho}

\newcommand{\1}{\mathbf 1}
\newcommand{\0}{\mathbf 0}

\newcommand{\cp}{^{\mathrm{cp}}}
\newcommand{\pz}{z}
 \newcommand{\px}{x}
 \newcommand{\ps}{s}

\newcommand{\mz}{z\mcp}
\newcommand{\mx}{x\mcp}
\newcommand{\bmx}{\bar{x}\mcp}
\newcommand{\ms}{s\mcp}
\newcommand{\bms}{\bar{s}\mcp}

\renewcommand{\phi}{\varphi}

\newcommand{\cN}{\mathcal{N}^2}
\newcommand{\clN}{\overline{\mathcal{N}}^2}

\newcommand{\conic}{\mathrm{c}}

\newcommand{\feas}{\mathrm{f}}
\newcommand{\opt}{\mathrm{o}}

\newcommand{\gap}{\overline{\mu}}

\newcommand{\supp}{\operatorname{supp}}

\newcommand{\MSysName}{\textsc{MSys}}
\newcommand{\PMaxName}{\textsc{PrimalMaxS}}
\newcommand{\DMaxName}{\textsc{DualMaxS}}
\newcommand{\PMaxAName}{\textsc{PrimalMaxSA}}
\newcommand{\DMaxAName}{\textsc{DualMaxSA}}
\newcommand{\MSys}{\hyperref[eq:big-M-sys]{\MSysName}}
\newcommand{\PMax}{\hyperref[sub:PMax]{\PMaxName}}
\newcommand{\DMax}{\hyperref[sub:DMax]{\DMaxName}}
\newcommand{\PMaxA}{\hyperref[sub:PMaxA]{\PMaxAName}}
\newcommand{\DMaxA}{\hyperref[sub:DMaxA]{\DMaxAName}}
\newcommand{\LP}{\textsc{LP}}

\newcommand{\SLC}{\operatorname{SLC}}

\newcommand{\pf}{\mathrm{pf}}   %
\newcommand{\df}{\mathrm{df}}   %
\newcommand{\mPi}{\boldsymbol{\Pi}}  %

\newcommand{\Pc}{P^{\conic}}    %
\newcommand{\Dc}{D^{\conic}}    %
\newcommand{\Pf}{P^{\pf}}       %
\newcommand{\Df}{D^{\df}}       %

\newcommand\restr[2]{{%
  \left.\kern-\nulldelimiterspace %
  #1 %
  \vphantom{\big|} %
  \right|_{#2} %
  }}

\makeatletter
\let\original@algocf@latexcaption\algocf@latexcaption
\long\def\algocf@latexcaption#1[#2]{%
  \@ifundefined{NR@gettitle}{%
    \def\@currentlabelname{#2}%
  }{%
    \NR@gettitle{#2}%
  }%
  \original@algocf@latexcaption{#1}[{#2}]%
}
\makeatother

\makeatletter
\newcommand*{\rom}[1]{\expandafter\@slowromancap\romannumeral #1@}
\makeatother

\newlength{\leftstackrelawd}
\newlength{\leftstackrelbwd}
\def\leftstackrel#1#2{\settowidth{\leftstackrelawd}%
{${{}^{#1}}$}\settowidth{\leftstackrelbwd}{$#2$}%
\addtolength{\leftstackrelawd}{-\leftstackrelbwd}%
\leavevmode\ifthenelse{\lengthtest{\leftstackrelawd>0pt}}%
{\kern-.5\leftstackrelawd}{}\mathrel{\mathop{#2}\limits^{#1}}}

\newtheorem{theorem}{Theorem}[section]
\newtheorem{lemma}[theorem]{Lemma}
\newtheorem{fact}[theorem]{Fact}
\newtheorem{claim}[theorem]{Claim}
\newtheorem{proposition}[theorem]{Proposition}
\newtheorem{corollary}[theorem]{Corollary}

\theoremstyle{definition}
\newtheorem{remark}[theorem]{Remark}
\newtheorem{definition}[theorem]{Definition}

\newtheorem{subroutine}[theorem]{Subroutine}

\newenvironment{claimproof}[1][\proofname]
{
    \proof[#1]%
        
}
{
    \endproof
}

\makeatletter
\providecommand*{\cupdot}{%
  \mathbin{%
    \mathpalette\@cupdot{}%
  }%
}
\newcommand*{\@cupdot}[2]{%
  \ooalign{%
    $\m@th#1\cup$\cr
    \hidewidth$\m@th#1\cdot$\hidewidth
  }%
}
\makeatother

\foreach \x in {A,...,Z}{%
	\expandafter\xdef\csname m\x\endcsname{\noexpand\boldsymbol{\x}}
}

\DeclareMathOperator*{\argmax}{arg\,max}

\newlist{mycases}{enumerate}{1}
\setlist[mycases,1]{wide=0pt, widest=99,leftmargin=\parindent,label=\textbf{Case \Roman*.}, ref=Case \Roman*}

\newcommand\myshade{100}
\colorlet{mylinkcolor}{NavyBlue}
\colorlet{mycitecolor}{YellowOrange}
\colorlet{myurlcolor}{Aquamarine}

\hypersetup{
  linkcolor  = mylinkcolor!\myshade!black,
  citecolor  = mycitecolor!\myshade!black,
  urlcolor   = myurlcolor!\myshade!black,
  colorlinks = true,
}

\author[1]{Daniel Dadush}
\author[2]{Zhuan Khye Koh}
\author[3]{Bento Natura}
\author[4]{Neil Olver}
\author[5]{L{\'{a}}szl{\'{o}} A. V{\'{e}}gh}
\affil[1]{Centrum Wiskunde \& Informatica, Amsterdam, The Netherlands}
\affil[2]{University of British Columbia, Vancouver, Canada}
\affil[3]{Columbia University, New York, USA}
\affil[4]{London School of Economics and Political Science, UK}
\affil[5]{University of Bonn, Germany}

\title{Solving Classes of Linear Programs with Strongly Polynomial Straight Line Complexity in Strongly Polynomial Time}

\begin{document}

\maketitle

\begin{abstract}
 Recently, Allamigeon, Dadush, Loho, Natura, and V\'egh \cite{ADLNV2022} introduced an interior point method for linear programs whose number of iterations is bounded by the so-called straight line complexity (SLC) of the instance.
This yields a strongly polynomial bound on the number of iterations of the corresponding interior point algorithm for broad classes{} of linear programs. In particular, in the companion paper \cite{DKNOV_mincost} we give a strongly polynomial straight line complexity bound for two-variable-per-inequality linear programs.\footnote{This result was announced in the STOC 2024 proceedings paper ``A strongly polynomial algorithm for linear programs with at most two nonzero entries per row or column'' \cite{DKNOV_STOC}. We present the full argument in two journal papers, the manuscript in preparation \cite{DKNOV_mincost}
containing the straight line complexity results, and the current manuscript including the initialization framework and the bit complexity arguments.}
However, in order to obtain a strongly polynomial algorithm for these classes, one also needs to address two important technical issues: initialization and bit-complexity bounds. Standard IPM initialization techniques either require numerical estimates that cannot be obtained in the strongly polynomial model,
or involve modifying the instance in ways that do not preserve the geometry of the original LP, potentially affecting the SLC.

In this work, we present an initialization framework for interior point methods for linear programming, along with a rounding procedure that ensures the continued feasibility of the IPM. We develop a multistage scheme in which each stage can be initialized in strongly polynomial time given the output of the previous stage. Additionally, we address the challenge of controlling the bit complexity of each iterate throughout the algorithm. To this end, we show that it is possible to represent iterates as low-complexity convex combinations of vertices and extreme rays. Our combinatorial rounding step is independent of the algorithm in \cite{ADLNV2022}, and can be applied more broadly  in the context of path-following methods.
\end{abstract}
\thispagestyle{empty}
\newpage

\tableofcontents
\thispagestyle{empty}

\clearpage
\pagenumbering{arabic}

\section{Introduction}\label{sec:introduction}
We consider linear programming (LP) in the following primal-dual form:
\begin{equation}
\label{LP_primal_dual} \tag{$\LP(\mA,b,c)$}
\begin{aligned}
\min \; & \pr{c}{x} \\
\mA x & = b  \\
x & \geq \0_n \, ,
\end{aligned}
\qquad \qquad
\begin{aligned}
\max \; & \pr{b}{y}\\
\mA^\top y + s & = c \\
s & \geq \0_n \, ,
\end{aligned}
\end{equation}
where $\mA \in \R^{m \times n}$, $b \in \R^m$, $c \in \R^n$, and $\mA$ has rank $m$.
  Our focus is on LP algorithms that find exact primal and dual optimal solutions, or conclude infeasibility or unboundedness.

The first polynomial-time LP algorithms were  the ellipsoid method by Khachiyan in 1979 \cite{Khachiyan79} and interior point methods,  introduced by Karmarkar in 1984 \cite{Karmarkar84}. However, it remains an outstanding open question to find a \emph{strongly polynomial algorithm} for linear programming. The question was listed by the Fields medalist Smale as one of the most prominent mathematical challenges for the 21st century \cite{smale1998}. In such an algorithm, only $\poly(n)$ basic arithmetic operations  and comparisons  are allowed, and the algorithm uses polynomial space (see Section~\ref{sec:comp-mod} for formal definitions).

The notion of strongly polynomial algorithms was first formally introduced by Megiddo \cite{Megiddo83}, under the term `genuinely polynomial'; he gave such an algorithm for finding feasible solutions to  \emph{two-variable-per-inequality (2VPI)} linear programs.
In a seminal, Fulkerson-prize winning paper \cite{Tardos1985}, Tardos obtained the first strongly polynomial algorithm for minimum-cost circulations.  A particularly important technique in this paper was \emph{variable fixing}: by solving an approximate version of the LP with rounded costs, one can deduce that a certain variable is at the lower or upper capacity bound in an optimal solution.
Towards general LP, Tardos \cite{Tardos86} extended this approach  to obtain a strongly polynomial algorithm for `combinatorial LPs'. More precisely, for \ref{LP_primal_dual} with an integer constraint matrix $\mA\in \mathbb{Z}^{m\times n}$, this algorithm runs in $\poly(n,\log\Delta_{\mA})$ iterations, where $\Delta_{\mA}$ is the maximum absolute value of a subdeterminant of $\mA$. The running time is independent of $b$ and $c$. In particular, this bound is strongly polynomial if all entries of $\mA$ are at most $\poly(n)$, such as for multicommodity flows and other combinatorial problems.

\paragraph{Strongly polynomial algorithms from interior point methods.}
Interior point methods (IPMs) give the fastest current weakly polynomial algorithms for general LPs, see \cite{CLS19,vdb20,br2020solving,Jiang2021}, and for special classes such as minimum-cost circulations \cite{chen2022maximum,brand2023deterministic} and multicommodity flows \cite{brand2023faster}. Somewhat surprisingly, they are also a potent approach in the context of strongly polynomial computability. In recent years, IPMs have emerged as a unifying tool in this context.

The algorithms discussed next  fall into the class of primal-dual path-following algorithms. A key concept here is the \emph{central path}, the algebraic curve formed by  minimizers of $\pr{c}{x}-\mu \sum_{i=1}^n \log(x_i)$ over the domain $\{x : \mA x = b\}$ for $\mu>0$ (see Section~\ref{sec:CP} for more details).
As $\mu\to 0$, the limit of the central path is an optimal solution.
 Path-following methods maintain iterates in a certain neighborhood of the central path while geometrically decreasing $\mu$ and thus the optimality gap. The logarithmic barrier function above can be replaced by more general barrier functions. %

 The main progress measure in IPMs is the decrease in optimality gap and thus these methods are inherently weakly polynomial; finding an exact optimal solution typically requires an additional rounding step. In the influential paper \cite{Vavasis1996}, Vavasis and Ye introduced
 \emph{Layered Least Squares (LLS) IPMs} that terminate with an exact optimal solution in
 $\poly(n,\log\bar\chi_{\mA})$ arithmetic operations, where $\bar\chi_{\mA}$ is the Dikin--Stewart--Todd condition number of the  matrix $\mA$. Remarkably, the number of arithmetic operations is independent of the vectors $b$ and $c$.
  For integer matrices, $\bar\chi_{\mA}=O(n\Delta_{\mA})$, so this strengthens Tardos's result. We note that a similar dependence, obtained via a black-box variable-fixing approach extending Tardos's work, was established in~\cite{DadushNV20}. %

This led to a line of research on improved combinatorial IPMs \cite{MMT98,Monteiro2003,MonteiroT05,LMT09}. The paper by Dadush, Huiberts, Natura, and V\'egh \cite{DadushHNV20} revealed that $\bar\chi_{\mA}$ is closely related to the \emph{circuit imbalance measure} $\kappa_{\mA}$ that bounds the maximum absolute value of ratios of two nonzero entries of an elementary vector in the kernel of $\mA$.
Moreover, they obtained an LLS algorithm invariant under column rescaling, thus improving the dependence to the best $\kappa^*_{\mA}$ value achievable under column rescalings.

The above results may raise hopes of finding a strongly polynomial IPM. However, the  papers by Allamigeon, Benchimol, Gaubert, and Joswig~\cite{allamigeon2018}, see also \cite{AGV22,Zong2023},
yield a surprising negative answer. By analyzing the tropical limits of linear programs, these papers exhibit parametric families of LPs such that for suitably large parameter values, no path-following method can be strongly polynomial. This was first shown for the standard logarithmic barrier \cite{allamigeon2018}, and later for arbitrary self-concordant barriers \cite{AGV22}.

\paragraph{IPMs matching straight line complexity.}
The lower bounds in \cite{allamigeon2018,AGV22} are ultimately based on the following insight. The trajectory of any path-following IPM is a piecewise linear curve in the neighborhood of the central path; the number of pieces corresponds to the number of iterations. Thus, a lower bound on the number of linear segments of \emph{any} piecewise linear curve in the neighborhood provides a lower bound on the number of iterations. The above-mentioned papers exhibit families of LPs with exponential lower bounds.

The recent result by Allamigeon, Dadush, Loho, Natura, and V\'egh~\cite{ADLNV2022} complements these negative results by a positive algorithmic bound. Namely, their \emph{Subspace Layered Least Squares (SLLS) IPM} matches the straight line complexity lower bound within a strongly polynomial $\tilde O(n^{1.5})$ factor; their  result is stated formally in \Cref{thm:SLC-bound}.
At a high level, the algorithm exploits the structural property that whenever there is a long straight line segment in the neighborhood of the central path, the variables exhibit a `polarized'
behavior: There exists a partition $[n]=(B,N)$ such that primal variables barely change on $B$ and scale roughly with the gap on $N$, while the reverse holds for the dual variables. The SLLS IPM can detect such a partition and uses a step that exhibits this behavior to reach a point near the end of such a segment in $\tilde O(n^{1.5})$ steps. Very recently, \cite{Dadush2026} improved the iteration complexity of the above algorithm, by using a strongly polynomial implementation of the trust region steps introduced
in \cite{LMT09}. In particular, they showed that the iteration complexity of the Trust Region IPM is optimal within a constant factor in the $\ell_2$-neighborhood.

The SLLS IPM in \cite{ADLNV2022} provides a unifying framework for strongly polynomially solvable classes of LPs. One can show that the straight line complexity is bounded polynomially by $n$ and $\log(\kappa_{\mA}^*)$, the optimized circuit imbalance measure, which was the most general previously known parametrization. Another example is the class of discounted Markov Decision Processes \cite{ye2005new}, and more generally, LPs where the nonzero entries of basic solutions fall into strongly polynomially bounded intervals \cite{chubanov2015polynomial}.
It is also the key tool for settling the strongly polynomial complexity of two-variable-per-inequality LPs, as discussed next.

\paragraph{Two-variable-per-inequality LPs.}
 We say that the dual program in \ref{LP_primal_dual} is a \emph{two-variable-per-inequality (2VPI)} linear program if every row of $\mA^\top$ includes at most two nonzero entries. In such a case, we refer to the pair of LPs as a 2VPI primal-dual pair.

 As already mentioned, one of the first strongly polynomial algorithms in the literature was given by Megiddo \cite{Megiddo83} for the 2VPI feasibility problem. As shown in \cite{Hochbaum2004}, 2VPI primal-dual pairs can be strongly polynomially reduced to solving the \emph{minimum-cost generalized flow} problem, the classical network flow model where the flow may change while traversing the arcs according to gain factors. The strongly polynomial algorithms for generalized flow maximization \cite{Vegh17,OV17} thus imply a strongly polynomial algorithm for finding primal-feasible solutions of LPs with at most two nonzeros per column. However, for the minimum-cost generalized flow problem, and for 2VPI primal-dual pairs, this was only settled recently in the conference paper by the authors \cite{DKNOV_STOC}:

 \begin{theorem}\label{thm:main-main}
    There is a deterministic strongly polynomial algorithm for the minimum-cost generalized flow problem, and for two-variable-per-inequality primal-dual pairs.
\end{theorem}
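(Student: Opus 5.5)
By \cite{Hochbaum2004}, a 2VPI primal-dual pair reduces strongly polynomially to minimum-cost generalized flow. It therefore suffices to give the algorithm for one of the two problems in the form \ref{LP_primal_dual} where every row of $\mA^\top$ has at most two nonzeros. The plan combines three ingredients. The first is the SLLS interior point method of \cite{ADLNV2022}, whose iteration count is bounded by $\tilde O(n^{1.5})$ times the straight line complexity $\SLC$ of the instance (\Cref{thm:SLC-bound}). The second is the strongly polynomial bound on $\SLC$ for 2VPI instances from the companion paper \cite{DKNOV_mincost}, which I take as given. The third, and the content of this manuscript, is an initialization and bit-complexity framework that makes this combination a genuinely strongly polynomial algorithm.

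The steps are as follows.

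\textbf{Step 1 (initialization without changing geometry).} Standard big-$M$ or self-dual embeddings need numerical estimates or alter the SLC. I would instead use a multistage scheme. First solve a primal feasibility problem with an auxiliary objective (the \PMax-type subroutine, maximizing support), then a dual one (\DMax), then the actual LP. Each stage is again a 2VPI-structured system, so the companion SLC bound applies to it. Each stage is started from a point near its central path, and that point is built in strongly polynomial time from the previous stage's optimal output, for instance by perturbing along the maximal-support solution already found.

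\textbf{Step 2 (running each stage).} Apply the SLLS IPM to each stage. By \Cref{thm:SLC-bound} and the 2VPI SLC bound, each stage takes $\poly(n)$ iterations. Each iteration is a least-squares computation costing $\poly(n)$ arithmetic operations.

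\textbf{Step 3 (bit complexity).} Show that every iterate can be rounded so that its encoding size stays polynomial in the input size, while it remains in a slightly larger neighbourhood of the central path. The intended mechanism is to write the iterate, up to negligible error, as a convex combination of a polynomial number of vertices and extreme rays, with coefficients of controlled bit length. Vertices have polynomial size by Cramer's rule. The coefficients can be chosen as powers of $2$ times simple rationals tied to $\mu$, so the numbers never grow across iterations. A Carathéodory-type reduction, performed combinatorially, keeps the number of terms at most $n+1$.

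\textbf{Step 4 (termination).} When $\mu$ is small enough, read off the optimal partition and recover exact optimal primal and dual solutions by solving linear systems. If a stage reveals infeasibility or unboundedness, report it.

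I expect Step 3 to be the main obstacle. It is not obvious that rounding can preserve both centrality and low complexity, given that $\mu$ may need to go down to values with exponentially many bits relative to $n$. My first attempt would represent $\mu$ implicitly through exponents, and bound the scaled coordinates $x_i s_i/\mu$, rather than the raw values, via the combinatorial representation.
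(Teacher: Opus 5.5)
Your overall architecture matches how the paper obtains \Cref{thm:main-main}. It combines \Cref{thm:main-1} (multistage initialization plus PSPACE rounding) with the companion SLC bound for the four systems \eqref{sys:1}, \eqref{sys:2}, \eqref{sys:3}, \eqref{sys:4}, which are again generalized-flow instances.

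\textbf{Step 1 cannot be started.} A primal-feasibility LP such as \eqref{sys:4} is strictly primal feasible. A strictly feasible dual, however, needs $y$ with $\0_n<-\mA^\top y<\1_n$. Such $y$ exists only if $\mA^\top y+s=\0_n$, $s\ge\0_n$ has a full-support solution. Symmetrically, \eqref{sys:3} needs some $x>\0_n$ with $\mA x=\0_m$. So in general there is no point of $\cN(\beta)$ from which to start your first stage. A boxed big-$M$ variant would instead need a bound $K$ on some feasible solution, which is not available without numerical estimates.

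The missing idea is a preliminary conic stage on \eqref{sys:2}, which is exactly centered at the all-ones point and needs no parameters.
\begin{itemize}
\item Its maximum-support partition $(\Pc,\Dc)$ tells you to delete $\Dc$ for dual feasibility and to project out $\Pc$ for primal feasibility. After this, both systems are strictly feasible.
\item Its analytic-center certificates $(v\out,w\out)$ from \Cref{thm:SLC-bound} give centered starting points, e.g.\ $y=-\mu u^\conic$, $s_{\Pc}=c_{\Pc}+\mu v^\conic_{\Pc}$.
\item A maximum-support solution alone gives an interior point but no matching dual, so it does not yield centrality. The alternative is to compute big-$M$ parameters from it as in \Cref{lem:big-M}.
\end{itemize}
Likewise, Stage~\rom{3} runs on the system with $J$ deleted and $I$ projected out. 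It is initialized from the outputs and certificates of \emph{both} feasibility subroutines, not only from the last one. \Cref{lem:project-delete} is what transfers the SLC bound to these reduced systems.

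\textbf{The Step 3 mechanism fails.} Step~3 is the other main contribution, and you leave it open. Rounding the Minkowski--Weyl multipliers of $(x,s)$ over $\Primal\times\Dual$ controls only absolute errors. Coordinates of the iterate can be tiny compared with the vertex and ray terms that produce them. The decomposition can also be highly unstable, with every vertex pair having gap $\ll\mu$ or $\gg\mu$. Your powers-of-two idea works only when some vertex-pair gap is within a $\poly(n)$ factor of $\mu$; this is \Cref{lem:anchoring}.

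The case you are missing is handled in \Cref{lem:round-gap}. When all vertex-pair gaps are far from $\mu$, the segment from $(x,s)$ towards the low-gap part $(x^-,s^-)$ of the decomposition stays in $\cN(2\beta)$ almost to its end. So you may jump to a point with the well-encoded gap $\mu^\star=\frac{9n^2}{\beta}\gap(v^{(i)},u^{(i)})$, taken over the low-gap vertex pair of largest gap. If that gap is $0$, you jump instead to an exact optimum with certificates.

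Only then does one round. \Cref{lem:approximating-analytic-center} decomposes inside the polytope of primal--dual pairs with gap at most $\mu^\star$, whose vertices have small encoding because $\mu^\star$ does. By \Cref{lem:near-monotonicity}, every point there is coordinatewise at most $3n$ times the iterate. Hence rounding the multipliers to multiples of $1/\poly(n)$ keeps the point in $\cN(3\beta)$.

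You would also need to check two things: that interleaving rounding and corrector steps preserves the SLLS iteration bound, and that it preserves the certificates used by later stages. Finally, drop Step~4. Waiting until $\mu$ is small enough to read off the partition is the weakly polynomial route, whereas \Cref{thm:SLC-bound} already terminates with exact optima.
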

A key ingredient in showing \Cref{thm:main-main} is proving strongly polynomial straight-line complexity bounds for minimum-cost generalized flow problems; these bounds are described in the companion paper \cite{DKNOV_mincost}. However, to obtain the claimed strongly polynomial bound, there are  significant additional
challenges regarding  initialization and implementation in the Turing model. The current paper exhibits general techniques to address these challenges.

\paragraph{Summary of contributions.}
Our contributions, presented in \Cref{sec:our_contributions}, can be informally summarized as follows:
\begin{enumerate}[label=(\arabic*), ref=(\arabic*)]
\item\label{task:init} \emph{Initialization:} IPMs require a pair of strictly feasible initial primal and dual solutions near the central path, even though a strictly feasible (or even a feasible) solution may not exist. We present a careful initialization scheme that solves linear programs in three stages, and preserves the straight line complexity bounds; see \Cref{thm:main-1}.
\item\label{task:turing}  \emph{Implementation in the Turing model:}
We show that the bit-length of the computations can be controlled in a model using only basic arithmetic operations and comparisons; see \Cref{lem:main-round}.
\end{enumerate}
Initialization is always a significant issue when applying IPMs. It is usually done by embedding the LP into an auxiliary system that is strictly feasible for which a suitable starting solution can be easily found and the optimal solutions of the auxiliary LP map to the optimal solutions of the original LP. Such constructions, however, may lose the combinatorial structure of the LP, or rely on numerical estimates based on bit-complexity (see \Cref{sec:intro-initialization}). In contrast, our initialization scheme does not require any a priori numerical estimates. The overall running time in \Cref{thm:main-1} is bounded by the maximum straight line complexity of the original LP and its three simple modifications. For the min-cost generalized flow problem, each of these LPs also describes a generalized flow instance.

Our second contribution addresses a fundamental issue regarding the computational models; we formally introduce the real RAM and Turing models in \Cref{sec:comp-mod} below. While `combinatorial' algorithms such as Tardos's results  \cite{Tardos1985,Tardos86} or the 2VPI/generalized flow papers \cite{Megiddo83,Vegh17,OV17} are strongly polynomial in the Turing model of computation, the Vavasis--Ye algorithm \cite{Vavasis1996} and all subsequent work on combinatorial IPMs, including the SLLS IPM \cite{ADLNV2022}, are presented in the real RAM model. Whenever the straight line complexity is strongly polynomially bounded, the number of arithmetic operations is strongly polynomial.
 However, in the Turing model one needs to ensure that the algorithm only uses polynomial space. The above algorithms are `stepwise strongly polynomial', i.e., each iteration can be implemented in strongly polynomial time. Still, the bit-length could increase exponentially over the sequence of iterations. This notion is defined in \Cref{sec:comp-mod}, where we also discuss why simple rounding techniques are not applicable in the strongly polynomial model.

 In \Cref{lem:main-round}, we introduce a `combinatorial' strongly polynomial rounding step. This is applicable broadly for the combinatorial IPMs above, and can thus turn them, under certain conditions explained below, into strongly polynomial algorithms in the Turing model for the classes where their iteration complexities are strongly polynomial.

\subsection{Computational models}\label{sec:comp-mod}
We now turn to the discussion of the computational models.
There are multiple related, yet distinct notions of a \emph{strongly polynomial} computational model. Smale's question was posed in the \emph{Blum--Shub--Smale (BSS) real model of computation} \cite{blum1989theory}. In this model, the input can be given by arbitrary real numbers, and one step may compute a rational  function
of the previously computed quantities with real coefficients, or make comparisons between two quantities. In the  more restrictive
\emph{real RAM} model, one can perform a sequence of elementary arithmetic operations ($+,-,\times,/$) and comparisons ($\ge$) on real numbers. We say that an algorithm is \emph{polynomial in the real RAM model} if the number of elementary arithmetic operations and comparisons  is bounded polynomially in the dimension of the input; in the case of LP, this is $K=n\times m+n+m$.

We now turn to the Turing model. Consider a problem where the input is given by $K$ integers; for LP, the input $(\mA,b,c)$ is described by $K=2(n\times m+n+m)$ integers representing the rational entries. An algorithm
is  \emph{strongly polynomial in the Turing model} (see \cite{gls}), if it only performs $\mathrm{poly}(K)$  elementary arithmetic operations and comparisons as in the real RAM model; in the LP case, this means  $\mathrm{poly}({m}, n)$ arithmetic operations. Additionally,  the bit-complexity of the numbers during the computations must remain polynomially bounded in the encoding length of the input. Equivalently, the algorithm must be polynomial in the real RAM model and also be in PSPACE.

There is an ambiguity in the models regarding how divisions can be implemented; see discussion of variants in \cite[Section 1.3]{gls}. The results of this paper work with the most restricted setting: all numbers are maintained as numerator--denominator pairs, which are never reduced to lowest terms. To make this precise, we distinguish a rational number from the pair that represents it, and define sizes of numbers following~\cite{gls}.
\begin{definition}[Encoded rationals and sizes of numbers]\label{def:size}
For an integer $k\in\Z$, we define the \emph{encoding length of $k$} as $\size(k)=1+\lceil \log_2(|k|+1)\rceil$.
We represent rational numbers as ordered pairs of integers, and accordingly set
\[
  \eQ\coloneqq\Z\times\Zplusplus\, ,\qquad
  \val(p,q)\coloneqq\frac{p}{q}\, ,\qquad
  \size(p,q)\coloneqq\size(p)+\size(q)\, ,
\]
calling the elements of $\eQ$ \emph{encoded rationals}. The canonical \emph{value map} $\val\colon\eQ\to\Q$ is surjective, and $\Q$ can be seen as the quotient of $\eQ$ by the equivalence $(p,q)\sim(p',q')$ if and only if $pq'=p'q$. We regard an integer $k$ as the encoded rational $(k,1)$, note that its two sizes then differ by the additive constant $2$, and we set $\eQplusplus\coloneqq \{(p,q)\in\eQ\mid p >0\}$.
Further, for a rational vector or matrix, the size is defined as the sum of the sizes of the entries. For matrices and vectors $a_1,a_2,\ldots,a_k$ of possibly different dimensions, we let $\size(a_1,a_2,\ldots,a_k)=\sum_{i=1}^k\size(a_i)$.
Finally, for a \emph{value} $\alpha\in\Q$ we set
\[
\size(\alpha)\coloneqq\min\{\size(a)\mid a\in\eQ,\ \val(a)=\alpha\}\, ,
\]
the minimum being attained by the lowest-terms representation, and likewise entrywise for rational vectors and matrices, matching the classical notion of encoding length of a rational number.
\end{definition}

Every number produced by our algorithms is an encoded rational. For $(p,q),(p',q') \in \eQ$ an addition returns $(pq'+p'q,qq')$, a multiplication returns $(pp',qq')$, and a division $\frac{p}{q} / \frac{p'}{q'}$ returns $(pq',\,qp')$, with the sign normalized so that the second coordinate stays positive.

We identify an encoded rational with its value wherever only the value matters: for $a\in\eQ^n$ and $S\subseteq\Q^n$ we write $a\in S$ for $\val(a)\in S$, and equations, inequalities and norms of encoded objects are those of their values.

While a strongly polynomial algorithm in the Turing model implies a  polynomial algorithm in the real {RAM} model,
the converse is not necessarily the case: enforcing PSPACE may be challenging. {For example,  Gaussian elimination needs to be done carefully to keep the sizes of numbers under control, see \cite{edmonds1967systems} and \cite[Section 1.4]{gls}.}

The LLS interior point methods~\cite{Vavasis1996,MMT98,Monteiro2003,MonteiroT05,DadushHNV20,ADLNV2022} are polynomial in the real RAM model\footnote{Some of these IPMs make use of square-root computations, and hence rely on the extended real model $(+,-,\times,/,\sqrt{.})$. However, square roots can be plausibly avoided in these works. E.g.,
one can approximate $\sqrt{n}$ to high accuracy in strongly polynomial time. No square-roots are used in \cite{ADLNV2022} and in \cite{Dadush2026}.} whenever $\log(\bar\chi_{\mA})=\mathrm{poly}(n)$.
However, we are not aware of any strongly polynomial implementation of such algorithms in the Turing model. The next definition highlights the key difficulty in developing such implementations.

\paragraph{Stepwise strongly polynomial algorithms.}
By an \emph{iterative algorithm} we mean an algorithm that computes a sequence of points $z^{(0)},z^{(1)},\ldots,z^{(T)}\in\R^k$ for some $k\in\mathbb{N}$; interior point methods all have this form. By a \emph{stepwise strongly polynomial algorithm} we mean an iterative algorithm that, given a rational input $z^{(t)}\in\Q^k$, computes the next iterate $z^{(t+1)}\in\Q^k$ using a strongly polynomial algorithm in the Turing model.

Note that stepwise strong polynomiality does not require that the total number of iterations $T$ is strongly polynomially bounded. Most standard interior point methods, such as the predictor-corrector method \cite{MTY}, are stepwise strongly polynomial: computing each step direction amounts to solving a system of linear equations, where the coefficients are constructed from the input $(\mA,b,c)$ and the current iterate. Accordingly, the bit complexity of the iterates might grow by a polynomial factor in each iteration. Thus, after a superconstant number of iterations, we would not be able to guarantee that the bit complexity of the iterates remains polynomially bounded in the input length.

Consequently, even if an iterative algorithm terminates in a strongly polynomial number of iterations, and is stepwise strongly polynomial, it does \emph{not} follow that the algorithm is strongly polynomial in the Turing model.

\paragraph{The difficulty of rounding in the strongly polynomial model.}
To implement an IPM in PSPACE, a natural idea is to use additional rounding steps.
In the weakly polynomial model, i.e.,~when running time dependence on the total encoding length is allowed, the  bit complexity of the algorithms can be controlled by approximate linear system solves and rounding. Recent work by Ghadiri, Peng, and Vempala \cite{ghadiri2023bit} developed general tools that enable one to keep the bit complexity of recent fast IPMs under control.

However, these techniques are not applicable in the strongly polynomial Turing model, where the number of arithmetic operations and comparisons must be bounded independently of the encoding length. In this model one cannot even approximate $\log(x)$ to within an additive constant for large $x$.

As already mentioned, \Cref{lem:main-round}  in this paper presents a general, strongly polynomial rounding technique that can be used to convert---under certain technical conditions---any stepwise strongly polynomial interior point method that stays in the $\ell_2$-neighborhood to a PSPACE algorithm.

\subsection{The central path and the analytic center}\label{sec:CP}
We next formally introduce some concepts from the theory of interior point methods needed to define straight-line complexity and to state our results.
Given an instance of \ref{LP_primal_dual}, we let
\[
\begin{aligned}
  \Primal\coloneqq\{x \in \R^n \mid \mA x=b\, ,\, x \geq \0\}\, , \quad&\mbox{and}&\quad \, \Primalp\coloneqq\Primal\cap \Rplusplus^n\, ,\\
  \Dual\coloneqq\{s \in \R^n \mid \exists y\in\R^m\, , \, \mA^\top y+s=c\, ,\, s \geq \0\}
  \, , \quad&\mbox{and}&\quad \Dualp\coloneqq\Dual\cap \Rplusplus^n\, .
  \end{aligned}
\]
denote the primal and dual feasible regions, and the sets of strictly feasible primal and dual solutions, respectively.
For  $(x,s)\in \Primal\times \Dual$, the \emph{normalized duality gap} is
\[
\gap(x,s)\coloneqq\frac{\pr{x}{s}}{n}\, .
\]
By complementary slackness, $(x,s)\in \Primal\times \Dual$ is a pair of primal and dual optimal solutions if and only if $\gap(x,s)=0$.

Given an instance of \ref{LP_primal_dual}, if $\Primalp,\Dualp\neq \emptyset$, then for every $\mu>0$ there exists a unique point $(x(\mu),s(\mu))\in \Primal\times\Dual$ with $x_i(\mu)s_i(\mu)=\mu$ for every $i\in [n]$; this is called the \emph{central path point at $\mu$} and is denoted by $(x\cp(\mu),s\cp(\mu))$. The \emph{central path} is the algebraic curve formed by the points $(x(\mu),s(\mu))$ for $\mu\in (0,\infty)$. The limit of the central path is a primal-dual optimal solution pair $(x^\star,s^\star)\in \Primal\times\Dual$, such that $B^\star=\supp(x^\star)$ and $N^\star=\supp(s^\star)$ form a partition of $[n]$.

Interior point methods maintain iterates in some neighborhood of the central path. We define the
 $\ell_2$-\emph{neighborhood} of the central path for a value $\beta \in [0,1)$ as
\begin{equation} \label{eq:l2_neighborhood}
\cN(\beta) \coloneqq \left\{(x,s) \in \Primal_{>0} \times \Dual_{>0} : \norm{\frac{x \circ s}{\gap(x,s)} - \1_n}_2 \leq \beta \right\}\, ,
\end{equation}
where $x\circ s=(x_1s_1,x_2s_2,\ldots,x_ns_n)$ denotes the Hadamard product of the vectors $x$ and $s$.
We let $\clN(\beta)$ denote the topological closure of the $\ell_2$ neighborhood. This
corresponds to the part of the neighborhood containing the limit optimal
solutions on the central path.
For simplicity, throughout the paper, $\beta$ is a parameter fixed by the algorithm, with $1/\poly(n)\le\beta\le 1/16$ and $\size(\beta)=O(\log n)$ in the sense of \Cref{def:size}. In particular, $\log(1/\beta)=O(\log n)$, and $\size(\beta)$ is absorbed in all $\poly(n)\size(\mA,b,c)$ bounds below. The lower bound on $\beta$ comes from the interior point method, whose per-iteration progress depends on $\beta$.

If $\Primalp$ is nonempty and bounded, its
\emph{analytic center}\footnote{We are slightly abusing notation here, in that the analytic center depends on the system $\mA x=b, x \geq 0$ and not just the feasible set.} is
\[
\tilde x\coloneqq \argmax_{x\in \Primalp}\sum_{i=1}^n \log x_i\, .
\]
By Lagrangian duality, there exists a unique $\tilde v\in \im(\mA^\top)$ such that $\tilde x\circ \tilde v=\1_n$. We define the analytic center of $\Dualp$ analogously as the maximizer of $\sum_{i=1}^n \log s_i$ over $\Dualp$. This is equivalent to the existence of $\tilde w\in \ker(\mA)$ with $\tilde s\circ\tilde w=\1_n$. We note that if both $\Primalp$ and $\Dualp$ are nonempty, one of them   is always unbounded; hence, the analytic center may only exist for at most one of them.

It is well known that the central path limit point $(x^\star,s^\star)$ coincides with the analytic centers of the primal and dual optimal facets, see, e.g., \cite[Theorem I.30]{roos2005interior}.
Namely, $x^\star$ is the analytic center of $\{x_{B^\star}\mid x\in \Primal\, , \supp(x)\subseteq B^\star\}$, and $s^\star$ is the analytic center of $\{s_{N^\star}\mid s\in \Dual\, , \supp(s)\subseteq N^\star\}$.

\subsection{Straight line complexity and the Subspace Layered Least Squares Interior Point Method}\label{sec:LLS-intro}
In this section, we introduce the concept of straight-line complexity, and state the main result on the SLLS IPM from \cite{ADLNV2022}.
Assume \ref{LP_primal_dual} is feasible and bounded with optimum value $v^\star$.
Given $g\ge 0$, we denote by
\begin{equation}\label{eq:sublevel-sets}
\begin{aligned}
\Primal_g & \coloneqq \{ x \in \Primal\mid \pr{c}{x} \leq v^\star + g \} \, , \\
\Dual_g & \coloneqq \{ s \in \Dual \mid \exists y\in\R^m \text{ with } \mA^\top y+s=c \text{ and } \pr{b}{y} \ge v^\star - g \}
\end{aligned}
\end{equation}
the feasible sublevel sets.
They correspond to the sets of the primal and dual feasible points $(x, s)$ with objective value within $g$ of the optimum $v^\star$, respectively.
\begin{definition}[Max central path]
{Assume $\Primal\neq\emptyset$ and $\Dual\neq\emptyset$.}
The \emph{max central path} is defined as the parametric curve $g\mapsto \mz(g) \coloneqq (\mx(g), \ms(g))\in (\Rplus\cup\{\infty\})^{2n}$, where for $g\ge 0$,
\begin{equation}\label{eq:max-cp-def}\mx_i(g)\coloneqq \sup\{x_i\mid x\in \Primal_g\} \quad \mbox{and}\quad \ms_i(g)\coloneqq \sup\{s_i\mid s\in \Dual_g\}\, ,\qquad \forall i\in [n]\, ,\end{equation}
\end{definition}
The max central path can be seen as a combinatorial proxy to the central path. In particular, for $g=n\mu$, $x\cp(\mu)\in \Primal_{g}$ and $s\cp(\mu)\in \Dual_{g}$. %
One can show the following bounds. Note that the existence of the central path requires strict feasibility of the primal and dual programs, i.e., $\Primalp,\Dualp\neq\emptyset$, whereas the max central path only requires feasibility on both sides. In case of strict feasibility, it also follows that both $\mx(g)$ and $\ms(g)$ are finite valued.
\begin{lemma}[{\cite[Lemma 1.2]{ADLNV2022}}]
If both primal and dual programs in  \ref{LP_primal_dual} are strictly feasible, then
for every $\mu>0$ and $g = n\mu$, we have
\[
 \frac{\mx(g)}{2n}\le x\cp(\mu)\le \mx(g)\, \quad\mbox{and}\quad \frac{\ms(g)}{2n}\le s\cp(\mu)\le \ms(g)\, .
 \]
\end{lemma}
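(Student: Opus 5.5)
The upper bounds are immediate from the observation recorded after the definition of the max central path. Since $x\cp(\mu)\in\Primal$ and $s\cp(\mu)\in\Dual$ with $\pr{x\cp(\mu)}{s\cp(\mu)}=n\mu=g$, weak duality gives $\pr{c}{x\cp(\mu)}-v^\star\le g$ and $v^\star-\pr{b}{y\cp(\mu)}\le g$. Hence $x\cp(\mu)\in\Primal_g$ and $s\cp(\mu)\in\Dual_g$, and taking coordinatewise suprema yields $x\cp(\mu)\le\mx(g)$ and $s\cp(\mu)\le\ms(g)$.

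For the lower bound on the primal side, fix $i\in[n]$ and any $x\in\Primal_g$; the aim is $x_i\le 2n\,x\cp_i(\mu)$. Write $(x',s')=(x\cp(\mu),s\cp(\mu))$ and let $(y',s')$ be the corresponding dual solution. The key identity is the orthogonality $\pr{x-x'}{s-s'}=0$ for any primal $x$ and dual $s$, since $x-x'\in\ker(\mA)$ and $s-s'\in\im(\mA^\top)$. I would pair $x$ with the dual point $s'$ itself and use the gap decomposition directly:
\[
\pr{x}{s'}=\pr{c}{x}-\pr{b}{y'}=(\pr{c}{x}-v^\star)+(v^\star-\pr{b}{y'})\le g+g=2n\mu .
\]
Since all terms $x_js'_j$ are nonnegative, this gives $x_i s'_i\le 2n\mu$. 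Using $x'_is'_i=\mu$, we get $x_i\le 2n\mu/s'_i=2n\,x'_i$. Taking the supremum over $x\in\Primal_g$ gives $\mx_i(g)\le 2n\,x\cp_i(\mu)$.

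The dual lower bound is symmetric: for $s\in\Dual_g$ with certificate $y$, we have $\pr{x'}{s}=\pr{c}{x'}-\pr{b}{y}\le 2g$, so $s_i\le 2n\mu/x'_i=2n\,s'_i$.

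There is no real obstacle here. The only point needing care is the step bounding $v^\star-\pr{b}{y'}\le g$, which uses $\pr{c}{x'}\ge v^\star$ together with $\pr{c}{x'}-\pr{b}{y'}=\pr{x'}{s'}=g$. Finiteness of $v^\star$ follows from strict feasibility of both sides.
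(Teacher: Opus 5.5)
Your proof is correct. The paper does not prove this lemma itself (it cites \cite[Lemma 1.2]{ADLNV2022}), but your argument matches the paper's own tools: the upper bound is the paper's observation that $x\cp(\mu)\in\Primal_g$ and $s\cp(\mu)\in\Dual_g$, and your lower-bound computation is the near-monotonicity bound of \Cref{lem:near-monotonicity} applied to the pair $(x,s^\star)$ (resp.\ $(x^\star,s)$), whose normalized gap is at most $g/n=\mu$.
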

For each $i\in [n]$, the function $g\mapsto \mx_i(g)$ is a piecewise-linear concave function. It
corresponds to a trajectory of the shadow simplex algorithm that interpolates between the objective functions $\pr{c}{x}$ and $-x_i$. The breakpoints correspond to basic feasible solutions, and therefore the number of linear pieces is at most the number of vertices, that is, at most $2^n$; see \cite[Section 4]{ADLNV2022}.
We will use the following definition.
\begin{definition}\label{def:slc-alternative}
Let $f\colon \Rplus\to\Rplus$ be a function and $\slceta\in [0,1]$.
The \emph{straight line complexity} of $f$ with respect to $\slceta$, denoted $\SLC_{\slceta}(f)$, is the infimum of the numbers of pieces among  continuous piecewise-linear functions $h\colon \Rplus\to \Rplus$ such that $\slceta f \leq h \leq f$.\footnote{We remark that our convention on the parameter $\slceta$ differs from \cite{ADLNV2022}, which uses $(1-\theta)f \leq h \leq f$ instead.} For the function $f(x)=\infty$ for all $x\in\Rplus$, we define $\SLC_{\slceta}(f)=1$.
\end{definition}

We will refer to $\eta$ as the \emph{SLC} parameter.
The number of iterations taken by the \emph{Subspace Layered Least Squares (SLLS) IPM} in \cite{ADLNV2022} can be bounded by the sum of straight line complexities of each coordinate of the (primal) max central path.
We now state the guarantees of SLLS IPM in~\cite{ADLNV2022}.

\begin{theorem}[{\cite[Theorem 1.4]{ADLNV2022}}]\label{thm:SLC-bound}
There is an interior-point method that, given an instance of \ref{LP_primal_dual} and a starting solution $(x, s) \in\cN(\beta)$, $0<\beta\leq1/6$, finds a pair of primal and dual optimal solutions $(x\out, s\out) \in\Primal\times \Dual$ in
\begin{equation}
  O\left(\frac{\sqrt{n}}{\beta}\min_{\slceta\in(0,1]}\log\left(\frac{n}{\beta\slceta}\right) \sum_{i = 1}^n \SLC_{\slceta}(\mx_i)\right)
\end{equation}
many iterations.
Let $B=\supp(x\out)$ and $N=\supp(s\out)$. Then, $B\cup N=[n]$, and
the algorithm additionally outputs $(v\out,w\out) \in \R^{2n}$
satisfying
\begin{enumerate}[label=(\roman*)]
\item $v\out \in \im(\mA^\top), v\out_B > \0_B$ and $\mA w\out = \0_m$, $w\out_N > \0_N$, and
\item $\norm{(x\out_B \circ v\out_B,s\out_N\circ w\out_N)-\1_n}_2 \leq \beta$.
\end{enumerate}
The algorithm can be implemented in the real RAM model, and is stepwise strongly polynomial. Further, $(v\out,w\out)$ is obtained by a strongly polynomial computation from a step direction computed in the algorithm. Every iterate stays in $\cN(\beta)$, and the normalized duality gap contracts by a factor of at most $1-\beta/(2\sqrt{n})$ in every iteration.
\end{theorem}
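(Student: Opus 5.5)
This statement is quoted from \cite[Theorem 1.4]{ADLNV2022}, so the plan is to reconstruct the structure of that argument rather than build a new one. The algorithm alternates two kinds of steps from the current iterate $(x,s)\in\cN(\beta)$. The first is a standard predictor--corrector (affine scaling) step, which reduces the normalized gap $\gap$ by a factor of at least $1-\beta/(2\sqrt n)$. The second is a \emph{subspace layered least squares} step, in which one detects a partition $[n]=(B,N)$ and moves along a direction that keeps $x_B$ and $s_N$ essentially fixed while $x_N$ and $s_B$ scale with the gap. Each such step requires solving weighted least-squares systems whose coefficients are formed from $(\mA,b,c)$ and the current iterate. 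This is what gives the real RAM and stepwise strongly polynomial claims, and the neighborhood invariant follows from the usual corrector analysis.

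For the iteration bound, I would fix $\slceta$ and, for each $i$, a piecewise-linear $h_i$ with $\slceta\mx_i\le h_i\le\mx_i$ and $\SLC_{\slceta}(\mx_i)$ pieces. By the Lemma comparing $x\cp(\mu)$ with $\mx(n\mu)$, on the central path each $x_i$ is tracked within a factor $2n/\slceta$ by $h_i$. The same holds for iterates in $\cN(\beta)$, at the cost of an extra $O(1)$ factor.

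Partition the gap axis into intervals on which every $h_i$ is linear; there are at most $\sum_i\SLC_{\slceta}(\mx_i)$ of them. On one such interval, each coordinate is roughly $\alpha_i+\gamma_i g$. The key structural claim is that within $O(\frac{\sqrt n}{\beta}\log\frac{n}{\beta\slceta})$ iterations, the algorithm either leaves the interval or reaches a regime where each coordinate is dominated by its constant term (its index goes to $B$) or by its linear term (its index goes to $N$). In that polarized regime the layered step jumps to the end of the interval, or terminates, in $O(\sqrt n/\beta)$ further steps. I expect this to be the \textbf{main obstacle}: one must show that the layered least-squares step is at least as good as the best polarized step, using a potential argument or a comparison with the max central path. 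The $\log(n/(\beta\slceta))$ factor comes from the number of ordinary steps needed to shrink $\gap$ until the ratios $\alpha_i/(\gamma_i g)$ separate beyond the approximation factor $2n/\slceta$. Taking the minimum over $\slceta$ then gives the stated bound.

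For the output, the algorithm terminates when the layered step lands exactly at $\gap=0$ with supports $B$ and $N$. The final step direction restricted to $B$ and $N$ gives dual certificates. I would set $v\out$ to be the component of the step in $\im(\mA^\top)$ normalized on $B$, and $w\out$ the component in $\ker(\mA)$ on $N$; rescaling by the last gap value shows that these are positive on $B$ and $N$. Then $\norm{(x\out_B\circ v\out_B,s\out_N\circ w\out_N)-\1_n}_2\le\beta$ is inherited from the neighborhood condition of the last iterate, since the limit rescaled products equal $x_i s_i/\gap$ along the final linear segment.
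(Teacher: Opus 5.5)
\paragraph{Main gap: the iteration bound is not proved.} The paper does not reprove this result. It imports it from \cite[Theorem 1.4]{ADLNV2022}, and in the proof of \Cref{thm:main-1} it only summarizes that argument. The central path is cut into $O\bigl(\sum_i\SLC_{\slceta}(\mx_i)\bigr)$ $\slceta$-polarized segments, and the key ingredient is \cite[Lemma 7.5]{ADLNV2022}. That lemma says: for $D=C_{\rm it}\frac{\sqrt n}{\beta}\ln\frac{n}{\slceta\beta}$ and iterates $z^i,z^j$ with $j\ge i+D$, either $z^j$ has passed the end of the polarized segment of $z^i$, or the lifting operator of the subspace LLS step on that segment has lost a large singular value. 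A charging argument \cite[Section 8.2]{ADLNV2022} then converts this into the stated bound. Your proposal leaves exactly this step open (your ``main obstacle''). So it is an outline, not a proof: the heart of the theorem is that the layered step makes the required long progress, and nothing in your sketch establishes it.

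\paragraph{The mechanism you sketch fails as stated.}
\begin{itemize}
\item \emph{Intervals are not polarized.} An interval on which every $h_i$ is linear is not a polarized segment. Suppose $h_i(g)=\alpha_i+\gamma_i g$ with $\alpha_i,\gamma_i>0$ and $\alpha_i/\gamma_i$ well inside the interval. Then coordinate $i$ is in $N$ for $g\gg\alpha_i/\gamma_i$ and in $B$ for $g\ll\alpha_i/\gamma_i$. A step aimed at the current partition can therefore reach at best the vicinity of the next crossover, not the end of the interval. This part is repairable by also cutting at crossovers, which keeps the count at $O(\sum_i\SLC_{\slceta}(\mx_i))$.
\item \emph{Coordinatewise closeness does not give long steps.} This is the more serious issue. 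The central path being within a factor $2n/\slceta$ of a constant or linear function of $g$ in each coordinate does not by itself make a layered least-squares step cross the stretch in $O(\sqrt n/\beta)$ steps. The classical analysis of layered steps goes through $\bar\chi_{\mA}$ \cite{Vavasis1996}. That is exactly why ADLNV2022 need the subspace step and a singular-value potential, and their guarantee is only amortized: a single segment may cost several blocks of $D$ iterations.
\item \emph{Contraction in every iteration.} Alternating the two step types does not give the stated contraction in every iteration unless each iteration takes the better of the two steps.
\end{itemize}

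\paragraph{The certificates.} Let $(x,s)$ be the last iterate and $\mu=\gap(x,s)$. Your choice $v\out=(s-s\out)/\mu$, $w\out=(x-x\out)/\mu$ is the right one. By \Cref{prop:gap-formula}, along $(x,s)+\alpha(x\out-x,s\out-s)$ the gap is exactly $(1-\alpha)\mu$. The normalized products $(1-\alpha)\,x\circ s/\mu+\alpha\,(x\circ s\out+x\out\circ s)/\mu$ are affine in $\alpha$. At $\alpha=1$ they equal $x\out_B\circ v\out_B$ on $B$, $s\out_N\circ w\out_N$ on $N$, and $0$ elsewhere.

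However, the bound $\beta$ and $B\cup N=[n]$ follow only if the terminating step is accepted with this endpoint within $\beta$ of $\1_n$, that is, with the whole segment in $\clN(\beta)$. The neighborhood condition at $(x,s)$ alone is off by the factors $x\out_B/x_B$ and $s\out_N/s_N$. A single-point argument such as the one in \Cref{lem:round-gap} yields only $2\beta$.
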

The vectors $(v\out,w\out)$ certify that
the returned optimal solutions $(x\out, s\out)$ are close to the analytic centers of the
corresponding optimal faces.
The SLLS IPM requires at most a $\tilde{O}(n^{1.5})$ factor more iterations than \emph{any} path-following IPM for any self-concordant barrier function. This is because it can be shown that each $\SLC_{\slceta}(\mx_i)$ gives a lower bound on the number of piecewise-linear segments needed to traverse a corresponding wide neighborhood.
Moreover,
as noted above, $\SLC_{1}(\mx_i)\le 2^n$, thus the number of iterations is always at most singly exponential.

We note that the theorem could be equivalently written in terms of the dual straight line complexities (see \cite[Lemma 4.5]{ADLNV2022}).
Further, we note that the SLC  parameter $\eta$ is not important for the overall bound. It is not difficult to show that for $0<\eta<\eta'<1$, $\SLC_{\eta'}(f)=O(\log(1/\eta)/\log(1/\eta'))\SLC_{\eta}(f)$. %

According to Theorem~\ref{thm:SLC-bound}, analyzing the number of iterations of SLLS IPM
boils down to upper-bounding the straight-line complexities of the variables. Note that this is a purely geometric question about understanding the structure of univariate piecewise linear functions $\mx_i(g)$.

\subsection{Our contributions}\label{sec:our_contributions}
Our first main result states that the running time of solving an arbitrary instance of \ref{LP_primal_dual} can be upper bounded by the maximum straight line complexity of four LPs. These include the original LP, as well as modified systems that will be used in the multi-stage initialization process.
\begin{restatable}{theorem}{mainthm}
\label{thm:main-1}
Let $\mA\in \R^{m\times n}$, $b\in\R^m$, $c\in\R^n$, $\rank(\mA)=m$. There exists an algorithm  that either finds primal and dual optimal solutions to \ref{LP_primal_dual}, or finds a certificate of primal or dual infeasibility. The running time of the algorithm is
\[
    O\left(\poly(n)\cdot \min_{\slceta\in(0,1]} \Gamma_\slceta\log\left(\frac{n}{\slceta}\right) \right)\, ,
\]
where $\Gamma_\slceta$ is the maximum over $\SLC_\slceta(x^\m_i)$ for any variable in any of the {primal and dual feasible LPs among the} following four:
\begin{align}
\min \pr{c}{x}\, ,&\quad \mA x=b\,,&x\ge \0_n\,\tag{S-Opt}  \label{sys:1}\\
\min \pr{\1_n}{\bar x}\, ,&\quad \mA x-\mA \bar x=\0_m\,,\quad x+\hat x= \1_n\, , &x,\bar x,\hat x\ge \0_n\, \label{sys:2} \tag{S-Conic} \\
\min \pr{c}{x}\, ,&\quad \mA x=\0_m\,,\quad x+\hat x=\1_n\, ,& x,\hat x\ge \0_n\, \tag{S-Dual} \label{sys:3} \\
\min \pr{\1_n}{\bar x}\, ,&\quad \mA x-\mA \bar x=b\, ,&x,\bar x\ge \0_n\, \tag{S-Primal} \label{sys:4}
\end{align}
{Note that \eqref{sys:2}, \eqref{sys:3}, and \eqref{sys:4} are always primal and dual feasible.}
For rational input $(\mA,b,c)$, the algorithm can be implemented in PSPACE.
\end{restatable}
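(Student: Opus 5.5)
We propose a three-stage scheme in which each stage runs the SLLS IPM of \Cref{thm:SLC-bound} on one of the auxiliary systems \eqref{sys:2}, \eqref{sys:3}, \eqref{sys:4}, and finally on \eqref{sys:1}. The starting point in $\cN(\beta)$ for each stage is built in strongly polynomial time from the output $(x\out,s\out,v\out,w\out)$ of the previous stage. A separate rounding step, the one announced as \Cref{lem:main-round}, is applied after every iteration so that the bit-lengths stay polynomial.

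\textbf{Stage 1.} We solve \eqref{sys:2}. This system has an explicit, strongly polynomially computable central point, for instance $x=\bar x=\hat x=\tfrac12\1_n$ with suitable dual slacks. Its optimal face encodes the partition of $[n]$ into the coordinates that can be positive in $\ker(\mA)\cap\Rplus^n$ and those that can be positive in $\im(\mA^\top)\cap\Rplus^n$. By \Cref{thm:SLC-bound}, the output also gives near-analytic-center certificates on the supports.

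\textbf{Stage 2.} These certificates let us write an explicit near-central starting point for \eqref{sys:3} and \eqref{sys:4}. The plan is as follows:
\begin{itemize}[label={}]
\item We use the Stage 1 certificate pair $(v\out,w\out)$ as a rescaling direction.
\item We combine it with a large-$\mu$ point, which is possible because the boxes $x+\hat x=\1_n$ make the primal feasible set bounded.
\end{itemize}
Solving \eqref{sys:3} then decides whether the dual of \eqref{sys:1} is feasible: $c$ is bounded below on $\ker(\mA)\cap\{0\le x\le 1\}$ with value $0$ iff dual feasible. If it is not, we output a negative-cost ray as a certificate. Solving \eqref{sys:4} decides primal feasibility in the same way: the optimum is $0$ iff $\Primal\neq\emptyset$, and otherwise Farkas dual information gives the certificate. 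The optimal faces of these two systems identify the implicit equalities of $\Primal$ and $\Dual$, i.e.\ the sets of coordinates that are zero on all feasible points. The analytic-center certificates give points in the relative interiors of $\Primal$ and $\Dual$.

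\textbf{Stage 3.} After deleting the coordinates forced to zero, we obtain strictly feasible $x\in\Primalp$ and $s\in\Dualp$ for the reduced \eqref{sys:1}, close to the respective analytic centers. We then need a point in $\cN(\beta)$. Here we would use the standard fact that if $x\circ v\approx\1$ and $s\circ w\approx\1$, then after scaling there is a value $\mu$ at which $(x,s)$ lies in a neighborhood of the central path. I expect this step to need a short predictor/corrector phase, or a convex combination with the analytic centers using the certificate vectors, to reach $\cN(\beta)$. Running \Cref{thm:SLC-bound} on the reduced \eqref{sys:1} then gives the optimal pair. The iteration count is at most $\frac{\sqrt n}{\beta}\log(n/(\beta\slceta))\cdot n\Gamma_\slceta$ per stage, which is the claimed bound. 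We also need that deleting fixed coordinates does not increase the SLC of the remaining $\mx_i$; this holds because the max central path is unchanged on those coordinates.

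\textbf{Main obstacles.} The PSPACE claim requires controlling bit sizes along each IPM run. We would replace each iterate by a nearby point in $\cN(2\beta)$ that is written as a convex combination of vertices and extreme rays of the relevant polyhedra with bounded-size coefficients. The corrector step then returns it to $\cN(\beta)$ without losing more than a constant factor in gap progress. We expect this rounding, which must be done strongly polynomially without numerical logarithms, to be the main difficulty. The second hardest point is building exactly centered starting points between stages using only the outputs $(v\out,w\out)$.
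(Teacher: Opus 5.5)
\textbf{Gap 1: the Stage~2 systems have no interior point.} Your three-stage architecture is the right one. Stage~1 is fine: \eqref{sys:2} has the exactly central point you describe. Stage~2, however, cannot be run as proposed, because \eqref{sys:3} and \eqref{sys:4} in general have no point in $\cN(\beta)$ at all. Every $x\ge\0_n$ with $\mA x=\0_m$ is orthogonal to $s^\conic$ and hence vanishes on $\Dc$, so the primal of \eqref{sys:3} has empty interior whenever $\Dc\neq\emptyset$. Symmetrically, every dual slack $s=-\mA^\top y\ge\0_n$ of \eqref{sys:4} vanishes on $\Pc$. No combination of certificates with a large-$\mu$ point can fix this.

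The missing step is a reduction:
\begin{itemize}
\item delete $x_{\Dc},\hat x_{\Dc}$ from \eqref{sys:3};
\item \emph{project out} $x_{\Pc}$ from \eqref{sys:4} via $\mPi=\mI_m-\mA_{\Pc}\mA_{\Pc}^+$, after which the columns of $\bar x_{\Pc}$ vanish.
\end{itemize}
Your idea of adding $\mu$ times a Stage~1 certificate is exactly right, but only for these reduced systems. The first is started from $(x^\conic_{\Pc},\hat x^\conic_{\Pc})$ against $(c_{\Pc}+\mu v^\conic_{\Pc},\mu\hat v^\conic_{\Pc})$. The second is started from $(d_{\Dc}+\mu w^\conic_{\Dc},\mu\bar w^\conic_{\Dc})$ against the conic dual.

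\textbf{Gap 2: Stage~3 needs projection, not deletion.} Deleting coordinates handles $J=\Df\setminus\Pf$. On $I=\Pf\setminus\Df$, however, it is the dual slack that is forced to zero, while $x_I$ is effectively free, since it can be shifted along $\tilde x$. Deleting these columns can destroy primal feasibility. They must be projected out with $\mI_m-\mA_I\mA_I^+$, and the cost replaced by $s^\df_R$. Consequently your SLC argument, which only covers deletion, is insufficient. Invariance of the max central path under projection needs the separate argument of \Cref{lem:project-delete}(ii),(iii). That argument lifts projected solutions back to nonnegative ones using a conic solution that is positive on the projected coordinates.

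\textbf{Gap 3: the Stage~3 starting point.} The ``standard fact'' you invoke has an impossible hypothesis. Having $x\circ v\approx\1$ and $s\circ w\approx\1$ on all coordinates would force $v>\0$ and $w>\0$, yet $v\in\im(\mA^\top)$ and $w\in\ker(\mA)$ are orthogonal. A predictor/corrector phase started far from the central path also has no strongly polynomial iteration bound.

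The paper's construction uses that $R=P'\cup D'$ with $P'=\Pf\cap\Dc$ and $D'=\Df\cap\Pc$. It also uses that the Stage~2 certificates from the \emph{reduced} systems have complementary block supports:
\begin{itemize}
\item $v^\pf\in\im(\mA^\top)$ with $v^\pf_{\Pc}=\0$ centers $x^\pf$ on $P'$;
\item $w^\df\in\ker(\mA)$ with $w^\df_{\Dc}=\0$ centers $s^\df$ on $D'$.
\end{itemize}
Then $x_R=x^\pf_R+\mu(\0_{P'},w^\df_{D'})$ and $s_R=s^\df_R+\mu(v^\pf_{P'},\0_{D'})$, with $\mu=\|x^\pf_R\circ s^\df_R\|_1/\beta^\feas$, have no cross terms and lie in $\cN(\beta^\opt)$.

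\textbf{PSPACE part.} Invoking \Cref{lem:main-round} is legitimate, but two further points must be handled:
\begin{itemize}
\item After rounding, the certificates only hold with $3\beta$. This is why the nested parameters $\beta^\conic\ll\beta^\feas\ll\beta^\opt$ are needed.
\item \Cref{thm:SLC-bound} does not apply as a black box to the rounded iterate sequence, so its proof must be re-checked.
\end{itemize}
Your own rounding sketch would also fail as stated. The Minkowski--Weyl decomposition may involve only vertex pairs whose gaps are far from $\gap(x,s)$, so its coefficients cannot simply be rounded. The paper first anchors the gap, or else moves down towards the low-gap vertices (\Cref{lem:round-gap}). Only then does it round, inside the polytope cut out by the gap bound.
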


We highlight as our second main result  a rounding technique that is used in the PSPACE implementation.  This step is not specific to our algorithm, and can be used to convert a broad class of IPMs in the real RAM model to the Turing model. Below,
$\size(\mA,b,c)$ denotes the binary encoding length of the input, formally defined in \Cref{def:size}.

\begin{restatable}{theorem}{roundlemma}\label{lem:main-round}
Assume $\mA\in \eQ^{m\times n}$, $b\in\eQ^m$, $c\in\eQ^n$, $\rank(\mA)=m$, and
let $\beta\in(0,1/16]$.
Given $(x,s)\in {\cal N^2}(\beta)\cap \eQ^{2n}$, there exists a strongly polynomial algorithm that finds a point $(\tilde x,\tilde s)\in \clN(3\beta)\cap \eQ^{2n}$ with $\gap(\tilde x,\tilde s)\le (1+\beta/(4n^2))\gap(x,s)$ and $\size(\tilde x,\tilde s)=\poly(n)\size(\mA,b,c)$.
Moreover, if $\gap(\tilde x,\tilde s)=0$, then the algorithm also returns $(\tilde v,\tilde w)\in\eQ^{2n}$ with $\size(\tilde v,\tilde w)=\poly(n)\size(\mA,b,c)$ such that $(\tilde x,\tilde s)$ and $(\tilde v,\tilde w)$ satisfy the output conditions of \Cref{thm:SLC-bound} with $3\beta$ in place of $\beta$.
\end{restatable}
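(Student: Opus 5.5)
The approach is to replace $(x,s)$ by a nearby point assembled from vertices and extreme rays of suitable polyhedra, as the abstract suggests. Vertices and rays have polynomially bounded size by Cramer's rule. If the combination coefficients also have small size, the rounded point has size $\poly(n)\size(\mA,b,c)$. The key is that the $\ell_2$-neighborhood only constrains the products $x_is_i$ relative to $\gap$, so each $x_i$ and $s_i$ may be perturbed by a tiny multiplicative factor $1\pm\delta$ with $\delta=\beta/(Cn^2)$.

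\textbf{Step 1 (primal side).} Using strongly polynomial Gaussian elimination (Edmonds), write $x$ as
\[
x=\sum_j \lambda_j u^{(j)} + \sum_k \rho_k r^{(k)},
\]
with at most $n+1$ vertices $u^{(j)}$ of $\Primal$, rays $r^{(k)}$ of its recession cone, $\lambda_j\ge 0$, $\sum_j\lambda_j=1$ and $\rho_k\ge 0$. A Carathéodory-type reduction makes this a strongly polynomial computation. Concretely, at each step one moves along a kernel direction of the currently active columns until a new coordinate hits zero. Each such move is one linear solve on input data, so all the vertex and ray vectors stay small.

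\textbf{Step 2 (rounding the coefficients).} Only the coefficients are large. Round each nonzero $\lambda_j,\rho_k$ multiplicatively to $\tilde\lambda_j,\tilde\rho_k$ with $\tilde\lambda_j/\lambda_j\in[1-\delta,1+\delta]$, then renormalize the $\tilde\lambda$ to sum to $1$. This keeps the point in $\Primal$. Since every term is nonnegative, $\tilde x_i/x_i\in[1-3\delta,1+3\delta]$ coordinatewise.

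The difficulty is doing this rounding in strongly polynomially many operations with a small output size, when the magnitudes of the coefficients are unbounded. Approximating a huge number to relative accuracy would take $\log$-many operations; this is exactly the obstacle noted in the introduction. I would try to exploit that the relevant scale is dictated by the data: terms whose contribution is negligible in every coordinate (below a $\delta/n$ fraction of the corresponding $x_i$) are dropped. After dropping, I would argue that the surviving ratios $\lambda_j/\lambda_{j'}$ can be replaced by ratios determined by a combinatorial structure. Sorting the terms by dominance gives a layering into groups. Within a layer the ratios are polynomially bounded, so a fixed-precision rounding with $O(\log(n/\delta))$ bits works. Across layers, I would pick a representative scale derived from the gap, e.g. $\gap$ divided by $s$-values that themselves come from dual vertices. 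The dual side gets the same decomposition in $\Dual$ with vertices and rays of the dual polyhedron. I would take the scales from primal–dual vertex products, which are small-size numbers, so as to match the gap. I expect this cross-layer scale choice, achieving $\gap(\tilde x,\tilde s)\le(1+\beta/(4n^2))\gap$ and size bounds simultaneously, to be the main obstacle.

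\textbf{Step 3 (neighborhood and output).} With $\tilde x_i\tilde s_i=(1\pm O(\delta))x_is_i$, a direct computation gives $\norm{\tilde x\circ\tilde s/\gap(\tilde x,\tilde s)-\1}_2\le\beta+O(n\delta)\le 3\beta$. Dropped terms can create zeros; this is why the conclusion is only membership in the closure $\clN(3\beta)$. If the rounded gap is $0$, then $B=\supp(\tilde x)$ and $N=\supp(\tilde s)$ partition $[n]$. In that case I would compute $\tilde v\in\im(\mA^\top)$ and $\tilde w\in\ker(\mA)$ by projecting $\1/\tilde x_B$ and $\1/\tilde s_N$ onto the appropriate subspaces, again by exact Gaussian elimination on small data. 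I would then check condition (ii) with $3\beta$ using the nearness of $(\tilde x,\tilde s)$ to the analytic centers of the optimal faces inherited from $(x,s)$.
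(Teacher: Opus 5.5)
There is a genuine gap. It sits exactly at the step you flag as the main obstacle, and in fact the target of your Step~3 cannot be reached in general.

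Your plan keeps $\tilde x_i\tilde s_i=(1\pm O(\delta))x_is_i$ for every $i$, hence $\gap(\tilde x,\tilde s)=(1\pm O(\delta))\gap(x,s)$. But the input $(x,s)\in\cN(\beta)\cap\eQ^{2n}$ has no size bound. So $\mu=\gap(x,s)$ may lie far outside $[2^{-cL},2^{cL}]$ for $L=\poly(n)\size(\mA,b,c)$, while any $(\tilde x,\tilde s)$ of size $L$ has positive gap inside such a range. No choice of layers or cross-layer scales can therefore work while the gap is preserved. When $\mu$ is not comparable to any scale expressible by the data, the rounded point must have a much smaller gap, possibly zero. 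The theorem bounds $\gap(\tilde x,\tilde s)$ only from above precisely to allow this.

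Relatedly, your zero-gap branch cannot occur under your own Step~3, since preserved products never vanish. Your certificate check (\emph{nearness inherited from $(x,s)$}) also has nothing to rest on. Finally, decomposing $x$ in $\Primal$ and $s$ in $\Dual$ separately discards the structure you need: the gap then depends bilinearly on two unrelated coefficient vectors.

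The missing idea is a dichotomy based on a joint decomposition of $(x,s)$ in $\Primal\times\Dual$, with common weights $\lambda_i$ on vertex pairs $(v^{(i)},u^{(i)})$, so that the gap is linear in the weights (\Cref{prop:gap-linear}); see \Cref{lem:round-gap}. The two cases are:
\begin{itemize}
\item Some pair has $\gap(v^{(i)},u^{(i)})$ within a $\poly(n)$ factor of $\mu$. Then $O(\log n)$ operations anchor $\mu$ to a small-size $\mu^\star\in[\mu,(1+\delta)\mu]$ (\Cref{lem:anchoring}).
\item Every pair gap is $\poly(n)$-separated from $\mu$. Then the weight on high-gap pairs is at most $\delta^2/(4n)$. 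The low-gap part $(x^-,s^-)$ agrees with $(x,s)$ up to $1\pm\delta$ in $x$ on some $B$ and in $s$ on $N=[n]\setminus B$. The segment from $(x,s)$ towards $(x^-,s^-)$ stays in $\cN(2\beta)$ down to gap $O(n^2/\beta)\cdot\gap(x^-,s^-)$. One jumps to the point on it with small-size gap $\mu^\star=\frac{9n^2}{\beta}\max_{i\in I^-}\gap(v^{(i)},u^{(i)})$. If that maximum is $0$, one jumps to the optimal pair $(x^-,s^-)$, with certificates $\hat v=(s-s^-)/\mu$ and $\hat w=(x-x^-)/\mu$.
\end{itemize}

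Once a small-size $\mu^\star$ is available, your coefficient-rounding problem disappears too. Adding the linear cut expressing gap at most $\mu^\star$ turns $\Primal\times\Dual$ into a polytope $\mathcal{Q}$ with a small description, hence small vertices. Near-monotonicity of the central path (\Cref{lem:near-monotonicity}) bounds every point of $\mathcal{Q}$ coordinatewise by $3n$ times the current point. Rounding the convex coefficients additively to a grid of width $1/(4n^2RK)$ then gives relative error at most $1/K$ on the relevant coordinates, with no layering and no ray multipliers (\Cref{lem:approximating-analytic-center}). In the zero-gap case the bound $R=2n$ comes from $(\hat v,\hat w)$, and fresh certificates are obtained by least squares; this is where your projection idea fits.
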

Interestingly, the rounding process may replace the current iterate by a much better solution further down the central path. As shown in \Cref{prop:simplex}, using this rounding step already guarantees termination in at most an exponential number of iterations, thus already achieving the property highlighted in the title \emph{``Interior point methods are not worse than Simplex''} of the paper \cite{ADLNV2022}.

 \subsubsection{Initialization}
 \label{sec:intro-initialization}
 The SLLS IPM, and more generally path-following IPMs, require an initial point $(x^0,s^0)\in \Primalp\times \Dualp$ near the central path. Such a point may not even exist; in fact, the primal or dual programs in \ref{LP_primal_dual} may be infeasible.
The situation is analogous to Simplex, where Stage I is needed to find a feasible solution by solving an auxiliary LP.
 A range of initialization methods have been developed for IPMs. However, none  seems suitable for our goal
 of solving only auxiliary systems with small straight-line complexity while remaining in the strongly polynomial model.

 A common initialization technique is the  homogeneous self-dual formulation \cite{Ye1994}. However, writing the self-dual formulation of a generalized flow LP changes the combinatorial properties of the problem, and it is not clear if the straight line complexity admits a similar bound. Note also that already in the simpler case of (standard) network flows, the constraint matrix is totally unimodular, while the combined matrix does not have this property.

The following ``big-$M$ formulation'' was used in
 \cite{Vavasis1996}.  In the primal system, we consider the problem  with variables $(x,\bar x,\hat x)\in\R^{3n}$ in the form
 \begin{equation}\label{eq:big-M}
 \min \pr{c}{x} + M \pr{\1_n}{\bar x}\quad \mbox{s.t.}\quad \mA x-\mA \bar x=b\, ,\quad x+\hat x=2M\1_n\, ,\quad x,\bar x,\hat x\ge \0_n\, .\end{equation}
 Here, $\bar x$ represents a negative copy of each variable, penalized at a high cost $M$, and $\hat x$ corresponds to a slack variable for the box constraint $\0_n \le x\le 2M\1_n$. The dual is extended symmetrically, where the dual $\bar s$ of $\bar x$ represents slack variables, and the dual $\hat s$ of $\hat x$ are negative copies of $s$.

 Such a system, along with its dual, is easy to initialize for sufficiently large $M$. Moreover, the constraint matrix remains `nice', e.g., for a standard or generalized flow problem, it can still be interpreted as a capacitated  flow problem, where the $\bar x$ variables correspond to expensive reverse arcs.  In~\cite{Vavasis1996}, it is shown that the $\bar\chi$ condition number of the constraint matrix of the extended system is within a constant factor of the same condition number of the original system $\bar\chi_{\mA}$.

 As long as there exists a pair of primal and dual optimal solutions $(x^\star,s^\star)$ to \ref{LP_primal_dual} with $\|x^\star\|_\infty,\|s^\star\|_\infty<M$, these will also be optimal solutions to the extended formulation for sufficiently large $M$.
 However, finding a suitable large $M$ becomes challenging. In~\cite{Vavasis1996}, such a bound is derived based on $\bar\chi_{\mA}$. This is hard to compute in general; one could use a repeated (squared) guess of this condition number, but this would lead to a $\log\log\bar\chi_{\mA}$ running time dependence.
 Bounds on the norms of optimal solutions are routinely derived using bit-complexity arguments, see e.g.~\cite{gls}; however, this is also not possible in the strongly polynomial model.

\medskip

We present two initialization schemes. Both can
   be interpreted as multi-stage facial reduction strategies
in which carefully fixing or projecting out variables yields an equivalent LP with strictly feasible primal and dual solutions.

The first initialization scheme  in \Cref{sec:simplified-init} relies on a variant of the big-M system \eqref{eq:big-M}. In Lemma~\ref{lem:big-M}, we observe that in the case that primal and dual strictly feasible solutions to \ref{LP_primal_dual} are available,  we can compute a suitable $M$ such that the set of optimal solutions to \ref{LP_primal_dual} and \eqref{eq:big-M} coincide. Moreover, if only a primal strictly feasible solution is given (but no dual solution is available), then we can still choose $M$ such that  all primal optimal solutions to  \eqref{eq:big-M} are feasible to \ref{LP_primal_dual}, and analogously in the dual setting.

Our multi-stage initialization calls the IPM in \Cref{thm:SLC-bound} for different instantiations of \eqref{eq:big-M}, by gradually deriving bounds on the parameters. In Stage~\rom{1}, we address conic feasibility: we first use an auxiliary system to find maximum support solutions  to the conic systems $\mA x=\0_m$, $x\ge \0_n$ and $\mA^\top y+s=\0_n$, $s\ge \0_n$. The extended system, listed as \eqref{sys:2} in \Cref{thm:main-1}, is easy to initialize without any numerical bounds. Stage~\rom{2} uses these solutions to solve two systems where the goal is to find maximum support primal and dual solutions, respectively. The required parameters are computed according to \Cref{lem:big-M}, using the solutions from Stage~\rom{1}. Finally, Stage~\rom{3} uses the primal and dual maximum support solutions to construct a system for optimization. Here, we solve the system obtained by deleting variables that are 0 in every primal solution, and by projecting out variables that are 0 in every dual solution. We obtain numerical parameters from the strictly feasible primal and dual solutions.

\medskip

In Section~\ref{sec:analytic-init}, we develop a stronger, `big-$M$-free' initialization scheme to prove \Cref{thm:main-1}. Whereas in \Cref{sec:simplified-init} we only use the maximum support primal and dual optimal solutions from the output of \Cref{thm:SLC-bound}, here we also make use of the analytic center certificates $(v\out,w\out)$. We proceed in the same three stages, and Stage~\rom{1} with the conic system is the same as before. However, in  Stage~\rom{2} we can use simpler systems, and can initialize them directly using the analytic center certificates from Stage~\rom{1}. Similarly, in Stage~\rom{3}, we can directly initialize the reduced system obtained from  \ref{LP_primal_dual} after deleting and projecting out variables, using the analytic center certificates from Stage~\rom{2}.

The straight line complexity bounds stated in \Cref{thm:main-1} follow because Stage~\rom{1} uses the system \eqref{sys:2}, whereas Stage~\rom{2} uses subsystems of those used in \Cref{sec:simplified-init},
obtained by deleting and projecting out variables from the systems \eqref{sys:4} and \eqref{sys:3}. The Stage~\rom{3} system is obtained similarly from \eqref{sys:1}. In \Cref{lem:project-delete}, we show that deletion and projection of `irrelevant' variables do not increase the straight line complexity.

\subsubsection{Implementation in the Turing model}
\label{sec:impl-turing}
We now give an overview of the strongly polynomial rounding step in \Cref{lem:main-round}. Starting from an iterate $(x,s)\in \cN(\beta)$, our goal is to compute $(\tilde x,\tilde s)\in\clN(3\beta)$, possibly with a slightly worse gap, whose encoding length is polynomially bounded in the length of the input. The output may lie on the boundary of the neighborhood only if it is an exact primal--dual optimal pair; in this case, the current IPM call terminates with this solution instead of being restarted from it.

To argue about the encoding length, we `anchor' the point $(\tilde x,\tilde s)$ to vertices of the primal and dual polyhedra. In strongly polynomial time, we can write a Minkowski--Weyl decomposition of $x$ and $s$ using vertices and extreme rays. However, we cannot simply round the coefficients. In particular, it is possible that $(x,s)$ is written as a `highly unstable' convex combination of primal and dual vertices such that either $\pr{v}{u}<\pr{x}{s}/2^n$ or $\pr{v}{u}>2^n\pr{x}{s}$ for each pair of primal and dual vertices $(v,u)$. We proceed in two stages. First, in \Cref{lem:round-gap}, we try to find a value $\mu^\star\approx \pr{x}{s}/n$ such that $\mu^\star$ has small encoding length. This is easy as long as the combination contains primal and dual vertices $(v,u)$ with $\pr{x}{s}/2^n\le \pr{v}{u}\le 2^n\pr{x}{s}$. In the `highly unstable' situation as above,  it turns out that the direction from $(x,s)$ pointing towards a pair of primal and dual vertices $(v,u)$ with much better gap is a very good movement direction of the IPM. Hence, we can replace $( x, s)$ during the rounding step by a much better iterate that is also numerically more stable. In the second stage, we add a cost bound to our feasible region according to $\mu^\star$. On this bounded polytope, we can now find a  Minkowski--Weyl decomposition and simply round the coefficients. The guarantees of this rounding are based on  the near-monotonicity property of the central path.

We note that, due to the reliance on the Minkowski--Weyl decomposition, the running time of this rounding procedure can be prohibitive in practice; its role is mainly conceptual, showing that rounding in the strongly polynomial model is  possible at all. We present loose running-time bounds in \Cref{sec:mw-explicit}. Developing more efficient  rounding methods remains for future research.

\paragraph{Organization.} The rest of the paper is structured as follows. Section~\ref{sec:prelim} introduces some necessary background.
 Section~\ref{sec:simplified-init} presents the `big-$M$' initialization scheme, followed by the `parameter-free' initialization scheme in Section~\ref{sec:analytic-init};  \Cref{thm:main-1} is proved in \Cref{sec:main-proof}.
  Section~\ref{sec:bit-complexity} describes the rounding procedure and proves \Cref{lem:main-round}. %
Appendix~\ref{sec:mw-explicit} gives a self-contained description of the strongly polynomial implementation of the Minkowski--Weyl theorem.

\section{Preliminaries}\label{sec:prelim}

\paragraph{Notation.}
We let $\Rplusplus$ denote the set of positive reals, and $\Rplus$ the set of nonnegative reals; similarly for $\Z_{>0},\Z_{\ge 0},\Q_{>0}$ and $\Q_{\ge 0}$.
 For $n\in \mathbb{N}$, we let
$[n]\coloneqq\{1,2,\ldots,n\}$.
We let $\0_n,\1_n\in \R^n$ denote the all-zero and all-one vectors, respectively. For $x\in\R^n$, we let $\supp(x)\subseteq [n]$ denote its support. For $\alpha\in\R$, we let $\alpha^+=\max\{\alpha,0\}$ and $\alpha^-=\max\{-\alpha,0\}$; for a vector $x\in\R^n$, we use $x^+$ and $x^-$ coordinatewise.

We let $\ker(\mA)\subseteq \R^n$ and $\im(\mA)\subseteq\R^m$ denote the kernel and image of the matrix $\mA \in \R^{m\times n}$.
The standard inner product of the
two vectors $x,y\in \R^n$  is denoted by $\pr{x}{y}=x^\top y$. For $x,y\in \R^n$, we let $x\circ y=(x_1y_1,\ldots,x_ny_n)$ denote the Hadamard product, and for $y\in\Rplusplus^n$, we  let $x/y=(x_1/y_1,\ldots,x_n/y_n)$.

\paragraph{Pseudoinverse.} For a matrix $\mM\in\R^{m\times n}$, we let $\mM^+\in\R^{n\times m}$ denote its Moore--Penrose pseudoinverse, i.e., the unique matrix such that $\mM^+ \mM u=u$ for all $u\in\im(\mM^\top)$, and $\ker(\mM^+)=\ker(\mM^\top)$. We use the following well-known properties:
\begin{lemma}\label{lem:pseudoinverse-prop}
For a matrix $\mM\in\R^{m\times n}$, we have that $\mM^+ \in \R^{n \times m}$, $\mM\mM^+\mM=\mM$, $\mM\mM^+$ and $\mI_m-\mM\mM^+$ are the orthogonal projection matrices onto the linear spaces $\im(\mM)$ and $\ker(\mM^\top)$, respectively.
\end{lemma}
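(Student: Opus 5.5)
The plan is to work entirely from the defining properties of $\mM^+$ stated above, namely $\mM^+\mM u=u$ for all $u\in\im(\mM^\top)$ and $\ker(\mM^+)=\ker(\mM^\top)$. We combine these with the two orthogonal decompositions $\R^n=\im(\mM^\top)\oplus\ker(\mM)$ and $\R^m=\im(\mM)\oplus\ker(\mM^\top)$.

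First, we confirm that $\mM^+$ is well defined and unique, so that the lemma makes sense. The restriction of $\mM$ to $\im(\mM^\top)$ is injective, since $\im(\mM^\top)\cap\ker(\mM)=\{\0\}$. It is also surjective onto $\im(\mM)$, since $\mM x=\mM u$ where $u$ is the orthogonal projection of $x$ onto $\im(\mM^\top)$. Hence it is a linear bijection $\im(\mM^\top)\to\im(\mM)$. The first defining condition forces $\mM^+$ to equal the inverse of this bijection on $\im(\mM)$, and the second forces $\mM^+$ to vanish on $\ker(\mM^\top)$. Since these two subspaces span $\R^m$, $\mM^+\in\R^{n\times m}$ is uniquely determined, and this construction shows it exists.

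Next, we show $\mM\mM^+\mM=\mM$. Take any $x\in\R^n$ and write $x=u+k$ with $u\in\im(\mM^\top)$ and $k\in\ker(\mM)$. Then $\mM x=\mM u$, and therefore
\[
\mM\mM^+\mM x=\mM\mM^+\mM u=\mM u=\mM x\, .
\]

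Finally, we identify the projections. For $w\in\im(\mM)$, write $w=\mM x$; the previous identity gives $\mM\mM^+w=\mM x=w$. For $w\in\ker(\mM^\top)=\ker(\mM^+)$, we have $\mM\mM^+w=\0$. Thus $\mM\mM^+$ is the identity on $\im(\mM)$ and zero on its orthogonal complement $\ker(\mM^\top)$, which is precisely the orthogonal projection onto $\im(\mM)$. The complementary map $\mI_m-\mM\mM^+$ is then the orthogonal projection onto $\ker(\mM^\top)$.

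There is no real obstacle here. The only point requiring care is checking that the two defining conditions pin down $\mM^+$ on all of $\R^m$, which follows from the bijection argument in the first step.
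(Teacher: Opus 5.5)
Your proof is correct. The paper gives no argument for this lemma and simply calls the properties standard, so your derivation from the paper's own defining conditions ($\mM^+\mM u=u$ for $u\in\im(\mM^\top)$ and $\ker(\mM^+)=\ker(\mM^\top)$) supplies exactly what is being taken for granted. Your route goes through the bijection $\im(\mM^\top)\to\im(\mM)$ and the orthogonal decompositions $\R^n=\im(\mM^\top)\oplus\ker(\mM)$ and $\R^m=\im(\mM)\oplus\ker(\mM^\top)$. This is the natural route given that characterization, rather than the textbook one via the SVD or the Penrose equations.

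The only detail you leave implicit is in the existence claim. Your constructed matrix must satisfy $\ker(\mM^+)\subseteq\ker(\mM^\top)$, not only the reverse inclusion you build in. This holds because $\mM^+$ acts on $\im(\mM)$ as the inverse of the bijection, which is injective, so $\mM^+w=\0$ forces the $\im(\mM)$-component of $w$ to vanish.
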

These properties are standard. For computing $\mM^+$ and other basic linear algebra objects in the model of \Cref{sec:comp-mod}, where numbers cannot be reduced to lowest terms, we use the following lemma, proved in \Cref{sec:mw-cost}. The bounds use the na\"ive cubic bound for matrix multiplication, and we make no attempt to optimize them.
\begin{lemma}[Strongly polynomial linear algebra]\label{lem:linalg}
Let $\mM\in\eQ^{m\times n}$ with $m\le n$. Each of the following can be computed by an algorithm that is strongly polynomial in the Turing model on encoded rationals of size $\poly(n)\size(\mM)$. In particular, the output has size $\poly(n)\size(\mM)$.
\begin{enumerate}[label=(\roman*), ref=(\roman*)]\setlength{\itemsep}{0pt}
\item\label{it:linalg-basis} A basis of $\mM$, that is, $B\subseteq[n]$ such that $\mM_B$ is a maximal linearly independent set of columns of $\mM$, in $O(n^5)$ operations.
\item\label{it:linalg-inv} For $m=n$, the determinant $\det\mM$ and the adjugate $\adjt(\mM)$, and, if $\mM$ is nonsingular, the inverse $\mM^{-1}$, in $O(n^4)$ operations.
\item\label{it:linalg-pinv} The pseudoinverse $\mM^+$, in $O(n^5)$ operations.
\end{enumerate}
The same holds for $m>n$ in \ref{it:linalg-pinv}, with the roles of $m$ and $n$ exchanged, since $(\mM^\top)^+=(\mM^+)^\top$.
\end{lemma}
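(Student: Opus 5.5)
The statement to prove is \Cref{lem:linalg}. The plan is to build everything on fraction-free (Bareiss/Edmonds-style) Gaussian elimination, and to control sizes through the classical observation that every intermediate quantity is a ratio of subdeterminants of the input.

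Since encoded rationals are never reduced, the first step is to clear denominators. Let $D$ be the product of all denominators of the entries of $\mM$; it is computed in $mn$ multiplications and $\size(D)\le\size(\mM)$. Every entry of $D\mM$ is an integer. Each entry is obtained as the numerator of an encoded product, with the denominator equal to $D$ as an encoded integer and then dropped, so $\size(D\mM)=O(mn\,\size(\mM))$. All further work is done on this integer matrix $\mN=D\mM$. At the end we rescale: $\det\mM=\det\mN/D^n$, $\adjt\mM=\adjt\mN/D^{n-1}$, and $\mM^{+}=D\,\mN^{+}$, each costing one extra division per entry with controlled size.

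\textbf{Determinant, adjugate, inverse.} On $\mN$ I run Bareiss elimination, with row pivoting to find nonzero pivots. Sylvester's identity guarantees that every intermediate entry $a^{(k)}_{ij}$ equals a $(k+1)\times(k+1)$ minor of $\mN$ and that each division in the update $a^{(k)}_{ij}=(a^{(k-1)}_{kk}a^{(k-1)}_{ij}-a^{(k-1)}_{ik}a^{(k-1)}_{kj})/a^{(k-2)}_{k-1,k-1}$ is exact. I would implement the exact division as an integer division, not as an encoded-rational division, so that denominators never accumulate. Hadamard's bound then gives size $\poly(n)\size(\mM)$ for all numbers, with $O(n^3)$ operations. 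For the adjugate, I apply the same elimination to the augmented matrix $[\mN\mid \mI]$ and continue it to a Jordan form. The final entries are $\det\mN$ times entries of $\mN^{-1}$, which are $(n-1)$-minors, i.e.\ they form exactly $\adjt\mN$. The count stays within $O(n^4)$ even if each cofactor is handled separately. The inverse is then $\adjt\mM/\det\mM$.

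\textbf{Basis.} I run the same fraction-free elimination on the columns, adding a column to $B$ whenever its reduced form has a nonzero entry in a not-yet-pivoted row. All numbers are again minors of $\mN$, which gives \ref{it:linalg-basis}. A crude bound of $O(n^5)$ operations suffices if each column test recomputes the elimination.

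\textbf{Pseudoinverse.} Choose a basis $B$ of columns and then, on $\mM_B$, a set $R$ of $r$ linearly independent rows, so that $\mM=\mC\mF$ with $\mC=\mM_{B}\in\Q^{m\times r}$ of full column rank and $\mF=\mC^{+}\mM$ computed as the solution of $\mM_{R,B}\mF=\mM_{R,\cdot}$. Then
\[
\mM^{+}=\mF^\top(\mF\mF^\top)^{-1}(\mC^\top\mC)^{-1}\mC^\top ,
\]
which is obtained with a constant number of products and two inversions of $r\times r$ matrices via the previous part. The main obstacle is the size bound here, because naively composing inversions and products multiplies sizes. I would handle it by noting that $\mC^\top\mC$ and $\mF\mF^\top$ have entries of size $\poly(n)\size(\mM)$, since each is a single product of matrices with controlled entries. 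Their adjugates and determinants are then minors of matrices of polynomial size, so one more Hadamard bound applies. Since only constantly many such compositions occur, the final size remains $\poly(n)\size(\mM)$. The stated operation counts follow from the naive cubic multiplication. Finally, for $m>n$ I would apply the same construction to $\mM^\top$ and transpose the result.
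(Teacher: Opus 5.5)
Your reductions above the primitive level are sound and close to the paper's: clearing denominators, the inverse as $\adjt\mM/\det\mM$, the pseudoinverse from a full-rank factorization with constantly many products and two $r\times r$ inversions, and transposition for $m>n$. The gap is in the primitive itself. Bareiss elimination needs the exact division by the previous pivot, and you propose to ``implement the exact division as an integer division''. The model of \Cref{sec:comp-mod}, in which the lemma is stated, has no such operation. Numbers are encoded rationals that are never reduced, and the only division is $(p,q)/(p',q')=(pq',qp')$; the paper states explicitly that exact divisions are not allowed.

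If you carry out Bareiss's division as an encoded-rational division, the pivot's numerator enters the denominator. The next update squares that denominator, so the sizes of the computed pairs grow geometrically in the number of elimination steps and are exponential after $n$ steps. Sylvester's identity and Hadamard's bound control only the lowest-terms size of the \emph{values} (minors of $\mN$), not the computed representations; compare \Cref{rem:size-bounds-encoded}. Recovering the integer quotient by comparisons or binary search would take a number of operations depending on the bit-length, which is not strongly polynomial. The same problem already appears in your denominator clearing. The integer $p_{ij}D/q_{ij}$ cannot be obtained by dividing by $q_{ij}$; you must form $p_{ij}\prod_{(k,l)\neq(i,j)}q_{kl}$ directly, which the paper does in $O(n^2)$ multiplications using prefix and suffix products. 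Your elimination-based basis routine and your Gauss--Jordan adjugate inherit the same problem.

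The missing ingredient is a \emph{division-free} determinant and adjugate algorithm for integer matrices. The paper uses the Faddeev--Le Verrier method (\Cref{cor:leverrier-integral}):
\begin{itemize}
\item It computes $\mM^2,\dots,\mM^n$ and the traces $s_j$ using only ring operations, so denominators stay $1$.
\item The rescaled characteristic-polynomial coefficients $C_i=(n-i)!\,c_i$ satisfy a recursion whose coefficients are integer falling factorials.
\item Cayley--Hamilton then yields $(n-1)!\,\adjt(\mM)$.
\end{itemize}
The only divisions are the final ones, by $n!$, $(n-1)!$ and powers of the common denominator. This gives $O(n)$ matrix products, i.e.\ $O(n^4)$ operations.

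The method also covers singular $\mM$. There your Gauss--Jordan elimination on $[\mN\mid\mI]$ breaks down. The fallback of handling each cofactor separately costs $n^2\cdot O(n^3)=O(n^5)$, not $O(n^4)$ as you claim.

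With these primitives, a basis is found by adding $j$ to $B$ exactly when $\det(\mM_{B\cup\{j\}}^\top\mM_{B\cup\{j\}})\neq 0$, in $O(n^5)$ operations. Your pseudoinverse construction then goes through as written. Berkowitz's algorithm would be another division-free option. Your argument would be valid in a more permissive model that counts exact integer division (or reduction to lowest terms) as one operation, but the paper deliberately works in the most restrictive variant.
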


An important use of pseudoinverses will be for projecting out variables from an LP. Consider a partition $I\cup J=[n]$, and define
\begin{equation}\label{eq:proj-matrix}
\mPi\coloneqq \mI_m - \mA_{I} \mA_{I}^+\,,\quad
    \mH\coloneqq \mPi \mA_{J}\,,\quad\mbox{and}\quad h\coloneqq \mPi b\, .
\end{equation}
Here, $\mPi$ is the projection matrix onto $\ker(\mA_{I}^\top)$, i.e., the orthogonal complement of $\im(\mA_{I})$. The following fact maps solutions between the systems $\mA x=b$ and $\mH x_{J}=h$; the proof follows by the definitions and by \Cref{lem:pseudoinverse-prop}.
\begin{fact}\label{lem:lift} The linear system $\mA x=b$ is feasible if and only if $\mH x_{J}=h$ is feasible. Moreover,  for any solution $x_{J}\in\R^{J}$ such that $\mH x_{J}=h$, setting
\[
    x_{I}=\mA_{I}^+(b-\mA_{J} x_{J})
\]
satisfies $\mA_{J} x_{J}+\mA_{I} x_{I}=b$.
\end{fact}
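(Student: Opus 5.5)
The statement to prove is \Cref{lem:lift}: $\mA x=b$ is feasible if and only if $\mH x_J=h$ is feasible, and any solution of the projected system lifts via $x_I=\mA_I^+(b-\mA_J x_J)$. The plan is to reduce everything to the fact that $\mI_m-\mPi=\mA_I\mA_I^+$ is the orthogonal projection onto $\im(\mA_I)$. This is \Cref{lem:pseudoinverse-prop} applied to $\mM=\mA_I$.

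The forward direction is immediate. If $\mA_I x_I+\mA_J x_J=b$, apply $\mPi$ to both sides. Since $\mPi$ annihilates $\im(\mA_I)$, we have $\mPi\mA_I=\mA_I-\mA_I\mA_I^+\mA_I=\0$, using $\mM\mM^+\mM=\mM$. Hence $\mH x_J=\mPi\mA_J x_J=\mPi b=h$, so the projected system is feasible.

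For the converse, and simultaneously the lifting claim, take $x_J$ with $\mH x_J=h$. Set $r\coloneqq b-\mA_J x_J$. Then $\mPi r=h-\mH x_J=\0$, so $r\in\ker(\mPi)$. Because $\mPi$ is the orthogonal projection onto $\ker(\mA_I^\top)=\im(\mA_I)^\perp$, this gives $r\in\im(\mA_I)$. Equivalently, $r=\mA_I\mA_I^+r$. With $x_I=\mA_I^+ r$ we obtain $\mA_I x_I=\mA_I\mA_I^+ r=r=b-\mA_J x_J$, which is the desired identity $\mA_J x_J+\mA_I x_I=b$.

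No step here is a real obstacle. The only point needing care is the identification $\ker(\mPi)=\im(\mA_I)$, which follows from $\mA_I\mA_I^+$ being the orthogonal projector onto $\im(\mA_I)$, as given by \Cref{lem:pseudoinverse-prop}. The argument does not use $\rank(\mA)=m$, and it also works when $\mA_I$ has dependent columns.
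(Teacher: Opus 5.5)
Your proof is correct and is the argument the paper intends. The paper only remarks that the fact follows from the definitions and \Cref{lem:pseudoinverse-prop}, which is exactly what you spell out via $\mPi\mA_I=\0$ and $r=\mA_I\mA_I^+r$. As a minor simplification, $\mPi r=\0$ already reads $r=\mA_I\mA_I^+r$ by the definition $\mPi=\mI_m-\mA_I\mA_I^+$, so the detour through $\ker(\mPi)=\im(\mA_I)$ is not needed.
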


\paragraph{Complementary solutions.}
Let us recall the  Goldman--Tucker theorem on strict complementarity \cite{Goldman1956} asserting that any maximum support primal and dual optimal solutions are strictly complementary.
\begin{theorem}[Goldman \& Tucker] Assume that both  the primal and dual LPs in \ref{LP_primal_dual} are feasible. Then, there exists
a unique partition $B\cup N=[n]$ such that there exist
optimal primal and dual solutions $x\in\Rplus^n$ and $(y,s)\in \R^m\times\Rplus^n$ with $B=\supp(x)$ and $N=\supp(s)$. %
\end{theorem}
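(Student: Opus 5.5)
We prove the Goldman--Tucker theorem by the standard route: reduce strict complementarity to a single Farkas-type alternative for each coordinate, then combine the resulting solutions by averaging. Since both programs are feasible, LP duality gives a finite common optimal value $v^\star$. Then $x\in\Primal$ is optimal if and only if $\pr{c}{x}\le v^\star$, and $(y,s)$ is dual optimal if and only if $\pr{b}{y}\ge v^\star$. Both optimal sets are therefore nonempty polyhedra, and hence convex. By weak duality, any optimal $x$ and $s$ satisfy $\pr{x}{s}=0$. So the supports $\supp(x)$ and $\supp(s)$ are disjoint, and this holds across all pairs of primal and dual optimal solutions.

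First, define $B$ as the union of supports of primal optimal solutions and $N$ as the union of supports of dual optimal solutions. By convexity, averaging finitely many optimal solutions (one for each index in the union) gives a single primal optimal $x$ with $\supp(x)=B$, and likewise a dual optimal $s$ with $\supp(s)=N$. The disjointness above gives $B\cap N=\emptyset$. If $B\cup N=[n]$, this partition is forced, because $B$ and $N$ are by construction the maximal possible supports. So both existence and uniqueness reduce to showing $B\cup N=[n]$.

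The key step is a per-coordinate alternative: for each $i\in[n]$, either some primal optimal $x$ has $x_i>0$, or some dual optimal $s$ has $s_i>0$. Fix $i$ and consider the LP
\[
\max\ x_i \quad\text{s.t.}\quad \mA x=b,\ \pr{c}{x}\le v^\star,\ x\ge\0_n .
\]
It is feasible and its feasible set is exactly the primal optimal set. If its optimum is positive, we are done. Otherwise its optimum is $0$, and LP duality gives multipliers $y'\in\R^m$ and $\lambda\ge0$ with
\[
\mA^\top y'-\lambda c\ge e_i, \qquad \pr{b}{y'}-\lambda v^\star\le 0 .
\]
In the case $\lambda>0$, set $y=y'/\lambda$ and $s=c-\mA^\top y$. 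Then $s\ge \0_n$, and in fact $s\le -e_i/\lambda$ componentwise fails; the correct reading of the first inequality is $\mA^\top y - c\ge e_i/\lambda$, so the signs of $y'$ must be flipped to obtain a dual-feasible $y$. After this sign adjustment, $s_i\ge 1/\lambda>0$ and $\pr{b}{y}\ge v^\star$, so $s$ is dual optimal with $s_i>0$. In the case $\lambda=0$, we get a ray $r$ with $\mA^\top r\le -e_i$ (again up to the sign convention) and $\pr{b}{r}\ge0$. Adding this ray to any dual optimal solution keeps it dual optimal and makes coordinate $i$ of the slack positive.

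The main obstacle is exactly this sign bookkeeping: one must write the dual of the auxiliary LP carefully, with the constraint $x\ge\0_n$ giving a slack whose $i$-th entry is at least $1$, so that both the case $\lambda>0$ and the ray case $\lambda=0$ yield a dual optimal slack $s$ with $s_i>0$. An equivalent route that avoids this is to apply Farkas' lemma (Tucker's form) directly to the system $\mA x=b$, $x\ge\0_n$, $\pr{c}{x}\le v^\star$, $x_i>0$.
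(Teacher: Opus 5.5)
The paper does not prove this statement; it quotes Goldman--Tucker as a classical result with a citation. Your proposal therefore has to stand on its own.

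\textbf{What works.} The outer layer of your argument is sound. Weak duality gives $\pr{x}{s}=0$ for every optimal pair, so supports are disjoint. Averaging optimal solutions realizes the union of primal supports as $B$ and the union of dual supports as $N$. Maximality of these unions forces uniqueness once $B\cup N=[n]$. The reduction to a per-coordinate alternative is also correct.

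\textbf{Where it breaks.} The gap is in the key step, which is the entire content of the theorem. Take the auxiliary LP $\max\{x_i : \mA x=b,\ \pr{c}{x}\le v^\star,\ x\ge\0_n\}$, with a free multiplier $y'$ for the equalities and $\lambda\ge 0$ for the cost constraint. Its dual is
\[
\min\ \pr{b}{y'}+\lambda v^\star \quad\text{s.t.}\quad \mA^\top y'+\lambda c\ge e_i,\quad \lambda\ge 0 .
\]
Your system $\mA^\top y'-\lambda c\ge e_i$, $\pr{b}{y'}-\lambda v^\star\le 0$ is this dual with $\lambda$ replaced by $-\lambda$. So the error is in the sign of $\lambda$, not of $y'$, and your proposed repair of flipping $y'$ does not work:
\begin{itemize}
\item With your inequalities and $y=y'/\lambda$, you get $s_i\le -1/\lambda<0$.
\item With $y=-y'/\lambda$, you only get $s\ge 2c+e_i/\lambda$ and $\pr{b}{y}\ge -v^\star$. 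Neither is dual feasibility nor dual optimality.
\end{itemize}
As written, the case $\lambda>0$ does not go through. The sentence ``$s\ge\0_n$, and in fact $s\le -e_i/\lambda$ componentwise fails'' is self-contradictory.

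\textbf{The fix.} The auxiliary LP has finite optimum $0$, so strong duality gives $(y',\lambda)$ with
\[
\mA^\top y'+\lambda c\ge e_i \quad\text{and}\quad \pr{b}{y'}+\lambda v^\star\le 0 .
\]
If $\lambda>0$, set $y\coloneqq -y'/\lambda$. Then $s=c-\mA^\top y\ge e_i/\lambda\ge\0_n$ and $\pr{b}{y}\ge v^\star$, so $(y,s)$ is dual optimal with $s_i>0$. If $\lambda=0$, then $r\coloneqq -y'$ satisfies $\mA^\top r\le -e_i$ and $\pr{b}{r}\ge 0$. Your ray argument then applies unchanged; this case is unaffected by the sign error, since the $\lambda c$ term vanishes. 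Carrying out the Motzkin--Tucker transposition you mention at the end would produce the same certificate. With this correction, your proof is the standard one and is complete.
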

We derive corollaries for the conic, dual-feasibility, and primal-feasibility settings. The corresponding complementary partitions will play important roles in our multi-stage initialization approaches.
\begin{corollary}\label{cor:maxsupp} Let $\mA\in\R^{m\times n}$, $b\in \R^m$, $c\in\R^n$.
\begin{enumerate}[label=(\roman*)]
\item\label{part:maxsupp-conic} There exists a unique partition $B\cup N=[n]$ and solutions $x,s\in\R^n$, $y\in\R^m$ such that
\[
 \mA  x=\0_m\, ,\, x\ge \0_n\, , \,  \supp(x)=B\,\quad\mbox{and}\quad \mA^\top y+s=\0_n\, , \, s\ge \0_n\, ,\, \supp(s)=N\, .
\]
\item\label{part:maxsupp-dual} Assume that $\mA^\top y+s=c$, $s\ge \0_n$ is feasible. Then,
there exists a unique partition $B\cup N=[n]$ and solutions $x,s\in\R^n$, $y\in\R^m$ such that
\[
  \mA x=\0_m\, , \, x\ge \0_n\, ,\, \supp(x)=B\,\quad\mbox{and}\quad \mA^\top y+s=c, \, s\ge \0_n\, ,\, \supp(s)=N\, .
\]
\item\label{part:maxsupp-primal} Assume that $\mA x=b$, $x\ge \0_n$ is feasible. Then,
there exists a unique partition $B\cup N=[n]$ and solutions $x,s\in\R^n$, $y\in\R^m$ such that
\[
  \mA x=b\, , \, x\ge \0_n\, ,\, \supp(x)=B\,\quad\mbox{and}\quad \mA^\top y+s=\0_n, \, s\ge \0_n\, ,\, \supp(s)=N\, .
\]
\end{enumerate}
\end{corollary}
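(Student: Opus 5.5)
Each part of \Cref{cor:maxsupp} will follow from the Goldman--Tucker theorem, applied to a suitably chosen primal-dual pair whose optimal solutions are exactly the objects in the statement. The only care needed is to check that both sides of the auxiliary pair are feasible, which is the hypothesis of the theorem.

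For \ref{part:maxsupp-conic}, I apply Goldman--Tucker to $\LP(\mA,\0_m,\0_n)$, i.e., $\min\{\pr{\0_n}{x}: \mA x=\0_m,\ x\ge\0_n\}$ and $\max\{\pr{\0_m}{y}: \mA^\top y+s=\0_n,\ s\ge\0_n\}$. Both sides are feasible, as $x=\0_n$ and $(y,s)=(\0_m,\0_n)$ show. Every feasible solution has objective value $0$, so it is optimal, and the optimal sets coincide with the two conic systems. The theorem then gives the partition $B\cup N=[n]$ and solutions with the prescribed supports.

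For \ref{part:maxsupp-dual}, I use $\LP(\mA,\0_m,c)$. Its primal $\min\{\pr{c}{x}:\mA x=\0_m,\ x\ge\0_n\}$ is feasible via $x=\0_n$, and its dual is feasible by assumption. Weak duality gives $\pr{c}{x}\ge \pr{\0_m}{y}=0$ for every feasible $x$, so the optimum value is $0$. Hence every dual feasible $(y,s)$ is optimal, since its objective is identically $0$. The primal optimal set is $\{x\ge\0_n:\mA x=\0_m,\ \pr{c}{x}=0\}$, which is contained in the conic system. Goldman--Tucker yields optimal $x$ and $(y,s)$ with $\supp(x)=B$ and $\supp(s)=N$ for a partition. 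These are solutions of the required systems, so existence holds.

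Uniqueness of the partition needs a separate argument, because it should be stated relative to the systems in the corollary rather than the optimal sets. The key observation is that whenever $x$ satisfies $\mA x=\0_m$, $x\ge\0_n$ and $(y,s)$ satisfies $\mA^\top y+s=c$, $s\ge \0_n$, we have $\pr{x}{s}=\pr{c}{x}$. Thus the supports are disjoint exactly when $\pr{c}{x}=0$. I will therefore read the statement with the implicit requirement $B\cup N=[n]$ as a partition, so that $\supp(x)\cap\supp(s)=\emptyset$. Any such pair then satisfies complementary slackness, so it is a pair of optimal solutions of $\LP(\mA,\0_m,c)$. Uniqueness in Goldman--Tucker then transfers directly.

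Part \ref{part:maxsupp-primal} is symmetric, using $\LP(\mA,b,\0_n)$. Its dual is feasible via $(y,s)=(\0_m,\0_n)$ and its primal by assumption. Weak duality gives optimum value $0$, and every primal feasible point is optimal. The same complementary-slackness identity, now $\pr{x}{s}=-\pr{b}{y}$, handles uniqueness.

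The main obstacle is only interpretive: "partition" must be read as disjoint supports covering $[n]$ for uniqueness to hold. I would state this explicitly and otherwise keep the proof to a few lines.
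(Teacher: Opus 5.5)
Your proposal is correct and follows the route the paper intends, since the paper states this corollary without proof directly after Goldman--Tucker: each part is that theorem applied to $\LP(\mA,\0_m,\0_n)$, $\LP(\mA,\0_m,c)$ and $\LP(\mA,b,\0_n)$, and your identities $\pr{x}{s}=\pr{c}{x}$ and $\pr{x}{s}=-\pr{b}{y}$ correctly transfer uniqueness from the optimal sets to the feasible systems. Reading ``partition'' as disjoint sets covering $[n]$ is simply the standard meaning, not an extra assumption you need to flag.
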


From the interior point literature, initialization (Sections~\ref{sec:simplified-init} and \ref{sec:analytic-init}) only requires
the following lemma that shows that the choice of parameter $\mu = \pr{x}{s}/n$ is
essentially optimal for the purpose of minimizing centrality error at
$(x,s)$. Further preliminaries on interior point methods are deferred to \Cref{sec:ipm-preliminaries}.

\begin{lemma}[{\cite[Lemma 4.4]{Monteiro2003}}] \label{lem:choice-of-mu}
 For $\beta\in(0,1/4]$, let
$(x,s) \in \Primalp \times \Dualp$ and $\mu' > 0$ satisfy
$\|x\circ s/\mu' - \1_n\|_2 \leq \beta$. Then, for $\mu \coloneqq \pr{x}{s}/n$, we have that
\begin{equation}
  \left\|\frac{(x\circ s)}{\mu} -\1_n\right\|_2 \le \frac{\beta}{1-\beta}\ , \quad \text{ and } \quad \left(1-\frac{\beta}{\sqrt{n}}\right)\mu' \leq \mu \leq \left(1+\frac{\beta}{\sqrt{n}}\right)\mu'\ .
\end{equation}
\end{lemma}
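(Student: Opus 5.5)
The statement is Lemma~\ref{lem:choice-of-mu}. The plan is to treat it as a purely algebraic statement about the vector $u \coloneqq x\circ s/\mu' - \1_n$, which satisfies $\norm{u}_2\le\beta$. Feasibility plays no role; only the positivity of $x\circ s$ is used. Since $\mu = \pr{x}{s}/n = \mu'\,\pr{\1_n}{\1_n+u}/n$, we get the identity
\[
\frac{\mu}{\mu'} = 1 + \frac{\pr{\1_n}{u}}{n}.
\]

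The second claim follows at once from Cauchy--Schwarz: $|\pr{\1_n}{u}| \le \sqrt{n}\,\norm{u}_2 \le \sqrt{n}\,\beta$. Hence $|\mu/\mu' - 1| \le \beta/\sqrt{n}$, which gives $(1-\beta/\sqrt{n})\mu' \le \mu \le (1+\beta/\sqrt{n})\mu'$.

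For the first claim, set $t \coloneqq \mu'/\mu$ and write
\[
\frac{x\circ s}{\mu} - \1_n = t(\1_n+u) - \1_n .
\]
Decompose $u = \alpha\1_n + u^\perp$ with $\alpha = \pr{\1_n}{u}/n$ and $u^\perp \perp \1_n$, so that $t = 1/(1+\alpha)$. Then
\[
t(\1_n+u) - \1_n = t\bigl((1+\alpha)\1_n + u^\perp\bigr) - \1_n = t\,u^\perp .
\]
Its norm is $\norm{u^\perp}_2/(1+\alpha)$. By Pythagoras, $\norm{u^\perp}_2 \le \norm{u}_2 \le \beta$. Also $|\alpha| \le \norm{u}_2/\sqrt{n} \le \beta$, so $1+\alpha \ge 1-\beta > 0$. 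This yields the bound $\beta/(1-\beta)$.

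The only point requiring care is the denominator $1+\alpha$, i.e.\ the lower bound on $\mu/\mu'$. This is handled by the same Cauchy--Schwarz estimate, and $\beta \le 1/4$ guarantees positivity. There is no genuine obstacle; one can even sharpen the bound to $\beta/(1-\beta/\sqrt{n})$, but the weaker stated form suffices.
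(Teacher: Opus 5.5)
Your proof is correct. The paper gives no proof of its own and simply cites \cite[Lemma 4.4]{Monteiro2003}, and yours is the standard argument behind that result: the decomposition $u=\alpha\1_n+u^\perp$ expresses that $\mu\1_n$ is the orthogonal projection of $x\circ s$ onto the line spanned by $\1_n$, so $\|x\circ s-\mu\1_n\|_2\le\|x\circ s-\mu'\1_n\|_2\le\beta\mu'$, and the Cauchy--Schwarz bound $\mu\ge(1-\beta/\sqrt n)\mu'$ then gives both claims, in fact for any $\beta<1$ and with the sharper constant $\beta/(1-\beta/\sqrt n)$ you mention.
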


\paragraph{The sublevel sets.}
Assume $\Primal,\Dual\neq\emptyset$, in which case \ref{LP_primal_dual} admits a  pair of primal and dual optimal solutions $(x^\star, y^\star,s^\star)$ with optimum value $\pr{c}{x^\star}=\pr{b}{y^\star}=v^\star$. Recall that this holds precisely if these two solutions are complementary: $\pr{x^\star}{s^\star}=0$; in particular, $x_i^\star s^\star_i=0$ for all $i\in [n]$.

Recall the definitions of the sublevel sets $\Primal_g$, $\Dual_g$ in \eqref{eq:sublevel-sets} as the sets of primal and dual solutions with objective value within $g$ from the optimum value $v^\star$.
The duality gap of any pair $(x, y, s)$ of primal-dual feasible points of~\ref{LP_primal_dual} satisfies $\pr{c}{x} - \pr{b}{y} = \pr{x}{s}$. In particular, we have $\pr{x}{s^\star} = \pr{c}{x} - v^\star$ and $\pr{x^\star}{s} = v^\star - \pr{b}{y}$. Thus, the two sets $\Primal_g$ and $\Dual_g$ are equivalently given by
\[
\Primal_g = \{x \in \Primal : \pr{x}{s^\star} \leq g\}\, \quad\mbox{and} \quad \Dual_g = \{s \in \Dual : \pr{x^\star}{s} \leq g\} \, .
\]
These expressions are in fact independent of the choice of optimal solutions
$(x^\star, y^\star,s^\star)$.
The following is immediate.
\begin{proposition}\label{prop:mcp_well_defined}
If $\Primalp,\Dualp\neq\emptyset$,
then for any $g,g'\ge 0$, and $i\in[n]$, $\mx_i(g)\cdot\ms_i(g')\le g+g'$ holds, and consequently, the sets  $\Primal_g$ and $\Dual_g$ are bounded.
\end{proposition}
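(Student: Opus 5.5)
The plan is to use the reformulation of the sublevel sets given just above, namely $\Primal_g=\{x\in\Primal:\pr{x}{s^\star}\le g\}$ and $\Dual_g=\{s\in\Dual:\pr{x^\star}{s}\le g\}$, together with a strictly feasible pair. The key identity is the following. For any $x\in\Primal$ and any $s\in\Dual$, write $s=c-\mA^\top y$. Then
\[
\pr{x}{s}=\pr{c}{x}-\pr{b}{y}=(\pr{c}{x}-v^\star)+(v^\star-\pr{b}{y}).
\]
Both summands are nonnegative by weak duality, and they equal $\pr{x}{s^\star}$ and $\pr{x^\star}{s}$ respectively.

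Hence for $x\in\Primal_g$ and $s\in\Dual_{g'}$ we get $\pr{x}{s}\le g+g'$. Since $x,s\ge\0_n$, each coordinate satisfies $x_is_i\le\pr{x}{s}\le g+g'$. The next step is to take suprema. For fixed $s\in\Dual_{g'}$ with $s_i>0$, we obtain $x_i\le (g+g')/s_i$ for all $x\in\Primal_g$, so $\mx_i(g)\,s_i\le g+g'$. Taking the supremum over $s$ then gives $\mx_i(g)\ms_i(g')\le g+g'$. One must handle the conventions when a factor is $0$ or $\infty$; this is where strict feasibility is needed.

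To exclude $\infty\cdot 0$ and to show finiteness, I would pick $\hat s\in\Dualp$ and set $g'_0\coloneqq\pr{x^\star}{\hat s}$, so that $\hat s\in\Dual_{g'_0}$. Applying the bound above with $\hat s$ gives $x_i\le (g+g'_0)/\hat s_i$ for every $x\in\Primal_g$. Thus $\mx_i(g)<\infty$ for every $g$, and $\Primal_g$ is bounded. Symmetrically, a point $\hat x\in\Primalp$ shows that $\ms_i(g')<\infty$ and that $\Dual_{g'}$ is bounded. With both quantities finite and nonnegative, the product inequality follows from the supremum argument: if either supremum is $0$ the product is $0$; otherwise take $s$ with $s_i$ arbitrarily close to $\ms_i(g')>0$ and pass to the limit.

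The only real care point is the $0\cdot\infty$ convention, and the strictly feasible points resolve it. Note that $v^\star$ exists because $\Primalp,\Dualp\neq\emptyset$ implies the LP is feasible and bounded. The rest of the argument is immediate.
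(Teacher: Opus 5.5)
Your proof is correct and is the argument the paper has in mind: it calls the proposition immediate, relying on the reformulation $\Primal_g=\{x\in\Primal:\pr{x}{s^\star}\le g\}$, $\Dual_g=\{s\in\Dual:\pr{x^\star}{s}\le g\}$ stated just before, which gives $\pr{x}{s}=\pr{x}{s^\star}+\pr{x^\star}{s}\le g+g'$ and hence $x_is_i\le g+g'$. Your use of strictly feasible points to prove finiteness before taking suprema also settles the $0\cdot\infty$ issue, which the paper leaves implicit.
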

When $\Primalp=\emptyset$ or $\Dualp=\emptyset$, but $\Primal,\Dual\neq\emptyset$, we may have one of the following extreme possibilities for some $i\in [n]$:
\begin{itemize}
  \item {\em Identically zero:} $\mx_i(g)=0$ for all $g\ge 0$ or $\ms_i(g)=0$ for all $g\ge 0$.
  \item {\em Identically infinite:}  $\mx_i(g)=\infty$ for all $g\ge 0$ or $\ms_i(g)=\infty$ for all $g\ge 0$.
\end{itemize}
These can be easily characterized as follows.
\begin{lemma}
Assume $\Primal,\Dual\neq\emptyset$.  Let $i\in[n]$.
\begin{enumerate}[label=(\roman*)]
\item If $\mx_i(g)=\infty$ for some $g\in\Rplus$, then $\mx_i(g')=\infty$ for every $g'\in \Rplus$, that  is, $\mx_i(g)$  is identically infinite.
If $\ms_i(g)=\infty$ for some $g\in\Rplus$, then $\ms_i(g)$  is identically infinite.
\item\label{part:mcp-x-zero} $\mx_i$ is identically zero if and only if
there exists $y'\in\R^m$, $s'\in\Rplus^n$, $\mA^\top y'+s'=\0_n$ such that $s'_i>0$, and $\pr{b}{y'}=0$.
\item\label{part:mcp-s-zero} $\ms_i$ is identically zero if and only if
there exists $x'\in\Rplus^n$, $\mA x'=\0_m$ such that $x'_i>0$, and $\pr{c}{x'}=0$.
\end{enumerate}
\end{lemma}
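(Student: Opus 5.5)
The plan is to handle the three parts separately, working throughout with the sublevel-set description $\Primal_g=\{x\in\Primal:\pr{x}{s^\star}\le g\}$ and $\Dual_g=\{s\in\Dual:\pr{x^\star}{s}\le g\}$, where $(x^\star,s^\star)$ is a fixed optimal pair. These sets are nonempty for every $g\ge0$ because $\Primal,\Dual\neq\emptyset$.

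\textbf{Part (i).} Suppose $\mx_i(g)=\infty$ for some $g$. Then $\Primal_g$ is an unbounded polyhedron, and since it is described by linear constraints, it contains a recession direction $d$ with $\mA d=\0_m$, $d\ge\0_n$, $\pr{d}{s^\star}\le 0$ and $d_i>0$. (If the supremum of $x_i$ over $\Primal_g$ is infinite, LP duality/Minkowski--Weyl produces such a ray.) Because $s^\star\ge\0_n$ and $d\ge\0_n$, we get $\pr{d}{s^\star}=0$. Now take any $g'\ge0$ and the point $x^\star\in\Primal_{g'}$. The ray $x^\star+td$ for $t\ge0$ stays in $\Primal$, and $\pr{x^\star+td}{s^\star}=0\le g'$, so it stays in $\Primal_{g'}$; moreover its $i$-th coordinate is unbounded. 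Hence $\mx_i(g')=\infty$. The dual statement follows symmetrically, using a direction $e=-\mA^\top z\ge\0_n$ with $e_i>0$ and $\pr{x^\star}{e}=0$.

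\textbf{Part (ii).} For the ``if'' direction, take $(y',s')$ as in the statement and any $x\in\Primal$. Then $\pr{x}{s'}=-\pr{\mA x}{y'}=-\pr{b}{y'}=0$. Since $x,s'\ge\0_n$ and $s'_i>0$, this forces $x_i=0$, so $\mx_i\equiv0$.

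For the ``only if'' direction, I would use that $\mx_i\equiv0$ means $x_i=0$ on all of $\bigcup_g\Primal_g=\Primal$. So the LP $\max\{x_i: x\in\Primal\}$ has optimum value $0$. Its dual is $\min\{\pr{b}{y}:\mA^\top y\ge e_i\}$, which by strong duality has an optimal $y$ with $\pr{b}{y}=0$. Setting $y'=-y$ and $s'=\mA^\top y-e_i+e_i=\mA^\top y\ge e_i$ does not quite give the required form, so the right choice is $s'\coloneqq-\mA^\top y'=\mA^\top y$. This satisfies $s'\ge e_i$, hence $s'\ge\0_n$ and $s'_i\ge1>0$, together with $\mA^\top y'+s'=\0_n$ and $\pr{b}{y'}=0$.

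\textbf{Part (iii).} This is the dual analogue and goes the same way. For ``if'': given such an $x'$ and any $s=c-\mA^\top y\in\Dual$, we have $\pr{x'}{s}=\pr{c}{x'}-\pr{y}{\mA x'}=0$, which forces $s_i=0$. For ``only if'': the LP $\max\{s_i:s\in\Dual\}$ has value $0$. Its dual, $\min\{\pr{c}{x}:\mA x=\mA e_i,\ x\ge e_i\}$ or equivalently, after substituting, $\min\{\pr{c}{x'}:\mA x'=\0_m,\ x'\ge\0_n,\ x'_i\ge1\}$ up to translation, has optimum $0$ and yields the desired $x'$.

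The only delicate point is stating these LP dualities precisely, in particular writing out the dual of $\max s_i$ over $\Dual$. I would set it up as $\max\{-\pr{a_i}{y}+c_i: \mA^\top y\le c\}$, where $a_i$ is the $i$-th column of $\mA$. Its dual is $\min\{\pr{c}{x}: \mA x=a_i,\ x\ge\0_n\}-c_i$, and putting $x'=x-e_i+e_i$ gives the required kernel element after a brief check.
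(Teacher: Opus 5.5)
Your proposal is correct. Part (i) and both ``if'' directions match the paper's argument. Your recession direction $d$ with $\pr{d}{s^\star}\le 0$ is the paper's $z$ with $\pr{c}{z}\le 0$; note that $\pr{d}{s^\star}=\pr{c}{d}$ since $\mA d=\0_m$. You get equality from nonnegativity of $d$ and $s^\star$, whereas the paper argues that $\pr{c}{z}<0$ would make $\Dual$ empty.

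The real difference is in the ``only if'' directions. The paper invokes the Goldman--Tucker corollary (Corollary~\ref{cor:maxsupp}\ref{part:maxsupp-primal}). A maximum-support feasible $x'$ comes with a complementary ray $s'=-\mA^\top y'\ge\0_n$. Coordinate $i$ lies outside $\supp(x')$, so $s'_i>0$, and $\pr{b}{y'}=-\pr{x'}{s'}=0$ by complementarity. You instead apply strong duality to the single LP $\max\{x_i : x\in\Primal\}$, which has value $0$. Its dual $\min\{\pr{b}{y} : \mA^\top y\ge e_i\}$ yields $y'=-y$ directly, even with the normalization $s'_i\ge 1$. Your route is self-contained, using plain LP duality one coordinate at a time. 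The paper's route reuses a tool it needs anyway and gives one certificate serving all identically-zero coordinates at once.

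In part (iii), the bookkeeping needs fixing.
\begin{itemize}
\item The dual of $\max\{c_i-\pr{a_i}{y} : \mA^\top y\le c\}$ is $c_i+\min\{\pr{c}{x} : \mA x=-a_i,\ x\ge\0_n\}$. It is not $\min\{\pr{c}{x}:\mA x=a_i,\ x\ge\0_n\}-c_i$.
\item The substitution should be $x'\coloneqq x+e_i$; your ``$x'=x-e_i+e_i$'' is vacuous.
\item The form $\min\{\pr{c}{x}:\mA x=\mA e_i,\ x\ge e_i\}$ is also not the dual.
\end{itemize}
Your ``equivalent'' form $\min\{\pr{c}{x'}:\mA x'=\0_m,\ x'\ge\0_n,\ x'_i\ge 1\}$ is the correct one and the cleanest to state. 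By strong duality it attains the value $0$, and any optimal $x'$ is the required certificate.
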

\begin{proof} For part \textbf{(i)}, $\mx_i(g)=\infty$ implies the existence of a vector $z\in\Rplus^n$, $\mA z= \0$, $z_i > 0$ and $\pr{c}{z}\le 0$. We further have $\pr{c}{z}=0$ since $\pr{c}{z}<0$ would imply $\Dual=\emptyset$. Hence, $x^\star+\alpha z$ is an optimal solution for any $\alpha\in\Rplus$, showing that $\mx_i(0)=\infty$, and consequently $\mx_i(g)$ is identically infinite. The proof for $\ms_i(g)$ is analogous.

For part \textbf{(ii)}, first assume $\mx_i$ is identically zero, meaning that $x_i=0$ for all $\mA x=b$, $x\ge 0$. Then, the existence of $y'\in\R^m$, $s'\in\Rplus^n$, $\mA^\top y'+s'=\0_n$ such that $s'_i>0$ is immediate by  \Cref{cor:maxsupp}\ref{part:maxsupp-primal}. For the objective value, let $x'$ be the primal solution as in \Cref{cor:maxsupp}\ref{part:maxsupp-primal}. Then, $\pr{b}{y'}=\pr{\mA x'}{y'}=\pr{x'}{\mA^\top y'}=-\pr{x'}{s'}=0$, since $x'$ and $s'$ have complementary supports. Conversely, if there exists a solution $\mA^\top y'+s'= \0_n$, $s'\ge\0_n$, $s'_i>0$ with $b^\top y'=0$, then for any $x\in\R^n$ such that $\mA x=b$, $x\ge 0$, it follows that $x_i s'_i\le x^\top s'=-x^\top \mA^\top y'=b^\top y'=0$, implying $x_i=0$. Consequently, $\mx_i$ is identically zero.
Part \textbf{(iii)} follows analogously.
 \end{proof}
 Note that according to parts \ref{part:mcp-x-zero} and \ref{part:mcp-s-zero}, $\mx_i$ is identically zero if and only if $\ms_i$ is identically infinite, and $\ms_i$ is identically zero if and only if $\mx_i$ is identically infinite.

\subsection{Operations preserving straight-line complexity}\label{sec:preserve}
The following lemma asserts that column rescalings do not affect the straight line complexity bounds.
\begin{lemma}\label{lem:scale-invariance}
Given an instance of \ref{LP_primal_dual} with data $(\mA,b,c)$, let $\mD\in\R^{n\times n}$ be a positive diagonal matrix, and consider the LP with data $(\mA \mD, b, \mD c)$. Let $(\mx,\ms)$ and $(\bmx,\bms)$ denote the max central paths of the two LPs. Then, $\SLC_{\eta}(\mx_i)=\SLC_{\eta}(\bmx_i)$ and $\SLC_{\eta}(\ms_i)=\SLC_{\eta}(\bms_i)$ for any $\eta\in [0,1]$ and $i\in [n]$.
\end{lemma}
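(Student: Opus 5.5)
The plan is to write down an explicit correspondence between the two instances under the change of variables $x=\mD\bar x$, and then check that it carries sublevel sets to sublevel sets. Coordinatewise this makes the max central paths positive multiples of each other. Finally, we observe that straight line complexity does not change when a function is multiplied by a positive constant.

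\textbf{Primal side.} Let $\bar{\Primal}=\{\bar x\ge\0_n : \mA\mD\bar x=b\}$. Because $\mD$ is a positive diagonal matrix, the map $\bar x\mapsto \mD\bar x$ is a bijection from $\bar{\Primal}$ onto $\Primal$. It also preserves objective values, since $\pr{\mD c}{\bar x}=\pr{c}{\mD\bar x}$. It follows that the optimum values $v^\star$ agree, and the bijection sends $\bar{\Primal}_g$ onto $\Primal_g$ for every $g\ge0$.

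\textbf{Dual side.} The new dual reads $\mD\mA^\top y+\bar s=\mD c$. This is equivalent to $\mA^\top y+\mD^{-1}\bar s=c$, so $(y,\bar s)$ is feasible for the new dual if and only if $(y,\mD^{-1}\bar s)$ is feasible for the original one. The vector $y$ is unchanged, hence so is the dual objective $\pr{b}{y}$. Therefore $\bar s\mapsto\mD^{-1}\bar s$ maps $\bar{\Dual}_g$ bijectively onto $\Dual_g$.

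\textbf{Coordinatewise identities.} Write $d_i=\mD_{ii}>0$. Taking suprema in the definition \eqref{eq:max-cp-def} gives, for all $g\ge 0$,
\[
  \bmx_i(g)=\mx_i(g)/d_i \quad\text{and}\quad \bms_i(g)=d_i\,\ms_i(g).
\]
These identities also hold, with the convention $\infty\cdot d=\infty$, in the identically infinite case. Emptiness of $\Primal$ or $\Dual$ is preserved as well, so both max central paths are defined under the same conditions.

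\textbf{Scaling invariance of SLC.} It remains to show that $\SLC_\eta(\lambda f)=\SLC_\eta(f)$ for any constant $\lambda>0$. Suppose $h$ is continuous piecewise linear with $\eta f\le h\le f$. Then $\lambda h$ is continuous piecewise linear with the same breakpoints, hence the same number of pieces, and it satisfies $\eta\lambda f\le \lambda h\le \lambda f$. This gives $\SLC_\eta(\lambda f)\le\SLC_\eta(f)$. Applying the same argument with $1/\lambda$ gives the reverse inequality. The case $f\equiv\infty$ holds trivially by the convention in \Cref{def:slc-alternative}.

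The only point needing care is the bookkeeping of the optimum value and of the sublevel-set definitions \eqref{eq:sublevel-sets} under the bijections. Since objective values are preserved exactly on both sides, I expect no real obstacle.
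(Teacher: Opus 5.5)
Your proof is correct and follows the same route as the paper. The objective-preserving bijection $\bar x\mapsto \mD\bar x$ between the primal feasible regions gives $\bmx=\mD^{-1}\mx$, and invariance of straight line complexity under positive scaling then finishes the argument; you additionally spell out the dual side ($\bms=\mD\ms$ via $\bar s\mapsto\mD^{-1}\bar s$ with $y$ unchanged) and the scaling invariance of $\SLC_\eta$, which the paper leaves as analogous or easy to see.
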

\begin{proof}
We prove the claim only for the primal max central paths; the proof for the dual case is analogous.
Note that there is a one-to-one mapping $x\mapsto \mD^{-1}x$ between the feasible solutions of the two systems, with $\pr{c}{x}=\pr{\mD c}{\mD^{-1}x}$. This implies that $\bmx(g)=\mD^{-1}\mx(g)$ for all $g\ge 0$. It is now easy to see  that the two functions $\mx_i$ and $\bmx_i$ have the same straight line complexities.
\end{proof}
Our next lemma shows that the straight line complexities of the remaining variables are unchanged under two reductions: deleting the variables whose primal max central paths are identically zero, and projecting out the variables whose dual max central paths are identically zero.
\begin{lemma}\label{lem:project-delete}
Consider an instance of \ref{LP_primal_dual} along with a partition $P\cup D\cup R=[n]$ of the variables such that for every primal and dual feasible solution $(x,s)$ to \ref{LP_primal_dual},
$x_D=\0_D$ and $s_P=\0_P$ holds. Let $s$ be any dual feasible solution.
\begin{enumerate}[label=(\roman*)]
\item Consider the system
\begin{equation}\label{eq:reduce-sys-1}
\min\pr{s_{P\cup R}}{x_{P\cup R}}\, , \quad \mbox{s.t.} \quad \mA_{P\cup R} x_{P\cup R}= b \, , x_{P\cup R}\ge \0_{P\cup R}\, .
\end{equation}
Then, for any $i\in P\cup R$, we have $x^\m_i(g)=\bar x^\m_i(g)$ for all $g\in\Rplus$, where $x^\m_i(g)$ and $\bar x^\m_i(g)$ denote the max-central paths of the corresponding variables in \ref{LP_primal_dual} and in \eqref{eq:reduce-sys-1}, respectively.
\item Define
\[
\mPi\coloneqq \mI_m - \mA_{P} \mA_{P}^+\,,\quad
    \mH\coloneqq \mPi \mA_{D\cup R}\,,\quad\mbox{and}\quad h\coloneqq \mPi b\, ,
\]
and consider the system
\begin{equation}\label{eq:reduce-sys-2}
\min\pr{s_{D\cup R}}{x_{D\cup R}}\, , \quad \mbox{s.t.} \quad \mH x_{D\cup R}=h\, , x_{D\cup R}\ge \0_{D\cup R}\, .
\end{equation}
Then, for any $i\in D\cup R$, we have $x^\m_i(g)=\bar x^\m_i(g)$ for all $g\in\Rplus$, where $x^\m_i(g)$ and $\bar x^\m_i(g)$ denote the max-central paths of the corresponding variables in \ref{LP_primal_dual} and in \eqref{eq:reduce-sys-2}, respectively.
\item Define
\[
\mPi\coloneqq \mI_m - \mA_{P} \mA_{P}^+\,,\quad
    \mH\coloneqq \mPi \mA_{R}\,,\quad\mbox{and}\quad h\coloneqq \mPi b\, ,
\]
and consider the system
\begin{equation}\label{eq:reduce-sys-2-pd}
\min\pr{s_{R}}{x_{R}}\, , \quad \mbox{s.t.} \quad \mH x_{R}=h\, , x_{R}\ge \0_{R}\, .
\end{equation}
Then, for any $i\in R$, we have $x^\m_i(g)=\bar x^\m_i(g)$ for all $g\in\Rplus$, where $x^\m_i(g)$ and $\bar x^\m_i(g)$ denote the max-central paths of the corresponding variables in \ref{LP_primal_dual} and in \eqref{eq:reduce-sys-2-pd}, respectively.
\end{enumerate}
\end{lemma}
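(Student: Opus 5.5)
The plan is to show that, for each $g\ge 0$, the two sublevel sets correspond to each other through a bijection that preserves the coordinates in question. Then $x^\m_i(g)=\bar x^\m_i(g)$ follows by taking suprema coordinatewise.

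First I rewrite the sublevel sets. Fix a primal optimal solution $x^\star$ of \ref{LP_primal_dual}. Since $s$ is dual feasible, $\pr{c}{x}-\pr{s}{x}=\pr{\mA^\top y}{x}=\pr{b}{y}$ is constant on $\{\mA x=b\}$. So minimizing $\pr{c}{x}$ and minimizing $\pr{s}{x}$ over $\Primal$ give the same optimal set, and $\Primal_g=\{x\in\Primal:\pr{s}{x}\le \pr{s}{x^\star}+g\}$. Because $x_D=\0_D$ on $\Primal$, only $s_{P\cup R}$ enters these objectives. The same is true in each reduced system, whose objective is $\pr{s_{\cdot}}{x_{\cdot}}$ restricted to the surviving coordinates. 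It therefore suffices to exhibit a bijection between the feasible sets that preserves the surviving coordinates and keeps the objective value unchanged up to a common additive constant.

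\textbf{Part (i).} Since every $x\in\Primal$ has $x_D=\0$, the map $x\mapsto x_{P\cup R}$ is a bijection from $\Primal$ onto the feasible set of \eqref{eq:reduce-sys-1}. Its inverse pads with zeros, which is feasible because $\mA_{P\cup R}x_{P\cup R}=b$. The objective is preserved exactly, so the optimal values and the sublevel sets correspond, and the claim follows.

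\textbf{Part (ii).} This is the main step. By \Cref{lem:lift}, $x\mapsto x_{D\cup R}$ maps $\Primal$ into $\{\mH x_{D\cup R}=h,\ x_{D\cup R}\ge\0\}$. The key is surjectivity: the lifted $x_P$ may be negative. I will use the hypothesis $s_P=\0_P$ for all dual feasible $s$. By the Goldman--Tucker-type \Cref{cor:maxsupp}\ref{part:maxsupp-primal}, this is equivalent to $x$ and $-x$ on $P$ being unconstrained in sign directions. Concretely, there is a primal feasible $x'$ with $x'_P>\0$. Moreover, applying \Cref{cor:maxsupp}\ref{part:maxsupp-conic}/\ref{part:maxsupp-dual} to the dual feasibility system $\mA^\top y+s=c$, the condition $s_P=\0$ yields $z\in\ker(\mA)$, $z\ge\0$, $z_P>\0$, and $\pr{c}{z}=0$. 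Since $z_D=\0$ is forced, $z$ is supported in $P\cup R$, and $\pr{s}{z}=0$, so $z_R$ has $s_R$-weight zero.

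I then want to lift a reduced solution $x_{D\cup R}$ to $x_P=\mA_P^+(b-\mA_{D\cup R}x_{D\cup R})+u$ with $u\in\ker(\mA_P)$ chosen to make $x_P\ge\0$. This is possible if $\ker(\mA_P)$ contains a strictly positive vector. I expect this to be the delicate point, and I would try to derive it from the hypothesis as follows. If no such vector exists, then by Gordan/Farkas there is $y$ with $\mA_P^\top y\ge\0$ nonzero. Adding a multiple of $\mA^\top y$ to a dual solution $\bar s$ with $\bar s_P=\0$ should then produce a dual feasible $s$ with $s_P\ne\0$, after adjusting $R$ and $D$ using a dual solution with $s_D$ large enough. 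Since $D$ is only sign-free on the dual side via $x_D=\0$, \Cref{cor:maxsupp}\ref{part:maxsupp-primal} gives a dual ray $(y'',s'')$ with $s''_D>\0$ and $s''_{P}=\0$. Handling $R$ still needs care.

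Given the lift, the map $x\mapsto x_{D\cup R}$ is a surjection onto the reduced feasible set, and it is injective up to adding $u$. The objective $\pr{s_{D\cup R}}{x_{D\cup R}}$ equals $\pr{s}{x}$ because $s_P=\0$. Hence the sublevel sets have equal projections onto $D\cup R$, and the suprema agree.

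\textbf{Part (iii).} I apply (i) to delete $D$, and then (ii) to the resulting instance. The partition there is $P\cup\emptyset\cup R$, and the hypotheses are inherited by the solution correspondence of (i). Note that the projection uses $\mA_P$ only, so $\mH$ coincides with $\mPi\mA_R$.
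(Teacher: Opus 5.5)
Your part (i) is fine, and in part (ii) you already find the right vector: $z\in\ker(\mA)$ with $z\ge\0_n$, $z_P>\0_P$ and $\pr{s}{z}=0$. The gap is the step after that.

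\textbf{Where the proposal fails.} You try to make the lift $x_P=\mA_P^+(b-\mA_{D\cup R}x_{D\cup R})$ nonnegative by adding a strictly positive vector of $\ker(\mA_P)$. The hypotheses do not provide such a vector. Take $m=1$, $n=2$, $\mA=(1,\,-1)$, $b=-1$, $c=\0_2$, $P=\{1\}$, $D=\emptyset$, $R=\{2\}$.
\begin{itemize}
\item Dual feasibility forces $y=0$ and $s=\0_2$, so $s_P=\0_P$ holds. Yet $\ker(\mA_P)=\{0\}$.
\item Here $\mPi=0$, so the reduced feasible set is $\{x_2\ge 0\}$.
\item The original feasible set is $\{(t,t+1):t\ge 0\}$, whose projection onto $R$ is $[1,\infty)$.
\end{itemize}
So your conclusion that the sublevel sets have equal projections onto $D\cup R$ is false in general. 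The lemma holds in this example only because both max central paths of $x_2$ are identically $\infty$. The Gordan-type argument you sketch cannot be completed, and the difficulty with $R$ that you anticipate is real.

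\textbf{The missing idea.} Shift by $z$ itself, not by an element of $\ker(\mA_P)$.
\begin{itemize}
\item For $\alpha$ large, $x+\alpha z$ is feasible and has the same objective $\pr{s}{\cdot}$. Since both maps preserve the objective, the two optimal values agree.
\item The shift changes the coordinates in $\supp(z)\cap(D\cup R)$. So the backward map gives $\bar x^\m_i\le x^\m_i$ only for $i\in(D\cup R)\setminus\supp(z)$.
\item The forward map gives $\bar x^\m_i\ge x^\m_i$ for all $i\in D\cup R$.
\item For $i\in\supp(z)$, argue separately: $x^\star+\alpha z$ is optimal for every $\alpha\ge 0$, so $x^\m_i\equiv\infty$, and hence $\bar x^\m_i\equiv\infty$.
\end{itemize}
This is exactly the paper's argument, with $z$ of maximum support $B$ from \Cref{cor:maxsupp}\ref{part:maxsupp-dual}.

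\textbf{Part (iii).} The dual hypothesis for the instance with $D$ deleted does not follow from the primal correspondence of (i). Deleting columns removes dual constraints and enlarges the dual feasible region. You need an extension argument, as in the paper:
\begin{itemize}
\item Complete a reduced dual solution on $D$.
\item Add a large multiple of a ray $\hat s=-\mA^\top\hat y\ge\0_n$ with $\hat s_D>\0_D$, which exists by \Cref{cor:maxsupp}\ref{part:maxsupp-primal}.
\item The result is a dual feasible solution of the original instance whose $P$-part is at least the reduced one. By hypothesis it vanishes on $P$, so the reduced $P$-part must vanish too.
\end{itemize}
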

\begin{proof}{\bf Part (i).} First, note that in \ref{LP_primal_dual} and in $\LP(\mA,b,s)$, all variables $x_i$ have the same straight line complexities because the objective value is shifted by a constant.
Since \eqref{eq:reduce-sys-1} arises from $\LP(\mA,b,s)$ by deleting primal variables that are 0 in every feasible solution,  $x\in\R^n$ is a primal feasible solution to $\LP(\mA,b,s)$ if and only if  $x_{D}=\0_{D}$ and $x_{P\cup R}$ is feasible to \eqref{eq:reduce-sys-1}. This also implies that the two LPs have the same optimum value. The claim follows by the definition of the max central path.
\paragraph{Part (ii).}
As in Part (i), the max-central paths in \ref{LP_primal_dual} and in $\LP(\mA,b,s)$ coincide. It therefore suffices to compare $\LP(\mA,b,s)$ with \eqref{eq:reduce-sys-2}. We do so by exhibiting maps in both directions between the two feasible regions that preserve the objective value.

By \Cref{cor:maxsupp}\ref{part:maxsupp-dual}, the maximum support partition is $B\cup N=[n]$ with $N\subseteq D\cup R$ and $B\supseteq P$, and there exists a vector $z\in\Rplus^n$ with $\mA z=\0_m$ and $\supp(z)=B$. Moreover, $\pr{s}{z}=0$, as $\supp(s)\subseteq N$.

Let $x\in\Rplus^n$ with $\mA x=b$. Since $\mPi\mA_{P}=\0$, applying $\mPi$ to $\mA x=b$ gives $\mH x_{D\cup R}=h$; hence $x_{D\cup R}$ is feasible to \eqref{eq:reduce-sys-2}, and $\pr{s}{x}=\pr{s_{D\cup R}}{x_{D\cup R}}$ because $s_{P}=\0_{P}$. This map preserves the coordinates in $D\cup R$.

Conversely, let $\bar x\in\Rplus^{D\cup R}$ with $\mH\bar x=h$. By \Cref{lem:lift}, the vector $x$ with $x_{D\cup R}=\bar x$ and $x_{P}=\mA_{P}^+(b-\mA_{D\cup R}\bar x)$ satisfies $\mA x=b$, although possibly $x_{P}\not\ge\0_{P}$. Since $z_{P}>\0_{P}$, the shifted vector $x+\alpha z$ is nonnegative for $\alpha>0$ large enough. It satisfies $\mA(x+\alpha z)=b$ and, by $\pr{s}{z}=0$, also $\pr{s}{x+\alpha z}=\pr{s}{x}=\pr{s_{D\cup R}}{\bar x}$; thus $x+\alpha z$ is feasible to $\LP(\mA,b,s)$ with the same objective value as $\bar x$ in \eqref{eq:reduce-sys-2}. Furthermore, $\bar x_N = (x+\alpha z)_{N}$ by $\supp(z)=B$, so the map preserves the coordinates in $N$.

We thus have maps in both directions between the feasible regions of $\LP(\mA,b,s)$ and \eqref{eq:reduce-sys-2} that preserve objective values. Hence, the two LPs have the same optimum value, and both maps also preserve the optimality gap. As the first map preserves the coordinates in $D\cup R$ and the second map the coordinates in $N$, we obtain $\bar x^\m_i(g)\ge x^\m_i(g)$ for all $i\in D\cup R$ and $\bar x^\m_i(g)\le x^\m_i(g)$ for all $i\in N\subseteq D\cup R$, for all $g\in\Rplus$. Finally, consider a coordinate $i\in (D\cup R)\cap B$. Since $z_i>0$, for an optimal solution $x^\star$, $x^\star+\alpha z$ is another optimal solution with $(x^\star+\alpha z)_i$  arbitrarily large. Consequently, $\mx_i$ is identically infinite, and since  $\bar x^\m_i(g)\ge x^\m_i(g)$, $\bar x^\m_i$ is also identically infinite.

\paragraph{Part (iii).}  We first use part {\em (i)} to delete all primal variables in $D$. We claim that in the resulting system \eqref{eq:reduce-sys-1}, $\tilde s_P=\0_P$ holds for any feasible dual solution $(\tilde y,\tilde s)$. Indeed, by \Cref{cor:maxsupp}\ref{part:maxsupp-primal}, there exists a solution $(\hat y,\hat s)$ with $\mA^\top \hat y+\hat s=\0_n$, $\hat s\ge \0_n$, $D\subseteq \supp(\hat s)$. Note that, setting $\tilde s_D\coloneqq s_D-\mA_D^\top\tilde y$, for $\alpha>0$ such that $\tilde s_D+\alpha \hat s_D\ge \0_D$, $\tilde s+\alpha \hat s$ is also a feasible dual solution to the original system
\ref{LP_primal_dual}. Hence, if $\tilde s_P\gneq\0_P$, then we obtain a feasible dual solution to \ref{LP_primal_dual} not identically zero on $P$, a contradiction. Consequently, after deleting the primal variables in $D$, we can use part {\em (ii)} to project out the dual variables in $P$, and the claim follows.
\end{proof}

\section{Multi-stage big-$M$ initialization}\label{sec:simplified-init}
\subsection{The extended system}
\label{sec:extended-system}

In this section, we describe our first initialization procedure that is based on
the following variant of the ``big-$M$ system'' \eqref{eq:big-M} of \cite{Vavasis1996}, using different parameters for the primal and dual objectives.

\begin{equation}
\label{eq:big-M-sys} \tag{$\MSysName(\mA,b,c,M,K)$}
\begin{aligned}
\min \; \pr{c}{x}+&M\pr{\1_n}{\bar x} \\
\mA x-\mA \bar x & = b  \\
x+\hat x&=2K\1_n\\
x, \bar x,\hat x & \geq \0_n \, ,
\end{aligned}
\qquad \qquad
\begin{aligned}
\max \; \pr{b}{y}-&2K\pr{\1_n}{\hat s}\\
\mA^\top y + s-\hat s & = c \\
-\mA^\top y+\bar s&=M\1_n\\
s,\bar s,\hat s & \geq \0_n \, .
\end{aligned}
\end{equation}
The auxiliary variables play the following roles. The variable $\bar x$, penalized by $M$ in the objective, absorbs primal infeasibility: the first constraint reads $\mA(x-\bar x)=b$. The variable $\hat x=2K\1_n-x$ is the slack of the box bound $x\le 2K\1_n$ (the factor $2$ places the initial central point at $x=K\1_n$, see \Cref{lem:big-M}\ref{part:center}). The complementary pairs are $(x,s)$, $(\bar x,\bar s)$, and $(\hat x,\hat s)$.

The following lemma shows that for sufficiently large values of $K$ and $M$, optimal solutions to \ref{eq:big-M-sys} correspond to feasible or optimal solutions to \ref{LP_primal_dual}. Moreover, an initial central point for \ref{eq:big-M-sys} can be easily obtained.

\begin{lemma}\label{lem:big-M}
Let $\mA\in\R^{m\times n}$, $b\in\R^m$,  $d\in\R^n$ be such that $\mA d=b$, $c\in\R^n$, $\beta\in(0,1/2]$ %
Assume that
\begin{equation}\label{eq:M-1}
   K>\frac{4}{\beta}\|d\|_2\quad \mbox{and}\quad
M>\frac{4}{\beta}\|c\|_2\, .
 \end{equation}
Then, the following hold.
\begin{enumerate}[label=(\roman*)]
\item\label{part:center}
The vectors
\[
\begin{aligned}
(x,\bar x,\hat x)&\coloneqq (K\1_n,K\1_n-d,K\1_n)\, \quad\mbox{ and }\\
(y,s,\bar s,\hat s)&\coloneqq(\0_m,M\1_n+c,M\1_n,M\1_n)
\end{aligned}
\]
are feasible primal and dual solutions to \ref{eq:big-M-sys} and are in the neighborhood $\cN(\beta)$ for this system.

\item\label{part:both}
If $c,d> \0_n$, and
\begin{equation}\label{eq:N-3}
    K>\frac{\pr{c}{d}}{\min_{i\in[n]}c_i}\, ,\quad \mbox{and}\quad
M>\frac{\pr{c}{d}}{\min_{i\in[n]}d_i}\,
\end{equation}
hold, then every primal optimal solution $(x,\bar x,\hat x)$ and dual optimal solution $(y,s,\bar s,\hat s)$ to \ref{eq:big-M-sys}
satisfy $\bar x=\hat s=\0_n$. Consequently,
 $x$ and $(y,s)$ are primal and dual optimal solutions to \ref{LP_primal_dual}.
\item\label{part:primal}
If $d> \0_n$, and further
\begin{equation}\label{eq:M-2}
	M>\frac{4K\|c\|_1}{\min_{i\in[n]}d_i}\, ,
\end{equation}
holds,
then every primal optimal solution $(x,\bar x,\hat x)$ to \ref{eq:big-M-sys} satisfies $\bar x=\0_n$, and therefore  $x$ is feasible to the primal of \ref{LP_primal_dual}.
\item\label{part:dual}
If $c>\0_n$, and  further
\begin{equation}\label{eq:K-2}
    K>\frac{M\|d\|_1}{\min_{i\in[n]}c_i}\, ,
\end{equation}
holds,
then every dual optimal solution $(y,s,\bar s,\hat s)$ to \ref{eq:big-M-sys} satisfies $\hat s=\0_n$, and therefore $(y,s)$ is feasible to the dual of
\ref{LP_primal_dual}.
\end{enumerate}
\end{lemma}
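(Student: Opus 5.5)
The plan is to verify the four parts separately. Part \ref{part:center} is a direct computation. Parts \ref{part:both}--\ref{part:dual} compare an arbitrary optimal solution with an explicit feasible point built from $d$ and $c$, and then apply complementary slackness within \ref{eq:big-M-sys}.

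\textbf{Part (i).} First I check feasibility. We have $\mA(K\1_n)-\mA(K\1_n-d)=\mA d=b$ and $x+\hat x=2K\1_n$. On the dual side, $\mA^\top\0_m+s-\hat s=c$ and $-\mA^\top \0_m+\bar s=M\1_n$. Positivity follows from $K>\|d\|_\infty$ and $M>\|c\|_\infty$, which are implied by \eqref{eq:M-1}. The three Hadamard products are $K(M\1_n+c)$, $M(K\1_n-d)$ and $KM\1_n$. With $\mu'\coloneqq KM$, the vector of products divided by $\mu'$, minus $\1_{3n}$, equals $(c/M,\,-d/K,\,\0_n)$. Its $2$-norm is at most $\|c\|_2/M+\|d\|_2/K<\beta/2\le 1/4$. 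Applying \Cref{lem:choice-of-mu} with $\beta/2$ gives centrality error at most $(\beta/2)/(1-\beta/2)\le\beta$ for the correct normalization $\mu$, so the point lies in $\cN(\beta)$.

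\textbf{Part (ii).} When $c,d>\0_n$, two explicit feasible points are available. The primal point $(d,\0_n,2K\1_n-d)$ is feasible since $d\le 2K\1_n$, and has value $\pr{c}{d}$. The dual point $(\0_m,c,M\1_n,\0_n)$ has value $0$. Hence the optimum value lies in $[0,\pr{c}{d}]$.

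Now fix a primal optimal solution. Its duality gap against the dual point is
\[
\pr{c}{x}+M\pr{\1_n}{\bar x}\le \pr{c}{d},
\]
so $x_i\le \pr{c}{d}/\min_j c_j<K$. Thus $\hat x_i>0$ for every primal optimal solution, and complementary slackness forces $\hat s=\0_n$ for every dual optimal solution.

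Symmetrically, fix a dual optimal solution. Its gap against the primal point gives $\pr{d}{s}\le\pr{c}{d}$, so $s_i<M$. Substituting $\mA^\top y=c-s+\hat s$ yields
\[
\bar s=M\1_n+c-s+\hat s=M\1_n+c-s>\0_n .
\]
Complementary slackness then gives $\bar x=\0_n$ for every primal optimal solution. It follows that $x$ is primal feasible and $(y,s)$ is dual feasible for \ref{LP_primal_dual}, and $x\circ s=\0_n$ by complementarity in \ref{eq:big-M-sys}. Hence the pair is optimal for \ref{LP_primal_dual}.

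\textbf{Parts (iii) and (iv).} For part \ref{part:primal} I argue by an improving-move contradiction. Suppose $(x,\bar x,\hat x)$ is optimal with $\bar x\ne\0_n$, and set $u\coloneqq x-\bar x$, so that $\mA u=b$. Choose
\[
t\coloneqq\max_i \frac{\bar x_i}{\bar x_i+d_i}\le \frac{\|\bar x\|_\infty}{\min_i d_i}
\]
and put $x'\coloneqq(1-t)u+td$. Then $x'\ge\0_n$, $\mA x'=b$ and $x'\le 2K\1_n$, so $(x',\0_n,2K\1_n-x')$ is feasible. The penalty saving is $M\|\bar x\|_1\ge M\|\bar x\|_\infty$. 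The cost change satisfies
\[
\pr{c}{x'-x}\le t\,\|c\|_1\,\|d-x\|_\infty\le 2K\|c\|_1\|\bar x\|_\infty/\min_i d_i,
\]
since the $-(1-t)\bar x$ term only helps when $c\ge\0_n$ and otherwise is absorbed in the same bound. Condition \eqref{eq:M-2} makes the saving strictly larger than the cost increase, contradicting optimality.

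Part \ref{part:dual} is the mirror argument on the dual. Starting from a dual optimal solution with $\hat s\ne\0_n$, I convexly combine its slack vector $s-\hat s$ with $c$ (the slack of $y=\0_m$), using a step $t\le\|\hat s\|_\infty/\min_i c_i$. The new $\bar s$ stays nonnegative, and the objective loss is controlled through $\|d\|_1$ via $\pr{b}{y}=\pr{d}{\mA^\top y}$, against the gain $2K\|\hat s\|_1$; condition \eqref{eq:K-2} closes the argument. The main obstacle is this bookkeeping in (iii)/(iv): keeping the combined point inside the box (respectively keeping $\bar s\ge\0_n$) while bounding the cost change by a quantity linear in $\|\bar x\|_\infty$ (respectively $\|\hat s\|_\infty$).
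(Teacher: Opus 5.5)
Your parts (i) and (ii) follow the paper's proof. For (iii) and (iv) you take a different route, and (iv) has a gap.

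\textbf{Comparison of routes.} The paper handles (iii) and (iv) with the same template as (ii). For (iii) it pairs $(d,\0_n,2K\1_n-d)$ with the dual point $(\0_m,c^+,M\1_n,c^-)$, whose gap is at most $2K\|c\|_1$. This gives $s<\tfrac{M}{2}\1_n$, hence $\bar s>\0_n$, at every dual optimum. For (iv) it pairs $(d^+,d^-,2K\1_n-d^+)$ with $(\0_m,c,M\1_n,\0_n)$, whose gap is at most $M\|d\|_1$. This gives $x<K\1_n$, hence $\hat x>\0_n$, at every primal optimum. Complementary slackness then finishes both parts.

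You instead exhibit an explicit improving feasible point. Your (iii) is correct. The choice of $t$ is equivalent to $(1-t)\bar x_i\le t d_i$ for all $i$, which yields $|x'_i-x_i|\le 2Kt$ in every coordinate. So your final bound $2K\|c\|_1\|\bar x\|_\infty/\min_i d_i$ on the cost change holds even when $c$ has negative entries. The intermediate expression $t\|c\|_1\|d-x\|_\infty$ does not bound it by itself, but this is harmless.

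\textbf{The gap in (iv).} The argument does not close as sketched. The loss is $t\pr{b}{y}=t\pr{d}{\mA^\top y}$. The only information you have on $\mA^\top y$ is $\mA^\top y=\bar s-M\1_n\ge -M\1_n$ and $\mA^\top y=c-s+\hat s\le c+\hat s$.
\begin{itemize}
\item On coordinates with $d_i>0$, the upper bound involves $\hat s_i$, for which you have no a priori bound in a contradiction argument.
\item The factor $t$ does not help, since $t\to 1$ as $\hat s$ grows.
\item So there is no estimate of the form $t\pr{b}{y}\le tM\|d\|_1$, and \eqref{eq:K-2} alone cannot finish.
\item Weak duality against $(d^+,d^-,2K\1_n-d^+)$ only gives $\pr{b}{y}\le M\|d\|_1+2K\|\hat s\|_1$. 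That does not beat the gain when $1-t$ is small.
\end{itemize}
The missing step is a coordinatewise split, using $t\le 1$:
\[
t\pr{b}{y}\le t\bigl(\pr{d^+}{c}+M\|d^-\|_1\bigr)+\pr{d^+}{\hat s}\le tM\|d\|_1+\|d\|_\infty\|\hat s\|_1 .
\]
The first term is below $K\|\hat s\|_\infty$, using $t\le\|\hat s\|_\infty/\min_i c_i$, $\|c\|_\infty<M$ and \eqref{eq:K-2}. The second term is below $K\|\hat s\|_1$ because $K>\|d\|_\infty$ by \eqref{eq:M-1}. Hence the loss is strictly smaller than the gain $2K\|\hat s\|_1$. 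Note that \eqref{eq:M-1} is genuinely needed here, not just \eqref{eq:K-2}.

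\textbf{What each approach buys.} The paper's argument is shorter and uniform across (ii)--(iv). It only bounds the original variables at an optimum ($s$ in (iii), $x$ in (iv)) via a one-line gap estimate, so it never has to control the auxiliary variables $\bar x,\hat s$. It does rely on optimal solutions existing on both sides, which (i) guarantees. Your perturbation argument avoids complementary slackness and the existence of optima on the other side. The price is the coordinatewise bookkeeping above, which your sketch of (iv) omits.
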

\begin{proof}
{\bf Part \ref{part:center}.} Feasibility is immediate by the choice of $K$ and $M$.
We can write
\[
    \begin{aligned}
    \left\|\frac{(x,\bar x,\hat x)\circ(s,\bar s,\hat s)}{KM}-\1_{3n}\right\|_2&= \left\|\left(\frac{c}{M},-\frac{d}{K},\0_n\right)\right\|_2\\
    &\le \left\|\frac{c}{M}\right\|_2+\left\|\frac{d}{K}\right\|_2\le \frac{\beta}{4}+\frac{\beta}{4}=\frac{\beta}{2}\, .
    \end{aligned}
\]
Then, $(x,\bar x,\hat x),(s,\bar s,\hat s)$ are in the neighborhood $\cN(\beta)$ by \Cref{lem:choice-of-mu}, since $(\beta/2)/(1-\beta/2)<\beta$ for $\beta\in (0,1/2]$.

\paragraph{Part \ref{part:both}.}
Since $c,d > \0_n$, the vector $(d,\0_n,2K\1_n-d)$ is primal feasible and $(\0_m,c,M\1_n,\0_n)$ is dual feasible, with gap $\gamma\coloneqq\pr{c}{d}$ between them. In the remainder of the proof, let $(x,\bar x,\hat x)$ and $(y,s,\bar s,\hat s)$ be any pair of primal and dual optimal solutions to \ref{eq:big-M-sys}.
They satisfy complementary slackness:
\begin{equation}\label{eq:opt-compl}
    \pr{x}{s}=\pr{\bar x}{\bar s}=\pr{\hat x}{\hat s}=0\, .
\end{equation}
Let us start by showing that $\hat s=\0_n$.
The gap between  the primal optimal $(x,\bar x,\hat x)$  and dual feasible $(\0_m,c,M\1_n,\0_n)$ is at most $\gamma$, and therefore
 $\pr{x}{c}\le \gamma$, and using the choice of $K$,
\[
    0\le x_i\le \frac{\gamma}{c_i}<K\, ,\quad\forall i\in [n]\, .
\]
Therefore, $\hat x_i=2K-x_i>0$. Then, complementary slackness implies $\hat s_i=0$ for all $i\in[n]$.

The analogous argument showing $\bar x=\0_n$ is as follows. The gap between the dual optimum and the primal feasible $(d,\0_n,2K\1_n-d)$ must be $\le \gamma$, implying
$\pr{d}{s}\le \gamma$. Since both vectors are nonnegative,  using the definition of $M$ we get
\[
   s_i\le \frac{\gamma}{d_i}<M\, ,\quad\forall i\in [n]\, .
\]
From the dual constraints, for every $i\in [n]$ it follows that
\[
\begin{aligned}
    \bar s_i &= [\mA^\top y]_i + M \ge [\mA^\top y]_i - \hat s_{i} + M = c_i - s_i + M > c_i \ge 0.
\end{aligned}
\]
Consequently, by complementary slackness, $\bar x_i=0$ for all $i\in [n]$.
Hence, $x$ and $(y,s)$ are primal and dual feasible solutions to \ref{LP_primal_dual}. Moreover, they are complementary because $\pr{x}{s}=0$ as noted in \eqref{eq:opt-compl}. This shows that they are primal and dual optima.

\paragraph{Part \ref{part:primal}.} The proof follows similarly to the primal part of the proof above, with the `reference' primal and dual feasible solutions  $(d,\0_n,2K\1_n-d)$
and $(\0_m,c^+,M\1_n,c^-)$ to \ref{eq:big-M-sys}. These have duality gap
\[
    \gamma\coloneqq\pr{d}{c^+}+\pr{2K\1_n-d}{c^-}\le 2K\|c\|_1\, .
\]
Then, the gap between $(d,\0_n,2K\1_n-d)$ and a dual optimal solution
$(y,s,\bar s,\hat s)$ is $\le \gamma$. By the choice of $M$, we  obtain
\[
    0\le s_i\le \frac{\gamma}{d_i}<\frac{M}{2}\, \quad\forall i\in [n]\, .
\]
We can now write  for every $i\in [n]$,
\[
\bar s_i=[\mA^\top y]_i + M \ge [\mA^\top y]_i - \hat s_{i} + M = c_i - s_i + M > 0\, ,
\]
using that $s_i<M/2$ and $\|c\|_\infty\le \|c\|_2< M/4$.
By complementary slackness, every primal optimal solution $(x,\bar x, \hat x)$ satisfies $\bar x_i = 0$ for all $i\in [n]$.

\paragraph{Part \ref{part:dual}.} We now use the `reference' solutions
$(d^+,d^-,2K\1_n-d^+)$ and $(\0_m,c,M\1_n,\0_n)$ with duality gap
\[
    \gamma\coloneqq \pr{d^+}{c}+\pr{d^-}{M\1_n}\le M\|d\|_1\, .
\]
The gap between a primal optimal solution
 $(x,\bar x,\hat x)$ and $(\0_m,c,M\1_n,\0_n)$  is $\le \gamma$, and therefore
\[
    \pr{x}{c}\le \gamma \le M \|d\|_1\, ,
\]
and by the choice of $K$, we get $x_i \le c_i^{-1} \cdot M\|d\|_1 < K$ for all $i\in [n]$, which in turn gives $\hat x_i=2K-x_i>0$ and thereby $\hat s_i=0$ for all $i\in[n]$.\end{proof}

\subsection{The algorithm}\label{sec:big-M-alg}
The algorithm proceeds in three stages, each refining a partition of $[n]$. The \emph{final optimal} primal-dual solution to \ref{LP_primal_dual} is marked with a star $(x^\star,y^\star,s^\star)$, while plain symbols ($x,y,s$) denote generic decision variables or feasible points of the system under discussion; a superscript marks the stage of origin---$\conic$ for the Stage~\rom{1} conic solution, $\pf$ and $\df$ for the Stage~\rom{2} primal- and dual-feasibility solutions, and $\opt$ for the Stage~\rom{3} solve. Consistent with \Cref{sec:extended-system}, the two auxiliary variables of every big-$M$ system are marked by an accent: $\bar x$ is the $M$-penalized residual variable (so the first constraint reads $\mA(x-\bar x)=b$) and $\hat x$ is the slack of the bound $x\le 2K\1_n$, with complementary dual variables $\bar s,\hat s$. Certificates carry a tilde: $(\tilde y,\tilde s)$ from $\PMax$ and $\tilde x$ from $\DMax$. Index sets use a primal/dual base with the stage on the superscript: $\Pc,\Dc$ are the Stage~\rom{1} conic supports, $\Pf,\Df$ the Stage~\rom{2} feasible supports, and $R,I,J$ the Stage~\rom{3} refinement.

We frequently shift a vector $u\in\R^n$ by a nonnegative multiple of a vector $w\in\Rplus^n$, where $u_i\ge0$ for all $i\notin\supp(w)$. The smallest $\alpha\ge 0$ with $u+\alpha w\ge\0_n$ is
\[
\alpha_0\coloneqq\max\left(\{0\}\cup\left\{-u_i/w_i \mid i\in\supp(w)\right\}\right)\, .
\]
Whenever we choose $\alpha\ge0$ \emph{large enough} such that $u+\alpha w\ge\0_n$, we take $\alpha\coloneqq\alpha_0$. Whenever we require in addition that $u_i+\alpha w_i>0$ for all $i\in\supp(w)$, we take $\alpha\coloneqq\alpha_0+1$. Then, $u_i+\alpha w_i=(u_i+\alpha_0 w_i)+w_i\ge w_i>0$ for all $i\in\supp(w)$. In both cases, $\alpha$ is computed in strongly polynomial time and has size $O(\size(u,w))$.

We assume that the system of linear equations $\mA x=b$ is feasible: we are given a vector $d\in\R^n$ such that $\mA d=b$. We note that from the output of \Cref{thm:SLC-bound}, we only use the maximum support primal and dual solutions $x\out$ and $s\out$ in this section. We also fix $\beta\in (0,1/6]$ to be used throughout this section.

\subsubsection{Stage \rom{1}: Support partition}\label{sec:stage-I}
\Cref{cor:maxsupp}\ref{part:maxsupp-conic} postulates the existence of a partition $\Pc \cup \Dc = [n]$
and $x^\conic,s^\conic\in \R^n$, $y^\conic\in\R^m$ such that $\mA x^\conic=\0_m$, $x^\conic\ge \0$, $s^\conic=-\mA^\top y^\conic$, $s^\conic\ge \0$ with $\supp(x^\conic)=\Pc$ and  $\supp(s^\conic)=\Dc$.  Our goal in the first stage is to find this partition and the conic solutions $(x^\conic,s^\conic)$.

We can obtain these by running the IPM in \Cref{thm:SLC-bound} to solve $\MSys(\mA,\0_m,\0_n,1,1)$, that is,
\begin{equation}
\label{eq:big-M-sys-conic}
\begin{aligned}
\min \; & \pr{\1_n}{\bar x} \\
\mA x-\mA \bar x & = \0_m  \\
x+\hat x&=2\cdot\1_n\\
x, \bar x,\hat x & \geq \0_n \, ,
\end{aligned}
\qquad \qquad
\begin{aligned}
\max \; &-2\pr{\1_n}{\hat s}\\
\mA^\top y + s-\hat s & = \0_n \\
-\mA^\top y+\bar s&=\1_n\\
s,\bar s,\hat s & \geq \0_n \, .
\end{aligned}
\end{equation}
Clearly, we can initialize this with the solutions $(\1_n,\1_n,\1_n)$ and $(\0_m,\1_n,\1_n,\1_n)$ exactly on the central path. Let $(x^\conic,\bar x^\conic,\hat x^\conic)$ and $(y^\conic,s^\conic,\bar s^\conic,\hat s^\conic)$ denote the maximum support optimal solutions returned by the algorithm. The optimum value is 0, and therefore $\bar x^\conic=\hat s^\conic=\0_n$. We thus get $\mA x^\conic=\0_m$, $x^\conic\ge \0_n$ and $s^\conic=-\mA^\top y^\conic$, $s^\conic\ge \0_n$. Further, $\Pc=\supp(x^\conic)$ and $\Dc=\supp(s^\conic)$ form a partition of $[n]$, as the returned solutions are of maximum support according to \Cref{thm:SLC-bound}.

\subsubsection{Stage \rom{2}: Maximum-support feasibility}\label{sec:stage-II}
In the second stage, we define two subroutines to obtain the solutions asserted in \Cref{cor:maxsupp}\ref{part:maxsupp-dual} and \ref{part:maxsupp-primal}. The inputs and outputs of these subroutines are specified as follows.

\begin{subroutine}[\PMaxName]\label{sub:PMax}
$\PMax(\mA,b)$ takes a matrix $\mA\in\R^{m\times n}$ and a vector $b\in\R^m$ as input and returns either of the following outputs:
\begin{enumerate}[label=(\roman*)]
\item\label{itm:PMax-fea} A feasible solution $x^\pf\in\R^n$ to the primal system $\mA x=b$, $x\ge \0_n$ with maximum support $\Pf:=\supp(x^\pf)$, along with a dual certificate $(\tilde y,\tilde s)\in \R^m \times \R^n$ satisfying $\tilde s=-\mA^\top \tilde y$, $\tilde s\ge \0_n$ and $\supp(\tilde s)=[n]\setminus \Pf$.
\item\label{itm:PMax-inf} an infeasibility certificate $(\tilde y,\tilde s)\in \R^{m}\times \R^n$ for the primal system, where $\tilde s=-\mA^\top \tilde y$, $\tilde s\ge \0_n$ and $\pr{b}{\tilde y}>0$.
\end{enumerate}
\end{subroutine}

\begin{subroutine}[\DMaxName]\label{sub:DMax}
$\DMax(\mA,c)$ takes a matrix $\mA\in\R^{m\times n}$ and vector $c\in\R^n$ as input and returns either of the following outputs:
\begin{enumerate}[label=(\roman*)]
\item\label{itm:DMax-fea}  A feasible solution $(y^\df,s^\df)\in\R^m\times \R^n$ to the dual system $\mA^\top y+s=c$, $s\ge \0_n$ with maximum support $\Df:=\supp(s^\df)$, along with a primal certificate $\tilde x\in \R^n$ satisfying $\mA \tilde x=\0_m$, $\tilde x\ge \0_n$,  $\supp(\tilde x)=[n]\setminus \Df$.
\item\label{itm:DMax-inf} an infeasibility certificate $\tilde x\in\R^n$ for the dual system, where $\mA \tilde x=\0_m$, $\tilde x\ge \0_n$ and $\pr{c}{\tilde x}<0$.
\end{enumerate}
\end{subroutine}

We note that if the primal (dual) system is feasible, the output of $\PMax$ ($\DMax$) satisfies the following additional property.

\begin{lemma}\label{lem:b-y-c-x}
If $\mA x=b$, $x\ge \0_n$ is feasible, the output  $(\tilde y,\tilde s)$ of $\PMax(\mA,b)$ satisfies $\pr{b}{\tilde y}=0$.
If $\mA^\top y+s=c$, $s\ge \0_n$ is feasible, the output $\tilde x$ of $\DMax(\mA,c)$ satisfies  $\pr{c}{\tilde x}=0$.
\end{lemma}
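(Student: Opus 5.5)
The claim follows from weak duality together with the sign conditions in the output specifications of \PMax{} and \DMax; no property of the algorithms producing these outputs is needed. The primal statement comes first, and the dual one is symmetric.

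For the first claim, I assume $\mA x=b$, $x\ge\0_n$ is feasible. Then the output of $\PMax(\mA,b)$ cannot be of type \ref{itm:PMax-inf}. Indeed, for any feasible $x$, an infeasibility certificate would give
\[
\pr{b}{\tilde y}=\pr{\mA x}{\tilde y}=\pr{x}{\mA^\top\tilde y}=-\pr{x}{\tilde s}\le 0,
\]
which contradicts $\pr{b}{\tilde y}>0$. So the output is of type \ref{itm:PMax-fea}, with $x^\pf$ feasible and $\supp(x^\pf)=\Pf$. Applying the same identity to $x=x^\pf$ gives $\pr{b}{\tilde y}=-\pr{x^\pf}{\tilde s}$. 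The right-hand side is zero because $\supp(\tilde s)=[n]\setminus\Pf$ is disjoint from $\supp(x^\pf)$, so $x^\pf_i\tilde s_i=0$ for every $i$.

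For the second claim, I assume $\mA^\top y+s=c$, $s\ge\0_n$ is feasible. For any feasible $(y,s)$ and any $\tilde x$ with $\mA\tilde x=\0_m$, $\tilde x\ge\0_n$, we have
\[
\pr{c}{\tilde x}=\pr{\mA^\top y+s}{\tilde x}=\pr{y}{\mA\tilde x}+\pr{s}{\tilde x}=\pr{s}{\tilde x}\ge 0.
\]
This excludes output type \ref{itm:DMax-inf}. Taking $(y,s)=(y^\df,s^\df)$ gives $\pr{c}{\tilde x}=\pr{s^\df}{\tilde x}$, and this is zero since $\supp(\tilde x)=[n]\setminus\Df$ is disjoint from $\supp(s^\df)=\Df$.

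There is no real obstacle here. The only point that needs care is ruling out the infeasibility branch of each subroutine first, so that the disjoint-support property of the feasible-case certificates is actually available.
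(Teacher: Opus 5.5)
Your proof is correct and follows the paper's argument: in the feasible case the paper also computes $\pr{b}{\tilde y}=\pr{\mA x^\pf}{\tilde y}=-\pr{x^\pf}{\tilde s}=0$ using the complementary supports, and obtains the dual claim symmetrically. The only difference is that you explicitly rule out the infeasibility branch by weak duality, a step the paper leaves implicit.
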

\begin{proof}
In case~\ref{itm:PMax-fea}, let $x^\pf$ denote the primal output with support $\Pf$ such that $\supp(\tilde s)=[n]\setminus \Pf$. Then, $\pr{b}{\tilde y}=\pr{\mA x^\pf}{\tilde y}=\pr{x^\pf}{\mA^\top \tilde y}=-\pr{x^\pf}{\tilde s}=0$, because $x^\pf$ and $\tilde s$ have complementary supports. The second claim follows similarly.
\end{proof}
\paragraph{Dual feasibility.}
We start by describing $\DMax(\mA,c)$. Recall the conic solutions $x^\conic$, $(y^\conic,s^\conic)$ and the partition $(\Pc,\Dc)$ from Stage~\rom{1}.
First, we note that it  suffices to solve $\DMax(\mA_{\Pc},c_{\Pc})$. Given any solution $y\in\R^m$ such that $s \coloneqq c - \mA^\top y$ satisfies $s_{\Pc} \ge \0_{\Pc}$, the pair $(y+\alpha y^\conic,s+\alpha s^\conic)$ satisfies $s+\alpha s^\conic\ge \0_n$ and $\supp(s+\alpha s^\conic)= \supp(s_{\Pc}) \cup \Dc$ for sufficiently large $\alpha$. Hence, we may assume that $\Pc \neq \emptyset$. %

We solve $\DMax(\mA_{\Pc},c_{\Pc})$ by solving the system $\MSys(\mA_{\Pc},\0_m,c_{\Pc},M,K)$ for $M$ and $K$ chosen large enough to satisfy \eqref{eq:M-1} and \eqref{eq:M-2} in \Cref{lem:big-M} with the primal solution $d$ chosen as $x^\conic_{\Pc}>\0_{\Pc}$:
\begin{equation}
\label{eq:big-M-sys-2}
\begin{aligned}
\min \; \pr{c_{\Pc}}{x_{\Pc}}&+M\pr{\1_{\Pc}}{\bar x_{\Pc}} \\
\mA_{\Pc} x_{\Pc}-\mA_{\Pc} \bar x_{\Pc} & = \0_m  \\
x_{\Pc}+\hat x_{\Pc}&=2{K}\cdot\1_{\Pc}\\
x_{\Pc}, \bar x_{\Pc},\hat x_{\Pc} & \geq \0_{\Pc} \, ,
\end{aligned}
\qquad \qquad
\begin{aligned}
\max \; -2{K}&\pr{\1_{\Pc}}{\hat s_{\Pc}}\\
\mA_{\Pc}^\top y +s_{\Pc}-\hat s_{\Pc} & = c_{\Pc} \\
-\mA_{\Pc}^\top y+\bar s_{\Pc}&=M\1_{\Pc}\\
s_{\Pc},\bar s_{\Pc},\hat s_{\Pc} & \geq \0_{\Pc} \, .
\end{aligned}
\end{equation}
This is the instantiation of \ref{eq:big-M-sys} with $b = \0_m$ and cost vector $c_{\Pc}$ over the columns $\Pc$.
Note that $\mA_{\Pc}$ may not have full row rank, even though $\mA$ has full row rank.
Nevertheless, we can remove the redundant rows before running the IPM in \Cref{thm:SLC-bound}.
This is because any dual optimal solution of the resulting system can be trivially lifted to a dual optimal solution of the original system by filling $y$ with additional zeros. %

Let
$(x^\df_{\Pc},\bar x^\df_{\Pc},\hat x^\df_{\Pc})$  and $(y',s^\df_{\Pc},\bar s^\df_{\Pc},\hat s^\df_{\Pc})$  be the primal and dual optimal solutions to \eqref{eq:big-M-sys-2} obtained via \Cref{thm:SLC-bound}.
According to \Cref{lem:big-M}\ref{part:primal},
we have $\bar x^\df_{\Pc}=\0_{\Pc}$.
The dual objective function is nonpositive, so the optimum value is $\le 0$.
If the optimum value is $\pr{c_{\Pc}}{x^\df_{\Pc}}<0$, then we obtain a certificate that the dual program in $\LP(\mA_{\Pc},\0_m,c_{\Pc})$ is infeasible; we return the Farkas certificate  $\tilde x=(x^\df_{\Pc},\0_{\Dc})$ for the dual program  in $\LP(\mA,b,c)$. Otherwise, $\pr{c_{\Pc}}{x^\df_{\Pc}}=0$. Since the dual optimum value is also 0,  we also have $\hat s^\df_{\Pc}=\0_{\Pc}$.
 Therefore, $\mA_{\Pc}^\top y'+s^\df_{\Pc}=c_{\Pc}$, $s^\df_{\Pc}\ge \0_{\Pc}$, and $(y',s^{\df}_{\Pc})$ is a maximum support such solution by \Cref{thm:SLC-bound}. As noted above, $y^\df\coloneqq y'+\alpha y^\conic$ and $s^\df\coloneqq c-\mA^\top y^\df$ for large enough $\alpha$ give the output $(y^\df,s^\df)$ of $\DMax(\mA,c)$. Further, we can return $\tilde x=(x^\df_{\Pc},\0_{\Dc})$ as the corresponding primal certificate.

\paragraph{Primal feasibility.}
We now turn to the subroutine $\PMax(\mA,b)$.
We may assume that $\Dc \neq \emptyset$, as otherwise $d + \alpha x^\conic$ satisfies $\mA(d+\alpha x^\conic) = b$ and $d + \alpha x^\conic > \0_n$ for sufficiently large $\alpha\geq 0$.%
We define  $\mPi$ as the projection matrix onto $\ker(\mA_{\Pc}^\top)$ as in \eqref{eq:proj-matrix}, that is,
\[
\mPi\coloneqq \mI_m - \mA_{\Pc} \mA_{\Pc}^+\,,\quad
    \mH\coloneqq \mPi \mA_{\Dc}\,,\quad\mbox{and}\quad h\coloneqq \mPi b\, ,
\]
with the convention that $\mPi \coloneqq \mI_m$ if $\Pc = \emptyset$. Recall \Cref{lem:lift} on mapping solutions between the systems $\mA x=b$ and $\mH x_{\Dc}=h$.

From Stage~\rom{1}, we have the conic dual solution $(y^\conic,s^\conic)$ such that
$s^\conic=-\mA^\top y^\conic$, $s^\conic\ge \0$ with $\supp(s^\conic)=\Dc$.
Let us solve
  $\MSys(\mH,h,s^\conic_{\Dc},M,K)$ with $s^\conic_{\Dc}>\0_{\Dc}$, where $M$ and $K$ are chosen large enough to satisfy \eqref{eq:M-1} and \eqref{eq:K-2} in \Cref{lem:big-M}:
\begin{equation}
\label{eq:big-M-sys-3}
\begin{aligned}
\min \; \pr{s^\conic_{\Dc}}{x_{\Dc}}+{M}&\pr{\1_{\Dc}}{\bar x_{\Dc}} \\
\mH x_{\Dc}-\mH \bar x_{\Dc} & = h  \\
x_{\Dc}+\hat x_{\Dc}&=2K\1_{\Dc}\\
x_{\Dc}, \bar x_{\Dc},\hat x_{\Dc} & \geq \0_{\Dc} \, ,
\end{aligned}
\qquad \qquad
\begin{aligned}
\max \; \pr{h}{y}-&2K\pr{\1_{\Dc}}{\hat s_{\Dc}}\\
\mH^\top y + s_{\Dc}-\hat s_{\Dc} & = s^\conic_{\Dc} \\
-\mH^\top y+\bar s_{\Dc}&={M}\1_{\Dc}\\
s_{\Dc},\bar s_{\Dc},\hat s_{\Dc} & \geq \0_{\Dc} \, .
\end{aligned}
\end{equation}
This is the instantiation of \ref{eq:big-M-sys} with cost vector $s^\conic_{\Dc}$ on the projected system $(\mH,h)$.
{Note again that $\mH$ may not have full row rank, but we can remove the redundant rows from $\mH$ before running the IPM in \Cref{thm:SLC-bound}.} %

The next lemma shows how we can construct the output for $\PMax(\mA,b)$ from the solution to this system. The big-$M$ solve produces the optimum on the block $\Dc$; we write its main variable as $x^\pf_{\Dc}$, and the recovery extends it to a full feasible vector $x^\pf$ by constructing the $\Pc$-block.
\begin{lemma}\label{lem:PMax-recover}
 Let $(x^\pf_{\Dc},\bar x^\pf_{\Dc},\hat x^\pf_{\Dc})$ and $(y^\pf,s^\pf_{\Dc},\bar s^\pf_{\Dc},\hat s^\pf_{\Dc})$ be the primal and dual optimal solutions to \eqref{eq:big-M-sys-3} obtained via \Cref{thm:SLC-bound}.
 Let $x^\conic$ and $(y^\conic,s^\conic)$ be the maximum support primal and dual conic solutions from Stage~\rom{1} such that $\mA x^\conic=\0_m$, $x^\conic\ge \0$, $s^\conic=-\mA^\top y^\conic$, $s^\conic\ge \0$ with $\supp(x^\conic)=\Pc$ and  $\supp(s^\conic)=\Dc$.
  Let
\[
    z\coloneqq \mA_{\Pc}^+(b-\mA_{\Dc} x^\pf_{\Dc})\, .
\]
Let us pick $\alpha\ge 0$ such that $z+\alpha x^\conic_{\Pc}>\0$, and let
\[
\begin{aligned}
    x^\pf&=(x^\pf_{\Dc},x^\pf_{\Pc})\coloneqq \left(x^\pf_{\Dc},
   z\right)+\alpha x^\conic\, ,
    \\
     \tilde y&\coloneqq \mPi y^\pf+y^\conic\, ,\quad\mbox{and}\quad
    \tilde s\coloneqq -\mA^\top \tilde y\, .
\end{aligned}
\]
Then, $\tilde s\ge \0_n$, $\tilde s_{\Pc}=\0_{\Pc}$, and either
\begin{enumerate}
    \item $\pr{b}{\tilde y}>0$, or
    \item $\pr{b}{\tilde y}=0$, $\mA x^\pf=b$, $x^\pf\ge \0$, and $\supp(x^\pf)\cup \supp(\tilde s)=[n]$.
\end{enumerate}
\end{lemma}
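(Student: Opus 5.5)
The plan is to verify the two sign claims on $\tilde s$ block by block, then compute $\pr{b}{\tilde y}$ explicitly via complementary slackness in \eqref{eq:big-M-sys-3}. The identity I am aiming for is
\[
\pr{b}{\tilde y}=\pr{\bar x^\pf_{\Dc}}{s^\pf_{\Dc}}\ \ge 0\, .
\]
The first preliminary is that $\hat s^\pf_{\Dc}=\0_{\Dc}$. This follows from \Cref{lem:big-M}\ref{part:dual}, since the cost vector $s^\conic_{\Dc}$ is strictly positive and $K$ was chosen according to \eqref{eq:K-2}. Hence the dual constraints of \eqref{eq:big-M-sys-3} read $\mH^\top y^\pf+s^\pf_{\Dc}=s^\conic_{\Dc}$ and $\bar s^\pf_{\Dc}=M\1_{\Dc}+s^\conic_{\Dc}-s^\pf_{\Dc}$.

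\textbf{Signs of $\tilde s$.} I will use that $\mPi$ is a symmetric projection with $\mPi\mA_{\Pc}=\0$, so $\mA_{\Pc}^\top\mPi=\0$. On $\Pc$ this gives $\tilde s_{\Pc}=-\mA_{\Pc}^\top\mPi y^\pf-\mA_{\Pc}^\top y^\conic=\0+s^\conic_{\Pc}=\0_{\Pc}$. On $\Dc$ it gives $\mA_{\Dc}^\top\mPi y^\pf=\mH^\top y^\pf=s^\conic_{\Dc}-s^\pf_{\Dc}$ and $\mA_{\Dc}^\top y^\conic=-s^\conic_{\Dc}$. Therefore $\tilde s_{\Dc}=s^\pf_{\Dc}\ge\0_{\Dc}$, which proves $\tilde s\ge\0_n$ and $\tilde s_{\Pc}=\0_{\Pc}$.

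\textbf{The value $\pr{b}{\tilde y}$.} Primal feasibility gives $\mH(x^\pf_{\Dc}-\bar x^\pf_{\Dc})=h$. By \Cref{lem:lift} there is some $x'$ with $x'_{\Dc}=x^\pf_{\Dc}-\bar x^\pf_{\Dc}$ and $\mA x'=b$. Then
\[
\pr{b}{\tilde y}=\pr{x'}{\mA^\top\tilde y}=-\pr{x'}{\tilde s}=-\pr{x^\pf_{\Dc}-\bar x^\pf_{\Dc}}{s^\pf_{\Dc}}\, ,
\]
where the last step uses $\tilde s_{\Pc}=\0_{\Pc}$ and $\tilde s_{\Dc}=s^\pf_{\Dc}$. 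Complementary slackness $\pr{x^\pf_{\Dc}}{s^\pf_{\Dc}}=0$ then yields the target identity, so $\pr{b}{\tilde y}\ge 0$. If it is positive we are in case~1.

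\textbf{Case $\pr{b}{\tilde y}=0$.} The key step, which I expect to be the only real obstacle, is to show that $\bar x^\pf_{\Dc}=\0_{\Dc}$, since \Cref{lem:big-M}\ref{part:primal} is not available here. Suppose $\bar x^\pf_i>0$ for some $i\in\Dc$. Complementarity forces $\bar s^\pf_i=0$, so $s^\pf_i=M+s^\conic_i>0$, and then $\pr{\bar x^\pf_{\Dc}}{s^\pf_{\Dc}}>0$, a contradiction. With $\bar x^\pf_{\Dc}=\0_{\Dc}$ we get $\mH x^\pf_{\Dc}=h$. By \Cref{lem:lift}, $\mA_{\Dc}x^\pf_{\Dc}+\mA_{\Pc}z=b$, and adding $\alpha x^\conic\in\ker(\mA)$ keeps $\mA x^\pf=b$. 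Nonnegativity holds because $x^\pf_{\Dc}\ge\0$ and $x^\pf_{\Pc}=z+\alpha x^\conic_{\Pc}>\0$; this also puts $\Pc\subseteq\supp(x^\pf)$.

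\textbf{Support condition.} It remains to cover $\Dc$. The solutions returned by \Cref{thm:SLC-bound} are strictly complementary: their supports partition all variables of \eqref{eq:big-M-sys-3}. Hence for each $i\in\Dc$ either $x^\pf_i>0$ or $s^\pf_i=\tilde s_i>0$, which gives $\supp(x^\pf)\cup\supp(\tilde s)=[n]$.
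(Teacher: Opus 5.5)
Your proof is correct, and it follows the paper for the signs of $\tilde s$, the lifting step, and the support condition. The key step, the dichotomy on $\pr{b}{\tilde y}$, takes a different route.

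The paper argues by weak duality. It checks that $-y^\conic$, completed with $s=\hat s=\0_{\Dc}$ and $\bar s=M\1_{\Dc}+s^\conic_{\Dc}>\0_{\Dc}$, is dual feasible for \eqref{eq:big-M-sys-3} with value $-\pr{b}{y^\conic}$. Optimality of $y^\pf$ then gives $\pr{b}{\tilde y}\ge 0$. In the equality case this reference point is itself dual optimal, and complementary slackness against its strictly positive $\bar s$ forces $\bar x^\pf_{\Dc}=\0_{\Dc}$.

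You instead lift the not necessarily nonnegative solution $x^\pf_{\Dc}-\bar x^\pf_{\Dc}$ of $\mH x=h$ via \Cref{lem:lift} and pair it with $\tilde s$. This gives the exact identity $\pr{b}{\tilde y}=\pr{\bar x^\pf_{\Dc}}{s^\pf_{\Dc}}$. Using $\bar x^\pf_{\Dc}\circ\bar s^\pf_{\Dc}=\0_{\Dc}$ and $\bar s^\pf_{\Dc}=M\1_{\Dc}+s^\conic_{\Dc}-s^\pf_{\Dc}$, this equals $\pr{\bar x^\pf_{\Dc}}{M\1_{\Dc}+s^\conic_{\Dc}}$, which is exactly the slack the paper's duality comparison measures implicitly.

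Your version makes it immediate that $\pr{b}{\tilde y}=0$ holds precisely when $\bar x^\pf_{\Dc}=\0_{\Dc}$, and it avoids constructing and verifying an auxiliary dual point. The paper's version stays on the dual side and mirrors the reference-solution arguments used in \Cref{lem:big-M}.
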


\begin{proof}
Since $s^\conic_{\Dc}>\0_{\Dc}$, \Cref{lem:big-M}\ref{part:dual} implies that $\hat s^\pf_{\Dc}=\0_{\Dc}$, that is, $\mH^\top y^\pf+s^\pf_{\Dc}=s^\conic_{\Dc}$.
Substituting the definitions $\mH=\mPi \mA_{\Dc}$ and $s^\conic_{\Dc}=-\mA^\top_{\Dc} y^\conic$, we get $\mA_{\Dc}^\top \mPi y^\pf+s^\pf_{\Dc}=-\mA^\top_{\Dc} y^\conic$, that is,
\[
  \mA_{\Dc}^\top \tilde y+s^\pf_{\Dc}=\0_{\Dc}\, ,
\]
i.e., $\tilde s_{\Dc}=s^\pf_{\Dc}\ge \0_{\Dc}$.
Since $\mPi$ is the orthogonal projection matrix onto $\ker(\mA_{\Pc}^\top)$ (\Cref{lem:pseudoinverse-prop}), and $\mA^\top_{\Pc} y^\conic=\0_{\Pc}$, we see that $ \mA_{\Pc}^\top \tilde y=\0_{\Pc}$. In summary, $\tilde s\ge \0_n$ and $\tilde s_{\Pc}=\0_{\Pc}$.

Let us now consider the dual objective $\pr{h}{y}-2K\pr{\1_{\Dc}}{\hat s_{\Dc}}$  in \eqref{eq:big-M-sys-3}.
First, note that $\supp(s^\conic)=\Dc$, together with complementarity $\Pc = [n] \setminus \Dc$, implies $y^\conic\in \ker(\mA_{\Pc}^\top)$ and therefore $\mPi y^\conic=y^\conic$. In particular, $\mH^\top y^\conic=\mA_{\Dc}^\top\mPi y^\conic=\mA_{\Dc}^\top y^\conic$, and $\pr{h}{y^\conic}=\pr{\mPi b}{y^\conic}=\pr{b}{\mPi y^\conic}=\pr{b}{y^\conic}$.
Thus,
we get that $(y,s,\bar s,\hat s) \coloneqq(-y^\conic,\0_{\Dc},\1_{\Dc}-\mA_{\Dc}^\top y^\conic,\0_{\Dc})$ is a dual feasible solution with objective value $-\pr{h}{y^\conic}=-\pr{b}{y^\conic}$; here $\bar s_{\Dc} = \1_{\Dc}-\mA_{\Dc}^\top y^\conic=\1_{\Dc}+s^\conic_{\Dc}\ge \1_{\Dc} > \0_{\Dc}$.
Since $\hat s^\pf_{\Dc}=\0_{\Dc}$, the dual optimum is $ \pr{h}{y^\pf}\ge -\pr{b}{y^\conic}$, and therefore
\[
    \pr{b}{\tilde y}=\pr{b}{\mPi y^\pf+y^\conic}=\pr{h}{y^\pf}+\pr{b}{y^\conic}\ge 0\, .
 \]
If $\pr{b}{\tilde y}>0$, then we return $(\tilde y,\tilde s)$ as an infeasibility certificate for the system $\mA x=b$, $x\ge \0_n$ as the output of $\PMax(\mA,b)$.

The remaining case is when $\pr{b}{\tilde y}=0$, or equivalently, when $(-y^\conic,\0_{\Dc},\1_{\Dc}-\mA_{\Dc}^\top y^\conic,\0_{\Dc})$ is a dual optimal solution.
Complementary slackness between this and the primal  optimal solution $(x^\pf_{\Dc},\bar x^\pf_{\Dc},\hat x^\pf_{\Dc})$ implies $\bar x^\pf_{\Dc}=\0_{\Dc}$. We show that $\mA x^\pf=b$, $x^\pf\ge \0_n$. First, $\mA_{\Dc} x^\pf_{\Dc}+\mA_{\Pc} z=b$ follows by \Cref{lem:lift}; since $\mA x^\conic=\0_m$, we obtain $\mA x^\pf=b$. For nonnegativity, we have $x^\pf_{\Dc}\ge \0_{\Dc}$, and $\alpha$ can be chosen as required in the statement because $\supp(x^\conic)=\Pc$. This immediately gives $x^\pf_{\Pc}=z+\alpha x^\conic_{\Pc}>\0_{\Pc}$.

Finally, $\supp(x^\pf)\cup \supp(\tilde s)=[n]$ follows because we started with strictly complementary optimal solutions to \eqref{eq:big-M-sys-3}.
In particular, $\supp(x^\pf_{\Dc})$ and $\supp(s^\pf_{\Dc})$ form a partition of $\Dc$.
The proof is complete by noticing that $\tilde s_{\Dc} = s^\pf_{\Dc}$, and that $\supp(x^\pf)=\Pc\cup \supp(x^\pf_{\Dc})$.
\end{proof}

\subsubsection{Stage \rom{3}: Optimal partition and solution}\label{sec:stage-III}

The overall algorithm first calls $\PMax(\mA,b)$ and $\DMax(\mA,c)$. Assume that both calls return feasible outputs. Let $x^\pf$ be the maximum support primal solution returned by $\PMax(\mA,b)$ with support
$\Pf=\supp(x^\pf)$ and dual certificate $(\tilde y,\tilde s)$, and let $(y^\df,s^\df)$ be the maximum support dual solution returned by $\DMax(\mA,c)$ with $\Df=\supp(s^\df)$ and primal certificate $\tilde x$.
Let
\[
    R\coloneqq \Pf\cap \Df\, ,\quad I\coloneqq \Pf\setminus \Df\, ,\quad J\coloneqq \Df\setminus \Pf\, .
\]
 Since $\Pc\subseteq \Pf$ and $\Dc\subseteq \Df$, it follows that $[n]=\Pc\cup \Dc\subseteq \Pf\cup \Df=R\cup I\cup J$. Further, note that for the certificates we have $\supp(\tilde s)=[n]\setminus \Pf=J$, and $\supp(\tilde x)=[n]\setminus \Df=I$. Thus $I$ is the set of variables where every dual feasible solution vanishes, hence any primal solution is complementary on this set. Analogously for $J$ with roles reversed.
 Stage III resolves the variables in $R$, where both primal and dual feasible solutions can be strictly positive.
 We may assume that $R\neq \emptyset$, as otherwise $x^{\pf}$ and $(y^{\df}, s^{\df})$ are maximum support primal and dual optimal solutions to \ref{LP_primal_dual}. %

We now remove all coordinates in $J$ and project out the coordinates in $I$. That is, we let
    \[
\mPi\coloneqq \mI_m - \mA_{I} \mA_{I}^+\,,\quad
    \mH\coloneqq \mPi \mA_{R}\,,\quad\mbox{and}\quad h\coloneqq \mPi b\, ,
\]
{with the convention that $\mPi \coloneqq \mI_m$ if $I = \emptyset$}.
We then solve $\MSys(\mH,h,s_{R}^\df,M,K)$ with positive cost vector $s_{R}^\df>\0_{R}$ and given primal solution $d=x_{R}^\pf>\0_{R}$:
\begin{equation}
\label{eq:big-M-sys-4}
\begin{aligned}
\min \; \pr{s_{R}^\df}{x_R}+&M\pr{\1_R}{\bar x_R} \\
\mH x_R-\mH \bar x_R & = h  \\
x_R+\hat x_R&=2K\1_R\\
x_R, \bar x_R,\hat x_R & \geq \0_R \, ,
\end{aligned}
\qquad \qquad
\begin{aligned}
\max \; \pr{h}{y}-&2K\pr{\1_R}{\hat s_R}\\
\mH^\top y + s_R-\hat s_R & = s^{\df}_R \\
-\mH^\top y+\bar s_R&=M\1_R\\
s_R,\bar s_R,\hat s_R & \geq \0_R \, .
\end{aligned}
\end{equation}
This is the instantiation of \ref{eq:big-M-sys} on the projected system $(\mH,h)$ with cost $s^\df_{R}$, where $M$ and $K$ are chosen large enough to satisfy \eqref{eq:M-1} and \eqref{eq:N-3} in \Cref{lem:big-M}.
Note again that $\mH$ may not have full row rank, but we can remove the redundant rows from $\mH$ before running the IPM in \Cref{thm:SLC-bound}.

The next lemma, analogously to \Cref{lem:PMax-recover}, shows how to convert these to optimal primal and dual solutions to \ref{LP_primal_dual}.
\begin{lemma}\label{lem:opt-recover}
 Let $(x^\opt_{R},\bar x^\opt_{R},\hat x^\opt_{R})$ and $(y^\opt,s^\opt_{R},\bar s^\opt_{R},\hat s^\opt_{R})$ be the primal and dual optimal solutions to \eqref{eq:big-M-sys-4} obtained via \Cref{thm:SLC-bound}. Let $(\tilde y,\tilde s)$ be the certificate returned by $\PMax(\mA,b)$ and $\tilde x$ the certificate returned by $\DMax(\mA,c)$ such that
\[
\begin{aligned}
    \tilde s&=-\mA^\top \tilde y\, ,\, \pr{b}{\tilde y}=0\, ,\,\tilde s\ge \0_n\, ,\, \supp(\tilde s)=J\, ,\quad\mbox{and}\quad\\
     \mA \tilde x&=\0_m\,,\, \pr{c}{\tilde x}=0\,,\,\tilde x\ge \0_n\,,\,\supp(\tilde x)=I\, .
     \end{aligned}
\]
Let us define
\[
    z\coloneqq \mA_{I}^+(b-\mA_{R} x^\opt_{R})\, ,
\]
and pick $\alpha,\lambda\ge 0$ such that $z+\alpha \tilde x_{I}> \0_{I}$ and $\mA^{\top}_{J}(\mPi y^\opt + y^\df + \lambda \tilde y)<c_{J}$. Let
\[
\begin{aligned}
    x^\star&=(x^\star_{R},x^\star_{I},x^\star_{J})\coloneqq \left(x^\opt_{R},z,\0_{J}\right)+\alpha \tilde x\, ,
    \\
    y^\star&\coloneqq \mPi y^\opt+y^\df+\lambda \tilde y\, ,\quad\mbox{and}\quad
    s^\star\coloneqq c-\mA^\top  y^\star\, .
\end{aligned}
\]
Then, $x^\star$ and $(y^\star,s^\star)$ are maximum support primal and dual optimal solutions to \ref{LP_primal_dual}.
\end{lemma}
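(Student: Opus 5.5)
The plan follows \Cref{lem:PMax-recover}: verify primal feasibility and dual feasibility of $(x^\star,y^\star,s^\star)$, then complementarity, then maximality of supports.

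\emph{Primal feasibility.} By \Cref{lem:big-M}\ref{part:both}, applied with $d=x^\pf_R>\0_R$ and $c=s^\df_R>\0_R$, every optimal pair of \eqref{eq:big-M-sys-4} has $\bar x^\opt_R=\hat s^\opt_R=\0_R$. Hence $x^\opt_R\ge\0_R$ is optimal for $\min\{\pr{s^\df_R}{x_R}: \mH x_R=h,\ x_R\ge \0_R\}$, and $(y^\opt,s^\opt_R)$ is optimal for its dual.

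By \Cref{lem:lift}, $\mA_R x^\opt_R+\mA_I z=b$. Together with $\mA\tilde x=\0_m$, this gives $\mA x^\star=b$. Nonnegativity holds because $x^\star_R=x^\opt_R\ge\0$, $x^\star_I=z+\alpha\tilde x_I>\0_I$ by the choice of $\alpha$ (possible since $\supp(\tilde x)=I$), and $x^\star_J=\0_J$.

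\emph{Dual feasibility.} We expand
\[
s^\star=s^\df-\mA^\top\mPi y^\opt+\lambda\tilde s .
\]
The coordinates split as follows.
\begin{itemize}
\item On $I$: $\mA_I^\top\mPi=\0$ and $\tilde s_I=\0$, and $s^\df_I=\0$ since $I\cap\Df=\emptyset$. So $s^\star_I=\0_I$.
\item On $R$: $\tilde s_R=\0$, so $s^\star_R=s^\df_R-\mH^\top y^\opt=s^\opt_R\ge\0$.
\item On $J$: $s^\star_J>\0_J$ directly by the choice of $\lambda$, which is possible since $\tilde s_J>\0_J$.
\end{itemize}
Hence $s^\star\ge\0_n$.

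\emph{Optimality.} The gap is
\[
\pr{x^\star}{s^\star}=\pr{x^\opt_R}{s^\opt_R}+\pr{x^\star_I}{\0_I}+\pr{\0_J}{s^\star_J}=\pr{x^\opt_R}{s^\opt_R},
\]
and this equals $0$ by complementary slackness in \eqref{eq:big-M-sys-4}. So the pair is optimal.

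\emph{Maximum support.} This is the step needing most care. The IPM output is strictly complementary, so $\supp(x^\opt_R)\cup\supp(s^\opt_R)=R$. Adding $I\subseteq\supp(x^\star)$ and $J\subseteq\supp(s^\star)$ gives $\supp(x^\star)\cup\supp(s^\star)=[n]$. A strictly complementary optimal pair is automatically of maximum support: any optimal $x'$ satisfies $\pr{x'}{s^\star}=0$, so $\supp(x')\subseteq[n]\setminus\supp(s^\star)=\supp(x^\star)$, and symmetrically for the dual.

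One subtlety concerns \Cref{lem:big-M}, which needs $\mA d=b$ for the projected system. Here $\mH x^\pf_R=\mPi(b-\mA_I x^\pf_I-\mA_J x^\pf_J)=h$, using $\mPi\mA_I=\0$ and $x^\pf_J=\0$. The argument also uses that the rows of $\mH$ removed before running the IPM do not affect $y^\opt$ beyond zero padding.
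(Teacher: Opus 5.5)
Your proof is correct and follows essentially the same route as the paper's: use \Cref{lem:big-M}\ref{part:both} to get $\bar x^\opt_R=\hat s^\opt_R=\0_R$, lift the primal via \Cref{lem:lift} plus $\alpha\tilde x$, check $s^\star$ blockwise on $R$, $I$, $J$, and conclude from strict complementarity on $R$ together with $I\subseteq\supp(x^\star)$ and $J\subseteq\supp(s^\star)$. The paper leaves two details implicit that you spell out: that $\mH x^\pf_R=h$, so $d=x^\pf_R$ is a legitimate input to \Cref{lem:big-M}, and the short argument that a strictly complementary optimal pair has maximum support.
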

\begin{proof}
Since $x^\pf_{R},s^\df_{R}>\0_{R}$, \Cref{lem:big-M}\ref{part:both} implies that $x^\opt_{R}$ and $(y^\opt,s^\opt_{R})$ are primal and dual optimal solutions to $\LP(\mH,h,s^\df_{R})$. We show that their modifications $x^\star$ and $(y^\star,s^\star)$  give primal and dual optima on the original system \ref{LP_primal_dual}. The supports $\supp(\tilde x)=I$ and   $\supp(\tilde s)=J$ enable the selection of suitable $\alpha,\lambda\ge 0$.

On the primal side, the choice of $z$, with the same argument as in \Cref{lem:lift},  and $\mA \tilde x=\0_m$ guarantees that $\mA x^\star=b$. We have $x^\star\ge \0_n$ by $x^\opt_{R}\ge \0_{R}$ and by the choice of $\alpha$. Thus, $x^\star$ is primal feasible.

Turning to the dual side, on the coordinates in $R$ we get
\[
  \mH^\top y^\opt+s^\opt_{R}=s_{R}^\df\,,\quad   \mA_{R}^\top y^\df+s_{R}^\df=c_{R}\, ,\quad \mbox{and}\quad \mA^\top_{R} \tilde y=\0_{R}\, .
\]
 Since $\mH^\top y^\opt=\mA_{R}^\top \mPi y^\opt$, we get that
$s^\star_{R}=s^\opt_{R}\ge \0_{R}$. On the coordinates in $I$ we have
\[
   \mA_{I}^\top \mPi y^\opt=\0_{I}\,\,,\quad   \mA_{I}^\top y^\df=c_{I}\, ,\quad \mbox{and}\quad \mA^\top_{I} \tilde y=\0_{I}\, ,
\]
where the first equation holds because $\mPi$ is an orthogonal projection matrix onto $\ker(\mA_{I}^\top)$. Hence, $s^\star_{I}=\0_{I}$. Finally $s^\star_{J}>\0_{J}$ by the choice of $\lambda$.

To show optimality and strict complementarity, we note that $\supp(x^\star)=\supp(x^\opt_{R})\cup I$ and $\supp(s^\star)=\supp(s^\opt_{R})\cup J$. The optima $x^\opt_{R}$ and $s^\opt_{R}$ are strictly complementary on $R$, since \Cref{thm:SLC-bound} returns maximum-support optimal solutions to \eqref{eq:big-M-sys-4}; hence $\supp(x^\opt_{R})$ and $\supp(s^\opt_{R})$ partition $R$. Together with the disjoint sets $I,J$, it follows that $[n]$ is the disjoint union of $\supp(x^\star)$ and $\supp(s^\star)$, proving that $x^\star$ and $(y^\star,s^\star)$ are maximum support optima.
\end{proof}

\subsection{Proof of a weaker version of Theorem~\ref{thm:main-1}} \label{sec:proof-weaker}

The proof of Theorem~\ref{thm:main-1} will require the stronger form of the initialization process described in \Cref{sec:analytic-init}. This also requires the dual certificates provided in \Cref{thm:SLC-bound}; on the other hand, one can use simpler systems in Stages~\rom{2} and~\rom{3}.

We now explain the slightly weaker result that could be derived based on the algorithm in this section. First, the conic system \eqref{eq:big-M-sys-conic}  used in
Stage~\rom{1} is the same as \eqref{sys:2}, except that the right-hand side is $2\cdot \1_n$ in \eqref{eq:big-M-sys-conic}. This scaling clearly maintains the same straight line complexities. Hence, the straight line complexities of the variables in \eqref{eq:big-M-sys-conic} are the same as those for \eqref{sys:2}.

Consider now \eqref{eq:big-M-sys-2} solved in Stage~\rom{2} for dual feasibility. This arises from
\begin{equation}
\min \pr{c}{x}+M\pr{\1_n}{\bar x}\, , \quad \mA x-\mA \bar x=\0_m\,,\quad x+\hat x=2K\cdot \1_n\, , \quad x,\bar x,\hat x\ge \0_n\, ,\tag{S-Dual'} \label{sys:3p}
\end{equation}
after deleting the variables corresponding to $\Dc$.
Similarly, \eqref{eq:big-M-sys-3}  for primal feasibility arises from
\begin{equation}
\min \pr{s^\conic}{x}+ M\pr{\1_n}{\bar x}\, ,\quad \mA x-\mA \bar x=b\,,\quad x+\hat x=2K\cdot\1_n\, ,\quad x,\bar x,\hat x\ge \0_n\, , \tag{S-Primal'} \label{sys:4p}
\end{equation}
after projecting out the variables corresponding to $\Pc$.
Finally, \eqref{eq:big-M-sys-4} solved for optimization in Stage~\rom{3} arises from
\begin{equation}
\min \pr{s^\df}{x}+M\pr{\1_n}{\bar x}\, ,\quad \mA x-\mA \bar x=b\,,\quad x+\hat x=2K\cdot\1_n\, ,\quad x,\bar x,\hat x\ge \0_n\, , \tag{S-Opt'} \label{sys:1p}
\end{equation}
after deleting and projecting out variables.

Hence, one could derive a version of \Cref{thm:main-1} where the running time is bounded in terms of the maximum of the straight line complexities of the systems \eqref{sys:1p}, \eqref{sys:2}, \eqref{sys:3p}, and \eqref{sys:4p}, for suitable choices of $M$, $K$, and the cost functions $(c, M\1_n)$, $(s^\conic, \1_n)$, and $(s^\df, M\1_n)$, and after deleting and projecting variables. We do not state such a form; note that \eqref{sys:1}, \eqref{sys:2}, \eqref{sys:3}, and \eqref{sys:4} in \Cref{thm:main-1} do not require such parameter choices, and \Cref{thm:main-1} does not require deleting and projecting variables.

However, we note that for many natural classes of LPs, the systems \eqref{sys:1p}, \eqref{sys:2}, \eqref{sys:3p}, and \eqref{sys:4p} would still fall into the same class, and the class is closed under variable deletion and projection. %
 For example, if the starting LP corresponds to a 2VPI primal-dual pair, all obtained systems also belong to this family, after straight-line-complexity-preserving transformations; we omit the details. In particular, for generalized flow problems, variable deletion and projection correspond to deleting and contracting arcs, respectively.

\section{Multi-stage analytic center initialization}\label{sec:analytic-init}

We now present the second initialization approach that yields the guarantees in \Cref{thm:main-1}. We make use of the dual certificates provided in \Cref{thm:SLC-bound} that show that the optimal solutions returned are near the analytic centers of their optimal faces.
We proceed in the same three stages as in \Cref{sec:big-M-alg}.
We will use this theorem for different neighborhoods in the three stages, with neighborhood parameters
\begin{equation}
 \label{def:beta-parameters}
 \beta^\conic \coloneqq 2^{-16}\, ,\quad \beta^\feas \coloneqq 2^{-8}\, ,\quad  \beta^\opt\coloneqq  2^{-4}\,
\end{equation}
for the conic, feasibility, and optimization stages, respectively.
These parameters leave enough room for the certificates to hold only with $3\beta$ in place of $\beta$, as happens in the PSPACE implementation; see the proof of \Cref{thm:main-1}.
 Again, we use a vector $d\in\R^n$ such that $\mA d=b$.
Such a $d$ with $\size(d)=\poly(n)\size(\mA,b)$ can be computed in $O(n^5)$ operations as $d\coloneqq\mA^+b$, using \Cref{lem:linalg}\ref{it:linalg-pinv}.
\subsection{Stage \rom{1}: Conic feasibility}\label{sec:ac-stage-I}
This stage is identical to the algorithm in \Cref{sec:big-M-alg}. We run the IPM in \Cref{thm:SLC-bound} to solve $\MSys(\mA,\0_m,\0_n,1,1)$ (i.e., solving system \eqref{eq:big-M-sys-conic}) using the neighborhood parameter $\beta^\conic$.
However, from the output we will also require the dual certificate. That is, we obtain:
\begin{itemize}
    \item A partition $\Pc \cup \Dc = [n]$, and primal and dual optimal solutions
$(x^\conic,\bar x^\conic,\hat x^\conic)$ and $(y^\conic,s^\conic,\bar s^\conic,\hat s^\conic)$ to \eqref{eq:big-M-sys-conic} with $\bar x^\conic=\hat s^\conic=\0_n$, and
$\supp(x^\conic)=\Pc$ and  $\supp(s^\conic)=\Dc$.
\item Vectors $(v^\conic,\hat v^\conic)\in \R^{2n}$, $u^\conic\in\R^{m}$,  $v^\conic=\mA^\top u^\conic+\hat v^\conic$ such that
\begin{equation}\label{eq:conic-cert-B}
    \left\|(x^\conic_{\Pc}\circ v^\conic_{\Pc},\hat x^\conic_{\Pc}\circ \hat v^\conic_{\Pc})-\1_{2|\Pc|}\right\|_2 \le\beta^\conic\, ,
\end{equation}
\item Vectors $(w^\conic,\bar w^\conic)\in\R^{2n}$, $\mA(w^\conic-\bar w^\conic)=\0_m$ such that
\begin{equation}\label{eq:conic-cert-N}
    \left\|(s^\conic_{\Dc}\circ w^\conic_{\Dc},\bar s^\conic_{\Dc}\circ \bar w^\conic_{\Dc})-\1_{2|\Dc|}\right\|_2 \le\beta^\conic\, .
    \end{equation}
\end{itemize}
We want to emphasize that the conditions in \eqref{eq:conic-cert-B} and \eqref{eq:conic-cert-N} imply that $w^\conic_{\Dc}, \bar w^\conic_{\Dc}, v^\conic_{\Pc},\hat v^\conic_{\Pc}$ are all strictly positive.
Here, we only presented the parts of the certificates that will be relevant in Stage \rom{2}.

\subsection{Stage \rom{2}: Maximum support feasible solutions}\label{sec:ac-stage-II}

In the second stage, we define two modifications of $\PMax$ and $\DMax$~that deliver stronger outputs as follows. Here, $\Pc$ and $\Dc$ are the partition from Stage \rom{1}.

\begin{subroutine}[\PMaxAName]\label{sub:PMaxA}
$\PMaxA(\mA,b)$ takes a matrix $\mA\in\R^{m\times n}$ and vector $b\in\R^m$ as input and returns either of the following outputs:
\begin{enumerate}[label=(\roman*)]
\item\label{itm:PMaxA-fea} A maximum support solution $x^\pf\in\R^n$ to the primal feasibility system $\mA x=b$, $x\ge \0_n$ with $\Pf=\supp(x^\pf)$, and a dual certificate $(\tilde y,\tilde s)$ with $\tilde s=-\mA^\top \tilde y$, $\tilde s\ge \0_n$, $\supp(\tilde s)=[n]\setminus \Pf$, along with vectors
$u^\pf\in\R^{m}$, $v^\pf=\mA^\top u^\pf\in\R^{n}$, $v^\pf_{\Pc}=\0_{\Pc}$ such that
\begin{equation}\label{eq:Pmax-cert}
      \left\|(x^\pf_{\Pf\cap \Dc}\circ  v^\pf_{\Pf\cap \Dc})-\1_{\Pf\cap \Dc}\right\|_2\le\beta^\feas\, .
\end{equation}
\item\label{itm:PMaxA-inf} An infeasibility certificate $(\tilde y,\tilde s)\in \R^{m}\times \R^n$ for the primal problem with $\tilde s=-\mA^\top \tilde y$, $\tilde s\ge \0_n$, $\pr{b}{\tilde y}>0$.
\end{enumerate}
\end{subroutine}

\begin{subroutine}[\DMaxAName]\label{sub:DMaxA}
$\DMaxA(\mA,c)$ takes a matrix $\mA\in\R^{m\times n}$ and vector $c\in\R^n$ as input and returns either of the following:
\begin{enumerate}[label=(\roman*)]
\item\label{itm:DMaxA-fea} A solution $(y^\df,s^\df)\in\R^m\times \R^n$ with maximum support $s^\df$ to $\mA^\top y+s=c$, $s\ge \0_n$ with support $\Df=\supp(s^\df)$, along with a primal certificate $\mA \tilde x=\0_m$, $\tilde x\ge \0_n$,  $\supp(\tilde x)=[n]\setminus \Df$, as well as  a vector
$w^\df\in\R^{n}$, $\mA w^\df=\0_m$, $w^\df_{\Dc}=\0_{\Dc}$ such that
\begin{equation}\label{eq:Dmax-cert}
      \left\|(s^\df_{\Df\cap \Pc}\circ  w^\df_{\Df\cap \Pc})-\1_{\Df\cap \Pc}\right\|_2\le\beta^\feas\, .
\end{equation}
\item\label{itm:DMaxA-inf} An infeasibility certificate $\tilde x\in\R^n$ for the dual problem satisfying $\mA \tilde x=\0_m$, $\tilde x\ge \0_n$, $\pr{c}{\tilde x}<0$.
\end{enumerate}
\end{subroutine}

\paragraph{Dual feasibility}
We start by describing $\DMaxA(\mA,c)$. As in Section~\ref{sec:big-M-alg}, we can restrict our attention to the subproblem defined by $(\mA_{\Pc},c_{\Pc})$; this will also suffice to produce the certificate \eqref{eq:Dmax-cert}.
We may assume that $\Pc \neq \emptyset$ and $c_{\Pc}\neq \0_{\Pc}$. Otherwise, we have $\Df = \Dc$, so we output $(y^{\df}, s^{\df}) \coloneqq (\alpha y^\conic, c+ \alpha s^\conic)$ for sufficiently large $\alpha\geq 0$ such that $s^{\df}_{\Dc} > \0_{\Dc}$. We also output the primal certificate $\tilde x \coloneqq x^\conic$ and the vector $w^{\df} \coloneqq x^\conic$, noticing that \eqref{eq:Dmax-cert} is vacuously true.

Instead of \eqref{eq:big-M-sys-2}, we solve a simpler system where the residual variables $\bar x_{\Pc}$ and $\bar s_{\Pc}$ are dropped, and no $M$ parameter is needed. Namely, we consider the system
\begin{equation}
\label{eq:big-M-sys-2-A}
\begin{aligned}
\min &\; \pr{c_{\Pc}}{x_{\Pc}}\\
\mA_{\Pc} x_{\Pc} & = \0_m  \\
x_{\Pc}+\hat x_{\Pc}&=2\cdot \1_{\Pc}\\
x_{\Pc}\, ,\hat x_{\Pc} & \geq \0_{\Pc} \, ,
\end{aligned}
\qquad \qquad
\begin{aligned}
\max \; -2&\pr{\1_{\Pc}}{\hat s_{\Pc}}\\
\mA_{\Pc}^\top y +s_{\Pc}-\hat s_{\Pc} & = c_{\Pc} \\
s_{\Pc},\hat s_{\Pc} & \geq \0_{\Pc} \, .
\end{aligned}
\end{equation}
We use the primal solution $(x^\conic,\hat x^\conic)\in \R^{2n}$ from Stage~\rom{1} along with the certificate vectors
$(v^\conic,\hat v^\conic)\in \R^{2n}$, $u^\conic\in\R^{m}$,  $v^\conic=\mA^\top u^\conic+\hat v^\conic$
 satisfying \eqref{eq:conic-cert-B} to find an initial solution to this system. %
Namely, we define
\[
\begin{aligned}
\mu&\coloneqq \frac{2\|c_{\Pc}\|_1}{\beta^\conic}\, ,\\
     y&\coloneqq -\mu  u^\conic \, ,\\
    (s_{\Pc},\hat s_{\Pc})&\coloneqq (c_{\Pc}+\mu v^\conic_{\Pc},\mu \hat v^\conic_{\Pc})\, .
\end{aligned}
\]
{Note that $\mu>0$ by our assumption.}
We then get the centrality bound between the primal solution $(x^\conic_{\Pc},\hat x^\conic_{\Pc})$ and the dual solution $(y,s_{\Pc},\hat s_{\Pc} )$ as
\begin{equation}
    \begin{aligned}
    \left\|\frac{(x^\conic_{\Pc}, \hat x^\conic_{\Pc}) \circ (s_{\Pc},\hat s_{\Pc} )}{\mu} - \1_{2|\Pc|}\right\|_2 &=
    \left\|(x^\conic_{\Pc}, \hat x^\conic_{\Pc}) \circ (v^\conic_{\Pc}, \hat v^\conic_{\Pc}) - \1_{2|\Pc|} + \frac{(x^\conic_{\Pc},\hat x^\conic_{\Pc})\circ(c_{\Pc}, \0_{\Pc})}{\mu}\right\|_2
     \\
     &\le  \left\|(x^\conic_{\Pc}, \hat x^\conic_{\Pc}) \circ (v^\conic_{\Pc}, \hat v^\conic_{\Pc}) - \1_{2|\Pc|}\right\|_2 + \frac{\left\|x^\conic_{\Pc}\circ c_{\Pc}\right\|_2}{\mu} \\
     &\le \beta^\conic + \beta^\conic = 2\beta^\conic\, ,
    \end{aligned}
\end{equation}
where the last inequality uses that $\|x^\conic_{\Pc}\|_\infty\le 2$, and therefore $\|x^\conic_{\Pc}\circ c_{\Pc}\|_2\le 2\|c_{\Pc}\|_2$. From \Cref{lem:choice-of-mu}, and the choice of $\beta^\conic$ and $\beta^\feas$, we get that
\[
    \left((x^\conic_{\Pc}, \hat x^\conic_{\Pc}) ,(s_{\Pc},\hat s_{\Pc} )\right) \in \cal{N}^2\left(\frac{2\beta^\conic}{1 - 2 \beta^\conic}\right) \subseteq \cal{N}^2({\beta^\feas})\, .
\]

Hence, we can run the IPM in \Cref{thm:SLC-bound} for the system \eqref{eq:big-M-sys-2-A}
using this starting solution and neighborhood parameter $\beta=\beta^\feas$. Let
$(x^\df_{\Pc},\hat x^\df_{\Pc})$  and $(y',s^\df_{\Pc},\hat s^\df_{\Pc})$  denote the primal and dual optimal solutions returned.
As in \Cref{sec:big-M-alg}, if $\pr{c_{\Pc}}{x^\df_{\Pc}}<0$, then we obtain a Farkas certificate $\tilde x\coloneqq (x^\df_{\Pc},\0_{\Dc})$ for the dual program in \ref{LP_primal_dual}. Otherwise, $\pr{c_{\Pc}}{x^\df_{\Pc}}=0$ and consequently, $\hat s^\df_{\Pc}=\0_{\Pc}$. In this case, we obtain the maximum support dual solution $(y',s^\df_{\Pc})$ to $\LP(\mA_{\Pc},\0_m,c_{\Pc})$, that is, $\mA_{\Pc}^\top y'+s^\df_{\Pc}=c_{\Pc}$ and $s^\df_{\Pc}\ge \0_{\Pc}$. Let $D'\coloneqq \supp(s^\df_{\Pc})$. Using the conic solution $(y^\conic, s^\conic)$, we set $y^\df\coloneqq y'+\alpha y^\conic$ and $s^\df\coloneqq c-\mA^\top y^\df$ for large enough $\alpha>0$, with $\alpha$ chosen such that $s^\df_{\Dc}>\0_{\Dc}$. This gives a dual solution to \ref{LP_primal_dual}. The shift leaves the block $s^\df_{\Pc}$ unchanged since $\mA_{\Pc}^\top y^\conic=\0_{\Pc}$, so $\Df\coloneqq\supp(s^\df)=D'\cup\Dc$ and in particular $\Df\cap \Pc=D'$. We also get the desired primal certificate $\tilde x\coloneqq(x^\df_{\Pc},\0_{\Dc})$, which satisfies $\mA \tilde x = \0_m$, $\tilde x\geq \0_n$ and $\supp(\tilde x) = [n]\setminus D^{\df}$.

 From  \Cref{thm:SLC-bound}, we also obtained $(w',\hat w')\in\R^{2|\Pc|}$, such that $\mA_{\Pc} w'=\0_m$, $w'+\hat w'=\0_{\Pc}$, and
 \[
     \left\|(s^\df_{D'}\circ w'_{D'})-\1_{D'}\right\|_2\le \beta^\feas\, ,
 \]
 and thus, $w^\df\coloneqq(w',\0_{\Dc})$ gives the required analytic center certificate  \eqref{eq:Dmax-cert} in the feasibility case, noting that $D'=\Df\cap \Pc$.

\paragraph{Primal feasibility.}
As in \Cref{sec:big-M-alg}, we define
\[
\mPi\coloneqq \mI_m - \mA_{\Pc} \mA_{\Pc}^+\,,\quad
    \mH\coloneqq \mPi \mA_{\Dc}\,,\quad\mbox{and}\quad h\coloneqq \mPi b\, .
\]
{We may assume that $\Dc \neq \emptyset$ and $d_{\Dc} \neq \0_{\Dc}$. Otherwise, we have $\Pf = \Pc$, so we output $x^{\pf} \coloneqq d + \alpha x^\conic$ for sufficiently large $\alpha\geq 0$ such that $x^{\pf}_{\Pc}>\0_{\Pc}$. We also output the dual certificate $(\tilde y, \tilde s) \coloneqq (y^\conic, s^\conic)$ and the vectors $u^{\pf} \coloneqq -y^\conic$, $v^{\pf} \coloneqq A^\top u^{\pf} = s^\conic$, noticing that \eqref{eq:Pmax-cert} is vacuously true.} %

 From the system \eqref{eq:big-M-sys-3}, we drop the bound variables $\hat x_{\Dc}$ and $\hat s_{\Dc}$, and accordingly the parameter $K$, and consider
\begin{equation}
\label{eq:big-M-sys-3-A}
\begin{aligned}
\min \; \pr{\1_{\Dc}}{\bar x_{\Dc}} \\
\mH x_{\Dc}-\mH \bar x_{\Dc} & = h  \\
x_{\Dc}, \bar x_{\Dc}& \geq \0_{\Dc} \, ,
\end{aligned}
\qquad \qquad
\begin{aligned}
\max &\; \pr{h}{y}\\
\mH^\top y +s_{\Dc}& = \0_{\Dc}\\
-\mH^\top y+\bar s_{\Dc}&=\1_{\Dc}\\
s_{\Dc},\bar s_{\Dc} & \geq \0_{\Dc} \, .
\end{aligned}
\end{equation}
We initialize the system using the dual solution $(y^\conic,s^\conic,\bar s^\conic)$ from Stage~\rom{1}, along with the certificate $(w^\conic,\bar w^\conic)$ as in \eqref{eq:conic-cert-N}. Observe that $\mPi y^\conic = y^\conic$ because $y^\conic \in \ker(\mA^\top_{\Pc})$; this implies $\mH^\top y^\conic=\mA_{\Dc}^\top y^\conic$.
Further, for the solution $d\in\R^n$ of $\mA d=b$, we have $\mH d_{\Dc}=h$ because $\mPi \mA_{\Pc} d_{\Pc}=\0_m$.
We construct the primal solution as
\[
\begin{aligned}
\mu&\coloneqq \frac{\|d_{\Dc}\|_1}{\beta^\conic}\, ,\\
    (x_{\Dc},\bar x_{\Dc})&\coloneqq (d_{\Dc}+\mu w^\conic_{\Dc},\mu \bar w^\conic_{\Dc})\, .
\end{aligned}
\]
{Note that $\mu>0$ by our assumption.}

Primal feasibility is verified by observing that $ x_{\Dc}, \bar x_{\Dc}\geq \0_{\Dc}$ and $\mH(x_{\Dc}-\bar x_{\Dc})=\mH d_{\Dc}+\mu\mH(w^\conic_{\Dc}-\bar w^\conic_{\Dc})=h$, where the last equality is due to $\mPi \mA_{\Dc}(w^\conic_{\Dc}-\bar w^\conic_{\Dc}) = \mPi\mA(w^\conic - \bar w^\conic) = \0_m$.
An argument analogous to that in the dual feasibility case shows that
\[
     \left((x_{\Dc},  \bar x_{\Dc}) ,(s^\conic_{\Dc},\bar s^\conic_{\Dc})\right) \in \cal{N}^2({\beta^\feas})\, .
\]
Indeed, in the conic system the optimum has $\hat s^\conic=\0$, so $s^\conic=-\mA^\top y^\conic$ and $\bar s^\conic=\1_n+\mA^\top y^\conic=\1_n-s^\conic$; since $\bar s^\conic\ge\0$, this gives $s^\conic\le\1_n$. Therefore, we get
\begin{align*}
\norm{\frac{(x_{\Dc},\bar x_{\Dc})\circ(s^\conic_{\Dc},\bar s^\conic_{\Dc})}{\mu}-\1_{2|\Dc|}}_2
&=\norm{\big((s^\conic_{\Dc}\circ w^\conic_{\Dc},\bar s^\conic_{\Dc}\circ\bar w^\conic_{\Dc})-\1_{2|\Dc|}\big)+\Big(\tfrac{d_{\Dc}\circ s^\conic_{\Dc}}{\mu},\0_{\Dc}\Big)}_2 \\
&\le \beta^\conic + \beta^\conic = 2\beta^\conic\, ,
\end{align*}
where the last inequality uses \eqref{eq:conic-cert-N} and the definition of $\mu$. Therefore, using \Cref{lem:choice-of-mu},
\begin{equation}
    \left((x_{\Dc},\bar x_{\Dc}),(s^\conic_{\Dc},\bar s^\conic_{\Dc})\right)\in\cal{N}^2\!\big(\tfrac{2\beta^\conic}{1-2\beta^\conic}\big)\subseteq\cal{N}^2(\beta^\feas).
\end{equation}
The next lemma is the adaptation of \Cref{lem:PMax-recover}, also including the analytic center certificate as in \eqref{eq:Pmax-cert}. The proof follows the same lines.
\begin{lemma}\label{lem:PMax-recover-A}
 Let $(x^\pf_{\Dc},\bar x^\pf_{\Dc})$ and $(y^\pf,s^\pf_{\Dc},\bar s^\pf_{\Dc})$ be the primal and dual optimal solutions to \eqref{eq:big-M-sys-3-A} obtained via \Cref{thm:SLC-bound}, along with the dual analytic center certificate $u'\in\R^{m}$, $v'=\mH^\top u'$.\footnote{A prime marks the certificate of the reduced system; its lift below carries the stage superscript.}
Let
$x^\conic,s^\conic\in \R^n$, $y^\conic\in\R^m$ be the maximum support primal and dual conic solutions from Stage~\rom{1} such that $\mA x^\conic=\0_m$, $x^\conic\ge \0_n$, $s^\conic=-\mA^\top y^\conic$, $s^\conic\ge \0_n$ with $\supp(x^\conic)=\Pc$ and  $\supp(s^\conic)=\Dc$.
 Let
\[
    z\coloneqq \mA_{\Pc}^+(b-\mA_{\Dc} x^\pf_{\Dc})\, .
\]
Let us pick $\alpha\ge 0$ such that $z+\alpha x^\conic_{\Pc}>\0$, and let
\[
\begin{aligned}
    x^\pf&\coloneqq \left(x^\pf_{\Dc}, x^\pf_{\Pc}\right) \coloneqq \left(x^\pf_{\Dc},
   z\right)+\alpha x^\conic\, ,
    \\
     \tilde y&\coloneqq \mPi y^\pf\, ,\quad\mbox{and}\quad
    \tilde s\coloneqq -\mA^\top \tilde y\, ,\\
 u^\pf&\coloneqq \mPi u'\, ,\quad\mbox{and}\quad
     v^\pf\coloneqq \mA^\top u^\pf\, .
\end{aligned}
\]
Then, $\tilde s\ge \0_n$, $\tilde s_{\Pc}=\0_{\Pc}$, and either
\begin{enumerate}
    \item $\pr{b}{\tilde y}>0$, or
    \item $\pr{b}{\tilde y}=0$ and $\mA x^\pf=b$, $x^\pf\ge \0$.
\end{enumerate}
In the latter case, $\supp(x^\pf)\cup \supp(\tilde s)=[n]$. Further, we have $v^\pf_{\Pc}=\0_{\Pc}$, and for $P'\coloneqq\supp(x^\pf_{\Dc})$, we have
\[
    \|x^\pf_{P'}\circ v^\pf_{P'}-\1_{P'}\|_2\le \beta^\feas\, .
\]
\end{lemma}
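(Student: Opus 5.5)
We follow the proof of \Cref{lem:PMax-recover}. The one structural change is that system \eqref{eq:big-M-sys-3-A} has no box variables $\hat x_{\Dc},\hat s_{\Dc}$. Hence its dual constraint already reads $\mH^\top y^\pf+s^\pf_{\Dc}=\0_{\Dc}$ with no $\hat s$ term, and the $y^\conic$ shift in $\tilde y$ is not needed.

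\textbf{Sign conditions on $\tilde s$.} Using $\mH=\mPi\mA_{\Dc}$, the dual constraint gives $\mA_{\Dc}^\top\mPi y^\pf=-s^\pf_{\Dc}$. Thus $\tilde s_{\Dc}=-\mA_{\Dc}^\top\tilde y=s^\pf_{\Dc}\ge\0_{\Dc}$. Since $\mPi$ is the orthogonal projection onto $\ker(\mA_{\Pc}^\top)$ (\Cref{lem:pseudoinverse-prop}), we have $\mA_{\Pc}^\top\mPi=\0$, so $\tilde s_{\Pc}=\0_{\Pc}$.

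\textbf{Objective value.} The symmetry of $\mPi$ gives $\pr{b}{\tilde y}=\pr{\mPi b}{y^\pf}=\pr{h}{y^\pf}$. The point $(y,s,\bar s)=(\0_m,\0_{\Dc},\1_{\Dc})$ is dual feasible with value $0$, so the dual optimum satisfies $\pr{h}{y^\pf}\ge 0$. If the value is strictly positive, we are in case~1.

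\textbf{The case $\pr{b}{\tilde y}=0$.} Here $(\0_m,\0_{\Dc},\1_{\Dc})$ is itself dual optimal. Since $\bar s=\1_{\Dc}>\0$, complementary slackness with the primal optimum forces $\bar x^\pf_{\Dc}=\0_{\Dc}$, hence $\mH x^\pf_{\Dc}=h$. By \Cref{lem:lift}, $\mA_{\Dc}x^\pf_{\Dc}+\mA_{\Pc}z=b$, and $\mA x^\conic=\0_m$ then gives $\mA x^\pf=b$. Nonnegativity follows from $x^\pf_{\Dc}\ge\0$ and the choice of $\alpha$, using $\supp(x^\conic)=\Pc$; in particular $x^\pf_{\Pc}>\0_{\Pc}$. \Cref{thm:SLC-bound} returns strictly complementary optima, so $\supp(x^\pf_{\Dc})$ and $\supp(s^\pf_{\Dc})$ partition $\Dc$. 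Combined with $\tilde s_{\Dc}=s^\pf_{\Dc}$, this yields $\supp(x^\pf)\cup\supp(\tilde s)=[n]$.

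\textbf{The certificate.} From $\mA_{\Pc}^\top\mPi=\0$ we get $v^\pf_{\Pc}=\mA_{\Pc}^\top\mPi u'=\0_{\Pc}$. On $\Dc$ we have $v^\pf_{\Dc}=\mA_{\Dc}^\top\mPi u'=\mH^\top u'=v'$. \Cref{thm:SLC-bound}(ii) for \eqref{eq:big-M-sys-3-A} bounds $\|x^\pf_{B}\circ v'_{B}-\1_B\|_2$ by $\beta^\feas$, where $B$ is the primal optimal support over all variables $(x_{\Dc},\bar x_{\Dc})$.

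This last step is the main thing to check. Since $\bar x^\pf_{\Dc}=\0$, the support $B$ lies entirely within the $x_{\Dc}$-block and equals $P'$. The certificate from \Cref{thm:SLC-bound} is a vector in $\im$ of the transposed constraint matrix $[\mH,-\mH]^\top$, so its $x$-block is exactly $\mH^\top u'$ for some $u'$, which is $v'$. The norm bound restricted to $P'$ therefore transfers unchanged to $v^\pf_{P'}$, giving $\|x^\pf_{P'}\circ v^\pf_{P'}-\1_{P'}\|_2\le\beta^\feas$.

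Finally, the row-redundancy removal in $\mH$ is harmless: the certificate $u'$ is lifted by padding with zeros, exactly as the dual solution is.
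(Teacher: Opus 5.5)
Your proof is correct and follows essentially the same argument as the paper's: $\tilde s_{\Dc}=s^\pf_{\Dc}$ and $\tilde s_{\Pc}=\0_{\Pc}$ via $\mA_{\Pc}^\top\mPi=\0$, then the identity $\pr{b}{\tilde y}=\pr{h}{y^\pf}$, then the lift via \Cref{lem:lift}, and finally $v^\pf_{\Dc}=\mH^\top u'=v'$. The differences are cosmetic. You get nonnegativity of the optimum and $\bar x^\pf_{\Dc}=\0_{\Dc}$ from the dual point $(\0_m,\0_{\Dc},\1_{\Dc})$ and complementary slackness, whereas the paper uses that the primal objective $\pr{\1_{\Dc}}{\bar x_{\Dc}}$ is nonnegative and vanishes at the optimum. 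You also spell out two things the paper leaves implicit: that the certificate's $x$-block is $\mH^\top u'$, and that $P'$ lies inside the primal optimal support.
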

\begin{proof}
The argument parallels that of \Cref{lem:PMax-recover}, with the simplification that the cost on $x_{\Dc}$ in \eqref{eq:big-M-sys-3-A} is $\0_{\Dc}$; in addition we track the analytic center certificate.

Since the cost of $x_{\Dc}$ in \eqref{eq:big-M-sys-3-A} is $\0_{\Dc}$, dual feasibility reads $\mH^\top y^\pf+s^\pf_{\Dc}=\0_{\Dc}$. Substituting $\mH=\mPi \mA_{\Dc}$ and using that $\mPi$ is the orthogonal projection onto $\ker(\mA_{\Pc}^\top)$ (\Cref{lem:pseudoinverse-prop}), so that $\mPi^\top=\mPi$, we get
\[
    \mA_{\Dc}^\top \tilde y=\mA_{\Dc}^\top \mPi y^\pf=\mH^\top y^\pf=-s^\pf_{\Dc}\, ,
\]
that is, $\tilde s_{\Dc}=s^\pf_{\Dc}\ge \0_{\Dc}$. Moreover, $\mA_{\Pc}^\top \mPi=(\mPi \mA_{\Pc})^\top=\0$ gives $\tilde s_{\Pc}=-\mA_{\Pc}^\top \mPi y^\pf=\0_{\Pc}$. In summary, $\tilde s\ge \0_n$ and $\tilde s_{\Pc}=\0_{\Pc}$.

For the dichotomy, note that $\pr{b}{\tilde y}=\pr{b}{\mPi y^\pf}=\pr{\mPi b}{y^\pf}=\pr{h}{y^\pf}$, the optimal value of \eqref{eq:big-M-sys-3-A}, which is nonnegative since the primal minimizes $\pr{\1_{\Dc}}{\bar x_{\Dc}}\ge 0$. If $\pr{b}{\tilde y}>0$, then $(\tilde y,\tilde s)$ is an infeasibility certificate for $\mA x=b$, $x\ge \0_n$, returned as the output of $\PMaxA(\mA,b)$.

In the remaining case $\pr{b}{\tilde y}=\pr{h}{y^\pf}=0$, the primal optimum is $0$, hence $\bar x^\pf_{\Dc}=\0_{\Dc}$ and $\mH x^\pf_{\Dc}=h$. By \Cref{lem:lift}, $\mA_{\Dc} x^\pf_{\Dc}+\mA_{\Pc} z=b$, and since $\mA x^\conic=\0_m$ we obtain $\mA x^\pf=b$. Next, we have $x^\pf\geq \0_n$ because $x^\pf_{\Dc}\geq \0_{\Dc}$, $\supp(x^\conic)=\Pc$ and $\alpha$ is chosen so that $x^\pf_{\Pc}=z+\alpha x^\conic_{\Pc}>\0_{\Pc}$. Finally, $\supp(x^\pf)\cup \supp(\tilde s)=[n]$ follows as in \Cref{lem:PMax-recover}: \eqref{eq:big-M-sys-3-A} is solved to strictly complementary optima, so $\supp(x^\pf_{\Dc})$ and $\supp(s^\pf_{\Dc})$ partition $\Dc$, and with $\tilde s_{\Dc}=s^\pf_{\Dc}$ and $\supp(x^\pf)=\Pc\cup \supp(x^\pf_{\Dc})$ the claim follows.

It remains to verify the analytic center certificate. Recall that $u^\pf=\mPi u'$ and $v^\pf=\mA^\top u^\pf$. Since $\mA_{\Pc}^\top\mPi=\0$, we get $v^\pf_{\Pc}=\mA_{\Pc}^\top \mPi u'=\0_{\Pc}$. For the $\Dc$-coordinates, using $\mPi^\top=\mPi$,
\[
    v^\pf_{\Dc}=\mA_{\Dc}^\top \mPi u'=(\mPi \mA_{\Dc})^\top u'=\mH^\top u'=v'\, .
\]
By \Cref{thm:SLC-bound} applied to \eqref{eq:big-M-sys-3-A}, the certificate $v'=\mH^\top u'$ satisfies $\|(x^\pf_{P'}\circ v'_{P'})-\1_{P'}\|_2\le \beta^\feas$ on the primal optimal support $P'=\supp(x^\pf_{\Dc})$. Since $v^\pf_{P'}=v'_{P'}$, we conclude
\[
    \|x^\pf_{P'}\circ v^\pf_{P'}-\1_{P'}\|_2\le \beta^\feas\, ,
\]
as required.
\end{proof}

\subsection{Stage \rom{3}: Optimization}\label{sec:ac-stage-III}
We start by calling $\PMaxA(\mA,b)$ and $\DMaxA(\mA,c)$. Assume that both calls return feasible outputs. Let $x^\pf$ be the maximum support primal solution returned by $\PMaxA(\mA,b)$ with support $\Pf=\supp(x^\pf)$ and dual certificate $(\tilde y,\tilde s)$, and let $(y^\df,s^\df)$ be the maximum support dual solution returned by $\DMaxA(\mA,c)$ with $\Df=\supp(s^\df)$ and primal certificate $\tilde x$. Further, we consider the analytic center certificates $(u^\pf,v^\pf)$ and $w^\df$ as in \eqref{eq:Pmax-cert} and \eqref{eq:Dmax-cert}.
We define the index sets
\[
    R\coloneqq \Pf\cap \Df\, ,\quad I\coloneqq \Pf\setminus \Df\, ,\quad J\coloneqq \Df\setminus \Pf\, ,\quad P'\coloneqq \Pf\cap \Dc\, ,\quad D'\coloneqq \Df\cap \Pc\, .
\]
Note that $P' \cap D' = \emptyset$ and
\[
    R=P'\cup D'\, .
\]
{We may assume that $R\neq \emptyset$, as otherwise $x^{\pf}$ and $(y^{\df}, s^{\df})$ are maximum support primal and dual optimal solutions to \ref{LP_primal_dual}.} %

Similarly to \Cref{sec:big-M-alg}, we  remove all coordinates in $J$ and project out the coordinates in $I$ by letting
    \[
\mPi\coloneqq \mI_m - \mA_{I} \mA_{I}^+\,,\quad
    \mH\coloneqq \mPi \mA_{R}\,,\quad\mbox{and}\quad h\coloneqq \mPi b\, ,
\]
{with the convention that $\mPi \coloneqq \mI_m$ if $I = \emptyset$}.
 We now work directly with the reduced LP
\begin{equation}
\label{eq:big-M-sys-4-A}
\begin{aligned}
\min \;& \pr{s^\df_R}{x_R} \\
\mH x_R & = h  \\
x_R & \geq \0_R \, ,
\end{aligned}
\qquad \qquad
\begin{aligned}
\max \; \pr{h}{y}\\
\mH^\top y +s_R & = s_R^\df \\
s_R & \geq \0_R \, .
\end{aligned}
\end{equation}
We can initialize this system using the outputs of $\PMaxA(\mA,b)$ and $\DMaxA(\mA,c)$. Namely, we set
 \[
\begin{aligned}
\mu&\coloneqq \frac{\|x^\pf_R\circ s^\df_R\|_1}{\beta^\feas}\, ,\\
    x_R&\coloneqq x^\pf_R+\mu (\0_{P'},w^\df_{D'}) \, \\
    y&\coloneqq -\mu u^\pf\, ,\quad\mbox{and}\quad s_R\coloneqq s^\df_R - \mH^\top y =  s^\df_R+\mu (v^\pf_{P'},v^\pf_{D'}) = s^\df_R+\mu (v^\pf_{P'},\0_{D'}) \, .
\end{aligned}
\]
Here the first equality uses $\mH^\top u^\pf=\mA_R^\top u^\pf=(v^\pf_{P'},v^\pf_{D'})$: indeed $\mA_I^\top u^\pf=v^\pf_I=\0$ (as $I\subseteq\Pc$ and $v^\pf_{\Pc}=\0$), so $\mPi u^\pf=u^\pf$ and $\mH^\top u^\pf=\mA_R^\top\mPi u^\pf=\mA_R^\top u^\pf$; the last equality uses $v^\pf_{D'}=\0_{D'}$ since $D'\subseteq\Pc$.
To verify the primal feasibility of $x_R$, observe that applying $\mPi$ to $\mA x^{\pf} = b$ yields $\mH x_R^\pf=h$ because $x^{\pf}_J = \0_J$.
Similarly, applying $\mPi$ to $\mA w^{\df} = \0_m$ yields $\mH_{D'} w^{\df}_{D'} = \0_m$ because $w^{\df}_{\Dc} = \0_{\Dc}$.
Finally, $x^{\pf}_R > \0_R$ and $w^{\df}_{D'}>\0_{D'}$, where the latter is due to \eqref{eq:Dmax-cert}.

These solutions satisfy
\begin{equation}
    \begin{aligned}
    \left\|\frac{x_R \circ s_R}{\mu} - \1_{R}\right\|_2 &=
    \left\|\frac{x^\pf_R \circ s^\df_R}{\mu}+ (x^\pf_R\circ (v^\pf_{P'},\0_{D'}))-(\1_{P'},\0_{D'}) + ((\0_{P'},w^\df_{D'})\circ s^\df_R)-(\0_{P'},\1_{D'}) \right\|_2\\
    &\le \left\|\frac{x^\pf_R \circ s^\df_R}{\mu}\right\|_2+ \left\|(x^\pf_{P'}\circ v^\pf_{P'})-\1_{P'}\right\|_2 + \left\|(w^\df_{D'}\circ s^\df_{D'})-\1_{D'}\right\|_2 \\
     &\le 3\beta^\feas
    \end{aligned}
\end{equation}
where the last inequality uses the definition of $\mu$ and the bounds from \eqref{eq:Pmax-cert} and \eqref{eq:Dmax-cert}.
From \Cref{lem:choice-of-mu}, and the choice of $\beta^\feas$ and $\beta^\opt$, we get that
\begin{equation}
    \left(x_R,s_R\right) \in \cal{N}^2\left(\frac{3\beta^\feas}{1 - 3 \beta^\feas}\right) \subseteq \cal{N}^2({\beta^\opt})\, .
\end{equation}
We solve the system \eqref{eq:big-M-sys-4-A}
with the IPM solver as in \Cref{thm:SLC-bound}, for neighborhood parameter $\beta=\beta^\opt$.
We now adapt \Cref{lem:opt-recover} to convert these to optimal primal and dual solutions to the original system \ref{LP_primal_dual}. The proof follows the same lines; we add it for completeness.

\begin{lemma}\label{lem:opt-recover-A}
 Let $x^\opt_R$ and $(y^\opt,s^\opt_R)$ be the primal and dual optimal solutions to \eqref{eq:big-M-sys-4-A} obtained via \Cref{thm:SLC-bound}. Let $(\tilde y,\tilde s)$ be the certificate returned by $\PMaxA(\mA,b)$ and $\tilde x$ the certificate returned by $\DMaxA(\mA,c)$ such that
\[
\begin{aligned}
    \tilde s&=-\mA^\top \tilde y\, ,\, \pr{b}{\tilde y}=0\, ,\,\tilde s\ge \0_n\, ,\, \supp(\tilde s)=J\, ,\quad\mbox{and}\quad\\
     \mA\tilde x&=\0_m\,,\, \pr{c}{\tilde x}=0\,,\,\tilde x\ge 0\,,\,\supp(\tilde x)=I\, .
     \end{aligned}
\]
Let us define
\[
    z\coloneqq \mA_I^+(b-\mA_R x^\opt_R)\, ,
\]
and pick $\alpha,\lambda\ge 0$ large enough such that $z+\alpha\tilde x_{I}>0$ and $\mA_J^\top (\mPi y^\opt+y^\df + \lambda \tilde y) < c_J$. Let
\[
\begin{aligned}
    x^\star&=(x^\star_R,x^\star_{I},x^\star_J)\coloneqq \left(x^\opt_R,z,\0_J\right)+\alpha \tilde x\, ,
    \\
    y^\star&\coloneqq \mPi y^\opt+y^\df+\lambda \tilde y\, ,\quad\mbox{and}\quad
    s^\star\coloneqq c-\mA^\top  y^\star\, .
\end{aligned}
\]
Then, $x^\star$ and $(y^\star,s^\star)$ are maximum support primal and dual optimal solutions to \ref{LP_primal_dual}.
\end{lemma}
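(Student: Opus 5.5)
The argument follows \Cref{lem:opt-recover} almost verbatim. The only change is that \eqref{eq:big-M-sys-4-A} has no auxiliary big-$M$ variables, so \Cref{lem:big-M}\ref{part:both} is no longer needed. The outputs $x^\opt_R$ and $(y^\opt,s^\opt_R)$ are by construction maximum support optimal solutions of $\LP(\mH,h,s^\df_R)$, so $\supp(x^\opt_R)$ and $\supp(s^\opt_R)$ partition $R$. The plan is to check primal feasibility, dual feasibility, and strict complementarity of the lifted pair in turn. The choices of $\alpha,\lambda$ are possible since $\supp(\tilde x)=I$ and $\supp(\tilde s)=J$. Concretely, $\mA_J^\top\tilde y=-\tilde s_J<\0_J$, so a large $\lambda$ makes the strict inequality on $J$ hold.

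\textbf{Primal side.} By \Cref{lem:lift} applied with the partition $(I,R)$ of the columns $R\cup I$, the equation $\mH x^\opt_R=h$ gives $\mA_R x^\opt_R+\mA_I z=b$. Adding $\alpha\tilde x$ preserves this, since $\mA\tilde x=\0_m$, so $\mA x^\star=b$. Nonnegativity holds because $x^\opt_R\ge\0_R$, $x^\star_J=\0_J$ (as $\tilde x_J=\0_J$, using $J\cap I=\emptyset$), $\tilde x_R=\0_R$, and $z+\alpha\tilde x_I>\0_I$.

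\textbf{Dual side.} We check $s^\star=c-\mA^\top y^\star$ blockwise.
\begin{itemize}
\item On $R$: we have $\mA_R^\top\mPi y^\opt=\mH^\top y^\opt=s^\df_R-s^\opt_R$ (using $\mPi^\top=\mPi$), $\mA_R^\top y^\df=c_R-s^\df_R$, and $\mA_R^\top\tilde y=-\tilde s_R=\0_R$. Hence $s^\star_R=s^\opt_R\ge\0_R$.
\item On $I$: $\mA_I^\top\mPi=\0$ by \Cref{lem:pseudoinverse-prop}, $s^\df_I=\0_I$ because $I\cap\Df=\emptyset$, and $\tilde s_I=\0_I$. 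Hence $s^\star_I=\0_I$.
\item On $J$: $s^\star_J>\0_J$ by the choice of $\lambda$.
\end{itemize}

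\textbf{Optimality and maximum support.} We get $\supp(x^\star)=\supp(x^\opt_R)\cup I$ and $\supp(s^\star)=\supp(s^\opt_R)\cup J$. These sets are disjoint and together cover $R\cup I\cup J=[n]$. Disjointness gives $\pr{x^\star}{s^\star}=0$, so the pair is optimal. Since the supports partition $[n]$, by Goldman--Tucker they are the maximum support optimal solutions. The coverage $R\cup I\cup J=[n]$ holds because $\Pc\subseteq\Pf$ and $\Dc\subseteq\Df$.

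The main obstacle is purely bookkeeping: making sure each shift vector vanishes on the blocks where it must. The needed facts are $\tilde x_R=\0$, $\tilde x_J=\0$, $\tilde s_R=\tilde s_I=\0$, $s^\df_I=\0$, and that $\mPi$ annihilates $\mA_I^\top$. Each follows directly from the stated supports. The conditions $\pr{b}{\tilde y}=0$ and $\pr{c}{\tilde x}=0$ are not needed for feasibility, but they are consistent with it and confirm that the shifts do not change objective values.
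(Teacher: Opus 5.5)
Your proposal is correct and follows essentially the same route as the paper's proof. Both lift $x^\opt_R$ via \Cref{lem:lift} and shift by $\alpha\tilde x$, verify $s^\star$ blockwise on $R$, $I$, $J$, and conclude maximum support because the supports of $x^\opt_R$ and $s^\opt_R$ partition $R$. Your extra remarks (why a suitable $\lambda$ exists, why $R\cup I\cup J=[n]$, and that $\pr{b}{\tilde y}=\pr{c}{\tilde x}=0$ are not needed) are accurate and only make explicit what the paper leaves implicit.
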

\begin{proof}
Since \eqref{eq:big-M-sys-4-A} is precisely $\LP(\mH,h,s^\df_R)$, $x^\opt_R$ and $(y^\opt,s^\opt_R)$ are primal and dual optimal solutions to it. We show that their modifications $x^\star$ and $(y^\star,s^\star)$ give primal and dual optima on the original system \ref{LP_primal_dual}. The supports $\supp(\tilde x)=I$ and $\supp(\tilde s)=J$ enable the selection of suitable $\alpha,\lambda\ge 0$.

On the primal side, \Cref{lem:lift} and $\mA\tilde x=\0_m$ guarantee that $\mA x^\star=b$. We have $x^\star\ge \0_n$ by $x^\opt_R\ge \0_R$ and by the choice of $\alpha$. Thus, $x^\star$ is primal feasible.

Turning to the dual side, on the coordinates in $R$ we get
\[
  \mH^\top y^\opt+s^\opt_R=s_R^\df\,,\quad   \mA_R^\top y^\df+s_R^\df=c_R\, ,\quad \mbox{and}\quad \mA^\top_R \tilde y=\0_R\, .
\]
Since $\mH^\top y^\opt=\mA_R^\top \mPi y^\opt$, we get that
$s^\star_R= c_R - \mA_R^\top y^\star = s_R^\df  + (s^\opt_R - s_R^\df) - \lambda \mA_R^\top \tilde y = s^\opt_R \ge \0_R$. On the coordinates in $I$ we have
\[
   \mA_I^\top \mPi y^\opt=\0_I\,\,,\quad   \mA_I^\top y^\df=c_I\, ,\quad \mbox{and}\quad \mA^\top_I \tilde y=\0_I\, ,
\]
where the first equation holds because $\mPi$ is an orthogonal projection matrix onto $\ker(\mA_I^\top)$. Hence, $s^\star_I= \0_I$. Finally, $s^\star_J>\0_J$ by the choice of $\lambda$: writing $s^\star_J=c_J-\mA_J^\top y^\star=c_J-\mA_J^\top(\mPi y^\opt+y^\df)+\lambda\tilde s_J$ and using $\tilde s_J>\0_J$ (as $\supp(\tilde s)=J$), choosing $\lambda$ large enough makes $s^\star_J>\0_J$.

To show optimality, we show complementarity. We note that $\supp(x^\star)=\supp(x^\opt_R)\cup I$ and $\supp(s^\star)=\supp(s^\opt_R)\cup J$. We have that $x^\star_R = x^\opt_R$ and $s^\star_R = s^\opt_R$, which have complementary support as optimal solutions to \eqref{eq:big-M-sys-4-A}. Since \eqref{eq:big-M-sys-4-A} is solved to strictly complementary optima, $\supp(x^\opt_R)$ and $\supp(s^\opt_R)$ partition $R$; together with $I\subseteq \supp(x^\star)$ and $J\subseteq \supp(s^\star)$ this gives $\supp(x^\star)\cup\supp(s^\star)=[n]$ with disjoint supports. Hence, $x^\star$ and $(y^\star,s^\star)$ are maximum support primal and dual optimal solutions.
\end{proof}

\subsection{Proof of Theorem~\ref{thm:main-1} }\label{sec:main-proof}
We are ready to prove \Cref{thm:main-1}, restated here for convenience. The remaining ingredient, \Cref{lem:main-round}, will be proved in \Cref{sec:bit-complexity}.

\mainthm*
\begin{proof}
We use the three-stage initialization as described in this section. Namely, we call the IPM in \Cref{thm:SLC-bound} on the four LPs
\eqref{eq:big-M-sys-conic}, \eqref{eq:big-M-sys-2-A}, \eqref{eq:big-M-sys-3-A}, and \eqref{eq:big-M-sys-4-A}.
Note that the matrices $\mA_{\Pc}$ and $\mH$ may not have full row rank, even though $\mA$ has full row rank.
Nevertheless, we can remove the redundant rows before running the IPM in \Cref{thm:SLC-bound}.
This is because any dual optimal solution of the resulting system can be trivially lifted to a dual optimal solution of the original system by filling $y$ with additional zeros.

As already noted in \Cref{sec:proof-weaker}, the straight line complexity of the variables in \eqref{eq:big-M-sys-conic} are the same as those for \eqref{sys:2}.
Next, consider the system \eqref{eq:big-M-sys-2-A} solved for dual feasibility.
To obtain this system from \eqref{sys:3}, we first replace the right-hand side vector $\1_n$ by $2\cdot \1_n$, which does not change the straight line complexity of the variables $(x, \hat x)$. Since every feasible solution satisfies $x_{\Dc} = \0_{\Dc}$, by \Cref{lem:project-delete}{\em (i)} we can delete the variables $x_{\Dc}$ without changing the max central path of $(x_{\Pc}, \hat x$). As the variables $\hat x_{\Dc}$ do not interact with $(x_{\Pc}, \hat x_{\Pc})$, we can delete them to arrive at \eqref{eq:big-M-sys-2-A} without affecting the max central path of $(x_{\Pc}, \hat x_{\Pc})$.

Similarly, we show the system \eqref{eq:big-M-sys-3-A} solved for primal feasibility is obtained from \eqref{sys:4}. Since every dual feasible solution satisfies $s_{\Pc} = \0_{\Pc}$, by \Cref{lem:project-delete}{\em(ii)} we can project out the variables $x_{\Pc}$ without changing the max central path of $(x_{\Dc}, \bar x)$. As $\mPi (-\mA_{\Pc}) = \0$, we can delete the variables $\bar x_{\Pc}$ to arrive at \eqref{eq:big-M-sys-3-A} without affecting the max central path of $(x_{\Dc}, \bar x_{\Dc})$.

Finally, we show how the system \eqref{eq:big-M-sys-4-A} solved in the optimization stage is obtained from \eqref{sys:1}.
Recall that $I = [n]\setminus \Df$ and $J = [n]\setminus \Pf$, that is, every feasible primal solution satisfies $x_J = \0_J$ and every feasible dual solution satisfies  $s_I = \0_I$. Thus, by \Cref{lem:project-delete}{\em(iii)}, the variables in \eqref{eq:big-M-sys-4-A} have the same straight line complexities as in \ref{LP_primal_dual}.

We now give a PSPACE implementation of the SLLS IPM. We use that the IPM is stepwise strongly polynomial, stays in the $\cN(\beta)$ neighborhood, and shrinks the optimality gap by a factor of at most $1-\beta/(2\sqrt{n})$ in every iteration, as stated in \Cref{thm:SLC-bound}.
For each iterate $(x,s)$, we call \Cref{lem:main-round} to round it to another iterate $(\tilde x,\tilde s)\in \clN(3\beta)$ and $\gap(\tilde x,\tilde s)\le (1+\beta/(4n^2))\gap(x,s)$ such that $\size(\tilde x,\tilde s)$ is polynomially bounded. Thus, every iterate of the SLLS IPM, followed by the rounding step, still decreases the gap by a factor of
\begin{equation}
\label{eq:rounded-progress}
\Bigl(1-\frac{\beta}{2\sqrt n}\Bigr)\Bigl(1+\frac{\beta}{4n^2}\Bigr)\le 1-\frac{\beta}{2\sqrt n}+\frac{\beta}{4n^2}\le 1-\frac{\beta}{4\sqrt n}\, .\
\end{equation}
After each such rounding, unless $\gap(\tilde x, \tilde s) = 0$, we can move back to the narrower neighborhood ${\cal N^2}(\beta)$ by at most two corrector steps (see \Cref{lem:corrector}).
If $\gap(\tilde x,\tilde s)=0$, then $(\tilde x,\tilde s)$ is a pair of primal and dual optimal solutions, and we terminate the current IPM call, returning $(\tilde x,\tilde s)$ together with the certificates $(\tilde v,\tilde w)$ from \Cref{lem:main-round}. These satisfy the output conditions of \Cref{thm:SLC-bound} with $3\beta$ in place of $\beta$. Otherwise, the IPM call terminates as in \Cref{thm:SLC-bound}, with certificates satisfying the output conditions for $\beta$ itself. Hence, in the PSPACE implementation, the certificate bounds \eqref{eq:conic-cert-B}, \eqref{eq:conic-cert-N}, \eqref{eq:Pmax-cert} and \eqref{eq:Dmax-cert} hold with $3\beta^\conic$ and $3\beta^\feas$ in place of $\beta^\conic$ and $\beta^\feas$, respectively.
The analysis in \Cref{sec:analytic-init} remains valid with these weaker bounds. The bounds $\|x^\conic_{\Pc}\|_\infty\le 2$ and $s^\conic\le\1_n$ used in Stage~\rom{2} follow from the structure of \eqref{eq:big-M-sys-conic}. In both initializations of Stage~\rom{2}, the bound $2\beta^\conic$ becomes $3\beta^\conic+\beta^\conic=4\beta^\conic$, and by choice of the parameters in \eqref{def:beta-parameters} we have $4\beta^\conic/(1-4\beta^\conic)\le\beta^\feas$. As in the original argument, this inequality can be used for \Cref{lem:choice-of-mu}, which converts the centrality error with respect to an arbitrary gap parameter into membership in $\cN(\beta^\feas)$, where we measure with respect to the normalized duality gap of the starting point. \Cref{lem:PMax-recover-A} passes the certificate bound from \Cref{thm:SLC-bound} through unchanged, so \eqref{eq:Pmax-cert} holds with $3\beta^\feas$. In the initialization of Stage~\rom{3}, the bound $3\beta^\feas$ becomes $\beta^\feas+3\beta^\feas+3\beta^\feas=7\beta^\feas$, and $7\beta^\feas/(1-7\beta^\feas) <2^{-5}<\beta^\opt$, which again allows us to apply \Cref{lem:choice-of-mu}. Finally, \Cref{lem:main-round} applies to all stages as $\beta^\opt=1/16$.

We still need to bound the encoding lengths across the four IPM calls. For a call on a system with data $(\mA',b',c')$, every iterate has size $\poly(n)\size(\mA',b',c')$ by \Cref{lem:main-round}, and the output, including the certificates, is either returned by \Cref{lem:main-round} or computed from the last iterate in strongly polynomial time, so it also has size $\poly(n)\size(\mA',b',c')$. The data $(\mA',b',c')$ of each system, its starting point, and the final output are obtained from $(\mA,b,c)$ and the outputs of the previous calls. By \Cref{lem:linalg}, all computations in \Cref{sec:analytic-init} outside the IPM calls are strongly polynomial in the Turing model. This includes the choice of shift scalars such as $\alpha$ and $\lambda$ in \Cref{lem:opt-recover-A}. As there are only four calls, all numbers throughout the algorithm have size $\poly(n)\size(\mA,b,c)$.

What remains to be argued is that the SLLS IPM with rounding still satisfies the same iteration bound as \Cref{thm:SLC-bound}. Note that the rounding step pushes each iterate only slightly up the path, which intuitively should not matter.

To formally justify this, we will explain why the analysis used to prove \Cref{thm:SLC-bound} applies to the iterate sequence produced by the rounded algorithm. As it is relatively direct, we simply highlight which statements of the proof of~\Cref{thm:SLC-bound} in \cite{ADLNV2022} need to be verified for the rounded IPM.

At a high level, the authors of \cite{ADLNV2022} decompose the central path into a number of $\eta$-\emph{polarized segments} proportional to SLC. Each of these requires $O(\sqrt{n}/\beta \ln(n/(\eta \beta)))$ iterations to traverse in an amortized sense.
Let $z^{0},\dots,z^{T} \in \cN(\beta)$ denote the iterates of the IPM. The key statement is \cite[Lemma 7.5]{ADLNV2022}, which states that for $D \coloneqq C_{\rm it}\sqrt{n}/\beta\ln(n/(\eta \beta))$, where $C_{\rm it} \geq 1$ is an absolute constant, and for any two iterates $z^{i}, z^{j}$ with $j \geq D+i$, either $z^{j}$ has passed the end of $z^{i}$'s polarized segment, or $z^{j}$'s lifting operator within this segment has one fewer large singular value. In \cite[Section 8.2]{ADLNV2022}, it is shown that any sequence of iterates that satisfies the above bound also satisfies the SLC bound of \Cref{thm:SLC-bound}. Thus, it suffices to verify the lemma. Lemma 7.5 relies only on the following three properties: that the iterates stay in the $\cN(\beta)$ neighborhood, that the IPM reduces the gap by at least a $1-\Omega(1/\sqrt{n})$ factor at each iteration, and finally that the IPM takes steps at least as large as the SLLS step whenever this is possible.
Since the rounding steps lose only a factor in the gap that can be recovered by a single short step (by \eqref{eq:rounded-progress}), and the iterates return to $\cN(\beta)$ after the corrector steps, a constant multiple of the iterations suffices for the rounded IPM, i.e., the lemma holds with $2C_{\rm it}$.

Alternatively, one could use the $\ell_2$-Trust Region IPM algorithm in the more recent paper \cite{Dadush2026}. Theorem 1.3 in this paper asserts that, for any $\mu_0\ge \mu_1\ge 0$,  starting from any point $z_0\in \cN(\beta)$ with $\gap(z_0)\in [\mu_1,\mu_0]$, the algorithm reaches an iterate $z_1$ with $\gap(z_1)\le \mu_1$ in $O\bigl(\SLC(\cN(\beta),\mu_1,\mu_0)\bigr)$ iterations, each of which takes strongly polynomial time.
\end{proof}

\section{A strongly polynomial rounding procedure}\label{sec:bit-complexity}
In this section, we prove the main rounding statement \Cref{lem:main-round} used for the PSPACE implementation in \Cref{thm:main-1}.
Throughout, all numbers are encoded rationals in the sense of \Cref{def:size}, and we use the conventions of \Cref{sec:comp-mod} for how to compute with them.

Addition and multiplication of encoded rationals are associative and commutative: for $(p_i,q_i)\in\eQ$, $i\in[k]$, we have
\begin{equation}
\label{eq:sum-prod-eQ}
\sum_i (p_i, q_i) = \Bigl(\sum_{i=1}^k p_i\prod_{j\ne i}q_j\, ,\ \prod_{i=1}^k q_i\Bigr)\, ,
\qquad
\prod_{i=1}^k (p_i, q_i)=\Bigl(\prod_{i=1}^k p_i\, ,\ \prod_{i=1}^k q_i\Bigr)\, ,
\end{equation}
independently of the order of the terms and of the bracketing. Hence, iterated sums and products are well-defined elements of $\eQ$.

Distributivity, however, does not hold. For instance,
\[
\bigl((1,1)+(1,1)\bigr)\cdot(1,2)=(2,1)\cdot(1,2)=(2,2)\, ,
\qquad\text{whereas}\qquad
(1,1)\cdot(1,2)+(1,1)\cdot(1,2)=(1,2)+(1,2)=(4,4)\, ,
\]
two different representations of the same value $1$. Thus, the size of a computed number depends on how it is computed, and our size bounds always refer to the specific computations performed by our algorithms.

We restate \Cref{lem:main-round} for convenience.
\roundlemma*

\Cref{lem:main-round} is not specific to the SLLS IPM. As indicated in the introduction, the rounding step can be interleaved with the iterations of any log-barrier path-following IPM that satisfies the following two requirements. First, the method maintains rational iterates in the $\ell_2$-neighborhood $\cN(\beta)$ for some $\beta\le 1/16$, and each iteration can be implemented in strongly polynomial time in the Turing model; that is, given an iterate $(x,s)\in\Q^{2n}$, the next iterate is computed using $\mathrm{poly}(n)$ arithmetic operations and comparisons on numbers of size $\mathrm{poly}(n)\size(\mA,b,c,x,s)$. Second, the method can be restarted from any point in $\clN(3\beta)$ whose gap is larger by a factor at most $(1+\beta/(4n^2))$, requiring that within $\mathrm{poly}(n)$ iterations, it returns to $\cN(\beta)$ without losing its progress guarantees, or terminates if the point is optimal. Under these two requirements, applying the rounding step after every iteration keeps the encoding length of all iterates polynomially bounded in $\size(\mA,b,c)$. Hence, if the total number of iterations of the original method is strongly polynomial, the modified method is a strongly polynomial algorithm in the Turing model.
The rounding steps could also be used in weakly polynomial algorithms. However, if the running time is allowed to depend on the total encoding length $L=\size(\mA,b,c)$, it may be more efficient to use other methods such as \cite{ghadiri2023bit}.
It is interesting to note that the new iterate $(\tilde x,\tilde s)$ could have much smaller duality gap than $(x,s)$: the rounding algorithm itself may take long steps down the central path. In particular, using the rounding it turns out that the number of iterates is at most $\poly(n)$ times the number of vertices of the primal-dual polytope $\cal P \times \cal D$, which is at most $2^n \cdot 2^n = 2^{2n}$.

\medskip
We use the
 following simple bounds from \cite{gls}. As in \cite{gls}, these bound the sizes of values, that is, of lowest-terms representations. \Cref{rem:size-bounds-encoded} below discusses which of them hold for encoded rationals.
\begin{lemma}\label{lem:size-bounds}
The following bounds hold:
\begin{enumerate}[label=(\roman*), ref=(\roman*)]
    \item\label{it:sums} For $\alpha_1,\ldots,\alpha_k\in\Q$ we have $\size\bigl(\sum_{i=1}^k\alpha_i\bigr)\le 2\sum_{i=1}^k\size(\alpha_i)$ and $\size\bigl(\prod_{i=1}^k\alpha_i\bigr)\le\sum_{i=1}^k\size(\alpha_i)$.
    \item\label{it:inverse} For an invertible matrix $\mM\in\Q^{n\times n}$ we have $\size(\mM^{-1})\le 4n^2\size(\mM)$.
    \item\label{it:vertex} For $\mA\in\Q^{m\times n}$, $b\in\Q^m$, $c\in \Q^n$, let  $\cal P = \{x\in\R^n \mid \mA x = b, x \ge \0_n\}$ and $\cal D = \{s\in\R^n \mid \exists y\in\R^m, \mA^\top y+s = c, s \ge \0_n\}$ denote the primal and dual feasible regions. Then every vertex of $\cal P$ has size $O(n^3\size(\mA,b))$, and every extreme ray of $\cal P$ has a generator of size $O(n^3\size(\mA,b))$; the same bounds hold for $\cal D$ with $\size(\mA,c)$ in place of $\size(\mA,b)$.
\end{enumerate}
\end{lemma}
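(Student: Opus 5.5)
These are classical encoding-length estimates from \cite[Section 1.3]{gls}. The plan is to reduce everything to one estimate, $\size(\det \mM)\le 2\size(\mM)$ for rational square $\mM$, together with Cramer's rule. All bounds concern values, i.e., lowest-terms representations. So it suffices to exhibit some fraction representing the value with the claimed size: reducing to lowest terms never increases $\size$.

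For part \ref{it:sums}, write $\alpha_i=p_i/q_i$ in lowest terms. For the sum, use the representation
\[
\frac{\sum_i p_i\prod_{j\ne i}q_j}{\prod_i q_i}
\]
as in \eqref{eq:sum-prod-eQ}. The denominator has $\log_2$ at most $\sum_i\log_2 q_i$. The numerator has absolute value at most $\bigl(\sum_i|p_i|\bigr)\prod_j q_j\le\prod_i(|p_i|+1)\prod_j q_j$. Taking logarithms and ceilings, each factor contributes at most $\size(p_i)+\size(q_i)$ in the numerator and once more in the denominator. This gives the factor $2$. The product is immediate, since $\size$ is subadditive on integer products up to the additive constants already built into the definition.

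For the determinant estimate, let $q$ be the product of all denominators of entries of $\mM$. Then $\det\mM=\det(\mM')/q^n$ for an integer matrix $\mM'$. A sharper route avoids the $q^n$ blow-up: expand $\det\mM$ as a sum over permutations. Bound the numerator via Hadamard's inequality applied to the matrix with entries $p_{ij}\prod_{(k,l)\ne(i,j)}q_{kl}$, and use the common denominator $\prod_{i,j}q_{ij}$. Taking logarithms gives $\size(\det\mM)\le 2\size(\mM)$. I expect this to be the only step requiring care: one must use Hadamard's bound rather than the naive $n!$ bound and count the denominator only once.

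Given the determinant estimate, part \ref{it:inverse} follows from the adjugate formula. Each entry of $\mM^{-1}$ is $\pm\det(\mM_{-j,-i})/\det\mM$. Both numerator and denominator are determinants of submatrices of $\mM$, so each entry has size at most $2\cdot 2\size(\mM)=4\size(\mM)$. Summing over the $n^2$ entries gives $4n^2\size(\mM)$.

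For part \ref{it:vertex}, we may assume $\mA$ has full row rank after discarding redundant rows, which only decreases size. A vertex of $\Primal$ is a basic solution $x_B=\mA_B^{-1}b$, $x_N=\0$. By Cramer's rule each nonzero entry is a ratio of two determinants of $m\times m$ submatrices of $(\mA\mid b)$. Each entry therefore has size $O(\size(\mA,b))$, and the vector has size $O(n\size(\mA,b))$, well within $O(n^3\size(\mA,b))$.

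An extreme ray of $\Primal$ is generated by an elementary vector of $\ker(\mA)$ supported on a minimally dependent column set $C$. Its entries can be taken as the signed maximal minors of $\mA_C$ (the cofactor vector), hence again of size $O(\size(\mA))$ each.

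For $\Dual$, a vertex is $s=c-\mA^\top y$ with $\mA_B^\top y=c_B$ for a basis $B$. Here $y$ is again given by Cramer ratios of minors of $(\mA\mid c)$. Each entry of $s$ is then a sum of $m+1$ products of such numbers with entries of $\mA$ and $c$, so part \ref{it:sums} gives total size $O(n^3\size(\mA,c))$, which is where the cubic factor is comfortably enough. Extreme rays of $\Dual$ are elementary vectors of $\im(\mA^\top)$, obtained as $\mA^\top y$ for $y$ solving a homogeneous subsystem of corank one; their generators are again given by minors and bounded in the same way.
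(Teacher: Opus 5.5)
Your overall route is the classical argument from \cite{gls} that the paper simply cites: a determinant bound, then Cramer's rule for inverses and basic solutions, then cofactor vectors for circuits. Your treatment of part (i), of part (ii) given the determinant bound, and of part (iii) is fine. There is, however, a real error in the one step you flagged as delicate. Let $Q\coloneqq\prod_{i,j}q_{ij}$. The matrix with entries $p_{ij}\prod_{(k,l)\ne(i,j)}q_{kl}$ is exactly $Q\mM$, and $\det(Q\mM)=Q^n\det\mM$, not $Q\det\mM$. So you cannot pair its determinant with the single denominator $Q$, since that represents the wrong value. Likewise, a Hadamard bound on $Q\mM$ overestimates the correct numerator by a factor $Q^{n-1}$. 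Done consistently, as $\det(Q\mM)/Q^n$, your route only gives a representation of size $O(n\,\size(\mM))$. That proves merely $\size(\det\mM)=O(n\,\size(\mM))$, and part (ii) would then degrade from $4n^2\size(\mM)$ to $O(n^3\size(\mM))$.

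The repair is short. Set $N\coloneqq Q\det\mM=\sum_\sigma\operatorname{sgn}(\sigma)\prod_i p_{i\sigma(i)}\prod_{(k,l)\notin\{(i,\sigma(i)):i\in[n]\}}q_{kl}$. This is an integer, because the pairs $(i,\sigma(i))$ are distinct, so each term's denominator $\prod_i q_{i\sigma(i)}$ divides $Q$. Now bound $N$ through $\mM$ itself rather than through $Q\mM$. You can even skip Hadamard, since $\sum_\sigma\prod_i|\mM_{i\sigma(i)}|\le\prod_i\sum_j|\mM_{ij}|$, and this also disposes of the $n!$ worry. Since $|\mM_{ij}|\le|p_{ij}|$, this gives $|N|\le Q\prod_i\sum_j|\mM_{ij}|\le\prod_{i,j}q_{ij}(1+|p_{ij}|)$. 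Hence $\size(N)+\size(Q)\le 2\size(\mM)$, which is what parts (ii) and (iii) need.

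There are also two smaller slips. First, the circuit generator should be the cofactor vector of $\mA_{R,C}$, where $R$ is a set of $|C|-1$ linearly independent rows; the maximal minors of $\mA_C$ itself all vanish when $m\ge|C|$. Second, for $\Dual$ the Cramer minors are those of the matrix obtained by stacking $\mA$ on top of $c^\top$, not of $(\mA\mid c)$, which is dimensionally undefined.
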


\begin{remark}[The same bounds on $\eQ$]\label{rem:size-bounds-encoded}
Part \ref{it:sums} also holds for encoded rationals. By \eqref{eq:sum-prod-eQ}, for $(p_1,q_1),\ldots,(p_k,q_k)\in\eQ$ we have
\[
\size\Bigl(\sum_{i=1}^k (p_i,q_i)\Bigr)\le 2\sum_{i=1}^k\size(p_i,q_i)\, ,\qquad
\size\Bigl(\prod_{i=1}^k (p_i,q_i)\Bigr)\le \sum_{i=1}^k\size(p_i,q_i)\, ,
\]
and likewise entrywise for sums and products of encoded vectors and matrices.

Parts \ref{it:inverse} and \ref{it:vertex}, however, bound the lowest-terms representations of $\mM^{-1}$ and of the vertices and extreme rays of $\cal P$, and our algorithms do not compute these representations. Instead, we bound the sizes of the representations returned by our algorithms. Given $\mM$, the procedure of \Cref{lem:linalg}\ref{it:linalg-inv} returns $\tilde\mM$ with $\val(\tilde\mM)=\mM^{-1}$, and given $\mA$, $b$ and $x\in\cal P$, the algorithm of \Cref{lem:minkowski-weyl} returns vectors $\tilde v,\tilde r\in\eQ^n$ with $\val(\tilde v)=v$ and $\val(\tilde r)=r$ for vertices $v$ and extreme rays $r$ of $\cal P$. Bounds on their sizes are given in \Cref{lem:linalg,lem:minkowski-weyl}.
\end{remark}

We next give a simple rounding step for rational numbers.
Suppose that, given $\alpha\in \Qplusplus$, we can find a number $r\in \Qplusplus$ of low bit complexity that agrees with $\alpha$ up to a factor of $2^{O(n)}$. The rounding step then `anchors' $\alpha$ to $r$: it changes the value of $\alpha$ only slightly, but brings its bit complexity down to roughly that of $r$.
\begin{lemma}[Anchoring numbers]\label{lem:anchoring}
Let $\alpha,r\in \eQplusplus$, $n, K\in\Zplusplus$ be such that $r/2^n \le \alpha \le 2^n r$. Then, there is an algorithm requiring $O(\log(n+K))$ arithmetic operations that returns a number $\hat \alpha\in \eQ$ with $\alpha\le \hat \alpha\le (1+1/K)\alpha$ and $\size(\hat \alpha)\le \size(r)+O(n+\log K)$.
\end{lemma}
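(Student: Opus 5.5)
The idea is to reduce to approximating the ratio $\alpha/r$ by a power of two times a low-complexity dyadic multiplier. First compute the ratio $\rho\coloneqq \alpha/r$ as an encoded rational; by assumption $2^{-n}\le\rho\le 2^n$. Next find an integer exponent $k\in[-n-1,n]$ with $2^k\le\rho<2^{k+1}$. A linear scan would cost $O(n)$ comparisons, so instead we binary search over the $2n+2$ candidates. Each candidate power $2^j$ is obtained by repeated squaring in $O(\log n)$ multiplications, or all needed powers $2^{2^i}$ are precomputed once. This keeps the operation count at $O(\log n)$, and every compared number has size $O(n)$ plus $\size(\rho)$.

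With $k$ fixed, set $L\coloneqq 2^{\lceil\log_2 K\rceil+1}$ (obtained by $O(\log K)$ doublings) and choose the integer $t$ with $L\le t<2L$ and
\[
(t-1)/L < \rho/2^k \le t/L,
\]
i.e. $t=\lceil L\rho/2^k\rceil$. This ceiling is again found by binary search over an interval of length $L$, costing $O(\log K)$ comparisons. Define the output
\[
\hat\alpha\coloneqq r\cdot(2^k t, L),
\]
with the product formed as encoded rationals (for $k<0$, put $2^{-k}$ into the denominator instead). By construction $\hat\alpha\ge\alpha$. Moreover,
\[
\hat\alpha-\alpha\le r2^k/L\le \rho r/L\le \alpha/(2K),
\]
using $\rho\ge 2^k$ and $L\ge 2K$, so $\hat\alpha\le(1+1/K)\alpha$. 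Since $r$ appears only once as a multiplicative factor, Remark~\ref{rem:size-bounds-encoded} gives
\[
\size(\hat\alpha)\le\size(r)+\size(2^{|k|}t)+\size(L)=\size(r)+O(n+\log K).
\]

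The main obstacle is that the comparisons themselves must be carried out in few operations. Comparing $\rho$ with $2^j$, or with $2^k t/L$, takes a constant number of encoded multiplications and a sign test. The binary searches then give $O(\log n+\log K)=O(\log(n+K))$ operations in total. One must also check that the intermediate quantities (in particular $\rho$, whose size is $\size(\alpha)+\size(r)$) do not enter the output. They do not, because $\hat\alpha$ is built only from $r$, $k$, $t$ and $L$. Only the comparisons touch $\alpha$, and this is why the output size bound is independent of $\size(\alpha)$.
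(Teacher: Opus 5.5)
Your construction matches the paper's. You locate $k$ with $2^k\le \alpha/r<2^{k+1}$ by a binary search driven by precomputed powers $2^{2^i}$. You then take the smallest dyadic multiplier with $O(\log K)$ bits, so that $r$ enters $\hat\alpha$ only as a single factor. Your error estimate $\hat\alpha-\alpha<r2^k/L\le\alpha/(2K)$ and the size bound are correct. One harmless slip: $t$ can equal $2L$, not only values below $2L$.

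The step that does not go through as written is the claim that the exponent search costs $O(\log n)$ operations. Suppose each probe $2^j$ is computed afresh, either by repeated squaring or as a product of the stored $2^{2^i}$ along the binary expansion of $j$. Then each probe costs $\Theta(\log n)$ operations, and the search costs $\Theta(\log^2 n)$, which exceeds the claimed $O(\log(n+K))$. Precomputing the $2^{2^i}$ does not by itself fix this. The missing observation, which the paper's proof uses, is to search over $[-m,m]$ with $m$ the smallest power of two not below $n$, instead of your $2n+2$ candidates. Then consecutive probes differ in the exponent by a power of two. So the current value $2^{k'}$ (or $2^{k'}r$) is updated by one multiplication or division by a stored $2^{2^j}$, followed by one comparison.

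A similar caution applies to your search for $t$. Generate the probes by adding the integer powers $2^i\le L$ that your doublings already produced, deciding the bits of $t-L$ from the top, rather than by halving interval endpoints. In this model division yields unreduced fractions, and adding such fractions multiplies their denominators. That would push $\size(t)$, and hence $\size(\hat\alpha)$, well beyond the $O(\log K)$ you need.
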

\begin{proof}
Let $m$ be the smallest power of two that is not smaller than $n$. In $O(\log(m)) = O(\log(n))$ arithmetic operations we can compute the numbers $2^{2^0}, 2^{2^1}, 2^{2^2}, \ldots, 2^m$ by repeated squaring. Using binary search on the exponent, in $O(\log n)$ iterations we can find an integer $k\in [-m,m]$ such that $2^k r\le \alpha\le 2^{k+1}r$. Throughout the search, we maintain the current value $2^{k'}r$ for the exponent $k'$ being probed, starting from $k'=0$. As the search interval is halved in each step, consecutive probes differ in the exponent by a power of two that is among our precomputed values; hence, each iteration takes a single multiplication or division, followed by a comparison. Divisions are not an issue here: dividing the current value by $2^{2^j}$ is equivalent to multiplying $\alpha$ by $2^{2^j}$ on the other side of the comparison. Note that all values arising in the search are of the form $r$ times a power of two with exponent in $[-m,m]$, and that the search also returns the value $2^kr$ itself.
Also, by repeated squaring in $O(\log\log K)$ arithmetic operations we can find a value $q\in [0,2\lceil\log_2 K\rceil]$ such that $2^{q/2} \le K \le 2^q$. Then, in $O(q)=O(\log K)$ arithmetic operations we find $p\in [0,2^q]$ such that $\alpha \le r 2^k(1+p/2^q) \le (1+1/K)\alpha$.  Then, $\hat\alpha \coloneqq r2^k(1+p/2^q)$ satisfies the claimed properties.
\end{proof}

We next state the algorithmic version of the Minkowski--Weyl decomposition theorem. The proof follows the standard lines of finding first a vertex $v^{(1)}$ in the support of $x$, and `shooting' a ray from $v^{(1)}$ through $x$. If it hits a face, then we recurse. Otherwise, the ray can be similarly decomposed to a convex combination of extreme rays. See \cite{Schrijver} and Theorem 6.5.11 in \cite{gls}. We need explicit bounds on the number of arithmetic operations and on the encoding lengths of all numbers computed, in the model of \Cref{sec:comp-mod}. We give a self-contained proof for polyhedra in the standard form $\mA x=b$, $x\ge \0_n$, in \Cref{sec:mw-explicit}, using the cubic bound on matrix multiplication.
\begin{lemma}[Minkowski--Weyl decomposition]\label{lem:minkowski-weyl}
For $\mA\in\eQ^{m\times n}$ with $m\le n$ and $b\in\eQ^m$, let  $\cal P = \{x\in\R^n \mid \mA x = b, x \ge \0_n\}$. Assume we are given a point $x\in\cal P\cap\eQ^n$. Then, there is a strongly polynomial algorithm that returns extreme points $v^{(1)},\ldots,v^{(k)}$, extreme rays $r^{(1)},\ldots,r^{(\ell)}$, and multipliers $\lambda_1,\ldots,\lambda_k\ge 0$,  $\eta_1,\ldots,\eta_\ell\ge 0$ such that $\sum_{i=1}^k \lambda_i=1$, $k+\ell\le n+1$, and
\[
x=\sum_{i=1}^k \lambda_i v^{(i)}+\sum_{j=1}^\ell \eta_j r^{(j)}\, .
\]
The algorithm performs $O(n^7)$ arithmetic operations and comparisons and runs in space $\poly(n)\size(\mA,b,x)$. Every vertex returned has size $\poly(n)\size(\mA,b)$, and every extreme ray and every multiplier returned has size $\poly(n)\size(\mA,b,x)$.
\end{lemma}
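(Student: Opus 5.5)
The algorithm follows the classical shooting argument, run recursively on faces of $\cal P$. Write $F(x)\coloneqq\{z\in\cal P\mid \supp(z)\subseteq\supp(x)\}$ for the smallest face containing $x$.

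\textbf{Step 1: find a vertex of $F(x)$.} We restrict to the columns $S=\supp(x)$. While $\mA_S$ has dependent columns, compute a kernel vector $g$ with $\supp(g)\subseteq S$. A suitable $g$ is obtained from a basis of $\mA_S$ via \Cref{lem:linalg}\ref{it:linalg-basis} and \ref{it:linalg-inv}, as an adjugate column combination, which is a circuit. If needed, flip $g$ so that it has a negative entry, and move from $x$ along $g$ until a coordinate hits zero; this shrinks $S$. After at most $n$ such moves the remaining columns are independent. The resulting point is a vertex $v^{(1)}$, the unique solution of $\mA_{S'}x_{S'}=b$. We recompute $v^{(1)}$ directly as $\mA_{B}^{-1}b$ for a basis $B\supseteq S'$ using \Cref{lem:linalg}. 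Its size then depends only on $\size(\mA,b)$ and not on the intermediate iterates, which gives the $\poly(n)\size(\mA,b)$ vertex bound.

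\textbf{Step 2: shoot the ray.} Set $d\coloneqq x-v^{(1)}$. Since $\supp(v^{(1)})\subseteq\supp(x)$, the point $x+t d$ stays in $F(x)$ for small $t\ge0$. If $d$ has a negative coordinate, let $t^*>0$ be the maximal step and $x'\coloneqq x+t^* d$. Then $x'$ lies on a proper face, since its support is strictly smaller, and $x=\frac{1}{1+t^*}x'+\frac{t^*}{1+t^*}v^{(1)}$. We recurse on $x'$. If $d\ge \0_n$, then $d$ lies in the recession cone $\{r\ge\0_n\mid \mA r=\0_m\}$, and we decompose it into extreme rays. Normalizing by $\pr{\1_n}{d}$ turns the cone into a polytope with $\mA r=\0_m$, $\pr{\1}{r}=1$, $r\ge\0_n$, whose vertices are the extreme rays, and we apply the same vertex procedure there. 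Each recursion strictly decreases the support, which gives $k+\ell\le n+1$. Multipliers are obtained by composing the convex weights, and all of them are nonnegative.

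\textbf{Operation count.} There are at most $n$ recursion levels. Each level does $O(n)$ support-reduction moves, and each move costs $O(n^5)$ linear algebra, which gives $O(n^7)$ total.

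\textbf{Main obstacle: sizes of $x'$ and the multipliers.} The delicate part is controlling the sizes of the intermediate points and multipliers, since the iterates $x'$ are defined recursively and the sizes could compound across the $n$ levels. The plan is to avoid carrying $x'$ forward in encoded form. Instead, at every level recompute it from the original $x$: $x'$ is the point of the line through the already-found vertices and $x$ on a specified face. Equivalently, the weights solve a linear system with data consisting of $x$ and the vertices $v^{(i)}$ (and rays), at most $n+1$ unknowns, solved freshly by \Cref{lem:linalg}. Then each multiplier and each $x'$ is the output of one linear solve on data of size $\poly(n)\size(\mA,b,x)$, giving the claimed bound. Space is polynomial because each level only stores $O(n)$ such vectors. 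I expect verifying that this recomputation really yields nonnegative multipliers consistent with the geometric recursion to be the main technical check.
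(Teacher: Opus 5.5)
Your outline follows the paper's algorithm: circuit-based basis crashing, shooting from the vertex through the current point, normalizing the recession cone to a polytope, and a barycentric linear solve for the multipliers. However, Step~1 has a genuine gap in the size/space analysis.

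\textbf{The gap.} In Step~1 you make up to $n$ successive moves $y\leftarrow y+\tau g$ with $\tau=-y_j/g_j$, and you recompute only the \emph{final} vertex from $(\mA,b)$. The intermediate points still have to be computed and stored to run the ratio tests. With encoded rationals that are never reduced (reduction needs a gcd, which is not strongly polynomial), the new coordinate $y_l+\tau g_l$ has a denominator containing $q_lq_j$, a product of two denominators of the current point. So the encoding length can roughly double with every move and grow exponentially over $n$ moves; this is the naive Gaussian elimination phenomenon. It breaks the claimed space bound $\poly(n)\size(\mA,b,x)$, and recomputing $v^{(1)}$ afterwards does not repair it. Your ``main obstacle'' paragraph applies recomputation only across the outer levels, not inside Step~1.

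\textbf{The missing idea.} The paper's basis-crashing lemma, following \cite[Theorem 6.5.11]{gls}, derives every inner iterate directly from the starting point $y$:
\begin{itemize}
\item Collect the circuits $g^{(1)},\dots,g^{(i)}$ as the columns of $\mG$. Each depends only on $\mA$ and the current support, so it has size $\poly(n)\size(\mA)$.
\item Set $y^{(i+1)}=y+\mG\alpha$, where $\alpha$ is the unique solution of $(\mG\alpha)_R=-y_R$ with $R=S_1\setminus S_{i+1}$.
\item Uniqueness holds because for $p_t\in S_t\setminus S_{t+1}$ we have $g^{(t)}_{p_t}\neq 0$ and $g^{(u)}_{p_t}=0$ for $u>t$. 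Hence rows $p_1,\dots,p_i$ of $\mG$ form a nonsingular triangular matrix.
\end{itemize}
This costs one $O(n^4)$ solve per move and keeps your $O(n^7)$ count.

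The same index argument also settles the check you flag at the end. First, it gives uniqueness for your outer-level recomputation in the affine hull of $x$ and the vertices found so far, which is a sound formulation. The directions $x-v^{(t)}$ span the same space as the actual shooting directions $y^{(t)}-v^{(t)}$, and the latter are triangular on the indices $p_t$. Second, it shows the vertices are affinely independent, via $v^{(t)}_{p_t}>0$ and $v^{(u)}_{p_t}=0$ for $u>t$. So the barycentric system has a unique solution, and it must coincide with the positive weights obtained by unrolling $y^{(i)}=\tfrac{\tau_i}{1+\tau_i}v^{(i)}+\tfrac{1}{1+\tau_i}y^{(i+1)}$.

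\textbf{A smaller point.} The lemma does not assume full row rank, and it is applied in \Cref{lem:round-gap} to a system without it. So $\mA_B^{-1}b$ and the adjugate-based circuit should be replaced by normal-equation formulas. For example, use $v_{S'}=(\mA_{S'}^\top\mA_{S'})^{-1}\mA_{S'}^\top b$, and $g_B=(\mA_B^\top\mA_B)^{-1}\mA_B^\top\mA_t$ with $g_t=-1$.
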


\subsection{Preliminaries on  interior point methods}\label{sec:ipm-preliminaries}
Before proving \Cref{lem:main-round} in \Cref{sec:rounding-proof}, we
summarize some simple facts needed from the interior-point literature.
The following simple formula allows us to express gaps between different primal and dual solutions.
\begin{proposition}\label{prop:gap-formula} Given $x,x' $ such that $x - x' \in \ker(\mA)$, and $s,s'$ such that $s - s' \in \im(\mA^\top)$, we have that
\[
\pr{x}{s} + \pr{x'}{s'} = \pr{x}{s'} + \pr{x'}{s}.
\]
In particular, if $\pr{x'}{s'}=0$, then
\[
\pr{x}{s} = \pr{x}{s'} + \pr{x'}{s}.
\]
\end{proposition}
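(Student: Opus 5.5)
The statement is \Cref{prop:gap-formula}, and I would prove it by expanding a single inner product. Set $u\coloneqq x-x'\in\ker(\mA)$ and $w\coloneqq s-s'\in\im(\mA^\top)$. The whole identity reduces to the orthogonality of the kernel of $\mA$ and the row space of $\mA$.

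\textbf{Step 1: orthogonality.} Write $w=\mA^\top z$ for some $z\in\R^m$. Then
\[
\pr{u}{w}=\pr{u}{\mA^\top z}=\pr{\mA u}{z}=\pr{\0_m}{z}=0\, .
\]

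\textbf{Step 2: expand by bilinearity.} Expanding the same product gives
\[
0=\pr{x-x'}{s-s'}=\pr{x}{s}-\pr{x}{s'}-\pr{x'}{s}+\pr{x'}{s'}\, .
\]
Rearranging yields $\pr{x}{s}+\pr{x'}{s'}=\pr{x}{s'}+\pr{x'}{s}$, which is the first claim.

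\textbf{Step 3: the special case.} Substituting $\pr{x'}{s'}=0$ into the identity from Step 2 gives $\pr{x}{s}=\pr{x}{s'}+\pr{x'}{s}$, which is the second claim.

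No step is a real obstacle. The only point needing care is to use the hypotheses exactly as stated: we need $x-x'\in\ker(\mA)$ and $s-s'\in\im(\mA^\top)$, and neither feasibility nor nonnegativity of the vectors is required.
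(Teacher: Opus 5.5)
Your proof is correct and follows the same route as the paper: the paper's one-line argument is that $x-x'\in\ker(\mA)$ and $s-s'\in\im(\mA^\top)$ lie in orthogonal subspaces, so $\pr{x-x'}{s-s'}=0$. You simply write out the bilinear expansion and the special case $\pr{x'}{s'}=0$ that the paper leaves implicit.
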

\begin{proof}
This follows since $x-x'\in \ker(\mA)$ and $s-s'\in \im(\mA^\top)$ live in orthogonal subspaces.
\end{proof}

 A key property of the central path is \emph{`near monotonicity'},
 formulated in the following lemma, see \cite[Lemma 16]{Vavasis1996}.
 \begin{lemma}\label{lem:near-monotonicity}
For the central path point $z\cp(\mu)=(x\cp(\mu),s\cp(\mu))$ and any $z=(x,s)\in \Primal\times \Dual$, we have
\[
z\le n\left(1+\frac{\gap(z)}{\mu}\right) z\cp(\mu)\, .
\]
Moreover, if  $\gap(z)\le \mu$, then $z\le 2nz\cp(\mu)$.
 \end{lemma}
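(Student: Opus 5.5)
The plan is to compare $z=(x,s)$ with the central path point $z\cp(\mu)=(x\cp(\mu),s\cp(\mu))$ through the gap identity of \Cref{prop:gap-formula}, and then use the defining relation $x\cp_i(\mu)\,s\cp_i(\mu)=\mu$ to turn the resulting bound into a coordinatewise one. Implicitly the central path point exists, so $\Primalp,\Dualp\neq\emptyset$ and $x\cp(\mu),s\cp(\mu)>\0_n$.

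First, since $x,x\cp(\mu)\in\Primal$, we have $x-x\cp(\mu)\in\ker(\mA)$. Since $s,s\cp(\mu)\in\Dual$, we have $s-s\cp(\mu)\in\im(\mA^\top)$. \Cref{prop:gap-formula} then gives
\[
\pr{x}{s\cp(\mu)}+\pr{x\cp(\mu)}{s}=\pr{x}{s}+\pr{x\cp(\mu)}{s\cp(\mu)}=n\,\gap(z)+n\mu\, ,
\]
where we used $\pr{x\cp(\mu)}{s\cp(\mu)}=\sum_i x\cp_i(\mu)s\cp_i(\mu)=n\mu$.

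Second, all vectors involved are nonnegative, so every term in the two inner products on the left is nonnegative. Hence each single term is bounded by the total. For every $i\in[n]$ this gives
\[
x_i\, s\cp_i(\mu)\le n(\gap(z)+\mu)\, .
\]
Substituting $s\cp_i(\mu)=\mu/x\cp_i(\mu)$ and dividing by $\mu>0$ yields
\[
x_i\le n\Bigl(1+\frac{\gap(z)}{\mu}\Bigr)x\cp_i(\mu)\, .
\]
The same argument applied to the term $x\cp_i(\mu)\,s_i$ gives the analogous bound for $s_i$. Together these prove the first claim. The second claim is immediate: if $\gap(z)\le\mu$, then the factor $1+\gap(z)/\mu$ is at most $2$.

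There is no real obstacle here. The only points needing care are the orthogonality that justifies \Cref{prop:gap-formula} and the nonnegativity of all terms, which is what lets us drop every summand except the $i$-th.
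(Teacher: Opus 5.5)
Your proof is correct. The identity from the orthogonality of $\ker(\mA)$ and $\im(\mA^\top)$, together with dropping all but one nonnegative summand and using $x\cp_i(\mu)\,s\cp_i(\mu)=\mu$, is the standard argument for this near-monotonicity bound. The paper does not prove the lemma itself; it only cites Vavasis and Ye (Lemma~16), so there is no in-paper proof to compare against.
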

We formulate a simple linearity property of the duality gap, see e.g. \cite[Proposition 2.29]{ADLNV2022}.
\begin{proposition}[Linearity of duality gap] \label{prop:gap-linear}
  Given $x^{(1)},\dots,x^{(k)} \in \Primal$, $s^{(1)},\dots,s^{(k)} \in \Dual$  and $\lambda \in \Rplus^k$ such that $\sum_{i=1}^k \lambda_i = 1$, we have that
  \begin{equation*}
  \gap\left(\sum_{i=1}^k \lambda_i x^{(i)},\sum_{i=1}^k \lambda_i s^{(i)}\right) = \sum_{i=1}^k \lambda_i \gap(x^{(i)},s^{(i)}) \,.
  \end{equation*}
\end{proposition}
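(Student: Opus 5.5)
The plan is to expand the inner product bilinearly and then use \Cref{prop:gap-formula} to replace each cross term by diagonal terms. Write $x\coloneqq\sum_{i=1}^k\lambda_i x^{(i)}$ and $s\coloneqq\sum_{j=1}^k\lambda_j s^{(j)}$.

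First, I will note that $x\in\Primal$ and $s\in\Dual$. Both sets are convex, and $\sum_i\lambda_i=1$ with $\lambda\ge\0$, so the convex combinations stay feasible and $\gap(x,s)$ is defined as in the paper. Bilinearity then gives
\[
\pr{x}{s}=\sum_{i=1}^k\sum_{j=1}^k\lambda_i\lambda_j\pr{x^{(i)}}{s^{(j)}}\, .
\]

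The key step is to symmetrize this double sum. Swapping the roles of $i$ and $j$ gives
\[
\pr{x}{s}=\frac12\sum_{i,j}\lambda_i\lambda_j\Bigl(\pr{x^{(i)}}{s^{(j)}}+\pr{x^{(j)}}{s^{(i)}}\Bigr)\, .
\]
For any $i,j$, we have $x^{(i)}-x^{(j)}\in\ker(\mA)$, since both satisfy $\mA x=b$. Likewise $s^{(i)}-s^{(j)}\in\im(\mA^\top)$, since both have the form $c-\mA^\top y$. Hence \Cref{prop:gap-formula} applies and yields
\[
\pr{x^{(i)}}{s^{(j)}}+\pr{x^{(j)}}{s^{(i)}}=\pr{x^{(i)}}{s^{(i)}}+\pr{x^{(j)}}{s^{(j)}}\, .
\]

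Substituting this identity and using $\sum_j\lambda_j=1$, the double sum collapses:
\[
\pr{x}{s}=\frac12\Bigl(\sum_i\lambda_i\pr{x^{(i)}}{s^{(i)}}+\sum_j\lambda_j\pr{x^{(j)}}{s^{(j)}}\Bigr)=\sum_i\lambda_i\pr{x^{(i)}}{s^{(i)}}\, .
\]
Dividing by $n$ gives the claim.

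There is no real obstacle here. The only point needing care is checking the hypotheses of \Cref{prop:gap-formula}: the differences must lie in the orthogonal subspaces $\ker(\mA)$ and $\im(\mA^\top)$, which follows directly from the definitions of $\Primal$ and $\Dual$.
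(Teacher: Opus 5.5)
Your proof is correct. The symmetrization is a valid use of \Cref{prop:gap-formula} with $(x,x',s,s')=(x^{(i)},x^{(j)},s^{(i)},s^{(j)})$, and the double sum collapses as you claim.

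The paper does not prove this statement; it only cites \cite[Proposition 2.29]{ADLNV2022}. A more direct route fixes one reference pair $(x_0,s_0)\in\Primal\times\Dual$. \Cref{prop:gap-formula} then gives $\pr{x}{s}=\pr{x}{s_0}+\pr{x_0}{s}-\pr{x_0}{s_0}$ for every $(x,s)\in\Primal\times\Dual$. Equivalently, one can use the identity $\pr{x}{s}=\pr{c}{x}-\pr{b}{y}$ from the preliminaries. Either way, $n\,\gap$ is the restriction of an affine function, and affine functions commute with convex combinations whose weights sum to one.

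Your pairwise symmetrization reaches the same conclusion without singling out a reference point, at the cost of handling a double sum. Both arguments rest on the same fact: the cross term between $\ker(\mA)$ and $\im(\mA^\top)$ vanishes, so the gap is affine on the product of the two affine solution spaces. Consequently both show that the identity holds for arbitrary affine combinations; nonnegativity of $\lambda$ is needed only to keep the combination inside $\Primal\times\Dual$.
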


We use the following proximity properties. For the first one, see e.g., \cite[Lemma 5.4]{Gonzaga92} and \cite[Proposition 2.1]{Monteiro2003}.
\begin{proposition}\label{prop:near-central}
  Let $z = (x, s) \in
   {\cal N}^2(\beta)$ for $\beta\in (0,1/4]$ and  $\mu=\gap(z)$, and
   consider the central path point
 $\pz\cp(\mu)=(\px\cp(\mu),\ps\cp(\mu))$. For each $i\in[n]$,
 \[
 \begin{aligned}
 \frac{\px_i}{1+2\beta}\le \frac{1-2\beta}{1-\beta}\mathinner{} \px_i&\le \px\cp_i(\mu)\le
 \frac{\px_i}{1-\beta}\, ,\quad \mbox{and}\\
 \frac{\ps_i}{1+2\beta}\le  \frac{1-2\beta}{1-\beta}\mathinner{} \ps_i&\le \ps\cp_i(\mu)\le
 \frac{\ps_i}{1-\beta}\, .
 \end{aligned}
 \]
 \end{proposition}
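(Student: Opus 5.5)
The plan is to work with coordinatewise ratios against the central path point and exploit orthogonality of the primal and dual displacements. Let $\mu=\gap(z)$ and $(\px\cp,\ps\cp)=(\px\cp(\mu),\ps\cp(\mu))$. Define $a\coloneqq \px/\px\cp$ and $b\coloneqq \ps/\ps\cp$ (well defined since the central path point is strictly positive), and put $u\coloneqq a-\1_n$, $v\coloneqq b-\1_n$.

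Since $\px\cp\circ\ps\cp=\mu\1_n$, we have $a\circ b=\px\circ\ps/\mu\eqqcolon w$, and by the definition of $\cN(\beta)$, $\norm{w-\1_n}_2\le\beta$. Because $\px-\px\cp\in\ker(\mA)$ and $\ps-\ps\cp\in\im(\mA^\top)$, Proposition~\ref{prop:gap-formula} gives $\pr{\px-\px\cp}{\ps-\ps\cp}=0$. Coordinatewise, $(\px_i-\px\cp_i)(\ps_i-\ps\cp_i)=\mu u_iv_i$, so $\pr{u}{v}=0$. The problem therefore reduces to an elementary one: vectors $u,v$ with $u\perp v$ and
\[
u+v+u\circ v=w-\1_n,\qquad \norm{w-\1_n}_2\le\beta .
\]
The target inequalities then say exactly that $1-\beta\le a_i$ and $a_i\le (1-\beta)/(1-2\beta)$, and the same for $b_i$. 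Indeed, $\px\cp_i\le \px_i/(1-\beta)$ is $a_i\ge1-\beta$, and the lower bound on $\px\cp_i$ is the stated upper bound on $a_i$. The outer inequalities $\px_i/(1+2\beta)\le\frac{1-2\beta}{1-\beta}\px_i$ are trivial arithmetic for $\beta\le1/4$.

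For the reduced problem, orthogonality gives $\norm{u}_2^2+\norm{v}_2^2=\norm{u+v}_2^2$. Also
\[
\norm{u+v}_2\le \beta+\norm{u\circ v}_2\le \beta+\norm{u}_2\norm{v}_2\le\beta+\tfrac12\bigl(\norm{u}_2^2+\norm{v}_2^2\bigr).
\]
Set $t\coloneqq(\norm{u}_2^2+\norm{v}_2^2)^{1/2}$. This yields $t\le\beta+t^2/2$, so either $t\le\beta/(1-\beta)$-ish (precisely $t\le 1-\sqrt{1-2\beta}\le\beta(1+\beta)$) or $t$ is large. A continuity argument along the segment from the central point, where $t=0$, to $z$ (or along the path of neighborhoods $\cN(\beta')$, $\beta'\in[0,\beta]$) excludes the large branch. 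Hence $\norm{u}_\infty,\norm{v}_\infty\le t$ is $O(\beta)$.

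From this I will extract the sharp constants. The lower bound $a_i\ge1-\beta$ should come from a refined one-coordinate argument. Fix $i$. If $u_i<0$, then orthogonality together with $|w_j-1|$ small forces the remaining mass to compensate. Combining $a_ib_i=w_i\ge 1-|w_i-1|$ with the bound on $\norm{v}$ gives $a_i\ge w_i/b_i$, and I would optimise this. The upper bound then follows by symmetry: $a_i=w_i/b_i\le w_i/(1-\beta)$, and sharpening with the joint constraint $\sum_j (w_j-1)^2\le\beta^2$ should produce $(1-\beta)/(1-2\beta)$.

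The main obstacle is obtaining exactly the stated constants $(1-\beta)$ and $(1-2\beta)/(1-\beta)$ rather than $1-O(\beta)$. If the direct optimisation becomes messy, I will fall back on the standard argument of \cite[Lemma 5.4]{Gonzaga92}, which parametrises $(\px,\ps)$ and $(\px\cp,\ps\cp)$ via the scaled vector $\sqrt{\px\circ\ps}$ and bounds $\norm{\px/\px\cp-\1_n}_\infty$ through the same orthogonality identity. For the dual inequalities I will simply swap the roles of $a$ and $b$, since the reduced problem is symmetric in $u$ and $v$.
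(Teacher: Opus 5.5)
You set things up correctly: with $a=x/x\cp$, $b=s/s\cp$, $u=a-\1_n$, $v=b-\1_n$ you get $\pr{u}{v}=0$ and $u+v+u\circ v=w-\1_n$. The claim is then exactly $1-\beta\le a_i,b_i\le\frac{1-\beta}{1-2\beta}$.

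The argument breaks at both places where real work is needed.

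\emph{First, the continuity step is circular.} Along the segment $(1-\theta)z\cp(\mu)+\theta z$ the scaled displacements are just $\theta u,\theta v$. The residual is then $\theta(u+v)+\theta^2u\circ v$, and nothing bounds it by $\beta$ for intermediate $\theta$. At $\theta=1/2$ it equals $\frac14(u+v)+\frac14(w-\1_n)$, whose norm is at least $(t-\beta)/4$. That is large exactly on the branch you want to exclude. Your variant ``along $\cN(\beta')$, $\beta'\le\beta$'' needs path-connectedness of $\{(x,s)\in\Primalp\times\Dualp:\norm{x\circ s/\mu-\1_n}_2\le\beta\}$. That is the weighted-center theorem, namely that $(x,s)\mapsto x\circ s$ is a homeomorphism onto $\Rplusplus^n$, which you do not invoke.

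\emph{Second, even granting $t\le 1-\sqrt{1-2\beta}$, the lower bound does not follow.} You only get $a_i\ge\sqrt{1-2\beta}$, which is strictly below $1-\beta$, so $x\cp_i\le x_i/(1-\beta)$ is not proved. (The other bound does follow, since $1-\sqrt{1-2\beta}\le\beta/(1-\beta)\le\beta/(1-2\beta)$.) The refinements you sketch are not carried out, and the one you make explicit fails: $a_i=w_i/b_i\le(1+\beta)/(1-\beta)$, which exceeds $(1-\beta)/(1-2\beta)$ for every $\beta<1/3$. Falling back on Gonzaga's Lemma 5.4 is what the paper itself does, since it only cites this result; it does not supply a proof.

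\emph{The missing idea is a coordinatewise identity.} Expand $\pr{a-\1_n}{b-\1_n}=0$ and use $\sum_j a_jb_j=\sum_j w_j=n$ to get $\sum_j(a_j+b_j)=2n$. Together with $w_j\ge 1-\beta$ this gives
\[
\norm{\sqrt a-\sqrt b}_2^2=2n-2\textstyle\sum_j\sqrt{w_j}=\norm{\1_n-\sqrt w}_2^2\le\frac{\beta^2}{(1+\sqrt{1-\beta})^2}=(1-\sqrt{1-\beta})^2\eqqcolon E^2 .
\]
No continuity is needed. Writing $a_j+b_j-2=(\sqrt{a_j}-\sqrt{b_j})^2+2(\sqrt{w_j}-1)$ also gives $t=\norm{a+b-2\cdot\1_n}_2\le E^2+2E$, which rules out your large branch directly.

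Now fix $i$ and put $p=\sqrt{a_i}$, so that $\sqrt{b_i}=\sqrt{w_i}/p$.
\begin{itemize}
\item If $a_i\ge b_i$, then $a_i\ge\sqrt{w_i}\ge\sqrt{1-\beta}\ge1-\beta$.
\item Otherwise $\sqrt{w_i}/p-p\le E$, i.e.\ $p^2+Ep-\sqrt{w_i}\ge0$. The left-hand side is increasing in $p>0$, and at $p=\sqrt{1-\beta}$ it is at most $\sqrt{1-\beta}-\sqrt{w_i}\le 0$. Hence $a_i\ge 1-\beta$.
\end{itemize}
The same argument gives $b_i\ge1-\beta$. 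The upper bounds then follow from your own estimate $\norm{u}_\infty,\norm{v}_\infty\le t\le 1-\sqrt{1-2\beta}$, which is now justified.
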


The next lemma formalizes the well-known property of standard corrector steps, see e.g., \cite{MTY}.
\begin{lemma}[Corrector step]\label{lem:corrector}
Let $\mA\in\eQ^{m\times n}$ have full row rank, let $\beta\in(0,1/4]$, and let $(x,s)\in \cN(2\beta)\cap\eQ^{2n}$. There is a strongly polynomial algorithm that finds a point $(x',s')\in\cN(\beta)\cap\eQ^{2n}$ with $\gap(x',s')=\gap(x,s)$ and $\size(x',s')=\poly(n)\size(\mA,x,s)$. The algorithm solves a single nonsingular system of $2n+m$ linear equations, whose data are formed from $\mA$, $x$ and $s$.
\end{lemma}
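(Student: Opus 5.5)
The plan is to take one full Newton step toward the central path point at the current normalized gap, $\mu\coloneqq\gap(x,s)$. Concretely, I would compute $(\Delta x,\Delta y,\Delta s)$ from the linear system
\[
\mA\Delta x=\0_m\, ,\qquad \mA^\top\Delta y+\Delta s=\0_n\, ,\qquad s\circ\Delta x+x\circ\Delta s=\mu\1_n-x\circ s\, .
\]
Then I would set $(x',s')\coloneqq(x+\Delta x,s+\Delta s)$. This is a system of $2n+m$ equations in $2n+m$ unknowns. It is nonsingular because $\mA$ has full row rank and $x,s>\0_n$: eliminating $\Delta s$ and $\Delta x$ gives the normal equations $\mA\,\mathrm{diag}(x/s)\mA^\top\Delta y=\cdots$, and $\mA\,\mathrm{diag}(x/s)\mA^\top$ is positive definite. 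Feasibility of $x'$ and $s'$ for the affine constraints is immediate from the first two equations.

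\emph{Gap is preserved.} Since $\Delta x\in\ker(\mA)$ and $\Delta s\in\im(\mA^\top)$, we have $\pr{\Delta x}{\Delta s}=0$. Summing the third equation over the coordinates gives $\pr{s}{\Delta x}+\pr{x}{\Delta s}=n\mu-\pr{x}{s}=0$. Hence $\pr{x'}{s'}=\pr{x}{s}$, and therefore $\gap(x',s')=\mu$.

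\emph{Centrality improves.} Again using the third equation, $x'\circ s'-\mu\1_n=\Delta x\circ\Delta s$. The standard Mizuno--Todd--Ye estimate applies with $\theta\coloneqq\|x\circ s/\mu-\1_n\|_2\le 2\beta\le 1/2$. Writing $\Delta x\circ\Delta s$ in scaled coordinates $p\coloneqq\sqrt{s/x}\circ\Delta x$ and $q\coloneqq\sqrt{x/s}\circ\Delta s$, the vectors $p$ and $q$ are orthogonal and $p+q=(\mu\1_n-x\circ s)/\sqrt{x\circ s}$. This yields
\[
\|p\circ q\|_2\le \frac{\|p+q\|_2^2}{2\sqrt2}\le\frac{\theta^2\mu}{2\sqrt2(1-\theta)}\, .
\]
So $\|x'\circ s'/\mu-\1_n\|_2\le \theta^2/(2\sqrt2(1-\theta))$. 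This is at most $\theta/2\le\beta$ exactly when $\theta\le\sqrt2/(1+\sqrt2)\approx0.586$, which holds since $\theta\le1/2$.

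\emph{Strict positivity.} I would use the usual continuity argument along $t\mapsto(x+t\Delta x,s+t\Delta s)$ for $t\in[0,1]$. By the same bound, each product coordinate stays at least $(1-t)x_is_i+t\mu-t^2\|\Delta x\circ\Delta s\|_\infty>0$, so no coordinate can cross zero. Hence $(x',s')\in\cN(\beta)$.

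\emph{Bit complexity.} The data of the system are entries of $\mA$, $x$, $s$, and $\mu=\pr{x}{s}/n$. By Remark~\ref{rem:size-bounds-encoded} these have size $\poly(n)\size(\mA,x,s)$. Solving the system via the inverse from \Cref{lem:linalg}\ref{it:linalg-inv} gives a strongly polynomial computation, and the output has size $\poly(n)\size(\mA,x,s)$. The final additions $x+\Delta x$ and $s+\Delta s$ only add polynomially by \Cref{lem:size-bounds}\ref{it:sums} for encoded rationals.

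I expect the main obstacle to be the constant bookkeeping in the centrality estimate. It must work over the whole range $\beta\le1/4$, that is, $\theta$ up to $1/2$, and the sharper $2\sqrt2$ form of the product bound is exactly what is needed there. If that margin turned out too thin, I would fall back on applying the step twice.
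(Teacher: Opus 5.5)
Your proposal is correct and takes the same approach as the paper. The paper gives no proof of its own; it simply invokes the standard Mizuno--Todd--Ye corrector step, which is one full Newton step toward the central path point at $\mu=\gap(x,s)$, with the gap preserved because $\pr{\Delta x}{\Delta s}=0$ — exactly as you do. Your estimate $\theta^2/(2\sqrt2(1-\theta))\le\theta/2\le\beta$ for $\theta\le 2\beta\le 1/2$ shows that a single step already suffices, so the fallback of applying the step twice is not needed.
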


\subsection{The rounding procedure}\label{sec:rounding-proof}
We prove \Cref{lem:main-round} in two steps. First, we show that every iterate $(x,s)$ can be replaced by another feasible pair $(\hat x,\hat s)$ whose normalized gap has bounded bit-complexity. This may involve an unbounded decrease in the gap value, as the algorithm identifies an improving direction along the central path.
In the second step, we replace such a point by another one nearby that has {approximately} the same normalized gap and also bounded bit-complexity.

\begin{lemma}\label{lem:round-gap}
Let $\mA\in \eQ^{m\times n}$, $b\in \eQ^m$, $c\in \eQ^n$, let $\beta\in(0,1/16]$ be as fixed in \Cref{sec:CP}, and
let $(x,s)\in{\cal N^2}(\beta)\cap \eQ^{2n}$. There is a strongly polynomial algorithm that returns a solution $(\hat x,\hat s)\in \clN(2\beta)\cap \eQ^{2n}$ with $\gap(\hat x,\hat s)\le \gap(x,s)$ and a value $\mu^\star\in \eQ$ such that $\gap(\hat x,\hat s)\le \mu^\star\le (1+\beta/(4n^2))\gap(\hat x,\hat s)$, and  $\size(\mu^\star)= \poly(n)\size(\mA,b,c)$.
Moreover, one of the following two outcomes occurs.
\begin{enumerate}[label=(\roman*)]
\item\label{it:round-gap-pos} $\mu^\star>0$, and $(\hat x,\hat s)\in\cN(2\beta)$.
\item\label{it:round-gap-zero} $\mu^\star=0$, so $(\hat x,\hat s)$ is a pair of primal and dual optimal solutions, and the algorithm additionally returns $(\hat v,\hat w)\in\eQ^{2n}$ such that $B\coloneqq\supp(\hat x)$ and $N\coloneqq\supp(\hat s)$ partition $[n]$, $\hat v\in\im(\mA^\top)$, $\hat v_B>\0_B$, $\mA\hat w=\0_m$, $\hat w_N>\0_N$, and $\norm{(\hat x_B\circ\hat v_B,\hat s_N\circ\hat w_N)-\1_n}_2\le 2\beta$; that is, $(\hat x,\hat s)$ and $(\hat v,\hat w)$ satisfy the output conditions of \Cref{thm:SLC-bound} with $2\beta$ in place of $\beta$.
\end{enumerate}
\end{lemma}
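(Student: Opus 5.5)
We will decompose both iterates into vertices and rays, look for a primal–dual vertex pair whose gap is comparable to $\gap(x,s)$, and otherwise move toward a vertex pair with much smaller gap.

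\textbf{Decomposition.} Apply \Cref{lem:minkowski-weyl} to $x\in\Primal$ and to $s\in\Dual$. This gives
\[
x=\sum_i\lambda_i v^{(i)}+\sum_j\eta_j r^{(j)}\quad\text{and}\quad s=\sum_k\lambda'_k u^{(k)}+\sum_l\eta'_l q^{(l)}.
\]
Each vertex has size $\poly(n)\size(\mA,b,c)$ by \Cref{lem:minkowski-weyl}. First we dispose of the rays. For any primal ray $r$, the point $x+tr$ stays in $\Primal$, and its gap against $s$ is $\pr{x}{s}+t\pr{r}{s}$, which cannot decrease since $\pr{r}{s}\ge 0$. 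The same holds on the dual side. Expanding $\pr{x}{s}$ bilinearly with \Cref{prop:gap-formula} and the optimality of the vertex/ray pairing, $n\gap(x,s)$ becomes a nonnegative combination of the terms $\pr{v^{(i)}}{u^{(k)}}$, $\pr{v^{(i)}}{q^{(l)}}$, $\pr{r^{(j)}}{u^{(k)}}$ and $\pr{r^{(j)}}{q^{(l)}}$.

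\textbf{Case split.} Let $\mu\coloneqq\gap(x,s)$ and compute the pairwise values $g_{ik}=\pr{v^{(i)}}{u^{(k)}}/n$. These all have size $\poly(n)\size(\mA,b,c)$ and there are at most $(n+1)^2$ of them.

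In the \emph{stable case}, some $g_{ik}>0$ satisfies $g_{ik}/2^{n}\le\mu\le 2^{n}g_{ik}$. We then take $(\hat x,\hat s)=(x,s)$ and apply \Cref{lem:anchoring} with $r=g_{ik}$ and $K=\lceil 4n^2/\beta\rceil$. This yields $\mu^\star$ with the required sandwich and $\size(\mu^\star)=\poly(n)\size(\mA,b,c)$. Outcome (i) holds trivially.

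In the \emph{unstable case}, every nonzero $g_{ik}$ is either below $\mu/2^n$ or above $2^n\mu$.
\begin{itemize}
\item Let $\bar v$ and $\bar u$ be convex combinations of vertices forming a pair with small total gap, chosen so that $\gap(\bar v,\bar u)\le\mu/2^n$. One natural choice is $\bar v=v^{(i)}$, $\bar u=u^{(k)}$ for a pair with $g_{ik}<\mu/2^n$; such a pair must exist, for otherwise all large terms would force $\gap(x,s)>\mu$.
\item Consider the segment $z(t)=(1-t)(x,s)+t(\bar v,\bar u)$. By \Cref{prop:gap-linear}, the gap is linear in $t$ when both endpoints are feasible. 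The cross terms are handled via the near-monotonicity \Cref{lem:near-monotonicity} and the proximity \Cref{prop:near-central}: since $(x,s)$ is near $z\cp(\mu)$, the vertex pair has $\bar v\le 2n x\cp(\mu')$ for small $\mu'$.
\item We argue that $x\circ s$ stays close to $\gap\cdot\1$ along the segment, so $z(t)\in\cN(2\beta)$ until the gap has shrunk by a factor near $2^{-n}$. The reason is that the ``bad'' contributions $\pr{\bar v}{s}$ and $\pr{x}{\bar u}$ are controlled by instability.
\item We then repeat the case analysis at the new point. Each step passes to a strictly smaller set of candidate vertex-pair gap scales, so the number of repetitions is at most $(n+1)^2$ and the process is strongly polynomial.
\item If we reach $t=1$ with $g=0$, we are at an optimal pair. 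The certificates $(\hat v,\hat w)$ come from the last interior point: since it lies within factor $1+O(\beta)$ of the central path and is close to the optimal face, set $\hat v=1/\hat x$-like vectors obtained by projecting. Concretely, $\hat v_B\coloneqq s_B/\mu$ and $\hat w_N\coloneqq x_N/\mu$, which lie in the right spaces up to a correction of size $O(\beta)$. The bound $2\beta$ then follows from $\cN(\beta)$ membership. Outcome (ii) holds, with $\mu^\star=0$.
\end{itemize}

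\textbf{Main obstacle.} The hard step is the unstable case: showing that the straight move toward the low-gap vertex pair preserves $\cN(2\beta)$ and does not raise the gap. We would first try to bound $\norm{z(t)\circ\text{(dual)}/\gap-\1}$ by expanding the product into four terms. The $(x,s)$ term is handled by centrality, the vertex term by the $2^{-n}$ gap, and the two cross terms via the dichotomy. The near-monotonicity bound $\bar v\le 2n x\cp(\mu)$ bounds $\bar v\circ s$ coordinatewise by $2n(1+2\beta)\mu$. This is too weak by a factor of $n$, so we would sharpen it with instability: the cross terms are sums of the tiny $g_{ik}$'s weighted by $\lambda$, which is exactly where the $2^{n}$ separation is needed. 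If this direct move fails, we would fall back to stopping at the first $t$ where the neighborhood is reached at radius $2\beta$ and take that (boundary-closure) point, which is the reason the statement uses $\clN(2\beta)$.
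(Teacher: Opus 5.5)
\textbf{The unstable case has a genuine gap: moving toward a single vertex pair does not stay in the neighborhood.}

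Parts of your setup are sound. Separate Minkowski--Weyl decompositions of $x$ and $s$ combine, via the product weights $\lambda_i\lambda'_k$, into a joint decomposition of $(x,s)$ into vertex pairs plus a ray. For $s$ you need a standard-form description of $\Dual$ without $y$, as the paper builds with a matrix whose kernel is $\im(\mA^\top)$. Anchoring when some pair gap is within a bounded factor of $\mu$ matches the paper's first case.

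The unstable case fails as proposed. Write $z(t)=(x(t),s(t))$ for the segment toward a \emph{single} low-gap pair $(v^{(i)},u^{(k)})$.
\begin{itemize}
\item Since $|\supp(v^{(i)})|+|\supp(u^{(k)})|\le m+(n-m)=n$, there is a coordinate $j$ with $v^{(i)}_j=u^{(k)}_j=0$ unless the two supports partition $[n]$. At such $j$ we get $x_j(t)s_j(t)=(1-t)^2x_js_j$.
\item At any $j'$ with $v^{(i)}_{j'}\ge x_{j'}$ we get $x_{j'}(t)s_{j'}(t)\ge(1-t)x_{j'}s_{j'}$. Such a $j'$ exists, for instance, whenever $\Primal$ is bounded, since then $x-v^{(i)}\in\ker(\mA)$ cannot be nonnegative and nonzero.
\item The ratio of these two products grows like $1/(1-t)$. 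So $z(t)$ leaves $\cN(2\beta)$ once $t$ exceeds $O(\beta)$, and the gap drops only by a factor $1-O(\beta)$.
\end{itemize}
The obstruction is that $v^{(i)}$ is too \emph{small} on part of the primal support. No sharper upper bound on the cross terms $\bar v\circ s$ can repair this.

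Your fallback does not rescue it:
\begin{itemize}
\item The $t$ at which the boundary is hit solves a quartic equation, so it need not be rational and its size is uncontrolled.
\item The re-decomposed point has fresh vertices, so the claim that the set of candidate gap scales strictly shrinks has no basis.
\item Consecutive vertex-pair gaps can differ by factors like $2^{\poly(n)\size(\mA,b,c)}$, so steps of ratio $1-O(\beta)$ give no strongly polynomial bound.
\item The zero-gap endpoint $(v^{(i)},u^{(k)})$ need not be strictly complementary.
\item Certificates lying in $\im(\mA^\top)$ and $\ker(\mA)$ only ``up to a correction'' do not meet the exact requirements of outcome (ii).
\end{itemize}
Also, the closure $\clN(2\beta)$ in the statement is needed only for the zero-gap outcome. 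It is not there to serve as a stopping rule.

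\textbf{The missing idea: move toward the aggregate of all low-gap pairs.} Aggregate all low-gap pairs of the decomposition of $(x,s)$ itself, keeping their own weights. Let $I^-$ be the pairs with gap at most $\mu$, $\alpha$ the total weight of the remaining pairs, and $(x^-,s^-)$ the renormalized $I^-$ part. Then $(x,s)=(1-\alpha)(x^-,s^-)+\alpha(x^+,s^+)$, and the separation gives $\alpha\le\mu/\mu^+$, which is tiny. With $\delta=\beta/(16n^2)$ this yields two facts.
\begin{itemize}
\item $(x^-,s^-)\le(1+\delta)(x,s)$.
\item Every coordinate has $x^-_i\ge(1-\delta)x_i$ or $s^-_i\ge(1-\delta)s_i$. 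Otherwise $\delta^2x_is_i\le\alpha^2x^+_is^+_i\le n\alpha\mu$, contradicting $x_is_i\ge(1-\beta)\mu$.
\end{itemize}
This polarized structure keeps the whole segment $(1-\rho)(x^-,s^-)+\rho(x,s)$ in $\cN(2\beta)$ for every $\rho>8n^2\mu^-/(\beta\mu)$.

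So a single move suffices, with no iteration. Let $g$ be the largest gap among the pairs in $I^-$.
\begin{itemize}
\item If $g>0$, move to gap exactly $\mu^\star=(9n^2/\beta)\,g$. The size bound on $\mu^\star$ is then automatic.
\item If $g=0$, output $(\hat x,\hat s)=(x^-,s^-)$ with $\hat v=(s-\hat s)/\mu$ and $\hat w=(x-\hat x)/\mu$. These lie exactly in $\im(\mA^\top)$ and $\ker(\mA)$, and polarization gives both the partition of $[n]$ into $B$ and $N$ and the $2\beta$ bound.
\end{itemize}

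With these constants the separation window must have width of order $n^5/\beta^2$, as in the paper's $[\delta^2\mu/(4n),4n\mu/\delta^2]$. A bare $2^n$ window is too narrow for small $n$, since $9n^2/\beta$ can exceed $2^n$.
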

\begin{proof}
Let $\mu=\gap(x,s)$; thus, $\pr{x}{s}=n\mu$. Let us construct a Minkowski--Weyl decomposition of $(x,s)\in \Primal\times \Dual$ into vertices and extreme rays of $\Primal\times\Dual$. Note that every vertex is of the form $(v^{(i)},u^{(i)})$, where $v^{(i)}$ is a vertex of $\Primal$ and $u^{(i)}$ is a vertex of $\Dual$. Similarly, every extreme ray of $\Primal \times \Dual$ can be written as $(r^{(j)}, 0)$ or $(0,t^{(j)})$, where the vector $r^{(j)}$ is an extreme ray of $\Primal$ and $t^{(j)}$ is an extreme ray of $\Dual$. The number of vertices and extreme rays in the decomposition is at most $2n+1$.

To apply \Cref{lem:minkowski-weyl} we need $\Primal\times\Dual$ in standard form, and so we describe $\Dual$ without the variable $y$. Let
\[
\mB\coloneqq\det(\mA\mA^\top)\,\mI_n-\mA^\top\adjt(\mA\mA^\top)\,\mA\in\eQ^{n\times n}\, .
\]
As $\mA$ has full row rank, $\mA\mA^\top$ is nonsingular, and $\mB=\det(\mA\mA^\top)\bigl(\mI_n-\mA^\top(\mA\mA^\top)^{-1}\mA\bigr)$ is a nonzero multiple of the orthogonal projection onto $\ker(\mA)$. Hence $\ker(\mB)=\ker(\mA)^\perp=\im(\mA^\top)$, so $s\in\Dual$ if and only if $\mB(c-s)=\0_n$ and $s\ge\0_n$, that is,
\[
\Primal\times\Dual=\left\{(x,s)\in\R^{2n} \;\middle|\; \begin{pmatrix}\mA&\0_{m\times n}\\ \0_{n\times n}&\mB\end{pmatrix}\begin{pmatrix}x\\ s\end{pmatrix}=\begin{pmatrix}b\\ \mB c\end{pmatrix},\ (x,s)\ge\0_{2n}\right\}\, .
\]
This describes the same set, so its vertices and extreme rays are exactly those of $\Primal\times\Dual$. The system does not have full row rank, as $\rank(\mB)=n-m$, but this is not required by \Cref{lem:minkowski-weyl}. The matrix $\mB$ is obtained by computing $\det(\mA\mA^\top)$ and $\adjt(\mA\mA^\top)$ by \Cref{lem:linalg}\ref{it:linalg-inv}, and matrix products, in $O(n^4)$ arithmetic operations in total. As $\size(\mA\mA^\top)=\poly(n)\size(\mA)$ by \Cref{rem:size-bounds-encoded}, \Cref{lem:linalg} and \Cref{rem:size-bounds-encoded} give $\size(\mB)=\poly(n)\size(\mA)$ and $\size(\mB c)=\poly(n)\size(\mA,c)$. The system has $m+n\le 2n$ rows and $2n$ columns. Therefore, \Cref{lem:minkowski-weyl} bounds this decomposition by $O(n^7)$ arithmetic operations and comparisons, performed in space $\poly(n)\bigl(\size(\mA,b,c)+\size(x,s)\bigr)$, and guarantees $\size(v^{(i)},u^{(i)})=\poly(n)\size(\mA,b,c)$. We write the decomposition in the form
\[
(x,s)=\sum_{i=1}^k \lambda_i (v^{(i)},u^{(i)})+(r,t)\, ,
\]
where $\lambda\ge0$, $\sum_{i=1}^k\lambda_i=1$, and $(r,t)$ is a ray of $\Primal\times \Dual$ (i.e., the sum of all extreme ray terms).
Let us partition the vertices according to their objective values:
\[
\begin{aligned}
I^-=\{i\in [k]\,:\, \gap({v^{(i)}},{u^{(i)}})\le \mu\}\,,\quad
I^+=\{i\in [k]\,:\, \gap({v^{(i)}},{u^{(i)}})> \mu\}\, , \\
 \end{aligned}
\]
 We note that $I^-\neq\emptyset$ but $I^+$ might be empty.
Indeed, let $(\bar v,\bar u)\coloneqq\sum_{i=1}^k\lambda_i(v^{(i)},u^{(i)})$. As $\mA r=\0_m$ and $t\in\im(\mA^\top)$, we have $\pr{r}{t}=0$, so nonnegativity and \Cref{prop:gap-linear} give
\[
n\mu=\pr{\bar v+r}{\bar u+t}=\pr{\bar v}{\bar u}+\pr{\bar v}{t}+\pr{r}{\bar u}\ge n\,\gap(\bar v,\bar u)=n\sum_{i=1}^k\lambda_i\gap(v^{(i)},u^{(i)})\, ,
\]
and hence $\gap(v^{(i)},u^{(i)})\le\mu$ for some $i$.
The pairs $(v^{(i)},u^{(i)})$ returned by the decomposition satisfy $\size(v^{(i)},u^{(i)})=\poly(n)\size(\mA,b,c)$ by \Cref{lem:minkowski-weyl}, and $\gap(v^{(i)},u^{(i)})$ is obtained from them by $O(n)$ additions and multiplications, so $\size(\gap(v^{(i)},u^{(i)}))=\poly(n)\size(\mA,b,c)$ by \Cref{rem:size-bounds-encoded}.
Let us define the accuracy parameter
\begin{equation}\label{eq:delta-def}
\delta\coloneqq\frac{\beta}{16n^2}\, .
\end{equation}
 If there is an index
$i\in [k]$ with $\gap(v^{(i)},u^{(i)})\in [\mu\delta^2/(4n),4n\mu/\delta^2]$, then we apply \Cref{lem:anchoring} with $\alpha\coloneqq\mu$, $r\coloneqq\gap(v^{(i)},u^{(i)})$, accuracy parameter $K$ the smallest power of two with $K\ge 1/\delta$, found by $O(\log n)$ doublings, and with $N$ in place of $n$, where $N$ is the smallest power of two with $2^N\ge 4n/\delta^2$. Then $N=O(\log n)$, $N$ is found by $O(\log\log n)$ squarings, and $r/2^N\le\alpha\le 2^N r$ by the choice of $i$. The lemma returns $\mu^\star\in\eQ$ with $\mu\le \mu^\star\le (1+\delta)\mu$ in $O(\log(N+K))=O(\log n)$ arithmetic operations, and $\size(\mu^\star)\le\size(r)+O(N+\log K)=\poly(n)\size(\mA,b,c)$ by the bound on $\size(\gap(v^{(i)},u^{(i)}))$ above. Thus, we output $(\hat x,\hat s)=(x,s)$ with the value $\mu^\star$ and outcome \ref{it:round-gap-pos} holds, as $\mu^\star\ge\mu>0$ and $(x,s)\in\cN(\beta)\subseteq\cN(2\beta)$.

For the rest of the proof, we can assume that
\begin{equation}\label{eq:far-away}
\gap(v^{(i)},u^{(i)})<\frac{\delta^2\mu}{4n}\, ,\quad\forall i\in I^-\quad\mbox{and}\quad \gap(v^{(j)},u^{(j)})>\frac{4n\mu}{\delta^2}\, ,\quad \forall j\in I^+\, .
\end{equation}

Let us define
\begin{equation}
\begin{aligned}
\alpha&\coloneqq \sum_{i\in I^+}\lambda_i\, ,\\
(x^-,s^-) &\coloneqq\frac{1}{1-\alpha}\sum_{i\in I^-}\lambda_i (v^{(i)},u^{(i)})\, ,\quad \mu^-\coloneqq \gap(x^-,s^-)\,
\end{aligned}
\end{equation}
Note that $\alpha < 1$ by the same argument that shows $I^- \neq \emptyset$.

\begin{claim}
\label{cl:far-away}
$(x^-,s^-)\le (1+\delta)(x,s)$, and
for every $i\in [n]$, either $x^-_i\ge (1-\delta)x_i$ or $s^-_i\ge (1-\delta)s_i$.
\end{claim}
\begin{claimproof}
We distinguish two cases according to whether $\alpha = 0$.

If $\alpha = 0$, then $(x, s) - (x^-, s^-) = (r,t) \ge (\0_n, \0_n)$. In particular, $x_i \ge x_i^-$ and $s_i \ge s_i^-$ for all $i \in [n]$. Furthermore, as $r$ and $t$ live in orthogonal subspaces, we have that $\pr{r}{t} = 0$ and so for all $i \in [n]$ either $r_i = 0$ or $t_i = 0$, implying that either $x^-_i = x_i$ or $s^-_i = s_i$.

If $\alpha > 0$, we define
\begin{equation}\label{eq:mupm}
\begin{aligned}
(x^+,s^+)&\coloneqq\frac{1}{\alpha}\left(\sum_{i\in I^+}\lambda_i (v^{(i)},u^{(i)})+(r,t)\right)
\, ,\quad \mu^+\coloneqq \gap(x^+,s^+)\, .
\end{aligned}
\end{equation}
Thus, $(x^-,s^-),(x^+,s^+)\in \Primal\times \Dual$, $(x,s)=(1-\alpha)(x^-,s^-)+\alpha (x^+,s^+)$. Note that $\mu^-\le \max_{i\in I^-}\gap(v^{(i)},u^{(i)})$ and $\mu^+\ge \min_{i\in I^+}\gap(v^{(i)},u^{(i)})$, and thus \eqref{eq:far-away} yields
\begin{equation}\label{eq:far-away-2}
\mu^-<\frac{\delta^2\mu}{4n}\quad \mbox{and}\quad \mu^+>\frac{4n\mu}{\delta^2}\, .
\end{equation}
By the linearity of the duality gap (\Cref{prop:gap-linear}), we have $\mu=(1-\alpha)\mu^-+\alpha \mu^+$ and thus $\alpha\le \mu/\mu^+$, which together with \eqref{eq:far-away-2} gives
\begin{equation}\label{eq:alpha-bound}
\alpha\le \frac{\delta^2}{4n}\, .
\end{equation}

First, assume for a contradiction
 that $x_i^-\ge (1+\delta)x_i$  for some $i\in [n]$; the case  $s_i^-\ge (1+\delta)s_i$ follows analogously.
We use $(1-\alpha)x_i^-+\alpha x_i^+=x_i$. The lower bound on $x_i^-$ and $x_i^+ \ge 0$ imply $1-\alpha\le 1/(1+\delta)$, that is, $\alpha\ge \delta/(1+\delta)$, contradicting \eqref{eq:alpha-bound}.

Next, assume for a contradiction that
 $x^-_i\le (1-\delta)x_i$ and $s^-_i\le (1-\delta)s_i$ for some $i\in [n]$. Since $(1-\alpha)x^-_i+\alpha x^+_i=x_i$ and $(1-\alpha)s^-_i+\alpha s^+_i=s_i$, the assumptions on $x^-_i$ and $s^-_i$ yield
$\alpha x^+_i\ge \delta x_i$ and $\alpha s^+_i\ge \delta s_i$.
Thus,
\[
  \delta^2 x_is_i\le \alpha^2 x_i^+ s_i^+ \le
\alpha^2 \pr{x^+}{s^+}= n\alpha^2 \mu^+ \le n\alpha \mu\, ,
\]
where  the final inequality uses $\alpha \mu^+\le \mu$. Since $(x,s)\in\cN(\beta)$, we also get $x_is_i\ge (1-\beta)\mu$, which implies
\[
  \delta^2 (1-\beta)\le n\alpha\, ,
\]
in contradiction with \eqref{eq:alpha-bound}.
 \end{claimproof}
\begin{claim}\label{cl:rho}
For any $\rho \in(8n^2\mu^-/(\beta\mu),1]$ we have
\[
(\hat x,\hat s)\coloneqq (1-\rho)(x^-,s^-)+\rho(x,s)\in\cN(2\beta)\, .
\]
\end{claim}
\begin{claimproof}
Let us define
\[
B\coloneqq\{i\in [n]\, :\,   x^-_i\ge (1-\delta)x_i\}\quad\mbox{and}\quad N\coloneqq\{i\in [n]\, :\, s^-_i\ge (1-\delta)s_i\}\, .\]
By \Cref{cl:far-away}, $B\cup N=[n]$.
Further, $B\cap N=\emptyset$ by \eqref{eq:far-away-2} and $x_is_i\geq (1-\beta)\mu$ for all $i \in [n]$, where the latter fact is due to $(x,s) \in \cN(\beta)$.
Together, \Cref{cl:far-away} and \eqref{eq:far-away-2} imply that for all $i \in B$ we have
\begin{equation}\label{eq:s-minus-B}
s_i^-\le \frac{n\mu^-}{x^-_i}\le \frac{n\mu^-}{(1-\delta)x_i}\le \frac{n\mu^-}{(1-\delta)(1-\beta)\mu}\cdot s_i \le \frac{2n \mu^-}{\mu} \cdot s_i \le  \frac{\beta}{4n}\cdot \rho s_i\, ,
\end{equation}
by the lower bound on $\rho$.
Hence,
\begin{equation}\label{eq:hat-s-B}
  \hat s_i=(1-\rho)s_i^- +\rho s_i \in \left[1,1+\frac{\beta}{4n}\right] \rho s_i\, .
\end{equation}
For $i\in B$, we also have $x_i^-\in [1-\delta,1+\delta] x_i$ and therefore $\hat x_i\in [1-\delta,1+\delta]x_i$. Using the definition of $\delta$ and the analogous bounds for $i\in N$, we obtain
\begin{equation}
\hat x_i \hat s_i\in \left[1-\delta,1+\frac{\beta}{2n}\right] \rho x_is_i\, , \quad\forall i\in [n]\,.
\end{equation}
Therefore,
\[
\begin{aligned}
  \left\|\frac{\hat x\circ \hat s}{\rho \mu}
  -\1_n\right\|_2&= \left\|\frac{\hat x \circ \hat s -\rho x\circ s}{\rho \mu} + \frac{x\circ s}{\mu}
  -\1_n\right\|_2\\
  &\le \left\|\frac{\hat x \circ \hat s -\rho x\circ s}{\rho \mu}\right\|_2+ \left\|\frac{x\circ s}{\mu}
  -\1_n\right\|_2 \\
  & \le \frac{\beta}{2n} \cdot \left\|\frac{x \circ s}{\mu}\right\|_2 + \beta \\
  & \le \sqrt{n}\cdot\frac{\beta \cdot (1+\beta)}{2n}+\beta \le \Big(\frac{1 + 1/16}{2}+1\Big)\beta = \frac{49}{32} \beta\, .
  \end{aligned}
\]
Here, we used that $\delta \le \beta/(2n)$ by definition.
Using that $\beta\in (0,1/16]$, \Cref{lem:choice-of-mu} guarantees that $(\hat x,\hat s)\in \cN(\tfrac{49}{32}\beta/(1-\tfrac{49}{32}\beta))\subseteq \cN(2\beta)$.
\end{claimproof}
Let us now pick $i\in I^-$ with $\gap(v^{(i)},u^{(i)})$ maximal. By \Cref{prop:gap-linear}, $\mu^-$ is a convex combination of the gaps of the pairs indexed by $I^-$, and therefore
\begin{equation}\label{eq:mu-minus-bound}
0\le\mu^-\le\gap(v^{(i)},u^{(i)})\, .
\end{equation}
We distinguish the cases $\gap(v^{(i)},u^{(i)})>0$ and $\gap(v^{(i)},u^{(i)})=0$.

\paragraph{Case $\gap(v^{(i)},u^{(i)})>0$.}
We let
  \begin{equation}
  \label{eq:mu-star-definition}
  \mu^\star\coloneqq \frac{9n^2}{\beta} \cdot \gap(v^{(i)},u^{(i)})\, .
  \end{equation}
Then, $\size(\mu^\star)= \poly(n)\size(\mA,b,c)$ by the bound on $\size(\gap(v^{(i)},u^{(i)}))$ above together with \Cref{rem:size-bounds-encoded}, since $9n^2/\beta$ has size $O(\log n)$, and
by \eqref{eq:far-away} and \eqref{eq:delta-def},
\[
\mu^\star<\frac{9n^2}{\beta}\cdot\frac{\delta^2\mu}{4n}=\frac{9\beta}{1024\,n^3}\cdot\mu<\mu\, .
\]
Moreover, \eqref{eq:mu-minus-bound} gives $\mu^-\le\gap(v^{(i)},u^{(i)})=\beta\mu^\star/(9n^2)<\mu^\star$. Let us now select  $\rho$ such that  $(1-\rho)\mu^- +\rho \mu=\mu^\star$, that is, $\rho=(\mu^\star-\mu^-)/(\mu-\mu^-)$. As $\mu^-<\mu^\star<\mu$, we have $\rho\in(0,1)$, and
\[
\rho > \frac{\mu^\star-\mu^-}{\mu}\ge\Big(\frac{9n^2}{\beta}-1\Big)\frac{\mu^-}{\mu}\ge\frac{8n^2\mu^-}{\beta\mu}\, ,
\]
using $\beta\le 1\le n^2$ in the last step, as required for \Cref{cl:rho}. For $(\hat x,\hat s)=(1-\rho)(x^-,s^-)+\rho(x,s)$, by \Cref{prop:gap-linear} we have $\gap(\hat x,\hat s)=\mu^\star$, and $(\hat x,\hat s)\in\cN(2\beta)$ by \Cref{cl:rho}. Since $\mu^\star>0$, outcome \ref{it:round-gap-pos} holds. Again, we can compute $(\hat x,\hat s)$ in strongly polynomial time.

\paragraph{Case $\gap(v^{(i)},u^{(i)})=0$.} Then $\mu^-=0$ by \eqref{eq:mu-minus-bound}. We output $(\hat x,\hat s)\coloneqq(x^-,s^-)$ together with $\mu^\star\coloneqq 0$ and the directions
\[
\hat v\coloneqq\frac{s-\hat s}{\mu}\, ,\qquad \hat w\coloneqq\frac{x-\hat x}{\mu}\, .
\]
Then $\gap(\hat x,\hat s)=0\le\gap(x,s)$, the bounds $\gap(\hat x,\hat s)\le\mu^\star\le(1+\beta/(4n^2))\gap(\hat x,\hat s)$ hold with equality, $\size(\mu^\star)=O(1)$, and $(\hat x,\hat s,\hat v,\hat w)$ is computed in strongly polynomial time. Moreover, as $\mu^-=0$, \Cref{cl:rho} applies to every $\rho\in(0,1]$, so $(\hat x,\hat s)+\rho\mu(\hat w,\hat v)=(1-\rho)(x^-,s^-)+\rho(x,s)\in\cN(2\beta)$ for all $\rho\in(0,1]$; letting $\rho\downarrow 0$ shows $(\hat x,\hat s)\in\clN(2\beta)$.

It remains to verify the conditions on $(\hat v,\hat w)$. As $x,\hat x\in\Primal$ and $s,\hat s\in\Dual$, we have $\mA\hat w=\0_m$ and $\hat v\in\im(\mA^\top)$. By \Cref{cl:far-away}, for every $j\in[n]$ we have $\hat x_j\ge(1-\delta)x_j>0$ or $\hat s_j\ge(1-\delta)s_j>0$, so $B\cup N=[n]$; further, $B\cap N=\emptyset$, since $\pr{\hat x}{\hat s}=n\mu^-=0$ and $\hat x,\hat s\ge\0_n$. For $j\in B$, we have $\hat s_j=0<(1-\delta)s_j$, so \Cref{cl:far-away} gives $\hat x_j\in[1-\delta,1+\delta]\,x_j$; moreover, $\hat v_j=s_j/\mu>0$ and $\hat x_j\hat v_j=(\hat x_j/x_j)\cdot x_js_j/\mu$. Similarly, for $j\in N$ we have $\hat s_j\in[1-\delta,1+\delta]\,s_j$, $\hat w_j=x_j/\mu>0$ and $\hat s_j\hat w_j=(\hat s_j/s_j)\cdot x_js_j/\mu$. We index the vector $(\hat x_B\circ\hat v_B,\hat s_N\circ\hat w_N)$ by $[n]$, and let $p\coloneqq x\circ s/\mu$. Then $(\hat x_B\circ\hat v_B,\hat s_N\circ\hat w_N)=\sigma\circ p$ for some $\sigma\in\R^n$ with $\|\sigma-\1_n\|_\infty\le\delta$. As $(x,s)\in\cN(\beta)$, we have $\|p-\1_n\|_2\le\beta$ and $\|p\|_2\le\sqrt n+\beta\le 2\sqrt n$, and therefore
\[
\norm{\sigma\circ p-\1_n}_2\le\norm{(\sigma-\1_n)\circ p}_2+\norm{p-\1_n}_2\le 2\sqrt n\,\delta+\beta\le 2\beta
\]
by the definition of $\delta$ in \eqref{eq:delta-def}. Hence outcome \ref{it:round-gap-zero} holds.
\end{proof}
\begin{remark}
The path along which we `shoot down' in the above proof exhibits a \emph{`polarized'} structure: primal variables in $B$ stay roughly the same and scale down linearly with $\mu$ in $N$; for dual variables, the role of $N$ and $B$ is reversed. We note that this is the crucial structural property exploited in the SLLS steps of \cite{ADLNV2022}.
\end{remark}

We need one more lemma to complete the proof of \Cref{lem:main-round}.
\begin{lemma}\label{lem:approximating-analytic-center}
Let $\mD\in \eQ^{m\times n}$ with $m\le n$, $h\in \eQ^m$, let $\cal Q = \{x\in\R^n \mid \mD x = h, x \ge \0_n\} \subset \R^n$ be a polytope, let $J\subseteq [n]$ be an index set, and let $y\in{\cal Q}\cap\eQ^n$ with $y_j>0$ for all $j\in J$. Let $R,K\in\Zplusplus$ be
such that $R\ge \max_{x \in {\cal Q}} \|x_J/y_J\|_\infty$. There is an algorithm that computes a vector $z \in {\cal Q}\cap \eQ^n$ such that $\|(z_J - y_J)/y_J\|_2 \le 1/K$ with $\size(z) \le \poly(n)\size(\mD,h)+ \tilde O(n^2\log(K+R))$. The running time is polynomial in $n$ and $\log(K+R)$ and the space requirement is $\poly(n)\size(\mD,h,y)+ \tilde O(n^2\log(K+R))$.
\end{lemma}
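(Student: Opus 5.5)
We plan to use the Minkowski--Weyl decomposition from \Cref{lem:minkowski-weyl} together with coefficient rounding. Since $\cal Q$ is a polytope, the decomposition of $y$ has no extreme rays. Hence \Cref{lem:minkowski-weyl} gives vertices $v^{(1)},\ldots,v^{(k)}$ of $\cal Q$, with $k\le n+1$, and multipliers $\lambda_i\ge 0$ summing to one, such that $y=\sum_i\lambda_i v^{(i)}$. Each vertex has size $\poly(n)\size(\mD,h)$. The multipliers may have large size, of order $\poly(n)\size(\mD,h,y)$, and this is the only place where the size of $y$ enters. That dependence is allowed for the space bound but not for $\size(z)$, so the idea is to replace $\lambda$ by a rounded vector $\tilde\lambda$ of small size and set $z\coloneqq\sum_i\tilde\lambda_i v^{(i)}$. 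Any convex combination of vertices lies in $\cal Q$, so $z\in\cal Q$ is automatic.

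For the rounding, fix a power of two $L\ge 2nKR\sqrt n$; it is computed by $O(\log(nKR))$ doublings and has size $O(\log(nKR))$. Pick the index $i_0$ with the largest $\lambda_{i_0}$, so $\lambda_{i_0}\ge 1/(n+1)$. For $i\ne i_0$, set $\tilde\lambda_i\coloneqq\lfloor L\lambda_i\rfloor/L$, and set $\tilde\lambda_{i_0}\coloneqq 1-\sum_{i\ne i_0}\tilde\lambda_i$. Then $\tilde\lambda\ge 0$, $\sum_i\tilde\lambda_i=1$ and $|\tilde\lambda_i-\lambda_i|\le n/L$ for every $i$. Each floor $\lfloor L\lambda_i\rfloor$ is found by binary search using $O(\log L)$ comparisons and halvings, with numbers of size $\poly(n)\size(\mD,h,y)+O(\log L)$. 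This avoids exact integer division in the encoded-rational model and gives the running time polynomial in $n$ and $\log(K+R)$. Each $\tilde\lambda_i$ then has size $O(n\log L)$. Since $z$ is computed by $O(n)$ products and sums of vertex entries with these coefficients, \Cref{rem:size-bounds-encoded} yields $\size(z)\le\poly(n)\size(\mD,h)+\tilde O(n^2\log(K+R))$.

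The error bound is the step that needs care, because relative error is required only on $J$, and $y_J$ may be tiny compared to some vertex coordinates. The hypothesis $R\ge\max_{x\in\cal Q}\|x_J/y_J\|_\infty$ is what controls this: it gives $v^{(i)}_j\le Ry_j$ for every vertex and every $j\in J$. Hence
\[
\frac{|z_j-y_j|}{y_j}\le\sum_i|\tilde\lambda_i-\lambda_i|\,\frac{v^{(i)}_j}{y_j}\le (n+1)\cdot\frac{n}{L}\cdot R\, ,
\]
and taking the $\ell_2$ norm over $J$ costs an extra factor $\sqrt n$. Choosing $L$ as a power of two of order $n^{2.5}KR$ therefore makes $\|(z_J-y_J)/y_J\|_2\le 1/K$. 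This order only changes $\log L$ by $O(\log n)$, so the size bound is unaffected.

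I expect the main obstacle to be bookkeeping in the encoded-rational model rather than the mathematics. One point is ensuring that $z$ is computed with $\tilde\lambda$ written over a common denominator $L$, so that its size does not pick up the size of $y$. The other is confirming that the space bound $\poly(n)\size(\mD,h,y)$ of \Cref{lem:minkowski-weyl} carries over to the binary-search rounding. The tie to the hypotheses on $R$ and to $y_j>0$ on $J$ appears only in the displayed inequality.
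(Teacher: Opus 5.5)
Your proposal is correct and follows essentially the same route as the paper: decompose $y$ by \Cref{lem:minkowski-weyl} into at most $n+1$ vertices, round the multipliers to a grid, take $z$ as the resulting convex combination, and bound the error through $v^{(i)}_j\le Ry_j$ on $J$. The paper uses grid width $\delta=1/(4n^2RK)$ and spreads the residual in increments of $\delta$ over several coordinates; you put all of it on one coordinate, which is equally fine since it keeps $\sum_i|\tilde\lambda_i-\lambda_i|\le 2n/L$.

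One point in the size count needs tightening. If you use the uniform bound $\size(\tilde\lambda_i)=O(n\log L)$, \Cref{rem:size-bounds-encoded} only gives $O(n^3\log L)$. Instead, compute the integer $L-\sum_{i\ne i_0}\lfloor L\lambda_i\rfloor$ and store every $\tilde\lambda_i$ as an integer over $L$, so each has size $O(\log L)$. This gives the claimed $\tilde O(n^2\log(K+R))$.
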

\begin{proof}
First, we apply \Cref{lem:minkowski-weyl} to obtain a Minkowski--Weyl decomposition $y=\sum_{i=1}^k \lambda_i v^{(i)}$, $\sum_{i=1}^k \lambda_i=1$, $\lambda\ge 0$, such that $k\le n+1$ and each $v^{(i)}$ is a vertex of $\cal Q$. Note that the decomposition does not include extreme rays, since the polytope is bounded. By \Cref{lem:minkowski-weyl}, this single decomposition costs $O(n^7)$ arithmetic operations and comparisons and runs in space $\poly(n)\bigl(\size(\mD,h,y)\bigr)$, with each $v^{(i)}$ of size $\poly(n)\size(\mD,h)$.
Let $\delta\coloneqq 1/(4n^2 RK)$, and let us round the coefficients $\lambda_i$ to $\tilde \lambda_i$ such that each $\tilde\lambda_i$ is an integer multiple of $\delta$, $\sum_{i=1}^k \tilde \lambda_i=1$, $\tilde\lambda\ge 0$, and $\|\tilde \lambda-\lambda\|_\infty\le \delta$. To achieve this, we first round down each $\lambda_i$ to the nearest integer multiple of $\delta$. This can be done by $O(n\log (1/\delta))=O(n\log(n+K+R))$ comparisons in total by doing a binary search on each coordinate. Let $\lambda'_i$ denote these coordinates. At this point, $1-\sum_{i=1}^k\lambda_i'=\ell \delta$ for an integer $\ell\le n$. We now pick arbitrary $\ell$ coordinates and set $\tilde \lambda_i=\lambda_i'+\delta$ for them, and set $\tilde \lambda_i=\lambda_i'$ for all other coordinates.

We claim that $z\coloneqq\sum_{i=1}^k \tilde\lambda_i v^{(i)}$ satisfies the requirements. First, note that
\[
\left\|\frac{z_J-y_J}{y_J}\right\|_2\le \sum_{i=1}^k |\lambda_i-\tilde\lambda_i|\cdot\left\|\frac{v^{(i)}_J}{y_J}\right\|_2\le 2R n^{1.5}\delta< \frac1K\, ,\]
since each $|\lambda_i-\tilde\lambda_i|\le \delta$ and $\|v^{(i)}_J/y_J\|_2\le \sqrt{|J|}\|v^{(i)}_J/y_J\|_\infty \le \sqrt{n} R$ by the definition of $R$.
 The bound on the encoding length of $z$ follows since each vertex $v^{(i)}$ returned by the decomposition has, by \Cref{lem:minkowski-weyl}, size bounded by $\poly(n)\size(\mD,h)$, and each coefficient $\tilde \lambda_i$ is represented as a ratio of two integers of absolute value at most $1/\delta^2 = (4n^2 RK)^2$, so $\size(\tilde\lambda_i)=O(\log(nRK))$. %
Indeed, we compute each coordinate $z_j$ as the sum of the $k\le n+1$ products $\tilde\lambda_i v^{(i)}_j$. By \Cref{rem:size-bounds-encoded}, $\size(z_j)\le 2\sum_{i=1}^k\bigl(\size(\tilde\lambda_i)+\size(v^{(i)}_j)\bigr)\le 2\sum_{i=1}^k\size(v^{(i)}_j)+O(n\log(nRK))$. Summing over $j\in[n]$ gives $\size(z)\le 2\sum_{i=1}^k\size(v^{(i)})+O(n^2\log(nRK))=\poly(n)\size(\mD,h)+\tilde O(n^2\log(K+R))$. The space bound follows by construction. Beyond the decomposition, the algorithm performs $O(n^2+n\log(K+R))$ operations.
\end{proof}

\begin{proof}[Proof of \Cref{lem:main-round}]
Given $(x,s)\in \cal N^2(\beta)\cap \eQ^{2n}$, we first use \Cref{lem:round-gap} to obtain $(\hat x,\hat s)\in \clN(2\beta)$ and a value $\mu^\star$ with $\hat \mu\le \mu^\star\le(1+\beta/(4n^2))\hat \mu$ for $\hat\mu=\gap(\hat x,\hat s)$, satisfying $\size(\mu^\star)= \poly(n)\size(\mA,b,c)$. Let us also fix any $d\in \eQ^n$ such that $\mA d=b$, computed as follows. We set $d\coloneqq\mA^+b$, computed in $O(n^5)$ arithmetic operations and comparisons by \Cref{lem:linalg}\ref{it:linalg-pinv}; as $\rank(\mA)=m$, we have $\mA\mA^+=\mI_m$, and hence $\mA d=b$. Then we get $\size(d)= \poly(n)\size(\mA,b)$ by \Cref{lem:linalg}\ref{it:linalg-pinv} and \Cref{rem:size-bounds-encoded}. Note that \Cref{lem:size-bounds}\ref{it:inverse} does not apply here, since $d$ is not computed in lowest terms. The vector $d$ is fixed throughout the algorithm.
We use \Cref{lem:approximating-analytic-center}  on the primal-dual polyhedron, which we write in standard form as in the proof of \Cref{lem:round-gap}: we describe $\Dual$ without the variable $y$ using the matrix $\mB\in\eQ^{n\times n}$ defined there, which satisfies $\ker(\mB)=\im(\mA^\top)$, and we add a slack variable $\sigma$ for the cost inequality. Namely,
let us define
\[
{\cal Q}\coloneqq \left\{(x',s',\sigma)\in\R^{n + n + 1}\mid
\mA x' = b\, , \ \mB s' =\mB c\, ,\ \pr{c}{x'}+\pr{d}{s'}+\sigma= \pr{c}{d}+n\mu^\star\, ,\ (x',s',\sigma) \ge \0_{2n+1}
\right\}\, ,
\]
that is, the set of primal and dual solutions with gap at most $\mu^\star$, together with the slack $\sigma$; note that $\pr{x'}{s'}\le n\mu^\star$ follows by \Cref{prop:gap-formula}, which applies since $\mB(c-s')=\0_n$ means $c-s'\in\im(\mA^\top)$, and gives $\pr{c}{x'}+\pr{d}{s'}=\pr{c}{d}+\pr{x'}{s'}$.
We note that $\cal Q$ is in fact a polytope. For the $x'$ and $s'$ variables, we have both lower and upper bounds by \Cref{prop:gap-formula}, using that $\pr{c}{x'} + \pr{d}{s'} = \pr{c}{d} + \pr{x'}{s'} + \pr{x}{s} - \pr{x}{s} = \pr{c}{d} + \pr{x}{s'} + \pr{x'}{s} - \pr{x}{s} $ and that $x, s > \0_n$.
Finally, $0\le\sigma\le\pr{c}{d}+n\mu^\star-\pr{c}{x'}-\pr{d}{s'}$, which is bounded since $x'$ and $s'$ are. Hence, $\cal Q$ is bounded.

We will apply \Cref{lem:approximating-analytic-center} to the polytope $\cal Q$, with $K$ the smallest power of two with $K\ge 8n/\beta$, and with the input point $(\hat x,\hat s,\hat\sigma)\in{\cal Q}\cap \eQ^{2n+1}$, where $\hat\sigma\coloneqq\pr{c}{d}+n\mu^\star-\pr{c}{\hat x}-\pr{d}{\hat s}=n(\mu^\star-\hat\mu)\ge 0$ by \Cref{prop:gap-formula}. As the index set we take
\[
J\coloneqq\{j\in[n]\mid \hat x_j>0\}\cup\{n+j\mid j\in[n],\ \hat s_j>0\}\subseteq[2n+1]\, ,
\]
that is, the $x'$- and $s'$-coordinates on which $(\hat x,\hat s)$ is supported. The bound $R$ is chosen separately in each case below. The data $\mD,h$ describing $\cal Q$ are formed from $\mA,\mB,b,c,d$ and $\mu^\star$. By the proof of \Cref{lem:round-gap}, $\mB$ is computed in $O(n^4)$ arithmetic operations, with $\size(\mB)=\poly(n)\size(\mA)$ and $\size(\mB c)=\poly(n)\size(\mA,c)$. Hence, the size of the description of $\cal Q$ is $O(\size(\mA,\mB,b,\mB c,c,d, \mu^\star))=\poly(n)\size(\mA,b,c)$.

We distinguish the cases $\mu^\star>0$ and $\mu^\star=0$, corresponding to the outcomes \ref{it:round-gap-pos} and \ref{it:round-gap-zero} of \Cref{lem:round-gap}.

\paragraph{Case $\mu^\star>0$.} Then $(\hat x,\hat s)\in\cN(2\beta)$ by \Cref{lem:round-gap}\ref{it:round-gap-pos}; in particular $\hat x,\hat s>\0_n$, so $J$ consists of all $x'$- and $s'$-coordinates.
 For every $(x,s,\sigma)\in{\cal Q}$ we have $\gap(x,s)\le\mu^\star\le(1+\beta/(4n^2))\hat\mu$, so \Cref{lem:near-monotonicity} with $\mu=\hat\mu$ and \Cref{prop:near-central} with $2\beta$ in place of $\beta$ give
\[
(x,s)\le n\Bigl(2+\frac{\beta}{4n^2}\Bigr)\bigl(x\cp(\hat\mu),s\cp(\hat\mu)\bigr)\le \frac{n}{1-2\beta}\Bigl(2+\frac{\beta}{4n^2}\Bigr)(\hat x,\hat s)\le 3n\,(\hat x,\hat s)\, ,
\]
using $\beta\le 1/16$. Hence, $R\coloneqq 3n$ is a valid choice.
 Applying \Cref{lem:approximating-analytic-center} with this $R$, we obtain a point $(x',s')$  with $\size(x', s') = \poly(n)\size(\mA, b, c)$ and
 \begin{equation}\label{eq:x-hat-x}
 \left\|\frac{x'}{\hat x}-\1_n\right\|_2+ \left\|\frac{s'}{\hat s}-\1_n\right\|_2
\le \sqrt{2}\left(\left\|\frac{x'}{\hat x}-\1_n\right\|_2^2+\left\|\frac{s'}{\hat s}-\1_n\right\|_2^2\right)^{1/2}
  \le \frac{\sqrt{2}}K \le \frac{\beta}{4n}\, .
 \end{equation}
For $\mu' \coloneqq \gap(x',s')$ we can bound the ratio ${\mu'}/\hat\mu$ by %
\begin{equation}\label{eq:mup-mu}
\left(1+\frac{\beta}{4n^2}\right)\hat \mu\ge \mu^\star \ge \mu'=\frac{1}{n}\sum_{i=1}^n x'_i s'_i\ge \frac{(1-1/K)^2}{n}\sum_{i=1}^n \hat x_i \hat s_i \ge {\left(1-\frac{\beta}{4n}\right)^2}\hat\mu\, .
\end{equation}
 We are ready to show that $(x',s')\in{\cal N}^2(3\beta)$.
\begin{equation}
    \begin{aligned}
    \left\|\frac{x'\circ s'}{\mu'} - \1_n\right\|_2 &= \left\|
        \left(\frac{x'}{\hat x} - \1_n + \1_n\right)\circ \left(\frac{s'}{\hat s} - \1_n + \1_n\right)\circ \frac{\hat x\circ \hat s}{\mu'} - \1_n
    \right\|_2 \\
    & \le \left(\left\|\frac{x'}{\hat x} - \1_n\right\|_2 + \left\|\frac{s'}{\hat s} - \1_n\right\|_2 + \left\|\frac{x'}{\hat x} - \1_n\right\|_2 \left\|\frac{s'}{\hat s} - \1_n\right\|_2\right)  \left\|\frac{\hat x\circ \hat s}{\mu'}\right\|_\infty + \left\|\frac{\hat x\circ \hat s}{\mu'} - \1_n\right\|_2 \\
    \end{aligned}
  \end{equation}
Using \eqref{eq:x-hat-x}, and bringing out a factor $\hat \mu/\mu'$ in both terms, we get
  \begin{equation}
    \begin{aligned}
     \left\|\frac{x'\circ s'}{\mu'} - \1_n\right\|_2&\le \frac{\hat\mu}{\mu'} \left( \frac{\beta}{4n} + \left(\frac{\beta}{4n}\right)^2\right)\left\|\frac{\hat x\circ \hat s}{\hat \mu}\right\|_\infty  + \frac{\hat\mu}{\mu'}\left\|\frac{\hat x\circ \hat s}{\hat\mu} - \frac{\mu'}{\hat \mu}\1_n\right\|_2  \\
     &\le \frac{\hat \mu}{\mu'}\left(\frac{\beta}{4n}\left(1+\frac{\beta}{4n}\right) (1+2\beta)+ \left\|\frac{\hat x\circ \hat s}{\hat \mu} - \1_n\right\|_2  + \left\|\frac{\mu'}{\hat\mu}\1_n-\1_n\right\|_2 \right)\\
     &\leq\left(1-\frac{\beta}{4n}\right)^{-2}\left(\frac{\beta}{4n}\left(1+\frac{\beta}{4n}\right)(1+2\beta)+2\beta+\sqrt{n}\frac{\beta}{2n}\right) \\
     &= \left(1-\frac{\beta}{4n}\right)^{-2}\left(\frac{\left(1+\frac{\beta}{4n}\right)(1+2\beta)}{4n}+2+\frac{1}{2\sqrt{n}}\right)\beta\\
     &\le \left(1-\frac{1}{64}\right)^{-2}\left(\frac{1}{4}\cdot\frac{65}{64}\cdot\frac98+2+\frac{1}{2}\right)\beta \\
     &\le 3\beta\, .
    \end{aligned}
\end{equation}
Here, the first inequality used \eqref{eq:x-hat-x}, the second used that $(\hat x,\hat s)\in{\cal N}^2(2\beta)$ and the triangle inequality, the third used again $(\hat x,\hat s)\in{\cal N}^2(2\beta)$ and the two-sided bound \eqref{eq:mup-mu} on $\mu'/\hat\mu$.
Therefore, $(x',s') \in \cal N^2(3\beta)$. We set $(\tilde x,\tilde s)\coloneqq(x',s')\in\eQ^{2n}$, which has size $\poly(n)\size(\mA,b,c)$ as shown above, and, by \eqref{eq:mup-mu} and $\gap(\hat x,\hat s)\le\gap(x,s)$ from \Cref{lem:round-gap},
\[
\gap(\tilde x,\tilde s)=\mu'\le\mu^\star\le\Bigl(1+\frac{\beta}{4n^2}\Bigr)\hat\mu\le\Bigl(1+\frac{\beta}{4n^2}\Bigr)\gap(x,s)\, .
\]
This proves the theorem in this case.
\paragraph{Case $\mu^\star=0$.} Then $\gap(\hat x,\hat s)=0$, so $(\hat x,\hat s)$ is a primal--dual optimal pair, and $\cal Q$ consists of the points $(x',s',0)$ for which $(x',s')$ is a primal--dual optimal pair, as $\sigma=-\pr{x'}{s'}$ on $\cal Q$ by \Cref{prop:gap-formula}. Moreover, by \Cref{lem:round-gap}\ref{it:round-gap-zero}, the algorithm also returns $(\hat v,\hat w)$ such that $B\coloneqq\supp(\hat x)$ and $N\coloneqq\supp(\hat s)$ partition $[n]$, $\hat v\in\im(\mA^\top)$, $\hat v_B>\0_B$, $\hat w\in\ker(\mA)$, $\hat w_N>\0_N$, and $\hat q\coloneqq(\hat x_B\circ\hat v_B,\,\hat s_N\circ\hat w_N)$ satisfies $\|\hat q-\1_n\|_2\le 2\beta$. In particular, $J$ consists of the $x'$-coordinates indexed by $B$ and the $s'$-coordinates indexed by $N$.

We first show, using the certificates $(\hat v,\hat w)$, that every primal--dual optimal pair $(x',s')$ satisfies $(x',s')\le 2n\,(\hat x,\hat s)$. By \Cref{prop:gap-formula}, $\pr{x'}{\hat s}+\pr{\hat x}{s'}=\pr{x'}{s'}+\pr{\hat x}{\hat s}=0$, so $x'_N=\0_N$ and $s'_B=\0_B$ by nonnegativity. As $x'-\hat x\in\ker(\mA)$ is orthogonal to $\hat v\in\im(\mA^\top)$, and $x'$ and $\hat x$ vanish on $N$, we get for every $j\in B$
\[
x'_j\hat v_j\le\pr{x'_B}{\hat v_B}=\pr{\hat x_B}{\hat v_B}\le\|\hat q\|_1\le n+2\beta\sqrt n\, .
\]
As $\hat x_j\hat v_j=\hat q_j\ge 1-2\beta$, this gives $x'_j\le\frac{n+2\beta\sqrt n}{1-2\beta}\,\hat x_j\le 2n\,\hat x_j$, using $\beta\le 1/16$. The bound $s'_N\le 2n\,\hat s_N$ follows in the same way from $\pr{\hat w}{s'-\hat s}=0$.

We apply \Cref{lem:approximating-analytic-center} with $R=2n$, which is valid by the bound just shown. It returns $(x',s',\sigma')\in{\cal Q}\cap\eQ^{2n+1}$ with $\size(x',s')=\poly(n)\size(\mA,b,c)$ and $\|r-\1_n\|_2\le\beta/(8n)$, where $r\coloneqq(x'_B/\hat x_B,\,s'_N/\hat s_N)$; in particular $x'_B>\0_B$ and $s'_N>\0_N$. Next, we compute new certificates. Let $v\in\im(\mA^\top)$ minimize $\|x'_B\circ v_B-\1_B\|_2$, and let $w\in\ker(\mA)$ minimize $\|s'_N\circ w_N-\1_N\|_2$. These least-squares problems are solved by $v=\mA^\top\mM^+\1_B$ for $\mM\coloneqq\mathrm{Diag}(x'_B)\,\mA_B^\top$, and $w=\mPi\,(\mM')^+\1_N$ for the orthogonal projection $\mPi\coloneqq\mI_n-\mA^+\mA$ onto $\ker(\mA)$ and $\mM'\coloneqq\mathrm{Diag}(s'_N)\,\mPi_{N,\bullet}$. By \Cref{lem:linalg}\ref{it:linalg-pinv}, applied to these matrices, $v$ and $w$ are computed in strongly polynomial time and have size $\poly(n)\size(\mA,b,c)$. Since $\hat v$ and $\hat w$ are feasible for these least-squares problems and $(x'_B\circ\hat v_B,\,s'_N\circ\hat w_N)=r\circ\hat q$, optimality of $v$ and $w$ gives, for $q\coloneqq(x'_B\circ v_B,\,s'_N\circ w_N)$,
\[
\|q-\1_n\|_2\le\|r\circ\hat q-\1_n\|_2\le\|\hat q\|_\infty\,\|r-\1_n\|_2+\|\hat q-\1_n\|_2\le(1+2\beta)\,\frac{\beta}{8n}+2\beta\le\frac{137}{64}\beta\eqqcolon\theta\, .
\]
As $\theta<1$, this implies $q>\0_n$, and hence $v_B>\0_B$ and $w_N>\0_N$.

We set $(\tilde x,\tilde s)\coloneqq(x',s')$, so that $\gap(\tilde x,\tilde s)=0\le(1+\beta/(4n^2))\gap(x,s)$, and show that $(x',s')\in\clN(3\beta)$. For $t>0$ let $z(t)\coloneqq(x'+tw,\ s'+tv)$. Since $w\in\ker(\mA)$ and $v\in\im(\mA^\top)$, $z(t)$ satisfies the equality constraints of $\Primal\times\Dual$, and since $x'_N=\0_N$, $s'_B=\0_B$ and $x'_B,v_B,s'_N,w_N>0$, we have $z(t) > \0_{2n}$ for all sufficiently small $t > 0$. Moreover, $(x'+tw)\circ(s'+tv)=t\,(q+t\,w\circ v)$, so \Cref{lem:choice-of-mu}, applied with $\mu'=t$ and the parameter $\theta_t\coloneqq\theta+t\,\|w\circ v\|_2\le 1/4$, gives $z(t)\in\cN\bigl(\theta_t/(1-\theta_t)\bigr)$ for all sufficiently small $t>0$. As $\theta/(1-\theta)\le\frac{137}{64}\beta/(1-\frac{137}{1024})<3\beta$, we get $z(t)\in\cN(3\beta)$ for all sufficiently small $t>0$, and letting $t\downarrow 0$ yields $(\tilde x,\tilde s)\in\clN(3\beta)$. Note that $(x',s',v,w)$ moreover satisfies the output conditions of \Cref{thm:SLC-bound} with $\theta$ in place of $\beta$, and has size $\poly(n)\size(\mA,b,c)$. Since $\theta=\frac{137}{64}\beta\le 3\beta$, returning $(\tilde v,\tilde w)\coloneqq(v,w)$ proves the additional claim of the theorem for the case $\gap(\tilde x,\tilde s)=0$.
\end{proof}

Finally, we show that using the rounding step in \Cref{lem:main-round} in combination with \emph{any} path-following IPM already guarantees termination in an exponential number of steps. We state this formally for methods in the $\ell_2$-neighborhood. In the statement, the subroutine $F(x,s)$ represents a possibly longer sequence of IPM iterates. By the algorithm in \Cref{lem:main-round}, we refer to the particular implementation using \Cref{lem:round-gap} and  \Cref{lem:approximating-analytic-center}.

\begin{proposition}\label{prop:simplex}
Let $\beta\in (1/\mathrm{poly}(n),1/16]$, and assume we are given a subroutine that, for any $(x,s)\in{\cal N}^2(3\beta)$, returns an iterate $F(x,s)=(x',s')\in{\cal N}^2(\beta)$ such that $\gap(x',s')\le (1-2\beta/n^2)\gap(x,s)$.
Assume further we are given a starting point $(x^0,s^0)\in{\cal N}^2(\beta)$. Then, the algorithm alternating between calls to the rounding algorithm in \Cref{lem:main-round} and calls to $F$ finds an exact optimal solution after $2^{2n}\poly(n)$ calls to $F$ and to the rounding algorithm.
\end{proposition}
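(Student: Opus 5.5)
The plan is to use a potential argument over the finite set of vertex pairs of $\Primal\times\Dual$. There are at most $2^n\cdot 2^n=2^{2n}$ pairs $(v,u)$ with $v$ a vertex of $\Primal$ and $u$ a vertex of $\Dual$. Each such pair has a well-defined gap $\gap(v,u)$. The claim to establish is that after every $\poly(n)$ rounds of (rounding, $F$), some vertex pair that was ``active'' becomes permanently excluded. Here a pair counts as active if its gap lies in the window $[\mu\delta^2/(4n),\,4n\mu/\delta^2]$ around the current gap $\mu$, which is the window used in the proof of \Cref{lem:round-gap}.

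The key steps are as follows.

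First, examine the proof of \Cref{lem:round-gap}. In every call, one of two things happens. Either the current normalized gap $\mu$ is anchored to the gap of some vertex pair $(v^{(i)},u^{(i)})$ in the window, or the algorithm shoots down. In the shooting case, the new gap $\mu^\star$ equals $\frac{9n^2}{\beta}\gap(v^{(i)},u^{(i)})$ for a vertex pair, or it equals $0$, in which case we terminate with an optimal solution. In both nonterminating cases, the gap after rounding lies within a factor $g\coloneqq 2^{O(\log n)}=(4n/\delta^2)\cdot 9n^2/\beta=\poly(n)$ of $\gap(v,u)$ for some vertex pair $(v,u)$. Call this pair the witness of the round.

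Second, the gap never increases substantially. Rounding loses at most a factor $(1+\beta/(4n^2))$. Each call to $F$ gains a factor $(1-2\beta/n^2)$. So each combined round decreases the gap by a factor at least $1-\beta/n^2$.

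Third, after $T=O\bigl((n^2/\beta)\log g^2\bigr)=\poly(n)$ rounds, the gap has dropped by a factor larger than $g^2$. From that point on, the witness $(v,u)$ of the earlier round can never be a witness again. Its gap is at least $\mu_{\rm old}/g$, while every later gap is below $\mu_{\rm old}/g^2$, so $\gap(v,u)$ exceeds $g$ times any later gap. Hence any fixed vertex pair serves as a witness in at most $T$ consecutive rounds, and every nonterminating round has a witness. The total number of rounds is therefore at most $2^{2n}\cdot T=2^{2n}\poly(n)$. If the gap becomes zero, the process terminates with an exact optimum.

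One technicality concerns vertex pairs with $\gap(v,u)=0$. These correspond to the terminating case, where \Cref{lem:round-gap}\ref{it:round-gap-zero} outputs an optimal pair, so they never serve as witnesses of a continuing round.

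The main obstacle is the bookkeeping in the first step. It must be verified that each branch of \Cref{lem:round-gap} really ties the post-rounding gap $\gap(\tilde x,\tilde s)$, and not merely $\mu^\star$, to a vertex-pair gap within a $\poly(n)$ factor. This requires \eqref{eq:mup-mu} together with $\mu^\star\le(1+\beta/(4n^2))\hat\mu$, which gives $\gap(\tilde x,\tilde s)\in[\mu^\star/2,\mu^\star]$. It also requires that the anchoring branch uses a pair whose gap is within $4n/\delta^2$ of $\mu$.

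A second point to check is that the hypothesis on $F$ applies to every rounded iterate. This holds because the output of the rounding lies in $\cN(3\beta)$ whenever its gap is positive, which is exactly the domain on which $F$ is defined.
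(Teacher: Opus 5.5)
Your argument is correct, but it counts differently from the paper.

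The paper sorts the distinct vertex-pair gap values as $0=h_0<h_1<\dots<h_T<h_{T+1}=\infty$ and proves a level-descent claim: from a point with gap at most $h_t$, within $\poly(n)$ rounds the gap falls to at most $h_{t-1}$, unless the algorithm stops. It splits this into two sub-cases.
\begin{itemize}
\item After the geometric decrease, the gap may already be within $4n/\delta^2$ of $h_{t-1}$, and then $\poly(n)$ further rounds suffice.
\item Otherwise the window $[\mu\delta^2/(4n),4n\mu/\delta^2]$ around the current gap contains no value of $H$. This forces the next rounding call into the regime \eqref{eq:far-away}. There every pair in $I^-$ has gap at most $h_{t-1}$, so the shoot-down lands at gap at most $\frac{9n^2}{\beta}h_{t-1}$, or at an exact optimum.
\end{itemize}

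You instead use a charging argument. Every non-terminating round has a witness pair whose gap is within a fixed $\poly(n)$ factor of the post-rounding gap, coming from either the anchoring branch or the shoot-down branch. Post-rounding gaps shrink by $1-\beta/n^2$ per round. Hence all rounds charged to a given pair lie within $T=\poly(n)$ rounds of its first use, and pigeonhole over at most $2^{2n}$ pairs finishes the proof.

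Your route needs only this two-sided witness property and the monotone decrease. It never has to identify where the shoot-down lands relative to the next lower level. The price is that you need both directions of \eqref{eq:mup-mu}, that is, $\mu^\star/2\le\gap(\tilde x,\tilde s)\le\mu^\star$. The paper uses only the upper bound $\gap(\tilde x,\tilde s)\le\mu^\star$.

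The paper's version gives more structure. The iterates pass the distinct vertex-pair gap values in decreasing order, spending $\poly(n)$ rounds per level, which is the simplex-like picture behind the proposition. Yours is a slightly more direct count. Both yield $2^{2n}\poly(n)$.
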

\begin{proof}
Let $H\subseteq \Rplus\cup\{\infty\}$ be the set of all values $\gap(v,u)$ such that $v$ is a vertex of $\Primal$ and $u$ is a vertex of $\Dual$; we also add $\infty$ to the set.  Thus, $|H|\le 2^{2n}+1$; let $H=\{0=h_0<h_1<h_2<\ldots<h_T<h_{T+1}=\infty\}$.
An iteration consists of a call to the rounding algorithm followed by a call to $F$, and we stop once the rounding algorithm returns a point with zero gap, which is optimal. Otherwise, the rounded point has strictly positive gap and lies in $\cN(3\beta)$. Each iteration multiplies the gap by at most $(1+\beta/(4n^2))(1-2\beta/n^2)\le 1-\beta/n^2$; as $\beta>1/\poly(n)$, decreasing the gap by a $\poly(n)$ factor takes $\poly(n)$ iterations.

We claim that if the current point $(x,s)$ has $\gap(x,s)\le h_t$ for some $t\in [1,T+1]$, then within $\poly(n)$ iterations we either stop or reach a point $(\bar x,\bar s)$ with $\gap(\bar x,\bar s)\le h_{t-1}$. Since $h_0=0$ and iterates have positive gap, applying the claim for $t=T+1,T,\ldots,1$ proves the statement. In particular, if each call to $F$ consists of $\poly(n)$ IPM iterations, the algorithm performs $2^{2n}\poly(n)$ IPM iterations in total.

Let $\delta=\beta/(16n^2)$ as in \eqref{eq:delta-def}. Within $\poly(n)$ iterations we reach an iterate $(x',s')$ with $\gap(x',s')<h_t\delta^2/(4n)$. If $\gap(x',s')\le h_{t-1}\cdot 4n/\delta^2$, then another $\poly(n)$ iterations bring the gap to at most $h_{t-1}$; for $t=1$, this cannot happen as $h_0=0$.

Otherwise, $\mu\coloneqq\gap(x',s')$ lies strictly between $h_{t-1}\cdot 4n/\delta^2$ and $h_t\delta^2/(4n)$, so no element of $H$ lies in $[\mu\delta^2/(4n),4n\mu/\delta^2]$. The vertex pairs in the decomposition in the proof of \Cref{lem:round-gap} have gaps in $H$, so the next rounding call is in the case \eqref{eq:far-away}, and all pairs in $I^-$ have gap at most $h_{t-1}$. Let $g\le h_{t-1}$ be the largest of these gaps. If $g=0$, as is always the case for $t=1$, then $\mu^\star=0$, the rounding algorithm returns an optimal pair by the case $\mu^\star=0$ in the proof of \Cref{lem:main-round}, and we stop. If $g>0$, it returns a point with gap at most $\mu^\star=\frac{9n^2}{\beta}g\le\frac{9n^2}{\beta}h_{t-1}$, and another $\poly(n)$ iterations bring the gap to at most $h_{t-1}$.
\end{proof}

\subsection*{Acknowledgments}
This project has received funding from the European Research Council (ERC) under the European Union's Horizon 2020 research and innovation programme (grant agreement nos. 757481-ScaleOpt and 805241-QIP), and from the Dutch Science Foundation (NWO) under the NWO Talent Programme (grant 016.Vidi.189.087).
Part of this work was done during the ICERM programme ''Combinatorics and Optimization'' and the Simons Institute programme ''Data Structures and Optimization for Fast Algorithms''.

\subsection*{AI Disclosure}
The mathematical ideas of the paper were obtained without using AI models. OpenAI's GPT-6 Astra and Anthropic's Claude Opus 5.5 were used in editing and  proofreading the manuscript, leading to several  corrections and clarifications.
\bibliographystyle{myalpha}
\bibliography{genflow}

\appendix

\section{Minkowski--Weyl decomposition in standard form}\label{sec:mw-explicit}

In this appendix we prove \Cref{lem:linalg,lem:minkowski-weyl}, including the running time and space bounds used by the rounding procedure of \Cref{sec:bit-complexity}. Results of this kind appear throughout the literature; we nonetheless carry the computations out in full, for two reasons. First, the standard references work in a different computational model: the algorithmic Carath\'eodory theorem \cite[Theorem 6.5.11]{gls} for polytopes, and its Minkowski--Weyl extension to pointed polyhedra \cite[Corollary 6.5.13]{gls}, are stated for a polyhedron given by a strong optimization oracle and are oracle-polynomial, whereas our polyhedron is given explicitly as $\mA x=b$, $x\ge\0_n$. Second, we bound the space used by the algorithm in the most restrictive of the strongly polynomial models discussed in \cite[Section 1.3]{gls}, which is more restrictive than the model used in \cite{gls} itself.

We make no attempt to optimize the running time in what follows. In particular, we abstract the basic linear algebra away into the cost of implementing a handful of subroutines, and we make no use of dynamic data structures that could amortize the computations. This is partly for clarity of presentation and partly because many of the implementations of these subroutines in the literature do not work in our strongly polynomial model. The algorithm is otherwise the classical one, and the encoding-length argument is that of \cite[Theorem 6.5.11]{gls}, carried out explicitly in standard form.

\subsection{The cost of strongly polynomial linear algebra}\label{sec:mw-cost}
\begin{definition}[Linear-algebra tasks and their cost]\label{def:tasks}
Each of the quantities below denotes the number of arithmetic operations and comparisons on rationals performed by an algorithm for the stated task that is \emph{strongly polynomial in the Turing model} in the sense of \Cref{sec:comp-mod}. The input matrix is $\mM\in\eQ^{m\times n}$ with $m\le n$, or, in \ref{it:task-det}--\ref{it:task-inv}, a square matrix $\mM\in\eQ^{r\times r}$ with $r\le n$.
\begin{enumerate}[label=(\roman*), ref=(\roman*)]\setlength{\itemsep}{0pt}
\item\label{it:task-mul} $\Tmul(n)$: computing a product $\mM\mN$ of two conformable matrices $\mM\in\eQ^{m\times n}$ and $\mN\in\eQ^{n\times r}$ with $m,r\le n$.
\item\label{it:task-det} $\Tdet(n)$: computing the determinant $\det\mM$ of a square $\mM\in\eQ^{r\times r}$ with $r\le n$.
\item\label{it:task-adjt}$\Tadj(n)$: computing the adjugate $\adjt(\mM)$ of a square $\mM\in\eQ^{r\times r}$ with $r\le n$.
\item\label{it:task-inv} $\Tinv(n)$: computing the inverse $\mM^{-1}\in\eQ^{r\times r}$ of a nonsingular $\mM\in\eQ^{r\times r}$ with $r\le n$.
\item\label{it:task-basis} $\Tbasis(n)$: computing a \emph{basis} of $\mM$, that is, an index set $B\subseteq[n]$ such that $\{\mM_j\}_{j\in B}$ is a maximal linearly independent set of columns of $\mM$.
\item\label{it:task-circuit} $\Tcirc(n)$: returning a \emph{circuit} of $\mM$, that is, an elementary vector $g\in\eQ^n\setminus\{\0_n\}$ with $\mM g=\0_m$ and $\supp(g)$ minimal among the supports of the nonzero kernel vectors of $\mM$, or reporting that the columns of $\mM$ are linearly independent.
\end{enumerate}
\end{definition}

We will give explicit upper bounds for \ref{it:task-mul}, \ref{it:task-det} and \ref{it:task-adjt}, and reduce the remaining tasks \ref{it:task-inv}--\ref{it:task-circuit} to these three. Two further tasks, basis crashing and the Minkowski--Weyl decomposition itself, are introduced with the same counting convention in \Cref{def:task-crash,def:task-mw}.

Before doing so, we recall that all numbers are encoded rationals as in \Cref{def:size}, never reduced to lowest terms. Every size bound in this appendix refers to the computed representation.

We begin with the standard cubic bound for matrix multiplication, from which all other running time bounds below are derived. We work with this obvious bound and  do not invoke fast matrix multiplication results, as the strongly polynomial implementability of some of these algorithms would require more involved technical arguments.

\begin{proposition}[Matrix multiplication]\label{prop:tmul}
$\Tmul(n)=O(n^3)$.
\end{proposition}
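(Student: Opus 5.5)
The plan is to implement the schoolbook algorithm. For $\mM\in\eQ^{m\times n}$ and $\mN\in\eQ^{n\times r}$ with $m,r\le n$, each entry is computed as $(\mM\mN)_{ij}=\sum_{k=1}^n \mM_{ik}\mN_{kj}$, using the encoded-rational operations of \Cref{sec:comp-mod}.

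For each of the $mr\le n^2$ entries, we first form the $n$ products $\mM_{ik}\mN_{kj}$. Each product costs one multiplication on encoded rationals, returning $(pp',qq')$. We then add the $n$ products one at a time, which takes $n-1$ additions, each returning $(pq'+p'q,qq')$. An entry therefore costs $2n-1$ operations, and the whole product costs $mr(2n-1)\le 2n^3=O(n^3)$ operations. No comparisons are needed.

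The statement in \Cref{def:tasks} also requires strong polynomiality in the Turing model, so the sizes of intermediate numbers must be bounded. This follows from \eqref{eq:sum-prod-eQ} and \Cref{rem:size-bounds-encoded}. Any partial sum of the $n$ products is an iterated sum of encoded rationals, hence independent of the bracketing. Its size is therefore at most $2\sum_k\bigl(\size(\mM_{ik})+\size(\mN_{kj})\bigr)\le 2\size(\mM,\mN)$, and each product has size at most $\size(\mM_{ik})+\size(\mN_{kj})$.

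Thus every number handled has size $O(\size(\mM,\mN))$, and the output has size $O(n^2\size(\mM,\mN))$. There is no real obstacle here. The only point needing care is that sizes are those of the computed, unreduced representations, and the associativity of $\eQ$ addition in \eqref{eq:sum-prod-eQ} is exactly what makes the bound independent of the summation order.
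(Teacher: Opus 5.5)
Your proposal is correct: it is the schoolbook algorithm that the paper means by ``the standard cubic bound'', and the paper states the proposition without proof. Your use of \eqref{eq:sum-prod-eQ} and \Cref{rem:size-bounds-encoded} to bound the sizes of the unreduced partial sums by $2\size(\mM,\mN)$ supplies the Turing-model size bookkeeping that the paper leaves implicit.
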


The determinant and the adjugate below are computed by the method of Le Verrier \cite{leverrier1840}, also known as the Faddeev--Le Verrier method (see also \cite{hou98}). The method presented is again not as fast as possible, but it is sufficient for our purposes.

\begin{proposition}[Le Verrier's identities]\label{prop:leverrier}
Let $\mM\in\Z^{n\times n}$, let $\chi_{\mM}(\lambda) = \det(\lambda\mI_n-\mM) = \sum_{i = 0}^n c_i \lambda^i$ be the characteristic polynomial of $\mM$, and let $s_j\coloneqq\operatorname{tr}(\mM^j)$. Then, $c_n = 1$, and
\begin{align}
\label{eq:newton}(n-i)\,c_i&=-\sum_{j=1}^{n-i}c_{i+j}\,s_j\qquad(0\le i\le n-1)\, ,\\
\label{eq:cayley}\adjt(\mM)&=(-1)^{n+1}\sum_{i=1}^{n}c_i\,\mM^{\,i-1}\, ,
\end{align}
and $\det\mM=(-1)^nc_0$.
\end{proposition}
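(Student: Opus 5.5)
The plan is to prove the three identities separately, using the characteristic polynomial $\chi_{\mM}(\lambda)=\det(\lambda\mI_n-\mM)$ and its resolvent. Everything can be done over $\Q$ or $\mathbb{C}$, since integrality of $\mM$ plays no role. The formula $\det\mM=(-1)^nc_0$ is immediate: $c_0=\chi_{\mM}(0)=\det(-\mM)=(-1)^n\det\mM$. Also $c_n=1$ because $\chi_{\mM}$ is monic.

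For the Newton-type identity \eqref{eq:newton}, I will use the logarithmic derivative of $\chi_{\mM}$. By Jacobi's formula (or by factoring $\chi_{\mM}(\lambda)=\prod_i(\lambda-\lambda_i)$ over the eigenvalues), $\chi_{\mM}'(\lambda)=\chi_{\mM}(\lambda)\operatorname{tr}\bigl((\lambda\mI_n-\mM)^{-1}\bigr)$. Expanding the resolvent as a formal Laurent series in $\lambda^{-1}$ gives $(\lambda\mI_n-\mM)^{-1}=\sum_{j\ge0}\mM^j\lambda^{-j-1}$. This is legitimate for $|\lambda|>\|\mM\|$, or simply as an identity in $\Q[[\lambda^{-1}]]$, verified by multiplying by $\lambda\mI_n-\mM$. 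Taking traces yields $\sum_{j\ge0}s_j\lambda^{-j-1}$ with $s_0=n$. Hence
\[
\sum_{i=1}^n i\,c_i\lambda^{i-1}=\Bigl(\sum_{k=0}^n c_k\lambda^k\Bigr)\Bigl(\sum_{j\ge0}s_j\lambda^{-j-1}\Bigr)\, .
\]
Comparing the coefficients of $\lambda^{i-1}$ for $0\le i\le n-1$ (the case $i=0$ is the coefficient of $\lambda^{-1}$, which is zero on the left) gives $i\,c_i=n\,c_i+\sum_{j=1}^{n-i}c_{i+j}s_j$. This rearranges to \eqref{eq:newton}.

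For the adjugate formula \eqref{eq:cayley}, I will invoke Cayley--Hamilton, $\chi_{\mM}(\mM)=\0$. Written as $\mM\cdot\sum_{i=1}^n c_i\mM^{i-1}=-c_0\mI_n$, it shows that for nonsingular $\mM$
\[
\adjt(\mM)=\det(\mM)\mM^{-1}=(-1)^nc_0\mM^{-1}=(-1)^{n+1}\sum_{i=1}^n c_i\mM^{i-1}\, .
\]
For singular $\mM$, both sides are polynomial functions of the entries of $\mM$, since each $c_i$ is a polynomial in the entries. They agree on the Zariski-dense set of nonsingular matrices, so they agree everywhere. Alternatively, I can apply the identity to $\mM+t\mI_n$ and let $t\to0$.

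The only mildly delicate step is justifying the coefficient comparison with the Laurent expansion. I would handle it by viewing both sides as elements of $\Q((\lambda^{-1}))$, where $\lambda\mI_n-\mM$ is invertible with the stated geometric-series inverse. Jacobi's formula then holds as an identity of rational functions, and hence of their expansions. The rest is routine.
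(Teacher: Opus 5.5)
Your proof is correct and follows the paper's route: the paper justifies \eqref{eq:newton} by citing Newton's identity in Faddeev--Le Verrier form and \eqref{eq:cayley} by Cayley--Hamilton. You make both steps self-contained. You derive the Newton identity from the logarithmic derivative of $\chi_{\mM}$ and the resolvent expansion, and you extend the adjugate formula to singular $\mM$ by a density argument, a point the paper leaves implicit.
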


Identity \eqref{eq:newton} is Newton's identity in the Faddeev--Le Verrier form \cite[Algorithm 2.2.7]{cohen93}, \cite{hou98}; \eqref{eq:cayley} follows from the  Cayley--Hamilton theorem.

To compute the determinant and the adjugate in our computational model we require rescalings of the $c_i$.

\begin{corollary}\label{cor:leverrier-integral}
\begin{equation}\label{eq:tdet}
\Tdet(n),\ \Tadj(n)=O\bigl(n\,\Tmul(n)\bigr).
\end{equation}
\end{corollary}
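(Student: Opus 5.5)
The plan is to run the Faddeev--Le Verrier method of \Cref{prop:leverrier} entirely over the integers and perform a single division at the very end. Working with integers avoids the blow-up of encoded rationals, which are never reduced to lowest terms (the failure of distributivity noted in \Cref{sec:bit-complexity}).

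\emph{Step 1: clear denominators.} Given $\mM\in\eQ^{r\times r}$ with entries $(p_{k\ell},q_{k\ell})$, let $q\coloneqq\prod_{k,\ell}q_{k\ell}$. Form the integer matrix $\mN$ with entries $p_{k\ell}\prod_{(k',\ell')\ne(k,\ell)}q_{k'\ell'}$, so that $\val(\mN)=q\,\val(\mM)$. Computing all entries with prefix/suffix products costs $O(r^2)$ operations, and each entry has size at most $\size(\mM)$. At the end we use
\[
\det\mM=\det\mN/q^r\quad\mbox{and}\quad\adjt(\mM)=\adjt(\mN)/q^{r-1}.
\]
Each of these needs only $O(r)$ further multiplications and one division per entry.

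\emph{Step 2: powers and traces.} Compute $\mN^2,\dots,\mN^r$ by $r-1$ successive products, which costs $O(r\,\Tmul(n))$. Then compute the traces $s_j=\operatorname{tr}(\mN^j)$ in $O(r^2)$ operations. All of these are integers with $|(\mN^j)_{k\ell}|\le (r\|\mN\|_\infty)^j$, so every entry has size $\poly(n)\size(\mM)$.

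\emph{Step 3: integral Newton recurrence.} The recurrence \eqref{eq:newton} divides by $n-i$, so instead we track the rescaled coefficients $C_i\coloneqq (r-i)!\,c_i$. These are integers, since the $c_i$ are integers for integer $\mN$. Multiplying \eqref{eq:newton} by $(r-i-1)!$ gives the division-free recurrence
\[
C_r=1\, ,\qquad C_i=-\sum_{j=1}^{r-i}\frac{(r-i-1)!}{(r-i-j)!}\,C_{i+j}\,s_j\, .
\]
Its coefficients are falling factorials, which can be generated by $O(r)$ multiplications per $i$. Hence the whole recurrence costs $O(r^2)$ operations, which is dominated by $r\,\Tmul(n)$. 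By the determinant formula in \Cref{prop:leverrier},
\[
\det\mN=(-1)^rC_0/r!\, ,
\]
which is returned as the encoded pair $\bigl((-1)^rC_0,\,r!\bigr)$. For the adjugate, multiply \eqref{eq:cayley} by $r!$ to get
\[
r!\,\adjt(\mN)=(-1)^{r+1}\sum_{i=1}^r C_i\,\frac{r!}{(r-i)!}\,\mN^{i-1}\, .
\]
This is an integer combination of already computed matrices, costing $O(r^3)=O(\Tmul(n))$. We then divide each entry once by $r!$.

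\emph{Main obstacle: size control.} The delicate point is bounding the encoding lengths rather than the operation count. The plan is to show that every intermediate quantity is an integer of size $\poly(n)\size(\mM)$, since each is an actual integer and not an unreduced fraction. For $C_i$ this comes from Hadamard-type bounds on the characteristic coefficients $|c_i|\le\binom{r}{i}(r\|\mN\|_\infty)^{r-i}$ together with $(r-i)!\le r^r$. The intermediate partial sums in the recurrence and in the adjugate sum are bounded by $\poly$ multiples of $r^r(r\|\mN\|_\infty)^{2r}$. The final divisions by $r!$ and by $q^r$ or $q^{r-1}$ each add only $O(r\log r)+r\,\size(\mM)$ bits. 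Summing up the counts gives $\Tdet(n),\Tadj(n)=O(n\,\Tmul(n))$, which is \eqref{eq:tdet}.
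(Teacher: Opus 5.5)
Your proposal is correct and follows the paper's proof essentially step for step. Like the paper, you clear denominators with prefix/suffix products, compute $O(n)$ matrix powers and their traces, run the division-free Newton recurrence for $C_i=(r-i)!\,c_i$ with falling-factorial coefficients, and return $\det$ as the pair $\bigl((-1)^rC_0,\,r!\bigr)$. The only differences are cosmetic: you scale the Cayley--Hamilton identity by $r!$ rather than $(r-1)!$, and you make the size bounds explicit via principal-minor bounds on the $c_i$, whereas the paper only bounds $\|\mM^n\|_\infty$.
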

\begin{proof}
Let $\mM\in\eQ^{r\times r}$ with $r\le n$. We first make $\mM$ integral for the following reason. Le Verrier's method computes the powers $\mM^j$, and on encoded rationals, which are never reduced, their denominators grow too fast for our required space.

We multiply $\mM$ with a common denominator, using $O(r^2)$ multiplications and no divisions. Write $\mM_{ij}=(p_{ij},q_{ij})$ with $p_{ij}\in\Z$ and $q_{ij}\in\Z_{>0}$, and let $\Delta\coloneqq\prod_{k,l}q_{kl}$. We cannot compute $(\Delta\mM)_{ij}$ as $p_{ij}\cdot\Delta/q_{ij}$, since the exact division $\Delta/q_{ij}$ is not allowed in the model of \Cref{sec:comp-mod}. Instead, we set $(\Delta\mM)_{ij}\coloneqq p_{ij}\Delta_{-ij}$, where $\Delta_{-ij}\coloneqq\prod_{(k,l)\ne(i,j)}q_{kl}$. Computing each $\Delta_{-ij}$ separately would take $O(r^4)$ multiplications. Instead, we list the $r^2$ entries in a fixed order and compute all prefix and all suffix products of the $q_{kl}$ in this order, using $O(r^2)$ multiplications. Then, $\Delta_{-ij}$ is the product of the prefix ending before $(i,j)$ and the suffix starting after it. Both $\Delta$ and every entry of $\Delta\mM$ have size $O(\size(\mM))$, so $\size(\Delta\mM)=O(r^2\size(\mM))$.

The outputs for $\mM$ are then $\det\mM=\det(\Delta\mM)/\Delta^r$ and $\adjt(\mM)=\adjt(\Delta\mM)/\Delta^{r-1}$, obtained with $O(r)$ multiplications for the powers of $\Delta$. By the size bounds above, bounds of the form $\poly(n)\size(\Delta\mM)$ for the computation on $\Delta\mM$ translate into $\poly(n)\size(\mM)$. The reduction thus takes $O(n^2)$ operations, which is dominated by the bound below. For the rest of the proof, we therefore assume $\mM\in\Z^{r\times r}$.

As all costs in \Cref{def:tasks} are monotone in $n$, it suffices to consider $r=n$.
With the $c_i$ of \Cref{prop:leverrier}, set $C_i\coloneqq(n-i)!\cdot c_i$ and $\mD^\star\coloneqq(n-1)!\cdot\adjt(\mM)$. Then $C_n = 1$ and multiplying \eqref{eq:newton} by $(n-i-1)!$ and \eqref{eq:cayley} by $(n-1)!$, and substituting $c_k=C_k/(n-k)!$ in both, puts the two identities in the division-free form
\begin{align}
\label{eq:newton-integral}C_i&=-\sum_{j=1}^{n-i}\frac{(n-i-1)!}{(n-i-j)!}\,C_{i+j}\,s_j\qquad(0\le i\le n-1)\, ,\\
\label{eq:adj-integral}\mD^\star&=(-1)^{n+1}\sum_{i=1}^{n}\frac{(n-1)!}{(n-i)!}\,C_i\,\mM^{\,i-1}\, ,
\end{align}
in which every coefficient is a falling factorial, hence an integer of size polynomial in $n$. These coefficients are all of the form $a!/(a-j)!=a(a-1)\cdots(a-j+1)$ with $0\le j\le a\le n-1$, and we can compute all of them in $O(n^2)$ multiplications.

The powers $\mM^2,\ldots,\mM^n$ and with them the traces $s_j$ cost $O(n)$ matrix multiplications and $O(n^2)$ additions. Since $\|\mM^n\|_\infty\le n^n\|\mM\|_\infty^n$ we have $\size(\mM^n)=\poly(n,\size(\mM))$, and hence $\size(s_j)=\poly(n,\size(\mM))$ as well.

Evaluating \eqref{eq:newton-integral} for $i=n-1,\ldots,0$ in this order therefore takes $O(n^2)$ multiplications and additions of integers of strongly polynomial size, and \eqref{eq:adj-integral} a further $O(n^3)$. Finally, $\det\mM$ is returned as the pair $\bigl((-1)^nC_0,\,n!\bigr)$ and $\adjt(\mM)$ as $\bigl(\mD^\star,(n-1)!\bigr)$.

As $\Tmul(n)\ge n^2$, the matrix multiplications dominate, which gives the result.
\end{proof}
Note that for integral $\mM$, $\det\mM$ is returned as the pair $\bigl((-1)^nC_0,\,n!\bigr)$, although its value is an integer. Computing this integer would require an exact division by $n!$, which is not allowed in our model.

\medskip

We now reduce $\Tinv$, $\Tbasis$ and $\Tcirc$ to the primitives $\Tmul$, $\Tdet$ and $\Tadj$. These constructions are classical; we include them to show that they can be carried out in the model of \Cref{sec:comp-mod}.

\begin{corollary}[Matrix inversion]\label{cor:tinv}
$\Tinv(n)=O\bigl(\Tadj(n)+\Tdet(n)\bigr)$.
\end{corollary}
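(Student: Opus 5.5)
The plan is to compute the inverse through the adjugate identity. For a nonsingular $\mM\in\eQ^{r\times r}$ with $r\le n$, we have $\mM^{-1}=\adjt(\mM)/\det\mM$. We first compute $\det\mM$ and $\adjt(\mM)$ with the Le Verrier-based routines of \Cref{cor:leverrier-integral}, at cost $\Tdet(n)+\Tadj(n)$. We then divide each of the $r^2$ entries of the adjugate by the single encoded rational $\det\mM$. In the model of \Cref{sec:comp-mod} this division is allowed: it is the operation $\frac{p}{q}/\frac{p'}{q'}\mapsto(pq',qp')$ with sign normalization, which is what we need here. The division step adds only $O(n^2)$ operations, and this is dominated by $\Tadj(n)\ge\Tmul(n)\ge n^2$.

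Nonsingularity gives $\det\mM\neq 0$ as a value, so the numerator of the returned pair for $\det\mM$ is nonzero. Each division is therefore well defined, and after sign normalization the second coordinate stays positive. Correctness is then just the classical identity $\mM\,\adjt(\mM)=\det(\mM)\mI_r$.

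The one point to check is the size bound required by the convention of \Cref{def:tasks}, namely that intermediate numbers have size $\poly(n)\size(\mM)$. The outputs of \Cref{cor:leverrier-integral} already satisfy this. Each quotient $(pq',qp')$ has size at most $\size(p,q)+\size(p',q')$, by the product bound in \Cref{rem:size-bounds-encoded}. Hence every entry of the computed inverse has size bounded by the size of the corresponding adjugate entry plus $\size(\det\mM)$, and summing over the $r^2$ entries keeps the total polynomial.

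I expect no real obstacle. The only subtlety is that we must not reduce to lowest terms or perform exact integer divisions. We avoid both by reusing the division-free representations already produced, for example $\det\mM=((-1)^nC_0,n!)$ rescaled by $\Delta^r$.
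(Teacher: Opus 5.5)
Your proposal is correct and follows the same route as the paper, whose entire proof is Cramer's rule $\mM^{-1}=\adjt(\mM)/\det\mM$ applied to the outputs of the determinant and adjugate routines. Your additional accounting of the $r^2$ encoded-rational divisions is a correct elaboration of details the paper leaves implicit: the numerator of $\det\mM$ is nonzero, signs are normalized, and $\size(pq',qp')\le\size(p,q)+\size(p',q')$.
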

\begin{proof}
For invertible $\mM \in \eQ^{r \times r}$ with $r\le n$ this is immediate from Cramer's rule $\mM^{-1}=\adjt(\mM)/\det\mM$.
\end{proof}

A system $\mM z= h$ with $\mM$ invertible can therefore be solved as $z=\mM^{-1}h$ in $\Tinv(n)+O(n^2)$ operations.

\begin{lemma}[Basis computation]\label{lem:basis}
 $\Tbasis(n)=O\bigl(n(\Tmul(n)+\Tdet(n))\bigr)$.
\end{lemma}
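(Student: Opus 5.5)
We compute a basis greedily, one column at a time: scan the columns $j=1,\dots,n$ in order, maintaining a set $B$ of indices whose columns are linearly independent. Column $j$ is added to $B$ exactly when $\{\mM_i\}_{i\in B\cup\{j\}}$ is still linearly independent. The standard matroid (greedy) argument shows the final $B$ is a maximal independent set: every rejected column lies in the span of the columns chosen before it, hence in the span of the final $\mM_B$. Thus the task reduces to $n$ independence tests, and the claimed bound follows once each test costs $O(\Tmul(n)+\Tdet(n))$.

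For the independence test we use a Gram determinant. Vectors $\mM_S$, $S\subseteq[n]$ with $|S|\le m\le n$, are linearly independent if and only if $\det(\mM_S^\top\mM_S)\ne 0$, since $\mM_S^\top\mM_S$ is positive semidefinite and is singular exactly when $\mM_S$ has a nontrivial kernel. For each $j$ we form $\mG\coloneqq\mM_{B\cup\{j\}}^\top\mM_{B\cup\{j\}}$, a square matrix of order at most $n$, by one matrix product of cost $\Tmul(n)$. We then compute $\det\mG$ in $\Tdet(n)$ operations and compare it with $0$; this is one comparison, and the sign of an encoded rational is read off its numerator. If $|B|=m$ we can stop early, though this is not needed. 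In total, $n$ iterations give $O\bigl(n(\Tmul(n)+\Tdet(n))\bigr)$ operations.

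It remains to check the model requirements, since the claim is only that this cost count is achieved by a strongly polynomial Turing-model algorithm. The key point, and the only potential obstacle, is that nothing is computed iteratively from previous numerical outputs. Each Gram matrix is formed afresh from the original entries of $\mM$, so $\size(\mG)=\poly(n)\size(\mM)$ by \Cref{rem:size-bounds-encoded}. The determinant is computed by \Cref{cor:leverrier-integral}, whose output has size $\poly(n)\size(\mG)$. Hence all numbers have size $\poly(n)\size(\mM)$ and no size blow-up accumulates across iterations. This also handles the unreduced-rationals issue, since we never divide and the zero test needs no reduction.
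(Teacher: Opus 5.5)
Your proposal is correct and matches the paper's proof. The paper likewise makes a single greedy pass over the columns, adjoining $j$ whenever $\det\bigl(\mM_{B\cup\{j\}}^\top\mM_{B\cup\{j\}}\bigr)\neq 0$, at a cost of one matrix product and one determinant per column, and keeps sizes at $\poly(n)\size(\mM)$ because every Gram matrix is formed afresh from $\mM$.
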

\begin{proof}
Let $\mM \in \eQ^{m\times n}$ with $m\le n$.
Note that for $\mN\in\R^{m\times r}$ the matrix $\mN^\top\mN$ is nonsingular if and only if $\mN$ has full column rank. Hence, a single pass over the columns suffices: start with $B\coloneqq\emptyset$ and for $j=1,\ldots,n$ adjoin $j$ to $B$ whenever
\[
\det\bigl(\mM_{B\cup\{j\}}^\top\mM_{B\cup\{j\}}\bigr)\ne0\, ,
\]
which by the above happens exactly when $\mM_j$ is linearly independent of $\mM_B$. Each column costs one matrix product of matrices of dimension at most $n$ and one determinant of order $|B|+1\le m+1$. Clearly, the size of every number occurring remains bounded by $\poly(n)\size(\mM)$.
\end{proof}

Let us now consider the computation of circuits.

\begin{lemma}[Circuits]\label{lem:circuit}
\begin{equation}\label{eq:tc}
\Tcirc(n)= O\left(\Tbasis(n)+\Tmul(n)+\Tinv(n)\right).
\end{equation}
\end{lemma}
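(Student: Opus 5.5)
We reduce circuit computation to one basis computation, a few matrix products and one matrix inversion. Let $\mM\in\eQ^{m\times n}$ with $m\le n$.

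First, we compute a basis $B\subseteq[n]$ of $\mM$ in $\Tbasis(n)$ operations. If $B=[n]$, the columns are linearly independent and we report this.

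Otherwise, we pick any $j\in[n]\setminus B$ and look for $g$ with $\supp(g)\subseteq B\cup\{j\}$, $g_j=1$ and $\mM_B g_B=-\mM_j$. Such a $g$ exists and is unique because $\mM_j\in\im(\mM_B)$ and $\mM_B$ has full column rank. To solve this system with a square nonsingular matrix, we use normal equations:
\[
g_B\coloneqq -(\mM_B^\top\mM_B)^{-1}\mM_B^\top\mM_j .
\]
Here $\mM_B^\top\mM_B$ is nonsingular of order $|B|\le m\le n$, so forming it and the vector $\mM_B^\top\mM_j$ costs $O(\Tmul(n))$. The inverse costs $\Tinv(n)$, and the final matrix--vector product costs $O(n^2)$. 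This vector satisfies $\mM_B g_B=-\mM_j$: the solution exists, and since $\mM_B$ has full column rank it is the unique minimizer of the least-squares problem, which is exactly what the normal equations return.

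Next, we check that $g$ is a circuit, i.e., that its support is minimal. Suppose $0\ne g'\in\ker(\mM)$ with $\supp(g')\subsetneq\supp(g)$. If $j\notin\supp(g')$, then $g'$ is a nonzero kernel vector supported on $B$, contradicting the independence of the columns in $B$. If $j\in\supp(g')$, then the uniqueness argument forces $g'=g'_j\,g$, so $\supp(g')=\supp(g)$, a contradiction. Hence $g$ is an elementary vector.

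It remains to bound the sizes, which is the main care point. The products of \Cref{rem:size-bounds-encoded} keep $\size(\mM_B^\top\mM_B)$ and $\size(\mM_B^\top\mM_j)$ at $\poly(n)\size(\mM)$. The inversion is performed via \Cref{cor:tinv} and \Cref{cor:leverrier-integral}, so its output also has size $\poly(n)\size(\mM)$. The last product then adds only a polynomial factor. The total operation count is $O(\Tbasis(n)+\Tmul(n)+\Tinv(n))$, which is \eqref{eq:tc}.
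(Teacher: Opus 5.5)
Your proposal is correct and follows the paper's proof: one basis computation, then the fundamental circuit of a nonbasic column $j$ with respect to $B$, obtained via the normal equations $(\mM_B^\top\mM_B)^{-1}\mM_B^\top\mM_j$ at cost one $\Tinv(n)$ and $\Tmul(n)$ plus $O(n^2)$. The differences are minor: you normalize $g_j=1$ where the paper uses $g_t=-1$, you verify minimality of the support explicitly where the paper just asserts it, and the paper handles $B=\emptyset$ (where $\mM=\0$ and $e_j$ is returned) as a separate case, whereas your argument covers it only through the convention for an empty system.
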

\begin{proof}
One call to \Cref{lem:basis} returns a basis $B\subseteq[n]$ of $\mM$; if $B=[n]$ the columns of $\mM$ are linearly independent and we report this. Otherwise, pick any nonbasic column $t\in[n]\setminus B$. If $B=\emptyset$, then $\mM=\0$, so $\{t\}$ is a circuit and we return $g=-e_t$. Otherwise, the vector $g_t = -1$, $g_B = (\mM_B^\top \mM_B)^{-1} \mM_B^\top \mM_t$, $g_{[n] \setminus (B \cup \{t\})} = \0$ is the elementary vector corresponding to the fundamental circuit of $t$ with respect to $B$. Using the expression above, it can be computed with a single call to $\Tinv(n)$ and $\Tmul(n)$ with an overhead of $O(n^2)$ for the additional matrix-vector multiplications. The overhead is absorbed by the other operations, as $\Tmul(n)\ge n^2$.
\end{proof}

\paragraph{Pseudoinverses.}

We can apply these primitives to compute the Moore--Penrose pseudoinverse. As in \Cref{def:tasks}, we assume $m\le n$. If $m>n$, we apply the lemma to $\mM^\top$ and use $(\mM^\top)^+=(\mM^+)^\top$; the cost is the same with the roles of $m$ and $n$ exchanged.

\begin{lemma}[Pseudoinverse computation]\label{lem:pseudoinverse-compute}
Let $\mM\in\eQ^{m\times n}$ with $m\le n$. One can compute the Moore--Penrose pseudoinverse $\mM^+\in\eQ^{n\times m}$ in
\begin{equation}\label{eq:tpinv}
O\bigl(\Tbasis(n)+\Tinv(n)+\Tmul(n)\bigr)
\end{equation}
arithmetic operations and comparisons, every number occurring having size $\poly(n)\size(\mM)$.
\end{lemma}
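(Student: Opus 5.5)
The plan is to reduce the pseudoinverse to one basis computation, a few matrix products, and one inversion. First I apply \Cref{lem:basis} to $\mM$ to obtain a basis $B\subseteq[n]$, with $r\coloneqq|B|=\rank(\mM)$. If $r=0$, then $\mM=\0$ and I return $\mM^+=\0_{n\times m}$. Otherwise, I set $\mC\coloneqq\mM_B\in\eQ^{m\times r}$, which has full column rank. Every column of $\mM$ lies in $\im(\mC)$, so I write $\mM=\mC\mF$ with
\[
\mF\coloneqq(\mC^\top\mC)^{-1}\mC^\top\mM\in\eQ^{r\times n}\, .
\]
Here $\mC^\top\mC$ is nonsingular because $\mC$ has full column rank. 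The identity $\mM=\mC\mF$ holds since $\mC(\mC^\top\mC)^{-1}\mC^\top$ is the orthogonal projection onto $\im(\mC)=\im(\mM)$. The matrix $\mF$ has full row rank $r$, because $\rank(\mM)=r$ and $\mM=\mC\mF$.

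For this full-rank factorization I use the classical formula
\[
\mM^+=\mF^\top(\mF\mF^\top)^{-1}(\mC^\top\mC)^{-1}\mC^\top\, .
\]
I verify it against the characterization in the Notation paragraph: $\mM^+\mM u=u$ for $u\in\im(\mM^\top)$, and $\ker(\mM^+)=\ker(\mM^\top)$. For the first property, substituting $\mM=\mC\mF$ gives $\mM^+\mM=\mF^\top(\mF\mF^\top)^{-1}\mF$. This is the orthogonal projection onto $\im(\mF^\top)$, and $\im(\mF^\top)=\im(\mM^\top)$ because $\mC^\top$ is surjective. For the second property, the factors $\mF^\top$, $(\mF\mF^\top)^{-1}$ and $(\mC^\top\mC)^{-1}$ are injective. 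Hence $\ker(\mM^+)=\ker(\mC^\top)$, and $\ker(\mC^\top)=\ker(\mM^\top)$ since $\im(\mC)=\im(\mM)$.

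For the operation count, the computation consists of one basis computation, two inversions of order $r\le m\le n$, and a constant number of products of matrices with all dimensions at most $n$. This gives $O(\Tbasis(n)+\Tinv(n)+\Tmul(n))$, as claimed in \eqref{eq:tpinv}.

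The main obstacle is the size bound, since encoded rationals are never reduced and distributivity fails, so every intermediate representation must be bounded directly. I chain \Cref{rem:size-bounds-encoded} over the constantly many products; each product step multiplies the size by at most $\poly(n)$. The size bounds stated for inversion (through \Cref{cor:tinv} and \Cref{cor:leverrier-integral}) then give size $\poly(n)\size(\cdot)$ of the respective input. A constant number of such steps keeps every number within $\poly(n)\size(\mM)$. If the naive products of encoded rationals turned out to blow up inside the adjugate computation, I would fall back on the integral rescaling trick of \Cref{cor:leverrier-integral}, which handles common denominators without divisions.
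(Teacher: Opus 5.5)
Your proposal is correct and follows the paper's proof essentially verbatim. Both use one basis computation, the full-rank factorization $\mM=\mM_B\,\bigl((\mM_B^\top\mM_B)^{-1}\mM_B^\top\mM\bigr)$, and the formula $\mM^+=\mG^+\mF^+$ with two inversions of order $r$; your $\mC,\mF$ are the paper's $\mF,\mG$. The differences are cosmetic. You verify the formula against the paper's kernel/image characterization of $\mM^+$ rather than citing it. You get full row rank of the second factor from a rank count rather than from $\mG_B=\mI_r$. Your closing fallback remark is unnecessary, since the size bound for the inverse is already part of \Cref{lem:linalg}.
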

\begin{proof}
For $\rank(\mM) = 0$ the statement is trivial, so assume $\rank(\mM) \ge 1$.
Apply \Cref{lem:basis} to $\mM$ to obtain a basis $B\subseteq[n]$ and put $\mF\coloneqq\mM_{B}\in\eQ^{m\times r}$ with $r=\rank(\mM)\in \{1, \dots, m\}$, so that $\mF$ has full column rank and $\mF^\top\mF$ is nonsingular. Set $\mG\coloneqq(\mF^\top\mF)^{-1}\mF^\top\mM\in\eQ^{r\times n}$. Since $\mF(\mF^\top\mF)^{-1}\mF^\top$ is the orthogonal projection onto $\im(\mF)=\im(\mM)$, it fixes every column of $\mM$, that is, $\mF\mG=\mM$. Further, $\mG_B=(\mF^\top\mF)^{-1}\mF^\top\mF=\mI_r$, so $\mG$ has full row rank and $\mM=\mF\mG$ is a full-rank factorization. For such a factorization $\mM^+=\mG^+\mF^+$, while $\mF^+=(\mF^\top\mF)^{-1}\mF^\top$ and $\mG^+=\mG^\top(\mG\mG^\top)^{-1}$ by full column and full row rank respectively.

Beyond the single basis computation, this is a constant number of matrix products together with the two inversions of $\mF^\top\mF$ and $\mG\mG^\top$, both of order $r\le n$. The size bounds are immediate.
\end{proof}

\subsection{Basis crashing}\label{sec:mw-support}

The first of the two polyhedral tasks is \emph{basis crashing}, a term coined by Beling and Megiddo~\cite{belingmegiddo98}.

\begin{definition}[Basis crashing]\label{def:task-crash}
$\Tcr(n)$ is the cost of the following task: given $\mA\in\eQ^{m\times n}$ with $m\le n$, $b\in\eQ^m$ and a point $y\in\cal P\cap\eQ^n$ where $\cal P\coloneqq\{x\in\R^n\mid\mA x=b,\ x\ge\0_n\}$, return a vertex $v$ of $\cal P$ with $\supp(v)\subseteq\supp(y)$.
\end{definition}
In this section we use the linear algebra subroutines introduced in the previous section to find vertices $v$ whose support is contained in the support of a given point $y$. The standard approach is to move from $y$ along an elementary vector $g$ with $\supp(g)\subseteq\supp(y)$ until a variable becomes zero, and to repeat. If the iterates are computed from one another, their encoding lengths may grow exponentially, as in a naive implementation of Gaussian elimination. Following \cite[Theorem 6.5.11]{gls}, we avoid this by computing every iterate directly from $y$, by solving a single linear system. We use the same technique again in \Cref{lem:mw-polytope}.

\begin{lemma}[Basis crashing]\label{lem:crash}
\begin{equation}\label{eq:tcrash}
\Tcr(n)=O\bigl(n(\Tcirc(n)+\Tinv(n)+\Tmul(n))\bigr)\, ,
\end{equation}
and with $\mA,b,y$ as in \Cref{def:task-crash}, every number occurring has size $\poly(n)\bigl(\size(\mA,b)+\size(y)\bigr)$, while the vertex $v$ returned has $\size(v)=\poly(n)\size(\mA,b)$.
\end{lemma}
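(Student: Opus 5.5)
We will follow the classical support-reduction scheme, but compute every iterate directly from $y$ and never from the previous iterate, as in \cite[Theorem 6.5.11]{gls}. Maintain a current support $S\subseteq\supp(y)$, initially $S=\supp(y)$, and a point $x\in\cal P$ with $\supp(x)=S$, initially $x=y$. In each round we call the circuit routine of \Cref{lem:circuit} on $\mA_S$. If the columns of $\mA_S$ are linearly independent, then $x$ is a vertex of $\cal P$ and we return it. Otherwise we obtain a circuit $g$ with $\supp(g)\subseteq S$ and $\mA g=\0_m$; replacing $g$ by $-g$ if needed, we may assume $g$ has a positive entry. We move from $x$ to $x-\epsilon g$ with the maximal $\epsilon=\min\{x_i/g_i: g_i>0\}$. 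This keeps $\mA x=b$ and $x\ge\0_n$, and zeroes at least one further coordinate, so $S$ strictly shrinks. Hence there are at most $n$ rounds, which gives the factor $n$ in \eqref{eq:tcrash}.

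The key point is the size control. We do not store $x$ as $x-\epsilon g$ computed from the previous $x$. Instead, after determining the new support $S'$, we recompute the iterate directly from $y$ by solving one linear system. Any point $x'$ with $\supp(x')\subseteq S'$ and $\mA x'=b$ is of the form $y$ plus a kernel vector of $\mA$ supported on $\supp(y)$, with the coordinates outside $S'$ forced to $0$. Concretely, take a basis $B'$ of $\mA_{S'}$ and the fundamental-circuit description of $\ker(\mA_{S'})$. The actual iterate is $y-\sum_t\epsilon_t g^{(t)}$. We recover it by solving a nonsingular system whose unknowns are the step lengths (at most $n$) and whose equations are $x'_i=0$ for $i\in\supp(y)\setminus S'$. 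Each $g^{(t)}$ is computed from $\mA$ alone, as a fundamental circuit via \Cref{lem:circuit}, so it has size $\poly(n)\size(\mA)$. The step-length system then has coefficients of that size together with entries of $y$. By \Cref{lem:linalg} (Cramer's rule via \Cref{cor:tinv}), its solution, and hence the iterate, has size $\poly(n)(\size(\mA,b)+\size(y))$, independent of the round. Each round thus costs $\Tcirc(n)+\Tinv(n)+\Tmul(n)$.

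For the final vertex bound $\size(v)=\poly(n)\size(\mA,b)$ we will not use the last iterate, since it carries $y$'s bits. Once the support $S$ with $\mA_S$ of full column rank is reached, we recompute $v$ from scratch: take a basis $B\supseteq S$ of $\mA$ via \Cref{lem:basis}, computed from $\mA$, which is independent of $y$. Then set $v_B=\mA_B^{-1}b$ (after dropping redundant rows, or via $(\mA_S^\top\mA_S)^{-1}\mA_S^\top b$ on $S$) and $v_{[n]\setminus S}=\0$. This uses only $\mA,b$, and \Cref{lem:linalg} gives size $\poly(n)\size(\mA,b)$; its value equals the last iterate by uniqueness on a linearly independent support.

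We expect the main obstacle to be the second paragraph: arranging the direct recomputation so that the linear system is provably nonsingular and its data do not inherit sizes from earlier rounds in the unreduced encoded-rational model (where distributivity fails, so every formula must be fixed explicitly). Our first attempt is the variant in which the step lengths are the unknowns, with all circuits recomputed from $\mA$ at each round. If that becomes awkward, the fallback is to characterize the iterate as the unique solution of $\mA x=b$, $x_i=0$ for $i\notin S'$, together with $x_j=y_j$ on a suitable complement of a basis of $\mA_{S'}$, which is again a single nonsingular system in $\mA$ and $y$.
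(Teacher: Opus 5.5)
Your scheme is the paper's: step along circuits of $\mA_{S_t}$ until a coordinate hits zero, keep the circuits $g^{(1)},\dots,g^{(i)}$, recover each iterate from $y$ by a linear system in the step lengths, and recompute the final vertex from $\mA$ and $b$ alone. Here $S_t$ is the support before round $t$. Since the circuits' sizes depend only on $\mA$, storing them suffices; recomputing all of them every round would cost an extra factor $n$.

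The gap is the step you flag as the obstacle and leave open: you never show that this system determines the step lengths. The system is in general overdetermined, since one step can zero several coordinates. Let $\mG\coloneqq[g^{(1)}\ \cdots\ g^{(i)}]$ and $R\coloneqq\supp(y)\setminus S_{i+1}$. Unless $\mG_R$ has full column rank, a solution $\alpha$ only yields some point $y+\mG\alpha$ with $\mA x=b$ and $x_R=\0$. That point need not be the iterate and need not be nonnegative, so the algorithm breaks down.

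The missing idea is the triangular structure forced by the strictly nested supports. For each $t\le i$ pick $p_t\in S_t\setminus S_{t+1}$.
\begin{itemize}
\item We have $y^{(t)}_{p_t}>0=y^{(t+1)}_{p_t}$, and $y^{(t+1)}-y^{(t)}$ is a nonzero multiple of $g^{(t)}$, so $g^{(t)}_{p_t}\neq 0$.
\item For $u>t$ we have $g^{(u)}_{p_t}=0$, because $\supp(g^{(u)})\subseteq S_u\subseteq S_{t+1}$ and $p_t\notin S_{t+1}$.
\end{itemize}
Hence rows $p_1,\dots,p_i$ of $\mG$ form a triangular matrix with nonzero diagonal, and $\mG_R$ has full column rank. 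The true step lengths are therefore the unique solution. They can be computed from this square triangular subsystem, or as $\alpha=-(\mG_R^\top\mG_R)^{-1}\mG_R^\top y_R$ for the iterate $y+\mG\alpha$, with one call each to $\Tinv(n)$ and $\Tmul(n)$. With this, your size bound goes through.

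Your fallback would not have rescued the argument. The iterate can differ from $y$ on every coordinate of $S_{i+1}$ touched by some $g^{(t)}$. These coordinates can themselves contain a circuit of $\mA_{S_{i+1}}$, so they need not fit inside any basis of $\mA_{S_{i+1}}$. In that case the system $\mA x=b$, $x_{[n]\setminus S_{i+1}}=\0$, $x_{N'}=y_{N'}$ describes a different point, again not necessarily nonnegative.
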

\begin{proof}
Set $y^{(1)}\coloneqq y$ and $S_1\coloneqq\supp(y^{(1)})$, and repeat for $i=1,2,\ldots$ the following round. For each $i\ge 2$ it will hold that for $S_i \coloneqq \supp(y^{(i)})$ we have $S_i\subsetneq S_{i-1}$ and $\mA y^{(i)}=b$. Call the circuit routine \ref{it:task-circuit} of \Cref{def:tasks} on the submatrix $\mA_{S_i}$. If it reports the columns of $\mA_{S_i}$ to be linearly independent, stop and return $v\coloneqq y^{(i)}$. To obtain the desired size bound of $\size(\mA,b)$, note that we need to recompute $v$ as the unique feasible solution to $\mA x = b$ supported on $S_i$. That is, we solve the system $\mA_{S_i} v_{S_i} = b$ for $v_{S_i}$ and set $v_{[n]\setminus S_i} = 0$.
Otherwise, the call to \ref{it:task-circuit} returns a circuit of $\mA_{S_i}$, which padded by zeros outside $S_i$ is a vector $g^{(i)}\in\ker(\mA)\setminus\{\0_n\}$ with $\supp(g^{(i)})\subseteq S_i$. Replacing $g^{(i)}$ by $-g^{(i)}$ if necessary, we may assume that it has a negative coordinate. As $y^{(i)}_j>0$ for every $j\in S_i\supseteq\supp(g^{(i)})$, set $\tau_i$ to be the maximal step length such that $y^{(i+1)}\coloneqq y^{(i)}+\tau_i g^{(i)}$ remains feasible. The new support $S_{i+1} \coloneqq \supp(y^{(i+1)})$ satisfies that $S_{i+1}\subsetneq S_i$.

We do not compute $y^{(i+1)}$ from $y^{(i)}$, but directly from $y$, as follows. Let $\mG$ be the matrix with columns $g^{(1)},\ldots,g^{(i)}$. Let $R\coloneqq S_1\setminus S_{i+1}$. By construction $y^{(i+1)}=y+\mG\tau$ for $\tau=(\tau_1,\ldots,\tau_i)$, and $y^{(i+1)}_R=\0$. We claim that $\tau$ is the unique solution of $(\mG\alpha)_R=-y_R$. For $t\le i$ pick $p_t\in S_t\setminus S_{t+1}\subseteq R$. Then $g^{(t)}_{p_t}=-y^{(t)}_{p_t}/\tau_t\ne 0$, whereas $g^{(u)}_{p_t}=0$ for $u>t$ since $\supp(g^{(u)})\subseteq S_u\subseteq S_{t+1}$. Hence the rows $p_1,\ldots,p_i$ of $\mG$ form a triangular matrix with nonzero diagonal, so $\mG_R$ has full column rank and the solution is unique. We compute it as $\alpha=-(\mG_R^\top\mG_R)^{-1}\mG_R^\top y_R$.
We can therefore obtain $\alpha$ with one call each to $\Tinv(n)$ and $\Tmul(n)$ and $O(n^2)$ further operations. The computation gives $\size(\alpha)=\poly(n)\size(\mG, y) = \poly(n) \size(\mA, y)$ and consequently $\size(y^{(i+1)})= \poly(n)(\size(\mA, y))$.

Since $S_{i+1}\subsetneq S_i$, there are at most $n$ rounds. Each round uses one call to $\Tcirc(n)$, $O(n)$ operations for the ratio test, and one call each to $\Tinv(n)$ and $\Tmul(n)$ to recompute $y^{(i+1)}$. In the final round, $\mA_{S_i}$ has full column rank, so $v_{S_i}=(\mA_{S_i}^\top\mA_{S_i})^{-1}\mA_{S_i}^\top b$ is obtained with one more call each to $\Tinv(n)$ and $\Tmul(n)$, and $\size(v)=\poly(n)\size(\mA,b)$. This gives \eqref{eq:tcrash}.
\end{proof}

\subsection{Polytopes}\label{sec:mw-polytope}

We are ready to compute Carath\'eodory decompositions.
\begin{lemma}[Carath\'eodory decomposition in standard form]\label{lem:mw-polytope}
Let $\mA\in\eQ^{m\times n}$, $m \le n$,  and $b\in\eQ^m$ be such that $\cal P\coloneqq\{x\in\R^n\mid \mA x=b,\ x\ge\0_n\}$ is a nonempty polytope, and let $y\in\cal P\cap\eQ^n$. There is a strongly polynomial algorithm returning vertices $v^{(1)},\ldots,v^{(k)}$ of $\cal P$ and multipliers $\lambda_1,\ldots,\lambda_k\in\eQplusplus$ such that
$v^{(1)},\ldots,v^{(k)}$ are affinely independent,
\[\sum_{i=1}^k\lambda_i=1\, ,\qquad y=\sum_{i=1}^k\lambda_i v^{(i)}\, ;
\]
in particular $k\le n-\rank(\mA)+1$. It performs $O\bigl(n(\Tcr(n)+\Tinv(n))\bigr)$ arithmetic operations and comparisons. The sizes of all numbers occurring during the algorithm are bounded by $\poly(n)\size(\mA, b, y)$.
\end{lemma}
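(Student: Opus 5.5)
The plan is the classical Carath\'eodory recursion: find a vertex in the support of the current point, shoot a ray from that vertex through the point until it leaves $\cal P$, and recurse on the exit point, which lies in a smaller face. To keep encoding lengths under control, every intermediate point is recomputed directly from $y$ by a single linear system, as in \Cref{lem:crash}.

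\emph{Setup.} Put $y^{(1)}\coloneqq y$ and $F_1\coloneqq\supp(y)$. In round $i$, call basis crashing (\Cref{lem:crash}) on the current point $y^{(i)}$ to obtain a vertex $v^{(i)}$ with $\supp(v^{(i)})\subseteq\supp(y^{(i)})$. If $v^{(i)}=y^{(i)}$, stop. Otherwise consider the ray $v^{(i)}+t(y^{(i)}-v^{(i)})$ for $t\ge1$. Since $\cal P$ is a polytope, it is bounded, so there is a maximal $t_i>1$ for which this point stays nonnegative, obtained by a ratio test. The exit point $y^{(i+1)}$ has strictly smaller support: at least one coordinate of $\supp(y^{(i)})$ hits zero, and no new coordinate appears because $\supp(v^{(i)})\subseteq\supp(y^{(i)})$. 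Moreover
\[
y^{(i)}=\tfrac{1}{t_i}\,y^{(i+1)}+\bigl(1-\tfrac1{t_i}\bigr)v^{(i)} .
\]
Unrolling these identities gives $y$ as a convex combination of $v^{(1)},\dots,v^{(k)}$ with strictly positive coefficients. Since supports strictly decrease, there are at most $n$ rounds.

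\emph{Affine independence.} Each later vertex $v^{(j)}$, $j>i$, lies in the face $\{x_p=0\}$ for every coordinate $p$ dropped at round $i$, while $v^{(i)}$ has $v^{(i)}_p>0$ for at least one such dropped $p$. I expect this to hold because the dropped coordinate can be chosen inside $\supp(v^{(i)})$: coordinates outside $\supp(v^{(i)})$ strictly increase along the ray relative to $v^{(i)}$ and so never hit zero. This gives a triangular structure on the coordinates $p_1,p_2,\dots$, which implies affine independence. The bound $k\le n-\rank(\mA)+1$ then follows because all vertices lie in the affine space $\{\mA x=b\}$, which has dimension $n-\rank(\mA)$.

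\emph{Size control (main obstacle).} Computing $y^{(i+1)}$ from $y^{(i)}$ iteratively could blow up bit lengths exponentially. Instead, write $y^{(i+1)}=y+\sum_{j\le i}\alpha_j(y-v^{(j)})$; this holds because each step is an affine combination. The coefficients $\alpha$ can then be recovered from the requirement that $y^{(i+1)}$ vanishes on the coordinates $R=F_1\setminus\supp(y^{(i+1)})$. Let $\mG$ have columns $y-v^{(j)}$. The same triangularity argument as in \Cref{lem:crash}, using the chosen coordinates $p_j\in\supp(v^{(j)})$, shows that $\mG_R$ has full column rank. Hence $\alpha=-(\mG_R^\top\mG_R)^{-1}\mG_R^\top y_R$, computed with one call to $\Tinv(n)$ plus matrix products, and its size is $\poly(n)\size(\mA,b,y)$.

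\emph{Multipliers and running time.} The multipliers $\lambda$ are obtained in the same direct way. Because the $v^{(i)}$ are affinely independent, $\lambda$ is the unique solution of the system $\sum\lambda_iv^{(i)}=y$, $\sum\lambda_i=1$, solved by one inversion. Positivity of $\lambda$ comes from the recursion. Each of the at most $n$ rounds costs one call to $\Tcr(n)$, one call to $\Tinv(n)$, and lower-order work, which gives the stated operation bound.
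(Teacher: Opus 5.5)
Your proposal is correct in substance and follows the same route as the paper. Both arguments use basis crashing to obtain $v^{(i)}$ with $\supp(v^{(i)})\subseteq\supp(y^{(i)})$, a ratio test along $y^{(i)}-v^{(i)}$, direct recomputation of each $y^{(i+1)}$ from $y$ via one full-column-rank system, and affine independence via the dropped coordinates $p_t$. You also justify $v^{(t)}_{p_t}>0$ through the monotonicity of coordinates outside $\supp(v^{(t)})$, a step the paper leaves implicit. Finally, both obtain $\lambda$ from the augmented system.

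One justification is wrong as stated and needs a one-line repair. Your columns are $y-v^{(j)}$ rather than the paper's $h^{(j)}=y^{(j)}-v^{(j)}$, so $\mG$ is \emph{not} triangular on the rows $p_1,\dots,p_i$. For $j>t$, the entry in row $p_t$ is $y_{p_t}-v^{(j)}_{p_t}=y_{p_t}>0$, so ``the same triangularity argument as in \Cref{lem:crash}'' does not apply verbatim. Full column rank of $\mG_R$ still holds, for the following reason. By your own representation $y^{(j)}=y+\sum_{l<j}\alpha^{(j)}_l(y-v^{(l)})$, you have $h^{(j)}=(y-v^{(j)})+\sum_{l<j}\alpha^{(j)}_l(y-v^{(l)})$, that is, $\mH=\mG\mT$ with $\mT$ unit upper triangular. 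The matrix $\mH_R$ is triangular on the rows $p_1,\dots,p_i$: $h^{(u)}_{p_t}=0$ for $u>t$ because $\supp(h^{(u)})\subseteq\supp(y^{(t+1)})\not\ni p_t$, and $h^{(t)}_{p_t}\neq0$ because $y^{(t)}_{p_t}>0=y^{(t+1)}_{p_t}$. Hence $\rank\,\mG_R=\rank\,\mH_R=i$.

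With that fix, your choice of columns is slightly better for the size analysis than the paper's. Your $\mG$ is built only from $y$ and from vertices of size $\poly(n)\size(\mA,b)$, so the bound $\size(y^{(i+1)})=\poly(n)\size(\mA,b,y)$ visibly does not compound across rounds. This is less immediate for the paper's $\mH^{(i)}$, whose columns involve the previously computed $y^{(j)}$.
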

\begin{proof}
Set $y^{(1)}\coloneqq y$ and $S_i\coloneqq\supp(y^{(i)})$, and repeat for $i=1,2,\ldots$ the following. Apply \Cref{lem:crash} to $\cal P$ and $y^{(i)}$ to obtain a vertex $v^{(i)}$ of $\cal P$ with $\supp(v^{(i)})\subseteq S_i$. If $v^{(i)}=y^{(i)}$, we stop and set $k\coloneqq i$. Otherwise, we move away from $v^{(i)}$ along the direction $h^{(i)}\coloneqq y^{(i)}-v^{(i)}\ne\0_n$, which satisfies $\mA h^{(i)}=\0_m$ and $\supp(h^{(i)})\subseteq S_i$. Let $\tau_i$ be the maximal step length in this direction, i.e., $y^{(i)} + \tau_i h^{(i)} \in \cal P$, but $y^{(i)} + (\tau_i+\epsilon) h^{(i)} \notin \cal P$ for any $\epsilon > 0$. By the assumption that $\cal P$ is bounded we have that $\tau_i$ is finite. Let $y^{(i+1)} \coloneqq  y^{(i)} + \tau_i h^{(i)}$.
Exactly as in the proof of \Cref{lem:crash}, we do not form $y^{(i+1)}$ recursively, but re-derive it from $y$ as $y^{(i+1)} = y + \mH^{(i)} \alpha$, for $\mH^{(i)} = [\,h^{(1)}\ \cdots\ h^{(i)}\,]$ where $\alpha$ is the unique solution to
 \begin{equation}
 [y + \mH^{(i)} \alpha]_{S_1 \setminus S_{i+1}} = \0_{S_1 \setminus S_{i+1}}\, .
 \end{equation}

 Uniqueness of $\alpha$ follows as in the proof of \Cref{lem:crash}: for $p_t\in S_t\setminus S_{t+1}$ we have $h^{(t)}_{p_t}=-y^{(t)}_{p_t}/\tau_t\ne0$ and $h^{(u)}_{p_t}=0$ for $u>t$, so $\mH^{(i)}_{S_1\setminus S_{i+1}}$ has full column rank.
 As $\size(\mH^{(i)}) = \poly(n) \size(\mA, b,y)$, we obtain that $\size(\alpha) = \poly(n) \size(\mA, b,y)$ and consequently $\size(y^{(i+1)}) = \poly(n) \size(\mA, b,y)$.

The same indices $p_t$ show that $v^{(1)},\ldots,v^{(k)}$ are affinely independent. For $t<k$ we have that $v^{(t)}_{p_t} > 0$, whereas $v^{(u)}_{p_t}=0$ for $u>t$ since $\supp(v^{(u)})\subseteq S_u\subseteq S_{t+1}$. Hence, $v^{(t)}$ is not contained in the affine hull of $v^{(t+1)}, \dots, v^{(k)}$, implying that $v^{(1)},\ldots,v^{(k)}$ are affinely independent.

 Finally, compute $\lambda$ as the unique solution to the augmented linear system
 \begin{equation}\label{eq:mw-barycentric}
 \begin{pmatrix}v^{(1)}&\cdots&v^{(k)}\\1&\cdots&1\end{pmatrix}\lambda=\binom{y}{1}\, .
 \end{equation}
 Existence follows by construction of the $v^{(i)}$: since $y^{(i+1)}=y^{(i)}+\tau_i(y^{(i)}-v^{(i)})$ with $\tau_i>0$, we have $y^{(i)}=\frac{\tau_i}{1+\tau_i}v^{(i)}+\frac{1}{1+\tau_i}y^{(i+1)}$ for $i<k$, and $y^{(k)}=v^{(k)}$. Unrolling these identities from $y=y^{(1)}$ expresses $y$ as a convex combination of $v^{(1)},\ldots,v^{(k)}$ with all coefficients $\lambda_i>0$ and $\sum_{i=1}^k\lambda_i=1$. Uniqueness follows by the affine independence of $v^{(1)},\ldots,v^{(k)}$, which is equivalent to the coefficient matrix of \eqref{eq:mw-barycentric} having full column rank. As $\lambda$ is the unique solution to a linear system with data $y$ and $v^{(1)}, \ldots, v^{(k)}$ we have that $\size(\lambda) = \poly(n) \size(v^{(1)},\ldots,v^{(k)},y) = \poly(n) \size(\mA, b,y)$.

Both systems can be solved by inverting a square subsystem that is triangular with nonzero diagonal by the arguments above: for $\alpha$ the rows $p_1,\ldots,p_i$, and for $\lambda$ the rows $p_1,\ldots,p_{k-1}$ together with the last row.
Each of the at most $n$ rounds costs $\Tcr(n)$ for the basis crash, $O(n)$ for the ratio test to compute the $\tau_i$ and one call to $\Tinv(n)$. The final solve for $\lambda$ requires one more linear system solve, which is a total of $O\bigl(n(\Tcr(n)+\Tinv(n))\bigr)$.
\end{proof}
\subsection{Cones and general polyhedra}\label{sec:mw-cone}
The conic case reduces to the bounded one by intersecting with a hyperplane through the given point.

\begin{corollary}[Conic decomposition in standard form]\label{cor:mw-cone}
Let $\mA\in\eQ^{m\times n}$, $m\le n$, $\cal C\coloneqq\{x\in\R^n\mid \mA x=\0_m,\ x\ge \0_n\}$ and $y\in\cal C\cap\eQ^n$ with $y\ne\0_n$. There is an algorithm returning linearly independent vectors $r^{(1)},\ldots,r^{(\ell)}\in\eQ^n$, each a nonzero generator of an extreme ray of $\cal C$, with $y=\sum_{j=1}^\ell r^{(j)}$, $\ell\le n$. It performs $O\bigl(n(\Tcr(n+1)+\Tinv(n+1))\bigr)$ operations and comparisons, and the size of all numbers occurring is bounded by $\poly(n)\size(\mA, y)$.
\end{corollary}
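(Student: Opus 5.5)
The plan is to reduce the conic case to \Cref{lem:mw-polytope} by slicing $\cal C$ with a hyperplane through $y$. Concretely, set $\sigma\coloneqq\pr{\1_n}{y}>0$; this is positive because $y\ge\0_n$ and $y\ne\0_n$, and it is computed with $O(n)$ additions. Define the polytope
\[
\cal P'\coloneqq\{x\in\R^n\mid \mA x=\0_m,\ \pr{\1_n}{x}=\sigma,\ x\ge\0_n\}\, ,
\]
which is in standard form with the $(m+1)\times n$ matrix obtained by appending the row $\1_n^\top$ to $\mA$ and right-hand side $(\0_m,\sigma)$. If $m+1>n$, we pad with a dummy zero column, or note that $m+1\le n+1$ and work in dimension $n+1$; this is why the bounds are stated with $n+1$. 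The set $\cal P'$ is bounded since $\pr{\1_n}{x}=\sigma$ and $x\ge\0_n$, and it contains $y$. Its data have size $O(\size(\mA,y))$.

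Next we apply \Cref{lem:mw-polytope} to $\cal P'$ and $y$. This returns affinely independent vertices $v^{(1)},\ldots,v^{(k)}$ of $\cal P'$ and positive multipliers $\lambda_i$ summing to $1$ with $y=\sum_i\lambda_iv^{(i)}$. The operation count is $O(n(\Tcr(n+1)+\Tinv(n+1)))$, and all numbers have size $\poly(n)\size(\mA,y)$. We then output $r^{(i)}\coloneqq\lambda_iv^{(i)}$, computed by $O(n^2)$ multiplications, so that $y=\sum_i r^{(i)}$ holds exactly.

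It remains to verify three properties.

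\textbf{Extreme rays.} A vertex $v$ of $\cal P'$ has columns $\mA_{\supp(v)}$ together with the all-ones row forming a full-column-rank system. Since $\pr{\1}{v}=\sigma\neq0$, the space $\ker(\mA_{\supp(v)})$ is then one-dimensional. Hence $v$ generates a ray of $\cal C$ whose support is minimal, i.e.\ an extreme ray of $\cal C$. Alternatively, if $v=a+b$ with $a,b\in\cal C$, rescaling $a$ and $b$ into $\cal P'$ expresses $v$ as a convex combination of points of $\cal P'$, so both are multiples of $v$.

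\textbf{Linear independence.} Suppose $\sum_i\gamma_iv^{(i)}=\0$. Taking the inner product with $\1_n$ gives $\sigma\sum_i\gamma_i=0$, so $\sum_i\gamma_i=0$. Affine independence then forces $\gamma=\0$. Consequently $\ell=k\le n$, since the vectors lie in $\R^n$.

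\textbf{Main obstacle.} The delicate point is the size bookkeeping in our unreduced model. \Cref{lem:mw-polytope} bounds sizes by $\poly(n)\size(\mA',b',y)$ for the slice data, and these are $O(\size(\mA,y))$ because $\sigma$ has size at most $2\size(y)$ by \Cref{rem:size-bounds-encoded}. The final products $\lambda_iv^{(i)}$ add at most the sum of the sizes of their factors. Thus every number stays within $\poly(n)\size(\mA,y)$, as claimed.
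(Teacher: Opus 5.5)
Your proposal is correct and follows the paper's proof essentially step for step. Like the paper, you slice $\mathcal C$ by $\1_n^\top x=\sigma$, apply \Cref{lem:mw-polytope}, set $r^{(j)}=\lambda_j v^{(j)}$, and derive linear independence from affine independence by applying $\1_n^\top$; your explicit check that vertices of the slice generate extreme rays fills in a step the paper only asserts.

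One small caveat concerns the case $m+1>n$. Padding with a dummy zero column adds a variable constrained only by $x_{n+1}\ge 0$, which makes the slice unbounded, so \Cref{lem:mw-polytope} no longer applies as stated. Use your other option instead, bounding the dimension parameter by $n+1$ as the paper does.
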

\begin{proof}
Let $\sigma\coloneqq\1_n^\top y>0$. The set
\[
\cal Q\coloneqq\{x\in\R^n\mid \mA x=\0_m,\ \1_n^\top x=\sigma,\ x\ge\0_n\}\ni y
\]
is in standard form with $m+1\le n+1$ constraints, with constraint matrix $\binom{\mA}{\1_n^\top}$ and right-hand side $\binom{\0_m}{\sigma}$ of size $O(\size(\mA,y))$, and it is bounded, since $x\ge\0_n$ and $\1_n^\top x=\sigma$ force $0\le x_i\le\sigma$.
Its vertices are precisely the extreme rays of $\cal C$ normalized to $\1_n^\top x=\sigma$.

\Cref{lem:mw-polytope} applied to $\cal Q$ and $y$ yields affinely independent vertices $w^{(1)},\ldots,w^{(\ell)}$ of $\cal Q$ and $\lambda\in\eQplusplus^\ell$ with $\sum_j\lambda_j=1$ and $y=\sum_j\lambda_jw^{(j)}$. The $w^{(j)}$ are linearly independent: if $\sum_j a_jw^{(j)}=\0_n$, then applying $\1_n^\top$ gives $\sigma\sum_j a_j=0$, so $\sum_j a_j=0$, and affine independence yields $a_1 = \dots = a_\ell = 0$. Setting $r^{(j)}\coloneqq\lambda_jw^{(j)}$ gives the desired vectors.
\end{proof}

We are now ready to prove \Cref{lem:minkowski-weyl}.

\begin{definition}[Minkowski--Weyl decomposition]\label{def:task-mw}
$\TMW(n)$ is the cost, in the sense of \Cref{def:task-crash}, of the following task: given $\mA\in\eQ^{m\times n}$ with $m\le n$, $b\in\eQ^m$ and a point $x\in\cal P\cap\eQ^n$ where $\cal P\coloneqq\{x\in\R^n\mid\mA x=b,\ x\ge\0_n\}$, compute a decomposition of $x$ as in \Cref{lem:minkowski-weyl}.
\end{definition}

\begin{theorem}\label{thm:mw-general}
\Cref{lem:minkowski-weyl} holds with $\TMW(n)=O\bigl(n(\Tcr(n+1)+\Tinv(n+1))\bigr)$ arithmetic operations and comparisons. Every vertex returned has size $\poly(n)\size(\mA,b)$, and every extreme ray returned has size $\poly(n)\size(\mA,b,x)$. Every number occurring during the algorithm has size $\poly(n) \size(\mA,b,x)$.
\end{theorem}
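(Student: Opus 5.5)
The plan is to reduce the general polyhedron $\cal P=\{x\mid \mA x=b,\ x\ge\0_n\}$ to the two cases already handled: polytopes (\Cref{lem:mw-polytope}) and pointed cones (\Cref{cor:mw-cone}). We homogenize. Consider the cone
\[
\cal C'\coloneqq\{(x,\tau)\in\R^{n+1}\mid \mA x-\tau b=\0_m,\ x\ge\0_n,\ \tau\ge0\}\, ,
\]
which is in standard form with $m\le n+1$ rows and $n+1$ columns. Its data have size $O(\size(\mA,b))$, and it contains $(x,1)\ne\0_{n+1}$. We apply \Cref{cor:mw-cone} to $(x,1)$. This yields linearly independent extreme-ray generators $(q^{(1)},\tau_1),\ldots,(q^{(\ell')},\tau_{\ell'})$ of $\cal C'$ that sum to $(x,1)$, with $\ell'\le n+1$.

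The extreme rays of $\cal C'$ come in two types. Those with $\tau_j>0$ correspond to vertices of $\cal P$: a minimal support of $\ker(\mA\mid -b)$ containing the last coordinate, scaled by $\tau_j$, gives a basic feasible solution. Those with $\tau_j=0$ are extreme rays of the recession cone $\{r\mid \mA r=\0_m,\ r\ge\0_n\}$. For each $j$ with $\tau_j>0$ we set $\lambda_j\coloneqq\tau_j$ and $v^{(j)}\coloneqq q^{(j)}/\tau_j$. For each $j$ with $\tau_j=0$ we set $r^{(j)}\coloneqq q^{(j)}$ with $\eta_j\coloneqq1$. Comparing last coordinates gives $\sum\lambda_j=1$, and comparing the first $n$ coordinates gives $x=\sum\lambda_jv^{(j)}+\sum r^{(j)}$. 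Also $k+\ell=\ell'\le n+1$.

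The cost is one call to \Cref{cor:mw-cone} in dimension $n+1$ plus $O(n^2)$ divisions. This gives $O(n(\Tcr(n+2)+\Tinv(n+2)))$ operations, which matches $O(n(\Tcr(n+1)+\Tinv(n+1)))$ up to constants by monotonicity and polynomiality of these bounds. Plugging in \Cref{prop:tmul}, \Cref{cor:leverrier-integral}, \Cref{cor:tinv}, \Cref{lem:basis}, \Cref{lem:circuit} and \Cref{lem:crash} gives $\Tcr=O(n^5)$, hence the claimed $O(n^7)$. All intermediate numbers have size $\poly(n)\size(\mA,b,x)$ by \Cref{cor:mw-cone}, and so do the rays and multipliers.

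The main obstacle is the stronger bound that every returned \emph{vertex} has size $\poly(n)\size(\mA,b)$, independent of $x$. The vectors $q^{(j)}/\tau_j$ carry the dependence on $x$ through the rescaling $\lambda_j$. We fix this as in the last step of \Cref{lem:crash}. Having identified $S\coloneqq\supp(q^{(j)})\subseteq[n]$, whose columns $\mA_S$ are linearly independent because $q^{(j)}/\tau_j$ is a vertex, we recompute $v^{(j)}_S=(\mA_S^\top\mA_S)^{-1}\mA_S^\top b$ and pad with zeros. This costs one $\Tinv$ and one $\Tmul$ per vertex, $O(n\Tinv(n))$ in total, and gives size $\poly(n)\size(\mA,b)$ by \Cref{lem:linalg}. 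The value is unchanged, so the decomposition identity still holds. The multipliers $\lambda_j=\tau_j$ keep size $\poly(n)\size(\mA,b,x)$.

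Finally, we check that the dependence on $x$ is confined to rays and multipliers, and that the space used is $\poly(n)\size(\mA,b,x)$. Both follow directly from the bounds in \Cref{cor:mw-cone} and \Cref{lem:mw-polytope}.
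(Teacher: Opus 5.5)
Your proof is correct, but it takes a genuinely different route from the paper's.

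The paper does not homogenize. It runs the shooting procedure of \Cref{lem:mw-polytope} directly on $\cal P$, so every vertex comes out of basis crashing on $\cal P$ itself and already has size $\poly(n)\size(\mA,b)$ by the final recomputation in \Cref{lem:crash}. Unboundedness is detected in the first round whose direction $h^{(k)}=y^{(k)}-v^{(k)}$ is nonnegative. Only this one recession direction is passed to \Cref{cor:mw-cone}, and the convex multipliers come from the barycentric system with $y^{(k)}$ in place of $v^{(k)}$.

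You instead make a single conic call on the lifted cone and sort the generators into vertices and rays by the sign of the last coordinate. This treats both kinds of generators uniformly. Both $\sum_j\lambda_j=1$ and $k+\ell\le n+1$ follow at once, from the last coordinate and from linear independence in $\R^{n+1}$; the paper only asserts the latter ``analogously''. Your minimal-support argument correctly shows that $\mA_S$ has full column rank, so the recomputation is valid.

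The cost of your route is that the vertices $q^{(j)}/\tau_j$ you first obtain are represented by numbers depending on $x$, through the normalization $\1^\top z=\1_n^\top x+1$ inside \Cref{cor:mw-cone}. So you need an extra recomputation pass, which the paper gets for free from basis crashing.

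Two small accounting corrections:
\begin{itemize}
\item Apply \Cref{lem:mw-polytope} directly to the normalized $(m+1)\times(n+1)$ system, which is allowed since $m+1\le n+1$, instead of quoting the statement of \Cref{cor:mw-cone}. You then get exactly $O(n(\Tcr(n+1)+\Tinv(n+1)))$, without any appeal to polynomial growth.
\item The chain of reductions gives $\Tcr(n)=O(n^6)$, not $O(n^5)$. It is $O(n^6)$ that yields the $O(n^7)$ total.
\end{itemize}
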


\begin{proof}
Write $y\coloneqq x$ for the input point and run the algorithm of \Cref{lem:mw-polytope} on $\cal P$. Since $\mA h^{(i)}=\0_m$, the point $y^{(i)}+\tau h^{(i)}$ lies in $\cal P$ for every $\tau\ge0$ if and only if $h^{(i)}\ge\0_n$; that is, the ratio test in round $i$ fails to find a finite $\tau_i$ exactly when $h^{(i)}$ is nonnegative. If no direction $h^{(i)}$ is nonnegative, the algorithm runs unchanged and returns the decomposition of \Cref{lem:mw-polytope}, with $\ell=0$ and $k\le n+1$. Otherwise, let $k$ be the first round with $h^{(k)}\ge\0_n$; we stop in this round. Now $h^{(k)}\ne\0_n$ lies in the recession cone $\cal C\coloneqq\{x\in\R^n\mid\mA x=\0_m,\ x\ge\0_n\}$ of $\cal P$, whose extreme rays are exactly those of $\cal P$, and one call to \Cref{cor:mw-cone} returns linearly independent extreme rays $r^{(1)},\ldots,r^{(\ell)}$ of $\cal P$ with $h^{(k)}=\sum_{j=1}^\ell r^{(j)}$.

As in the proof of \Cref{lem:mw-polytope}, $y$ is a convex combination of $v^{(1)},\ldots,v^{(k-1)},y^{(k)}$ with positive coefficients $\lambda\in\eQplusplus^k$, which we compute as the unique solution of the augmented system \eqref{eq:mw-barycentric} with $y^{(k)}$ in place of $v^{(k)}$. Substituting $y^{(k)}=v^{(k)}+\sum_{j}r^{(j)}$ gives the asserted decomposition with $\eta_j\coloneqq\lambda_k>0$ for all $j\in[\ell]$. The linear independence of $(v^{(1)},1),\ldots,(v^{(k)},1),(r^{(1)},0),\ldots,(r^{(\ell)},0)$ in $\R^{n+1}$, and thus $k+\ell\le n+1$, follows analogously to \Cref{lem:mw-polytope} and \Cref{cor:mw-cone}.

For the sizes, $\size(v^{(i)})=\poly(n)\size(\mA,b)$ by \Cref{lem:crash}, and $\size(h^{(k)})=\poly(n)\size(\mA,b,y)$ by \Cref{lem:mw-polytope}, so every number in the call to \Cref{cor:mw-cone}, and in particular every $r^{(j)}$, has size $\poly(n)\size(\mA,h^{(k)})=\poly(n)\size(\mA,b,y)$. For the operation count, the at most $n$ rounds cost $O\bigl(n(\Tcr(n)+\Tinv(n))\bigr)$ in total, as in \Cref{lem:mw-polytope}. Solving for $\lambda$ costs one call to $\Tinv(n+1)$ and $O(n^2)$ further operations; and the call to \Cref{cor:mw-cone} costs $O(n(\Tcr(n+1)+\Tinv(n+1)))$. This gives the claimed bound.
\end{proof}
Combining the reductions of \Cref{sec:mw-cost} turns \Cref{thm:mw-general} into an explicit bound using the naive cubic bound $\Tmul(n)=O(n^3)$ on matrix multiplication of \Cref{prop:tmul}.
\begin{corollary}[Explicit running time and space]\label{cor:mw-explicit-bounds}
Assuming $\Tmul(n)=O(n^3)$, the reductions of \Cref{cor:leverrier-integral,cor:tinv,lem:basis,lem:circuit,lem:crash} and \Cref{thm:mw-general} give
\begin{equation}\label{eq:chain}
\begin{aligned}
\Tdet(n),\ \Tadj(n),\ \Tinv(n)&=O(n^4)\, ,&\qquad \Tbasis(n),\ \Tcirc(n)&=O(n^5)\, ,\\
\Tcr(n)&=O(n^6)\, ,&\qquad \TMW(n)&=O(n^7)\, .
\end{aligned}
\end{equation}
Consequently, for $\mA\in\eQ^{m\times n}$ with $m\le n$, $b\in\eQ^m$ and $x\in\cal P\cap\eQ^n$, the decomposition of \Cref{lem:minkowski-weyl} is computed in $O(n^7)$ arithmetic operations and comparisons on rationals. Every vertex returned has size $\poly(n)\size(\mA,b)$, every extreme ray and every multiplier returned has size $\poly(n)\size(\mA,b,x)$, and the total space used by the algorithm is $\poly(n)\size(\mA,b,x)$.
\end{corollary}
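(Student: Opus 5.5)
The plan is to substitute the cubic bound of \Cref{prop:tmul} into the chain of reductions from the bottom up, and then read off the size and space claims from \Cref{thm:mw-general}. First, \Cref{cor:leverrier-integral} gives $\Tdet(n),\Tadj(n)=O(n\,\Tmul(n))=O(n^4)$, and then \Cref{cor:tinv} gives $\Tinv(n)=O(\Tadj(n)+\Tdet(n))=O(n^4)$. Next, \Cref{lem:basis} yields $\Tbasis(n)=O(n(\Tmul(n)+\Tdet(n)))=O(n\cdot n^4)=O(n^5)$. With this, \Cref{lem:circuit} gives $\Tcirc(n)=O(\Tbasis(n)+\Tmul(n)+\Tinv(n))=O(n^5)$. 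Then \Cref{lem:crash} gives $\Tcr(n)=O(n(\Tcirc(n)+\Tinv(n)+\Tmul(n)))=O(n^6)$. Finally, \Cref{thm:mw-general} gives $\TMW(n)=O(n(\Tcr(n+1)+\Tinv(n+1)))$.

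To finish the time bound, I will use that each of these bounds is a fixed polynomial in its argument, so $(n+1)^k=O(n^k)$. This gives $\Tcr(n+1)=O(n^6)$ and $\Tinv(n+1)=O(n^4)$, hence $\TMW(n)=O(n^7)$. Along the way I must check that every subroutine is only invoked on matrices whose dimensions are at most $n$ (or $n+1$ in the conic step). Examples are the submatrices $\mA_{S_i}$, the matrices $\mG_R^\top\mG_R$, and the augmented systems \eqref{eq:mw-barycentric}. Since the costs in \Cref{def:tasks} are monotone in the dimension parameter, each such call is charged at most the stated bound.

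For the sizes, the vertex bound $\poly(n)\size(\mA,b)$ is exactly what \Cref{lem:crash} and \Cref{thm:mw-general} assert. The same results give $\poly(n)\size(\mA,b,x)$ for the extreme rays. The multipliers $\lambda,\eta$ are unique solutions of linear systems whose data are these vectors and $x$, so they inherit a $\poly(n)\size(\mA,b,x)$ bound via \Cref{cor:tinv}. For the space, I will argue that \Cref{thm:mw-general} bounds every number occurring by $\poly(n)\size(\mA,b,x)$, while the number of stored numbers at any time is at most the operation count $O(n^7)$. Their product is $\poly(n)\size(\mA,b,x)$.

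The only step needing care is this size composition. The reductions are nested to a constant depth, namely multiplication inside determinant, then basis, then circuit, then crash, then decomposition. Moreover, each level recomputes its iterates directly from the original input, as in the proofs of \Cref{lem:crash,lem:mw-polytope}. So polynomial size bounds compose only a constant number of times and stay polynomial; sizes never compound over the $n$ rounds. I expect no genuine obstacle beyond this bookkeeping.
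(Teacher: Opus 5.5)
Your proposal is correct and follows the paper's proof essentially verbatim. You use the same bottom-up substitution of $\Tmul(n)=O(n^3)$ through \Cref{cor:leverrier-integral,cor:tinv,lem:basis,lem:circuit,lem:crash} and \Cref{thm:mw-general}, and the same space argument multiplying the $O(n^7)$ stored numbers by the per-number bound $\poly(n)\size(\mA,b,x)$. As in the paper, all size claims, including those for the multipliers, are simply taken from \Cref{thm:mw-general}, so the constant-depth size bookkeeping in your last paragraph is not needed for this corollary.
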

\begin{proof}
For the running time, substitute $\Tmul(n)=O(n^3)$ into the chain of reductions. \Cref{cor:leverrier-integral} gives $\Tdet(n),\Tadj(n)=O(n\,\Tmul(n))=O(n^4)$, and \Cref{cor:tinv} then gives $\Tinv(n)=O(\Tadj(n)+\Tdet(n))=O(n^4)$. \Cref{lem:basis} gives $\Tbasis(n)=O\bigl(n(\Tmul(n)+\Tdet(n))\bigr)=O(n^5)$, and \Cref{lem:circuit} $\Tcirc(n)=O\bigl(\Tbasis(n)+\Tmul(n)+\Tinv(n)\bigr)=O(n^5)$. \Cref{lem:crash} gives $\Tcr(n)=O\bigl(n(\Tcirc(n)+\Tinv(n)+\Tmul(n))\bigr)=O(n^6)$, and finally \Cref{thm:mw-general} gives $\TMW(n)=O\bigl(n(\Tcr(n+1)+\Tinv(n+1))\bigr)=O(n^7)$, as claimed in \eqref{eq:chain}.

For the space bound, the algorithm performs $O(n^7)$ operations, each producing one number, and every number occurring has size $\poly(n)\size(\mA,b,x)$ by \Cref{thm:mw-general}. Hence at most $O(n^7)$ numbers of size $\poly(n)\size(\mA,b,x)$ are stored at any time, which gives the claim.
\end{proof}

In particular, \Cref{thm:mw-general,cor:mw-explicit-bounds} prove \Cref{lem:minkowski-weyl}. We also collect the bounds stated in \Cref{lem:linalg}.

\begin{proof}[Proof of \Cref{lem:linalg}]
Part \ref{it:linalg-basis} is \Cref{lem:basis}, and part \ref{it:linalg-inv} is \Cref{cor:leverrier-integral,cor:tinv}, with the bounds $\Tbasis(n)=O(n^5)$ and $\Tdet(n),\Tadj(n),\Tinv(n)=O(n^4)$ of \eqref{eq:chain}. For part \ref{it:linalg-pinv}, if $m\le n$, then \Cref{lem:pseudoinverse-compute} and \eqref{eq:chain} give $O(n^5)$ operations; if $m>n$, we apply it to $\mM^\top$ and use $(\mM^\top)^+=(\mM^+)^\top$. The size bounds are part of these statements and their proofs.
\end{proof}

\end{document}